\def\mathunderaccent#1#2 {\let\theaccent#1\skewfactor#2
\mathpalette\putaccentunder}
\def\putaccentunder#1#2{\oalign{$#1#2$\crcr\hidewidth
\vbox to.2ex{\hbox{$#1\skew\skewfactor\theaccent{}$}\vss}\hidewidth}}
\def\name{\mathunderaccent\tilde-3 }
\newcommand{\forces}{\Vdash}
\newcommand{\can}{{}^{\omega}2}
\newcommand{\rest}{{\restriction}}
\newcommand{\dom}{{\rm dom}} 
\newcommand{\conc}{{}^\frown\!}
\newcommand{\vtl}{\vartriangleleft} 
\newcommand{\vare}{\varepsilon} 
\newcommand{\diam}{{\rm diam}_{\rho^*}}
\newcommand{\cf}{{\rm cf}}
\newcommand{\rk}{{\rm rk}}
\newcommand{\rksp}{{\rm rk}^{\rm sp}}
\newcommand{\cl}{{\rm cl}}
\newcommand{\stnd}{{\rm stnd}}
\newcommand{\std}{{\rm std}}
\newcommand{\bB}{{\mathbf B}}
\newcommand{\bb}{{\mathbf b}}
\newcommand{\bbC}{{\mathbb C}}
\newcommand{\bD}{{\mathbf D}}
\newcommand{\bF}{{\bf F}}
\newcommand{\bbH}{{\mathbb H}}
\newcommand{\bj}{{\mathbf j}}
\newcommand{\bk}{{\mathbf k}}
\newcommand{\cL}{{\mathcal L}}
\newcommand{\bbM}{{\mathbb M}}
\newcommand{\cP}{{\mathcal P}}
\newcommand{\bbP}{{\mathbb P}}
\newcommand{\bbQ}{{\mathbb Q}}  
\newcommand{\cQ}{{\mathcal Q}}
\newcommand{\bT}{{\mathbf T}}
\newcommand{\cU}{{\mathcal U}}
\newcommand{\bV}{{\mathbf V}}
\newcommand{\cV}{{\mathcal V}}
\newcommand{\cW}{{\mathcal W}}
\newcommand{\bX}{{\mathbf X}}
\newtheorem{theorem}{Theorem}[section] 
\newtheorem{claim}{Claim}[theorem]
\newtheorem{lemma}[theorem]{Lemma} 
\newtheorem{proposition}[theorem]{Proposition} 
\newtheorem{corollary}[theorem]{Corollary} 
\newtheorem{observation}[theorem]{Observation} 
\theoremstyle{definition}
\newtheorem{problem}[theorem]{Problem} 
\newtheorem{definition}[theorem]{Definition}
\newtheorem{hypothesis}[theorem]{Assumption}
\theoremstyle{remark}
\newtheorem{conclusion}[theorem]{Conclusion}
\begin{document}

\title{Borel sets without perfectly many overlapping translations, III}

\author{Andrzej Ros{\l}anowski}
\address{Department of Mathematics\\
University of Nebraska at Omaha\\
Omaha, NE 68182-0243, USA}
\email{aroslanowski@unomaha.edu}

\author{Saharon Shelah}
\address{Institute of Mathematics\\
 The Hebrew University of Jerusalem\\
 91904 Jerusalem, Israel\\
 and  Department of Mathematics\\
 Rutgers University\\
 New Brunswick, NJ 08854, USA}
\email{shelah@math.huji.ac.il}
\urladdr{http://shelah.logic.at}

\thanks{Publication 1187 of the second author.\\
The first author thanks the National Science Fundation for supporting his
visit to Rutgers University where this research was carried out, and the 
Rutgers University for their hospitality.\\
Saharon Shelah thanks the Israel Science Foundation for their grant
1838/19.}    

\subjclass{Primary 03E35; Secondary: 03E15, 03E50}
\date{July, 2021}

\begin{abstract}
  We expand the results of Ros{\l}anowski and Shelah \cite{RoSh:1138,
    RoSh:1170} to all perfect Abelian Polish groups $(\bbH,+)$. In
  particular, we show that if $\alpha<\omega_1$ and $4\leq k<\omega$, then
  there is a ccc forcing notion adding a $\Sigma^0_2$ set $B\subseteq \bbH$
  which has $\aleph_\alpha$ many pairwise $k$--overlapping translations but
  not a perfect set of such translations. The technicalities of the forcing
  construction led us to investigations of the question when, in an Abelian
  group, $X-X\subseteq Y-Y$ imply that a translation of $X$ or $-X$ is
  included in $Y$.
\end{abstract}

\maketitle 

\section{Introduction}
For a Polish space $X$ and a set $B\subseteq X\times X$ we say that $B$
contains a  $\mu$--square (perfect square, respectively), if there is a set
$Z$ of cardinality  $\mu$ (a perfect set $Z$, respectively) such that
$Z\times Z\subseteq B$. The problem of Borel sets with large squares 
but no perfect squares was studied and resolved in Shelah \cite{Sh:522}.

Several questions can be phrased in a manner involving $\mu$--squares
and/or perfect squares {\em with some additional structure\/} on them.
For instance, looking at a Polish group  $(\bbH,+)$ we may ask for its Borel
subsets with many, but not too many disjoint translations (or just
translations with small overlaps). This leads to considering the {\em
  spectrum of translation   $k$--disjointness of a set $A\subseteq
  \bbH$,}
\[\std_k(A)=\{(x,y)\in \bbH\times \bbH: |(A+x)\cap (A+y)|\leq k\},\] 
and asking if this set may contain a $\mu$--square  but not a perfect
square. For $k=0$ this is  asking for $\mu$ many pairwise 
disjoint translations of $A$ without a perfect set of such
translations. This direction is related to works of Balcerzak,
Ros{\l}anowski and  Shelah \cite{BRSh:512}, Darji and Keleti
\cite{UdKe03}, Elekes and Stepr\={a}ns \cite{ElSt04},  Zakrzewski
\cite{Zak13} and Elekes and Keleti \cite{ElKe15}.

It is still unresolved if we may repeat the results of \cite{Sh:522} for the
disjointness context, but there is some promising work in progress
\cite{Sh:F1927}.  However a lot of progress has been made in the dual
direction. 

For a set $A\subseteq \bbH$ we consider its {\em spectrum of  translation
  $k$--non-disjointness,}   
\[\stnd_k(A)=\{(x,y)\in \bbH\times \bbH: |(A+x)\cap (A+y)|\geq k\}.\]  
Then a $\mu$--square included in $\stnd_\kappa (A)$ determines a family of
$\mu$ many pairwise $k$--overlapping translations. These were studied
extensively for the context of the Cantor space in Ros{\l}anowski and Rykov
\cite{RoRy18}, and Ros{\l}anowski and Shelah \cite{RoSh:1138,
  RoSh:1170}. Those works fully utilized the algebraic properties of
$(\can,+)$, leaving the general case of Polish groups unresolved.

In the current paper we aim at generalizing their results to perfect Abelian
Polish groups. The main difficulty in this more general case lies in quite
algebraic problem $(\spadesuit)$ given below. Suppose $S\subseteq \bbH$ and
$X\subseteq  \bbH$ is a set of $k$--intersecting translations, i.e.,
\begin{enumerate}
\item[$(\diamondsuit)_X^S$] $|(S+x)\cap (S+y)|\geq k$ for all $x,y\in X$. 
\end{enumerate}
Then for all $c\in \bbH$ the property $(\diamondsuit)_{X+c}^S$ also holds
true. Thus the properties of objects added by our forcing should reflect
some ``translation invariance''. How can we know that a set $Y$ is included
in a translation of $X$? Clearly, if $Y\subseteq X+c$ or $Y\subseteq c-X$, 
then $Y-Y\subseteq X-X$. It would be helpful in our forcing if we knew 
\begin{enumerate}
\item[$(\spadesuit)$] when does $Y-Y\subseteq X-X$ imply that 
  $Y$ is included in a (small) neighborhood of a translation $X+c$ of $X$
  or of a translation $c-X$ of $-X$?    
\end{enumerate}
In the third section we introduce the main algebraic ingredient of our
forcing notion: qifs and quasi independent sets. In forcing, we will use
them in conjunction with differences of elements of the group, but a
relative result for sums also seems interesting, so we present it in Section
4. The third and fourth section might be of interest independently from the
rest of the paper, as they address the question $(\spadesuit)$ giving
interesting (though technical) properties of  perfect Abelian Polish groups
with few elements of rank 2. 

Like in \cite{Sh:522}, the ``no perfect set'' property of the forcing
extension results from the use of a ``splitting rank'' $\rksp$. We remind its 
definition and basic properties in the second section. For the relevant
proofs we refer the reader to \cite{Sh:522,RoSh:1138}.

In the fifth section we prove our main consistency result for groups with few
elements of rank 2. The remaining case when $\bbH$ has many elements of rank
2 is treated in Section 6. We close the paper with summary of our results
and a list of open problems. 

The general case of Polish groups will be investigated in a  subsequent work
\cite{Sh:F1926}.   
\bigskip

\noindent{\bf Notation}:\qquad Our notation is rather standard and
compatible with that of classical textbooks (like Jech \cite{J} or
Bartoszy\'nski and Judah \cite{BaJu95}). However, in forcing we keep the
older convention that {\em a stronger condition is the larger one}.

\begin{enumerate}
\item For a set $u$ we let 
\[u^{\langle 2\rangle}=\{(x,y)\in u\times u:x\neq y\}.\]
\item Ordinal numbers will be denoted be the lower case initial letters of
  the Greek alphabet $\alpha,\beta,\gamma,\delta,\vare,\zeta$. Finite
  ordinals  (non-negative integers) will be denoted by letters
  $i,j,k,\ell,m,n,J,K,L,M,N$ and $\iota$. The Greek letters $\lambda$ and
  $\mu$ will stand for uncountable cardinals.  
\item Finite sequences will be denoted $\sigma,\varsigma$
\item For a forcing notion $\bbP$, all $\bbP$--names for objects in
  the extension via $\bbP$ will be denoted with a tilde below (e.g.,
  $\name{\tau}$, $\name{X}$), and $\name{G}_\bbP$ will stand for the
  canonical $\bbP$--name for the generic filter in $\bbP$.
\item $(\bbH,+,0)$ is an Abelian group (in the main part of the paper it is
 a perfect Polish Abelian group). The elements of $\bbH$ will be  called
 $a,b,c,d$ (with possible indices). For an integer  $\iota$ and $a\in \bbH$,
 we use the notation $\iota a$ to denote the element of $\bbH$ obtained by
 repeated addition of $a$ (or $-a$) $|\iota|$ many times in  the usual way.     
\item For sets $A,B\subseteq \bbH$ we will write $-A=\{-a:a\in 
  \bbH\}$, 
\[A+B=\{a+b:a\in A\ \wedge\ b\in B\}\quad \mbox{ and }\quad A-B= \{a-b:a\in 
  A\ \wedge\ b\in B\}.\] 
\end{enumerate}

\section{Splitting rank $\rksp$} 
Let us recall a rank used in previous papers which will be central  
for the results here too. We quote some definitions and theorems 
from \cite[Section 2]{RoSh:1138}, however they were first given in
\cite[Section 1]{Sh:522}.  
 
Let $\lambda$ be a cardinal and $\bbM$ be a model with the universe  
$\lambda$ and a countable vocabulary $\tau$.  

\begin{definition}
\label{defofrank}
\begin{enumerate}
\item By induction on ordinals $\delta$, for finite non-empty sets  
  $w\subseteq\lambda$ we define when $\rk(w,\bbM)\geq \delta$. Let  
  $w=\{\alpha_0,\ldots,\alpha_n\} \subseteq\lambda$, $|w|=n+1$.  
 \begin{enumerate}
\item[(a)] $\rk(w)\geq 0$ if and only if for every quantifier free formula 
    $\varphi=\varphi(x_0,\ldots,x_n)\in \cL(\tau)$ and each $k\leq n$, if  
$\bbM\models \varphi[\alpha_0,\ldots,\alpha_k,\ldots,\alpha_n]$ then  the
set
\[\big\{\alpha\in \lambda:\bbM\models \varphi[\alpha_0,\ldots,\alpha_{k-1},   
  \alpha,\alpha_{k+1}, \ldots,\alpha_n]\big\}\]
is  uncountable;  
\item[(b)] if $\delta$ is limit, then $\rk(w,\bbM)\geq\delta$ if and only if 
  $\rk(w,\bbM)\geq\gamma$ for all $\gamma<\delta$;  
\item[(c)] $\rk(w,\bbM)\geq\delta+1$ if and only if for every quantifier  
  free  formula $\varphi=\varphi(x_0,\ldots,x_n)\in \cL(\tau)$ and each
  $k\leq n$, if $\bbM\models \varphi[\alpha_0,\ldots,\alpha_k,\ldots,
  \alpha_n]$ then there is   $\alpha^*\in\lambda\setminus w$ such that  
\[\rk(w\cup\{\alpha^*\},\bbM)\geq \delta\quad\mbox{ and }\quad \bbM\models   
  \varphi[\alpha_0,\ldots,\alpha_{k-1},\alpha^*,\alpha_{k+1},\ldots,\alpha_n].\] 
 \end{enumerate}
\end{enumerate}
\end{definition}
 
By a straightforward induction on $\delta$ one easily shows that if 
$\emptyset\neq v\subseteq w$ then 
\[\rk(w,\bbM)\geq\delta\geq\gamma \implies \rk(v,\bbM)\geq \gamma.\] 
Hence we may define the rank function on finite non-empty subsets of  
$\lambda$. 

\begin{definition} 
The rank $\rk(w,\bbM)$ of a finite non-empty set $w\subseteq\lambda$ 
is defined as:  
 \begin{itemize}
\item $\rk(w,\bbM)=-1$ if $\neg (\rk(w,\bbM)\geq 0)$,  
\item $\rk(w,\bbM)=\infty$ if $\rk(w,\bbM)\geq \delta$ for all ordinals  
  $\delta$,  
\item for an ordinal $\delta$: $\rk(w,\bbM)=\delta$ if $\rk(w,\bbM)\geq  
  \delta$ but $\neg(\rk(w,\bbM)\geq\delta+1)$.  
\end{itemize}
\end{definition}

\begin{definition} 
For an ordinal $\vare$ and a cardinal $\lambda$ let ${\rm  
  NPr}^\vare(\lambda)$  be the following statement:  
\begin{quotation}
``there is a model  $\bbM^*$ with the universe $\lambda$ and a countable  
vocabulary $\tau^*$ such  that  $1+\rk(w,\bbM^*)\leq\vare$ for all $w\in
[\lambda]^{<\omega}\setminus\{\emptyset\}$.''      
\end{quotation}
Let ${\rm Pr}^\vare(\lambda)$ be the  negation of ${\rm NPr}^\vare(\lambda)$.
\end{definition}
Note that ${\rm NPr}_\vare$ of \cite[Definition 2.4]{RoSh:1138}
differs from our ${\rm NPr}^\vare$: ``$\sup\{\rk(w,\bbM^*):
\emptyset\neq w\in [\lambda]^{<\omega} \}<\vare$  '' there is replaced
by ``$1+\rk(w,\bbM^*)\leq\vare$'' here. However, the proofs for
\cite[Propositions 2.6, 2.7]{RoSh:1138} show the following results. 

\begin{proposition}
\label{cl1.7-522}
\begin{enumerate}
\item ${\rm NPr}^1(\omega_1)$.
\item If ${\rm NPr}^\vare(\lambda)$, then ${\rm NPr}^{\vare+1}(\lambda^+)$.  
\item If ${\rm NPr}^\vare(\mu)$ for $\mu<\lambda$ and
  $\cf(\lambda)=\omega$, then ${\rm NPr}^\vare(\lambda)$. 
\item If $\alpha<\omega_1$, then ${\rm NPr}^{\alpha}(\aleph_\alpha)$ but  
${\rm Pr}^\alpha(\beth_{\omega_1})$ holds. 
\end{enumerate}
\end{proposition}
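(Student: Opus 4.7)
My plan is to follow the inductive structure from the proofs of \cite[Propositions 2.6, 2.7]{RoSh:1138}, the only new element being the bookkeeping forced by replacing ``$\sup\{\rk(w,\bbM^*):\emptyset\neq w\in[\lambda]^{<\omega}\}<\vare$'' with ``$1+\rk(w,\bbM^*)\leq\vare$''. This renormalization shifts indexing but does not change the combinatorial content of any step.

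For (1), I would take $\bbM^*$ on $\omega_1$ with a single binary relation interpreted as the natural well-ordering. For any pair $w=\{\alpha_0<\alpha_1\}$ the satisfied quantifier-free formula $x_0<x_1$ admits only $|\alpha_1|\leq\aleph_0$ replacements in the first coordinate, forcing $\rk(w,\bbM^*)=-1$; the same formula handles all larger finite $w$; singletons have rank $\leq 0$ because no extension to a pair has rank $\geq 0$. Thus $1+\rk(w,\bbM^*)\leq 1$ for every nonempty finite $w$. For (2), given $\bbM$ on $\lambda$ witnessing ${\rm NPr}^\vare(\lambda)$, I would partition $\lambda^+$ into disjoint classes $(X_\alpha)_{\alpha<\lambda^+}$ of size $\lambda$, fix bijections $g_\alpha\colon\lambda\to X_\alpha$, and build $\bbM'$ by pulling back each relation of $\bbM$ to $X_\alpha$ (interpreted as false across classes), plus adding an equivalence relation $E$ for the partition, a well-ordering of $\lambda^+$, and function symbols encoding the $g_\alpha$. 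A routine induction on $\delta$ then shows that $\rk(w,\bbM')$ matches $\rk(g_\alpha^{-1}[w],\bbM)$ for $w\subseteq X_\alpha$, with the extra ``$+1$'' absorbed when one extends $w$ to a new $E$-class, and it is small when $w$ already meets several classes. For (3), I would partition $\lambda=\bigcup_n Y_n$ with $|Y_n|=\mu_n$, copy each $\bbM_n$ onto $Y_n$ via a bijection, and add unary predicates for the $Y_n$; every finite $w$ meets only finitely many $Y_n$, which reduces rank computations to those inside single classes.

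Part (4) then follows by transfinite induction on $\alpha<\omega_1$, using (1) at the base, (2) at successors (since $\aleph_{\alpha+1}=(\aleph_\alpha)^+$), and (3) at countable limits (as $\cf(\aleph_\alpha)=\cf(\alpha)=\omega$ for every limit $\alpha<\omega_1$). For the second half, ${\rm Pr}^\alpha(\beth_{\omega_1})$, I would invoke the Erd\H{o}s--Rado partition theorem iteratively within the stratification $\beth_{\omega_1}=\sup_{\beta<\omega_1}\beth_\beta$ to locate, inside any candidate model $\bbM^*$, an indiscernible sequence of length $\omega_1$; every finite subset $w$ of such a sequence has $\rk(w,\bbM^*)=\infty$, so $1+\rk(w,\bbM^*)>\alpha$.

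The principal technical obstacle is verifying the renormalized bookkeeping in part (2): because $1+\vare=\vare$ for infinite limit $\vare$, one must carefully check that the inductive ``$+1$'' in rank accurately corresponds to the step $\lambda\to\lambda^+$ at every limit stage, so that the induction supporting (4) does not slip by one at countable limits. This is precisely the bookkeeping flagged in the paper's remark comparing the two conventions for ${\rm NPr}^\vare$, and it is where the present proof genuinely differs in notation (though not in ideas) from the one in \cite{RoSh:1138}.
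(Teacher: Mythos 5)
Your proof of part (2) does not go through. In the model $\bbM'$ you build on $\lambda^+$ there is no mechanism bounding the rank of sets that meet several classes $X_\alpha$, and such sets arise as the obligatory witnesses for ranks $\geq\delta+1$: for a finite $w$ whose points lie in pairwise distinct classes, are mutually far apart in the well-order, and all sit above $\lambda$, every quantifier-free formula true of $w$ (built from $<$, $E$, and the pulled-back relations, the latter all false across classes) admits $\lambda$-many admissible replacements at each coordinate, so $\rk(w,\bbM')\geq 0$, and one can keep extending such $w$ through fresh classes indefinitely; the rank of such transversals is therefore unbounded and $\bbM'$ fails to witness ${\rm NPr}^{\vare+1}(\lambda^+)$. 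The vague ``function symbols encoding the $g_\alpha$'' cannot rescue this, because each $g_\alpha$ is a bijection from $\lambda$ onto a single block and says nothing about the position of a point relative to a maximum lying in a \emph{different} block. What is actually exploited in \cite{Sh:522,RoSh:1138} is the bound $|\delta|\leq\lambda$ for every $\delta<\lambda^+$: fix bijections $g_\delta\colon\delta\to\lambda$ and, for each $R_{n,j}$ of $\bbM$, add an $(n{+}1)$-ary relation that holds of $(\beta_0,\dots,\beta_{n-1},\delta)$ iff $\beta_0,\dots,\beta_{n-1}<\delta$ and $R_{n,j}[g_\delta(\beta_0),\dots,g_\delta(\beta_{n-1})]$; then, among the true formulas of any finite $w$, there is one pinning all other entries below $\max w$, and it transfers the rank computation into $\bbM$ via $g_{\max w}$, giving $\rk(w,\bbM')\leq 1+\rk\bigl(g_{\max w}[w\setminus\{\max w\}],\bbM\bigr)$. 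The same remark applies to (3): you want an increasing chain of \emph{initial segments} $\mu_0\subseteq\mu_1\subseteq\cdots$ with unary predicates $P_n$, so that every finite $w$ is contained in some $\mu_n$ and a formula $\bigwedge_i P_n(x_i)$ localizes the rank calculation to $\bbM_n$; a disjoint partition leaves the same cross-class problem as in (2).

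Your appeal to Erd\H{o}s--Rado in the ${\rm Pr}^\alpha(\beth_{\omega_1})$ half of (4) also overreaches. Partition calculus below $\beth_{\omega_1}$ yields, for each fixed exponent $n$, a set of size $\aleph_1$ homogeneous for $n$-tuples, but these sets change with $n$, and a single sequence of length $\omega_1$ (or even $\omega$) indiscernible for all arities simultaneously is a large-cardinal-strength phenomenon, not available from $\beth_{\omega_1}$ in ZFC. The argument in \cite{Sh:522} fixes $\alpha<\omega_1$ first and, by an $\alpha$-length recursion invoking Erd\H{o}s--Rado at each step inside the $\beth$-hierarchy below $\beth_{\omega_1}$, produces a \emph{tree} of partial end-homogeneous sequences whose depth-$\alpha$ branches supply a finite $w$ with $\rk(w,\bbM^*)\geq\alpha$; no genuine $\omega_1$-indiscernible sequence is produced or needed.
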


\begin{definition}
Let $\tau^\otimes=\{R_{n,j}:n,j< \omega\}$ be a fixed relational
vocabulary where $R_{n,j}$ is an $n$--ary relational symbol (for
$n,j<\omega$).  
\end{definition}

\begin{definition}
\label{hypo2}
Assume that $\vare<\omega_1$ and $\lambda$ is an uncountable cardinal such
that  ${\rm NPr}^\vare(\lambda)$. By this assumption, we may fix a model
$\bbM(\vare,\lambda)= \bbM=(\lambda, \{R^\bbM_{n,j}\}_{n,j<\omega}) $ in the
vocabulary $\tau^\otimes$ with the  universe $\lambda$ such that:  
\begin{enumerate}
\item[$(\circledast)_{\rm a}$] for every $n$ and a quantifier free formula 
  $\varphi(x_0,\ldots,x_{n-1})\in \cL(\tau^\otimes)$ there is $j<\omega$ such
  that for all $\alpha_0,\ldots, \alpha_{n-1}\in \lambda$, 
\[\bbM\models\varphi[\alpha_0,\ldots,\alpha_{n-1}]\Leftrightarrow
  R_{n,j}[\alpha_0, \ldots, \alpha_{n-1}],\]
\item[$(\circledast)_{\rm b}$] the rank of every singleton is at least 0,  
\item[$(\circledast)_{\rm c}$] $1+\rk(v,\bbM)\leq\vare$ for every $v\in
  [\lambda]^{<\omega} \setminus \{\emptyset\}$,
\item[$(\circledast)_{\rm d}$] $\bbM\models R_{2,0}[\alpha_0,\alpha_1]$
  \quad if and only if \quad $\alpha_0<\alpha_1<\lambda$.
\end{enumerate}
For a nonempty finite set $v\subseteq\lambda$ let
$\rk^{\rm sp}(v)=\rk(v,\bbM)$, and we fix witnesses $\bj(v)<\omega$ and
$\bk(v)<|v|$ for the rank of $v$, so that the following demands
$(\circledast)_{\rm e}$--$(\circledast)_{\rm g}$ are satisfied. If
$\{\alpha_0,\ldots, \alpha_k, \ldots \alpha_{n-1}\}$ is the increasing
enumeration of $v$ and $k=\bk(v)$ and $j= \bj(v)$, then  
\begin{enumerate}
\item[$(\circledast)_{\rm e}$] if $\rk^{\rm sp}(v)\geq 0$, then
  $\bbM\models R_{n,j}[\alpha_0,\ldots, \alpha_k,\ldots, \alpha_{n-1}]$
  but there is no $\alpha\in \lambda\setminus v$  such that  
\[\rk^{\rm sp}(v\cup\{\alpha\})\geq \rk^{\rm sp}(v)\ \mbox{ and }\ 
  \bbM\models R_{n,j} [\alpha_0,\ldots, \alpha_{k-1}, \alpha,
  \alpha_{k+1}, \ldots,\alpha_{n-1}],\]  
\item[$(\circledast)_{\rm f}$] if $\rk^{\rm sp}(v)=-1$, then
  $\bbM\models R_{n,j} [\alpha_0,\ldots,\alpha_k,\ldots, \alpha_{n-1}]$
  but the set
  \[\big\{\alpha\in\lambda:\bbM\models R_{n,j}[\alpha_0,\ldots,
    \alpha_{k-1},\alpha, \alpha_{k+1}, \ldots, \alpha_{n-1}]\big\}\] 
is countable,  
\item[$(\circledast)_{\rm g}$] for every
  $\beta_0,\ldots,\beta_{n-1}<\lambda$,\quad if $\bbM\models
  R_{n,j}[\beta_0,\ldots,\beta_{n-1}]$  then $\beta_0<\ldots <\beta_{n-1}$. 
\end{enumerate}
The choices above define functions $\bj:[\lambda]^{<\omega}\setminus \{
\emptyset\} \longrightarrow \omega$, $\bk:[\lambda]^{<\omega}\setminus  
\{\emptyset\} \longrightarrow \omega$, and $\rksp:[\lambda]^{<\omega}
\setminus \{\emptyset\} \longrightarrow \{-1\}\cup (\vare+1)$. 
\end{definition}

\section{QIFs and differences}

\begin{definition}
  \label{quasidef}
Let $(\bbH,+,0)$ be an Abelian group and $\bB\subseteq \bbH$.
   \begin{enumerate}
\item A {\em (2,n)--combination from $\bB$\/} is
  any  sum of the form 
\[\iota_0b_0+\iota_1b_1+\iota_2b_2+\ldots+\iota_{n-1}b_{n-1}\] 
where $b_0,b_1,\ldots,b_{n-1}\in \bB$ are pairwise distinct and
$\iota_0,\iota_1,\iota_2,\ldots,  \iota_{n-1}\in \{-2,-1,0,1,2\}$. The
$(2,n)$--combination is said to be {\em nontrivial\/} when not all
$\iota_0,\ldots,\iota_{n-1}$ are equal $0$.   
\item   We say that the set $\bB$ is {\em quasi independent in $\bbH$\/} if
  $|\bB|\geq 8$ and no nontrivial $(2,8)$--combination from $\bB$ equals to
  $0$.   
\item We say that a family $\cV$ of non-empty subsets of $\bbH$ is an {\em 
    $n$--good qif\/}{}\footnote{short for quasi independent family} if
$|\cV|\geq n$, the sets in $\cV$ are pairwise disjoint and for distinct
$V_0,\ldots,V_{n-1}\in \cV$, for each choice of $b_i,b_i'\in V_i$  (for
$i<n$) and every $\iota_0,\iota_0',\ldots, \iota_{n-1},\iota_{n-1}'\in
\{-1,0,1\}$ such that $\sum_{i=0}^{n-1} (\iota_i+\iota_i')^2\neq 0$ we have     
  \[\iota_0b_0+\iota_0'b_0'+\iota_1b_1+\iota_1'b_1'+\ldots+
    \iota_{n-1}b_{n-1}+\iota_{n-1}'b_{n-1}'\neq 0.\]
An expression as on the left hand side above will be called a {\em
  nontrivial $(2,\cV,n)$--combination} (or a {\em nontrivial
  $(2,n)$--combination from $\cV$}).
\item Let $\cV,\cW\subseteq \cP(\bbH)\setminus \{\emptyset\}$. We will say
  that $\cW$ is {\em immersed in\/} $\cV$ if there is a bijection
  $\pi:\cW\stackrel{1-1}{\longrightarrow} \cV$ such that 
  \begin{itemize}
\item $W\subseteq \pi(W)$ for all $W\in\cW$, and 
\item if $W_0,W_1\in\cW$, and $a,a'\in W_0$, $b\in W_1$, then
  $(a-a')+b\in \pi(W_1)$.  
  \end{itemize}
\end{enumerate}
\end{definition}

\begin{observation}
\label{obs3.2}
  \begin{enumerate}
\item If $\bB$ is quasi independent then all elements of $\bB$ have 
order at least 3 and $\big\{ \{b\}:b\in\bB\big\}$ is an $8$--good qif.  
\item If $\cV$ is an $8$--good qif and $b_V\in V$ (for $V\in \cV$) then
  $\{b_V: V\in \cV\}$ is quasi independent. 
\item Assume $\bbH$ is an Abelian Polish group. Suppose also that, for
  $i<N<\omega$, $V_i\subseteq\bbH$ are disjoint open sets and $b_i\in
  V_i$. Then there are open sets $W_i$ such that $b_i\in W_i\subseteq V_i$
  for $i<N$, and $\{W_i: i<N\}$ is immersed in $\{V_i:i<N\}$.   
  \end{enumerate}
\end{observation}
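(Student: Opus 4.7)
The plan is to verify each of the three clauses in turn; parts (1) and (2) are routine translations between the two equivalent formulations of quasi-independence, while (3) relies on continuity of the group operations in the Polish setting.

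For part (1), I would first show that every $b \in \bB$ has order at least $3$. If $b = 0$ or $2b = 0$, pick any seven other elements $b_1, \dots, b_7 \in \bB$ (possible since $|\bB| \geq 8$) and consider the combination $\iota_0 b + 0 \cdot b_1 + \dots + 0 \cdot b_7$ with $\iota_0 = 1$ or $\iota_0 = 2$ respectively; this is a nontrivial $(2,8)$--combination equal to $0$, contradicting quasi-independence. Then to show $\{\{b\} : b \in \bB\}$ is an $8$--good qif, pick eight distinct singletons, set $a_i = a_i' = b_i$, and let $\iota_i, \iota_i' \in \{-1,0,1\}$ satisfy $\sum (\iota_i + \iota_i')^2 \neq 0$; the left-hand side collapses to $\sum_{i<8} (\iota_i + \iota_i') b_i$, whose coefficients lie in $\{-2,-1,0,1,2\}$ and are not all zero, so it is a nontrivial $(2,8)$--combination from $\bB$, hence nonzero.

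For part (2), disjointness of the sets in $\cV$ guarantees that the map $V \mapsto b_V$ is injective, so $|\{b_V : V \in \cV\}| \geq 8$. Given a nontrivial $(2,8)$--combination $\sum_{i<8} \iota_i b_{V_i}$ with $\iota_i \in \{-2,\dots,2\}$, I would split each $\iota_i = \iota_i^1 + \iota_i^2$ with $\iota_i^1, \iota_i^2 \in \{-1,0,1\}$ in the obvious way. Then the combination rewrites as $\sum_i (\iota_i^1 b_{V_i} + \iota_i^2 b_{V_i})$, which is a $(2,\cV,8)$--combination with $a_i = a_i' = b_{V_i}$. Since $\sum_i (\iota_i^1 + \iota_i^2)^2 = \sum_i \iota_i^2 \neq 0$, the qif property yields that the sum is nonzero.

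For part (3), the key input is that the map $f: \bbH^3 \to \bbH$ defined by $f(x,y,z) = (x-y) + z$ is continuous. For each pair $(i_0, i_1) \in N \times N$ we have $f(b_{i_0}, b_{i_0}, b_{i_1}) = b_{i_1} \in V_{i_1}$, so by continuity there exist open neighborhoods $X_{i_0,i_1} \ni b_{i_0}$ and $Y_{i_0,i_1} \ni b_{i_1}$ such that $(X_{i_0,i_1} - X_{i_0,i_1}) + Y_{i_0,i_1} \subseteq V_{i_1}$. Setting
\[
W_i = V_i \cap \bigcap_{j<N} X_{i,j} \cap \bigcap_{j<N} Y_{j,i},
\]
we obtain an open set with $b_i \in W_i \subseteq V_i$ (a finite intersection, since $N$ is finite). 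The bijection $\pi(W_i) = V_i$ then witnesses immersion: for $a, a' \in W_{i_0}$ and $b \in W_{i_1}$ we have $a,a' \in X_{i_0,i_1}$ and $b \in Y_{i_0,i_1}$, so $(a-a') + b \in V_{i_1} = \pi(W_{i_1})$. No step presents real obstacles; the mild bookkeeping issue is just verifying the case $W_0 = W_1$ in the immersion clause, but it is covered by the diagonal pair $(i,i)$ in the intersection above.
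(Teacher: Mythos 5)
Your proof is correct and is exactly the routine unfolding of the definitions that the paper leaves implicit (Observation~\ref{obs3.2} carries no proof in the text, being considered immediate). The splitting $\iota_i=\iota_i^1+\iota_i^2$ in part (2), and the joint continuity of $(x,y,z)\mapsto (x-y)+z$ together with the finite intersection over pairs (including the diagonal, for the case $W_0=W_1$ in the immersion clause) in part (3), are exactly the intended steps.
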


\begin{proposition}
  \label{existence}
  Assume that
  \begin{enumerate}
\item[(i)] $(\bbH,+,0)$ is a perfect Abelian Polish group,
\item[(ii)] the set of elements of $\bbH$ of order larger than 2 is dense
  in $\bbH$,
\item[(iii)] $U_0,\ldots,U_{n-1}$ are nonempty open subsets of $\bbH$. 
\end{enumerate}
Then there are disjoint open sets $V_i\subseteq U_i$ (for $i<n$) such that
$\{V_i:i<n\}$ is an  $n$--good qif.   
\end{proposition}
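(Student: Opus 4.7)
The plan is to proceed in two stages: first select ``centres'' $c_i \in U_i$ satisfying a strong algebraic non-degeneracy condition, then replace them by sufficiently small open neighborhoods using continuity of the group operation.

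\emph{Step 1 (finding centres).} By induction on $k \leq n$ I would construct pairwise distinct elements $c_i \in U_i$ of order $> 2$, for $i < k$, such that no nontrivial combination $\sum_{i<k} \iota_i c_i$ with $\iota_i \in \{-2,-1,0,1,2\}$ equals $0$. At stage $k$, the ``bad'' candidates $c \in U_k$ are those belonging to one of finitely many closed sets: the points $c_0, \ldots, c_{k-1}$; the $2$-torsion subgroup $\ker(c \mapsto 2c)$ (to ensure $c_k$ has order $> 2$); and, for each nonzero $\iota_k \in \{\pm 1, \pm 2\}$ and each $(\iota_0,\ldots,\iota_{k-1}) \in \{-2,\ldots,2\}^k$, the solution set of $\iota_k c = -\sum_{i<k} \iota_i c_i$. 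The first family is finite and hence nowhere dense in the perfect space $U_k$. The $2$-torsion $\ker(\cdot 2)$ is closed, and it has empty interior because an open subset of it would consist of elements of order $\leq 2$, contradicting the density hypothesis~(ii). For $\iota_k = \pm 1$ the last equation has a unique solution; for $\iota_k = \pm 2$ its solution set is either empty or a coset of $\ker(\cdot 2)$, hence nowhere dense. A finite union of nowhere dense sets in $U_k$ leaves a dense $G_\delta$, which is nonempty by the Baire category theorem, so an admissible $c_k$ exists.

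\emph{Step 2 (shrinking to neighborhoods).} For every coefficient vector $(\iota_0, \iota_0', \ldots, \iota_{n-1}, \iota_{n-1}') \in \{-1,0,1\}^{2n}$ with $\sum_i (\iota_i + \iota_i')^2 \neq 0$, the map
\[
\Phi_{\iota,\iota'} : \bbH^{2n} \to \bbH,\quad (b_0, b_0', \ldots, b_{n-1}, b_{n-1}') \mapsto \sum_{i<n}\bigl(\iota_i b_i + \iota_i' b_i'\bigr),
\]
is continuous, and its value at $(c_0, c_0, c_1, c_1, \ldots, c_{n-1}, c_{n-1})$ equals $\sum_{i<n} (\iota_i + \iota_i') c_i$, which is a nontrivial combination with coefficients in $\{-2,\ldots,2\}$ of the $c_i$'s and hence nonzero by Step~1. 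Therefore there is a product neighborhood $\prod_{i<n} (W_i^{\iota,\iota'})^2$ of that point on which $\Phi_{\iota,\iota'}$ avoids $0$. Intersecting the $W_i^{\iota,\iota'}$'s over the finitely many nontrivial patterns $(\iota, \iota')$, and further shrinking to ensure $V_i \subseteq U_i$ and the $V_i$'s are pairwise disjoint (possible since the $c_i$ are distinct), yields the required open sets $V_0, \ldots, V_{n-1}$.

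The principal obstacle is the treatment in Step~1 of the equations $\iota_k c = s$ with $\iota_k = \pm 2$: without assumption~(ii) these could be solvable on an open subset of $U_k$, obstructing the induction. It is precisely here that density of order-$>2$ elements is used in its full strength, rather than merely the existence of one such element.
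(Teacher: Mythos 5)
Your proof is correct, and it follows a genuinely different route from the paper's. Both arguments rest on the same key fact extracted from hypothesis~(ii) --- that the $2$-torsion subgroup $H_2$ is closed with empty interior, hence nowhere dense, and so are its cosets --- but they exploit it differently. The paper first chooses \emph{large finite} subsets $A_i\subseteq U_i\setminus H_2$ whose sizes $m_i$ grow fast enough (via an explicit recursion $m_{i+1}=10^{i+1}\prod_{j\le i}m_j+10$), and then runs a pigeonhole/counting argument: the set $X_i$ of all $(2,i)$-combinations from $A_0,\dots,A_{i-1}$ has cardinality less than $|A_i|/2$, and since $a\mapsto 2a$ is injective on $A_i$, one can pick $b_i^*\in A_i$ with both $b_i^*\notin X_i$ and $2b_i^*\notin X_i$. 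You instead choose the centres one at a time by a direct Baire-category argument: at stage $k$ the bad locus is a finite union of closed nowhere dense sets (the previously chosen points, $H_2$ itself, and for each coefficient pattern either a singleton or a coset of $H_2$), so its complement in $U_k$ is dense open and hence nonempty. Your Step~2, the continuity/shrinking argument, is essentially the same as the paper's. What your route buys is a cleaner, more conceptual Step~1 with no bookkeeping: you never need to finitize, estimate cardinalities, or engineer the $m_i$'s; the crucial dichotomy --- unique solution when $\iota_k=\pm1$, a coset of $H_2$ when $\iota_k=\pm2$ --- makes the role of hypothesis~(ii) transparent. One tiny stylistic note: since you are removing a \emph{finite} union of \emph{closed} nowhere dense sets, the complement is already dense open, so invoking the full Baire category theorem is slightly heavier than needed, though of course not wrong.
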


\begin{proof}
 Let $H_2$ consists of all elements of $\bbH$ of order $\leq 2$. Then $H_2$
 is a closed subgroup of $\bbH$ and, by the assumption (ii), it has empty
 interior. Consequently, for each $a\in \bbH$ and $i< n$ the set
 $(a+H_2)\cap U_i$ is meager. Therefore, for each $i< n$,
 \begin{enumerate}
 \item[$(\otimes)_i$] the set $\big\{a+H_2: a\in \bbH$ and $(a+H_2)\cap
   U_i\neq \emptyset\ \big\}$  is infinite.
 \end{enumerate}
Let $m_0=10$ and $m_{i+1}=10^{i+1}\cdot\prod\limits_{j\leq i} m_j+10$
(for $i<n$). For each $i< n$ choose a set $A_i\subseteq
U_i\setminus H_2$ such that
\begin{enumerate}
\item[$(\oplus)_0$] $|A_i|=m_i$ and 
\item[$(\oplus)_1$] if $a,b\in A_i$ and $a\neq b$, then $2a\neq 2b$.
\end{enumerate}
(The choice is possible by $(\otimes)_i$ for each $i< n$.) For $0<i< n$ let 
\[\begin{array}{ll}
  X_i=\big\{\iota_0 a_0+\ldots+\iota_{i-1} a_{i-1}: &a_0\in A_0,\ldots,
      a_{i-1}\in A_{i-1}\ \wedge\\
     &\iota_0,\ldots,\iota_{i-1}\in \{-2,-1,0,1,2\} \big\}.
\end{array}\]
By the choice of $m_j$'s we know that $2\cdot |X_i|<m_i=|A_i|$, so we
may choose $b_i^*\in A_i$ such that $2b_i^*,b_i^*\notin X_i$. Let $b_0^*\in  
A_0$ be arbitrary. One easily verifies that every nontrivial
$(2,n)$--combination from $\{b^*_i:i< n\}$ is not zero, so for each
$\iota_0,\iota_0',\ldots, \iota_{n-1},\iota_{n-1}'\in \{-1,0,1\}$ such that
$\sum_{i=0}^{n-1} (\iota_i+\iota_i')^2\neq 0$ we have 
 \[\iota_0b_0^*+\iota_0'b_0^*+\iota_1b_1^*+\iota_1'b_1^*+\ldots+
    \iota_{n-1}b_{n-1}+\iota_{n-1}'b_{n-1}^*\neq 0.\]
For each such combination we may choose disjoint open sets
$V^i_{\iota_0,\iota_0',\ldots,\iota_{n-1},\iota_{n-1}'}$ such that $b^*_i\in
V^i_{\iota_0,\iota_0',\ldots,\iota_{n-1},\iota_{n-1}'}\subseteq U_i$ and for
every $b_i,b_i'\in V^i_{\iota_0,\iota_0',\ldots,\iota_{n-1},\iota_{n-1}'}$,
$i<n$, we have  
  \[\iota_0b_0+\iota_0'b_0'+\iota_1b_1+\iota_1'b_1'+\ldots+
    \iota_{n-1}b_{n-1}+\iota_{n-1}'b_{n-1}'\neq 0.\]
Now, for $i<n$ we set
\[\begin{array}{r}
V_i=\bigcap \big\{V^i_{\iota_0,\iota_0',\ldots,\iota_{n-1},\iota_{n-1}'} : 
  \iota_0,\iota_0',\ldots,\iota_{n-1},\iota_{n-1}' \in \{-1,0,1\}\
    \wedge\ \qquad \\
  (\iota_0-\iota_0')^2+\ldots +(\iota_{n-1}-\iota_{n-1}')^2>0\big\}.
  \end{array}\]      
It is clear that the sets $V_i$ (for $i< n$) are as required.
\end{proof}

\begin{lemma}
\label{fromKupMin}
Suppose that $(\bbH,+,0)$ is an Abelian group and $\rho$ is a translation
invariant metric on it. Let $\cW\subseteq \cP(\bbH)$ be a finite 8--good
qif. Assume that  
\begin{enumerate}
\item[(a)] $\cW$ is immersed in $\cV$, $\cV\subseteq \cP(\bbH)$, 
\item[(b)] $ A'\subseteq A\subseteq \bbH$, $| A'|=8$,
\item[(c)] $A-A\subseteq \bigcup\big\{W-W':W,W'\in\cW\big\}$, 
\item[(d)]  if $a,b\in A$, $a\neq b$, then $\rho(a,b)>{\rm diam}_\rho(W)$
  ($={\rm diam}_\rho(-W)$) for all $W\in \cW$. 
\end{enumerate}
(1)\quad If $c\in \bbH$ is such that $ A'+c\subseteq \bigcup \cW$,
then  also $ A+c\subseteq \bigcup \cV$.\\
(2)\quad If $c\in \bbH$ is such that $c- A'\subseteq \bigcup \cW$,
then also  $c- A\subseteq \bigcup \cV$.
\end{lemma}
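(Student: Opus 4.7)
The plan is to prove (1) in detail; part (2) will follow by a symmetric argument. Let $\pi:\cW\to\cV$ be the bijection witnessing the immersion. First I would observe that by hypothesis (d) and translation invariance of $\rho$, the 8 points $\{a'+c:a'\in A'\}$ are pairwise farther apart than the diameter of any $W\in\cW$, and hence lie in 8 pairwise distinct members of $\cW$; this yields an injection $f:A'\to\cW$ defined by $a'+c\in f(a')$. Fix $a\in A$; for each $a'\in A'$ hypothesis (c) supplies a decomposition $a-a'=u_{a'}-v_{a'}$ with $u_{a'}\in W_{s(a')}$, $v_{a'}\in W_{t(a')}$, giving
\[a+c=(a'+c)+u_{a'}-v_{a'}.\]
Two easy cases would then be dispatched directly by the immersion property. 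If $s(a')=t(a')$ for some $a'$, take $W_0=W_{s(a')}$ containing $u_{a'},v_{a'}$ and $b=a'+c\in W_1=W_{f(a')}$ to get $a+c\in\pi(W_{f(a')})$; if $t(a')=f(a')$ for some $a'$, take $W_0=W_{f(a')}$ containing $a'+c,v_{a'}$ and $b=u_{a'}\in W_1=W_{s(a')}$ to get $a+c\in\pi(W_{s(a')})$.

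The hard part will be to rule out the remaining configuration, in which $s(a')\neq t(a')$ and $t(a')\neq f(a')$ for every $a'\in A'$. For distinct $a'_1,a'_2\in A'$, subtracting the two expressions for $a+c$ gives the identity
\[(a'_1+c)+u_{a'_1}-v_{a'_1}-(a'_2+c)-u_{a'_2}+v_{a'_2}=0.\]
The strategy is to analyze the per-set coefficient sums of this six-term combination (grouping terms by the member of $\cW$ containing them) and invoke the 8-good qif property of $\cW$, which by zero-padding is $k$-good for every $k\leq 8$: provided at most two terms of the combination lie in any single set of $\cW$ and some per-set sum is nonzero, qif forces the sum to be nonzero, contradicting the identity. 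In the first sub-case, $s(a'_1)=f(a'_1)$ for some $a'_1$ places $a'_1+c$ and $u_{a'_1}$ in $W_{f(a'_1)}$ with coefficient $+1$ each, producing a $+2$ contribution there which no $-1$ term can fully cancel under the standing exclusions. I would then choose a partner index carefully: if some $a'_2\in A'\setminus\{a'_1\}$ satisfies $s(a'_2)\neq f(a'_1)$ and $t(a'_2)\neq f(a'_1)$, the pair $(a'_1,a'_2)$ already meets the qif hypothesis; otherwise pigeonhole supplies two indices $a'_j,a'_k\in A'\setminus\{a'_1\}$ with $s(a'_j)=s(a'_k)=f(a'_1)$ (or symmetrically with $t$), and the pair $(a'_j,a'_k)$ combination, which does not involve $a'_1$, likewise satisfies the qif hypothesis and yields the contradiction. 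In the second sub-case, $s(a'),t(a'),f(a')$ are pairwise distinct for every $a'\in A'$; a check of the six matchings of the three $+1$ terms against the three $-1$ terms in the identity shows that the only matching compatible with the standing exclusions and with injectivity of $f$ is the pattern $f(a'_1)=s(a'_2),\;s(a'_1)=f(a'_2),\;t(a'_1)=t(a'_2)$. Applied to any triple $a'_1,a'_2,a'_3\in A'$ (possible since $|A'|=8$), this pattern forces $s(a'_1)=f(a'_2)=f(a'_3)$, contradicting injectivity of $f$.

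For part (2), writing $c-a=(c-a')+v_{a'}-u_{a'}$ interchanges the roles of $u$ and $v$: the easy cases become $s(a')=t(a')$ and $s(a')=f(a')$ (handled by analogous immersion moves producing $c-a\in\pi(W_{f(a')})$ and $c-a\in\pi(W_{t(a')})$ respectively), while the hard case $t(a')=f(a')$ is ruled out by the same qif argument with $s$ and $t$ swapped.
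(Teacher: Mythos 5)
Your high-level approach matches the paper's: write $a+c=(a'+c)+u_{a'}-v_{a'}$ for each $a'\in A'$, dispose of favorable box--coincidences via immersion, and otherwise kill a six-term combination with the $8$--good qif property. The easy case $t(a')=f(a')$ and your sub-case~2 (where $f(a'),s(a'),t(a')$ are pairwise distinct for every $a'$) are argued correctly; the case $s(a')=t(a')$ is in fact empty under hypothesis~(d), since by translation invariance $\rho(u_{a'},v_{a'})=\rho(a,a')>{\rm diam}_\rho(W)$ for every $W\in\cW$.

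The gap is in sub-case~1. You assert that if $a'_2$ satisfies $s(a'_2)\neq f(a'_1)$ and $t(a'_2)\neq f(a'_1)$, then the six-term combination for the pair $(a'_1,a'_2)$ ``already meets the qif hypothesis.'' Those two conditions only control the box $W_{f(a'_1)}$; they do \emph{not} prevent three of the six terms from landing in some other member of $\cW$. If $t(a'_1)=f(a'_2)=s(a'_2)$ --- perfectly consistent with your two conditions, since $t(a'_1)\neq f(a'_1)$ by the standing exclusions --- then $v_{a'_1}$, $a'_2+c$ and $u_{a'_2}$ all lie in one $W^*\in\cW$, each with coefficient $-1$, and such an expression simply is not a $(2,\cW,n)$--combination, so the qif property cannot be invoked. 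The $t$-version of your second alternative has a similar hole: $a'_j+c$, $u_{a'_j}$ and $u_{a'_k}$ can share the box $W_{f(a'_j)}$ when $s(a'_j)=s(a'_k)=f(a'_j)$. Each of these obstructions is realized by at most one choice of partner index, so the argument is salvageable by choosing that index more carefully, but the bookkeeping you skip is exactly where the work lies. The paper does it once and for all in Claim~\ref{cl4}: a pigeonhole argument across the eight points of $A'$ produces a pair whose $f$-box avoids the other's $s$- and $t$-boxes (and vice versa); together with $s\neq t$ from~(d) and injectivity of $f$, that single fact rules out every triple collision and forces a nonzero per-set sum at the chosen $f$-box once the easy case has been excluded, so the combination for that pair is automatically a nontrivial $(2,\cW,8)$--combination.
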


\begin{proof}
(1)\quad Suppose that $\cW,\cV$, $ A'\subseteq A\subseteq \bbH$ satisfy the 
assumptions of the Lemma and $c\in \bbH$ is such that $ A'+c\subseteq
\bigcup\cW$.  

Assume $a\in A\setminus A'$ and let us argue that $a+c\in \bigcup\cV$.  

Let $\langle a_i:i<8\rangle$ list the elements of $ A'$. For $i<8$ let
$b_i=a_i+c\in W_i\in \cW$ and note that all $W_i$'s are pairwise distinct 
(by assumption (d); remember $\rho$ is translation invariant). It follows
from assumption (c) that we may choose $b_i'\in W_i'\in \cW$ and $b_i''\in
W_i''\in \cW$ such that $a-a_i=b_i'-b_i''$. Then, for each $i<8$, we have  
\[a+c=a+(b_i-a_i)=(b_i'-b_i''+a_i)+(b_i-a_i)= b_i'-b_i''+b_i.\]

\begin{claim}
  \label{cl4}
  There are distinct $i^*,j^*<8$ such that
  \begin{enumerate}
  \item[$(\heartsuit)_{i^*,j^*}$] \qquad $W_{i^*}\notin \{W_{j^*}' ,
    W_{j^*}''\}$  and $W_{j^*} \notin \{W_{i^*}',W_{i^*}''\}$.  
  \end{enumerate}
\end{claim}

\begin{proof}[Proof of the Claim]
If for some $i_0<8$ we have $|\{j<8:W_{i_0}=W_j''\ \wedge\ j\neq i_0\}|\geq 
3$, then choose $j_0<j_1<j_2<8$ distinct from $i_0$ and such that 
$W_{j_0}''= W_{j_1}''=W_{j_2}''=W_{i_0}$. Since all $W_i$'s are distinct, we
may pick $i^*<8$ such that $i^*\notin \{i_0,j_0,j_1,j_2\}$ and
$W_{i^*}\notin \{W_{j_0}',W_{j_1}', W_{j_2}'\}$. Next let $j^*\in\{j_0,j_1,
j_2\}$ be such that $W_{j^*}\notin\{W_{i^*}',W_{i^*}''\}$. Then also
$W_{i^*}\neq W_{i_0}=W_{j^*}''$ and clearly $(\heartsuit)_{i^*,j^*}$ holds
true. 

If for some $i_0<8$ we have $|\{j<8:W_{i_0}=W_j'\ \wedge\ j\neq i_0\}|\geq 
3$, then by the same argument (just interchanging $W''_j$'s and $W'_j$'s) we
find $i^*,j^*$ so that $(\heartsuit)_{i^*,j^*}$ holds true. 

So now suppose that both $|\{j<7:W_7=W_j'\}|\leq 2$ and
$|\{j<7:W_7=W_j''\}|\leq 2$. Then there are $j_0<j_1<j_2<7$ such that
$W_7\notin \{W_{j_0}', W_{j_0}'', W_{j_1}', W_{j_1}'', W_{j_2}',
W_{j_2}''\}$. Take $j^*\in \{j_0,j_1,j_2\}$ such that $W_{j^*}\notin
\{W_7',W_7''\}$ and note that then $(\heartsuit)_{7,j^*}$ holds true.
\end{proof}

Let distinct $i^*,j^*<8$ be such that $(\heartsuit)_{i^*,j^*}$ holds.

It follows from assumption (d) that $W_{i^*}'\neq W_{i^*}''$ and
$W_{j^*}'\neq W_{j^*}''$ (remember $a_{i^*}\neq a\neq a_{j^*}$). Now, if
$W_{i^*}= W_{i^*}''$, then
\[a+c=b_{i^*}'+(b_{i^*}-b_{i^*}'')\in
  \big(W_{i^*}'+(W_{i^*}''-W_{i^*}'')\big) \subseteq V_{i^*}'\] 
where $W_{i^*}'\subseteq V_{i^*}'\in\cV$ (so we are done).  Similarly, if 
$W_{j^*}=W_{j^*}''$.   

So suppose towards contradiction that both $W_{i^*}\neq W_{i^*}''$ and
$W_{j^*}\neq W_{j^*}''$. Now,
\[b_{i^*}'-b_{i^*}''+b_{i^*}=a+c=b_{j^*}'-b_{j^*}''+b_{j^*},\]
so
\begin{enumerate}
\item[$(\otimes)$] \qquad $(b_{i^*}+b_{i^*}'+b_{j^*}'')-(b_{j^*}+b_{j^*}'
  +b_{i^*}'')=0$.  
\end{enumerate}
Considering known inequalities among $W_{i^*}, W_{i^*}', W_{i^*}'', W_{j^*},
W_{j^*}', W_{j^*}''$,  we notice that no equality between them may involve
more than two sets. Also $W_{i^*}\notin \{W_{j^*}, W_{j^*}', W_{i^*}''\}$,
so the expression on the left hand side of $(\otimes)$ can be  
written as a nontrivial $(2,\cW,8)$--combination, contradicting the
assumption that $\cW$ is an 8--good qif.
\medskip

\noindent (2)\quad Follows from the first part applied to $-A$ and $-A'$. 
\end{proof}

\begin{theorem}
\label{gettranslMin}
 Suppose that $(\bbH,+,0)$ is an Abelian group and $\rho$ is a translation
 invariant metric on it. Assume also that 
\begin{enumerate}
\item[(a)] $\cW, \cV, \cQ\subseteq \cP(\bbH)$ are finite 8--good qifs, and
  $\cW$ is immersed in $\cV$ and  $\cV$ is immersed in $\cQ$, 
\item[(b)] $m\longrightarrow \big(10\big)^4_{2^{144}}$ (the Erd\H{o}s--Rado
  arrow notation, see \cite{ErRa56}),
\item[(c)] $ A\subseteq \bbH$, $| A|\geq m$ and 
\item[(d)] $A-A\subseteq \bigcup\big\{W-W':W,W'\in\cW\big\}$, and 
\item[(e)] if $a,b\in A$, $a\neq b$, then $\rho(a,b)>{\rm diam}_\rho(Q)$
($={\rm diam}_\rho(-Q)$) for all $Q\in \cQ$. 
\end{enumerate}
Then exactly one of (A), (B) below holds true:
 \begin{enumerate}
 \item[(A)]   There is a $c\in \bbH$ such that $ A+c\subseteq \bigcup\cQ$.
 \item[(B)]   There is a $c\in \bbH$ such that $c- A\subseteq \bigcup\cQ$.    
 \end{enumerate}
\end{theorem}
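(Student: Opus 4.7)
The plan is to reduce the theorem to an application of Lemma \ref{fromKupMin}. Since $\cW$ is immersed in $\cV$ and $\cV$ is immersed in $\cQ$, every $W\in\cW$ sits inside some $V\in\cV$, which sits inside some $Q\in\cQ$; in particular $\diam_\rho(W)\leq \diam_\rho(Q)$ for each $W$ and the associated $Q$. Therefore assumption (e) of the theorem upgrades to condition (d) of Lemma \ref{fromKupMin} relative to $\cW$, and together with assumption (d) of the theorem (which is condition (c) of the Lemma) and the $8$-good qif property of $\cW$, it suffices to find an $8$-element subset $A^*\subseteq A$ and $c\in\bbH$ with either $A^*+c\subseteq\bigcup\cW$ or $c-A^*\subseteq\bigcup\cW$. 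In the first case, Lemma \ref{fromKupMin}(1) upgrades the inclusion to $A+c\subseteq\bigcup\cV\subseteq\bigcup\cQ$, yielding (A); the second case yields (B) via Lemma \ref{fromKupMin}(2).

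To locate such an $(A^*,c)$, I would use the Erd\H{o}s--Rado hypothesis (b) as follows. For each ordered pair $(a,a')\in A^{\langle 2\rangle}$, assumption (d) lets me fix witnesses $W(a,a'),W'(a,a')\in\cW$ and points $b(a,a')\in W(a,a')$, $b'(a,a')\in W'(a,a')$ with $a-a'=b(a,a')-b'(a,a')$. For each $u\in [A]^4$ I color $u$ by the isomorphism type of the incidence pattern among the (at most) $12$ labels $W(a,a'),W'(a,a')$ attached to the six unordered pairs inside $u$ (i.e., the information of which labels coincide and in which slots). Since each of the $12$ slots is recorded only up to which other slots share its label, the number of colors is bounded by $2^{144}$, so assumption (b) produces a homogeneous $A^\dagger\subseteq A$ of size $10$.

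The heart of the argument is converting this $4$-uniform homogeneity on $A^\dagger$ into an actual $8$-element alignment. I expect the homogeneous pattern to obey a clean dichotomy: either a \emph{translation} form, in which one obtains a function $a\mapsto W_a\in\cW$ and consistent witness points $b_a\in W_a$ so that $a-a'=b_a-b_{a'}$ (hence $c:=a-b_a$ is independent of $a$ and $a+c=b_a\in W_a$); or a symmetric \emph{reflection} form, producing $c-a\in W_a$. Any homogeneous pattern outside this dichotomy, or a translation pattern whose witness points cannot be chosen coherently as $a'$ varies, should single out four (or at most eight) elements of $A^\dagger$ whose witnesses assemble into a nontrivial $(2,\cW,8)$-combination equal to zero, violating the $8$-good qif property of $\cW$. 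This qif-based exclusion of the ``bad'' homogeneous types is the main obstacle; the two elements of $A^\dagger$ beyond the $8$ required by Lemma \ref{fromKupMin} are intended to provide the room needed to construct such a forbidden combination whenever alignment fails.

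Once the $8$-element alignment is in hand, Lemma \ref{fromKupMin} lifts it to all of $A$, yielding (A) or (B). For the ``exactly one'' clause, I would combine the largeness $|A|\geq m$ with assumption (e) (which makes the assignment $a\mapsto Q\in\cQ$ with $a+c\in Q$ injective, and likewise for $c-a$) and the $8$-good qif structure of $\cQ$: if both (A) and (B) held via witnesses $c_1,c_2$, then iterating the identity $(a+c_1)+(c_2-a)=c_1+c_2$ over sufficiently many $a\in A$ would produce a nontrivial vanishing $(2,\cQ,8)$-combination, a contradiction.
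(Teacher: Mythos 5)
Your overall plan matches the paper's: pick paired witnesses in $\cW$ for each difference $a-a'$, color quadruples by the equality pattern among the witness sets, apply Erd\H{o}s--Rado to get a homogeneous $10$-element subset, analyze the homogeneous equality pattern, extract a translation $c$, and lift the $8$-element alignment to all of $A$ via Lemma~\ref{fromKupMin}. The uniqueness argument is also the same pivot identity.

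However, there are two genuine gaps. First, the ``clean dichotomy'' you expect --- a function $a\mapsto W_a\in\cW$ with $a+c=b_a\in W_a$ and hence $A'+c\subseteq\bigcup\cW$ --- is not what homogeneity delivers. On the homogeneous set the witness sets can genuinely vary with both arguments, and the paper extracts $c$ from an \emph{auxiliary} pair of indices: writing $a_j+c$ or $c-a_j$ as something like $\big(\bb_0(8,9)-\bb_1(j,8)\big)+\bb_0(j,8)$ shows it lands in $\big(\bar{W}_0(8,9)-\bar{W}_0(8,9)\big)+\bar{W}_0(j,8)\subseteq\bar{V}_0(j,8)$, a $\cV$-set whose index depends on $j$. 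You only get equality of the \emph{sets} $\bar{W}_1(j,8)=\bar{W}_0(8,9)$, not of the witness \emph{points}, so the same-set difference does not vanish; it has to be absorbed by the immersion into $\cV$. The achievable alignment is therefore $A'+c\subseteq\bigcup\cV$, and one applies Lemma~\ref{fromKupMin} with $\cV$ playing that lemma's $\cW$ and $\cQ$ its $\cV$, concluding $A+c\subseteq\bigcup\cQ$. Aiming for $A'+c\subseteq\bigcup\cW$ and then invoking Lemma~\ref{fromKupMin} to pass from $\cW$ to $\cV$ is essentially unattainable.

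Second, the step you yourself flag as ``the main obstacle'' is genuinely the bulk of the work and is not closed by a single qif violation. The paper's Claim~\ref{cl1} is an exhaustive $13$-case analysis of the possible orders of the overlap indices $i_0,j_0,i_1,j_1$, using homogeneity together with the metric consequence $\bar{W}_0(i,j)\neq\bar{W}_1(i,j)$ of hypothesis (e) (Claim~\ref{cl3}(1)), to reduce any overlap to one of the two ``hinge'' equalities $\bar{W}_0(0,1)=\bar{W}_1(1,2)$ or $\bar{W}_1(0,1)=\bar{W}_0(1,2)$. Separately, the ``no overlap'' case is shown impossible by a combination identity, and even there the violation is a nontrivial $(2,\cV,8)$-combination, not a $(2,\cW,8)$-combination: when several of the $\bar{W}_0(0,1),\bar{W}_0(0,2),\bar{W}_0(1,2)$ coincide, one must again absorb same-set differences via immersion before the qif property can be invoked. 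So the exclusion lives at the $\cV$-qif level and requires the full case analysis, not just the $8$-good qif property of $\cW$.
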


\begin{proof}
Let $\langle a_i:i<m\rangle$ be a sequence of pairwise distinct elements of 
$ A$. Since $ A- A\subseteq \bigcup\big\{W-W':W,W'\in\cW\big\}$, we may
choose functions $\bb_0,\bb_1:m\times m\longrightarrow \bigcup\cW$ and
$\bar{W}_0,\bar{W}_1:m\times m\longrightarrow \cW$ such that  for all
$i,j<m$ 
\[a_i-a_j=\bb_0(i,j)-\bb_1(i,j),\ \bb_0(i,j)\in \bar{W}_0(i,j),\
  \bb_1(i,j)\in \bar{W}_1(i,j),\]
and $\bb_0(i,j)=\bb_1(j,i)$, and $\bb_1(i,j)=\bb_0(j,i)$. Let $\langle
\varphi_\ell(i_0,i_1,i_2,i_3):\ell<144\rangle$ list all formulas of the form  
\[\bar{W}_j(i_x,i_y)=\bar{W}_{j'}(i_{x'},i_{y'})\]
for $j,j'<2$ and $x,y,x',y'<4$, $x<y$, $x'<y'$.

Let $\mu:\big[ m\big]^{\textstyle 4}\longrightarrow {}^{144}2$ be a
coloring of quadruples from $m$ such that if $i_0<i_1<i_2<i_3<m$, then
\[\mu\big(\{i_0,i_1,i_2,i_3\}\big)\big(\ell)=1\quad \mbox{ if and only if
  }\quad \varphi_\ell(i_0,i_1,i_2,i_3)\mbox{ holds true.}\]
Since $m\longrightarrow \big(10\big)^4_{2^{144}}$, we may choose $u\in
[m]^{\textstyle 10}$ homogeneous for $\mu$. Without loss of generality,
$u=\{0,1,2,3,4,5,6,7,8,9\}$.  

\begin{claim}
  \label{cl3}
  Let $i,j,k<10$ be pairwise distinct. Then
  \begin{enumerate}
  \item $\bar{W}_0(i,j)\neq \bar{W}_1(i,j)$ and 
  \item $\bb_0(i,k)-\bb_1(i,k)=\bb_0(i,j)-\bb_1(i,j) +\bb_0(j,k)-
    \bb_1(j,k)$ and hence
\[\Big(\bar{W}_0(i,k)-\bar{W}_1(i,k)\Big)\cap\Big(\big(
  \bar{W}_0(i,j)-\bar{W}_1(i,j)\big) +\big(\bar{W}_0(j,k)-
  \bar{W}_1(j,k)\big) \Big)\neq \emptyset.\]
  \end{enumerate}
\end{claim}

\begin{proof}[Proof of the Claim]
(1)\quad Follows from assumption (e) of the Theorem (remember every set
from $\cW$ is a subset of a member of $\cQ$).
\smallskip

\noindent (2)\quad This  follows by the equality
$(a_i-a_j)+(a_j-a_k)=a_i-a_k$ and the choice of $\bb_0(i,j), \bar{W}_0(i,j), 
\bb_1(i,j),\bar{W}_1(i,j)$.
\end{proof}
\medskip

\begin{claim}
  \label{cl1}
 If $\{\bar{W}_0(i,j):i<j<10\}\cap \{\bar{W}_1(i,j):i<j<10\}\neq \emptyset$, then
 either $\bar{W}_0(0,1)=\bar{W}_1(1,2)$, or $\bar{W}_1(0,1)=\bar{W}_0(1,2)$.
\end{claim}

\begin{proof}[Proof of the Claim]
Suppose $i_0<j_0<10$ and $i_1<j_1<10$ are such that $\bar{W}_0(i_0,j_0) =
\bar{W}_1(i_1,j_1)$. We shall consider all possible orders of $i_0,j_0,i_1,j_1$
and use the homogeneity to conclude one of the clauses in the assertion.
\medskip

\noindent (a)\quad If $i_0<j_0<i_1<j_1$, then (by the homogeneity)
$\bar{W}_0(0,1)=\bar{W}_1(2,3) =\bar{W}_1(4,5) =\bar{W}_0(2,3)$, so 
$\bar{W}_0(2,3)=\bar{W}_1(2,3)$, contradicting Claim \ref{cl3}(1). 
\medskip

\noindent (b)\quad If $i_0<j_0=i_1<j_1$ then also
$\bar{W}_0(0,1)=\bar{W}_1(1,2)$ (giving the conclusion of Claim \ref{cl1}).
\medskip

\noindent (c)\quad If $i_0<i_1<j_0<j_1$, then
$\bar{W}_0(1,4)=\bar{W}_1(2,5)= \bar{W}_0(0,3)=\bar{W}_1(1,4)$,
contradicting Claim \ref{cl3}(1). 
\medskip

\noindent (d)\quad If $i_0<i_1<j_0=j_1$, then $\bar{W}_0(0,3)=
\bar{W}_1(1,3)= \bar{W}_1(2,3)= \bar{W}_0(1,3)$, contradicting Claim
\ref{cl3}(1). 
\medskip

\noindent (e)\quad If $i_0<i_1<j_1<j_0$, then $\bar{W}_0(1,4)=\bar{W}_1(2,3)
=\bar{W}_0(0,5) =\bar{W}_1(1,4)$, contradicting Claim \ref{cl3}(1).
\medskip

\noindent (f)\quad If $i_0=i_1<j_0<j_1$, then $\bar{W}_0(0,1)=\bar{W}_1(0,2)
= \bar{W}_1(0,3)=\bar{W}_0(0,2)$, contradicting Claim \ref{cl3}(1).  
\medskip

\noindent (g)\quad The configuration $i_0=i_1<j_0=j_1$ contradicts Claim
\ref{cl3}(1).   
\medskip

\noindent (h)\quad If $i_0=i_1<j_1<j_0$, then $\bar{W}_0(0,2)=\bar{W}_1(0,1) =
\bar{W}_0(0,3)= \bar{W}_1(0,2)$, contradicting Claim \ref{cl3}(1).  
\medskip

\noindent (i)\quad The configuration $i_1<i_0<j_0<j_1$  is not possible
similarly to (e) (just interchange $\bar{W}_0$ and $\bar{W}_1$).  
\medskip

\noindent (j)\quad The configuration $i_1<i_0<j_0=j_1$ is not possible
similarly to (d).
\medskip

\noindent (k)\quad The configuration $i_1<i_0<j_1<j_0$  is not possible
similarly to (c). 
\medskip

\noindent (l)\quad If $i_1<i_0=j_1<j_0$, then
$\bar{W}_1(0,1)=\bar{W}_0(1,2)$  (giving the conclusion of Claim \ref{cl1}). 
\medskip

\noindent (m)\quad The configuration $i_1<j_1<i_0<j_0$, is not possible 
similarly to (a).
\end{proof}

\medskip

Now, we will consider three cases, showing that the first one is not
possible. In the second case we will find $c\in \bbH$ such that 
$\{c-a_i:i<8\}\subseteq \bigcup\cV$. Then by Lemma \ref{fromKupMin} we will
also  have $c- A\subseteq \bigcup\cQ$. Finally in the last case we will find
$c\in \bbH$ such that $\{a_i+c:i<8\}\subseteq \bigcup\cV$, so by Lemma
\ref{fromKupMin} we will also have $ A+c\subseteq \bigcup\cQ$.
\medskip

For $\ell<2$ and $i<j<10$ let $\bar{V}_\ell(i,j)\in\cV$ be the unique
set such that $\bar{W}_\ell(i,j)\subseteq\bar{V}_\ell(i,j)$. Also, let
$\cV_\ell=\{\bar{V}_\ell(i,j):i<j<10\}$. 
\medskip

\noindent {\sc Case 1}:\quad $\{\bar{W}_0(i,j):i<j<10\}\cap
\{\bar{W}_1(i,j):i<j<10\}=\emptyset$.\\
By Claim \ref{cl3}(2) we have
\[\bb_0(0,2)-\bb_1(0,2)=\bb_0(0,1)-\bb_1(0,1) +\bb_0(1,2)-\bb_1(1,2)\] 
or
\[\big(\bb_0(0,1) -\bb_0(0,2) +\bb_0(1,2)\big)+\big(\bb_1(0,2)-\bb_1(0,1)-
\bb_1(1,2)\big)=0.\]
If $\big|\{\bar{W}_0(0,1), \bar{W}_0(0,2), \bar{W}_0(1,2)\}\big|\leq 2$,
then 
\begin{enumerate}
\item[either] $\bar{W}_0(0,1) =\bar{W}_0(1,2)$ and by the homogeneity
  $\bar{W}_0(0,1)=\bar{W}_0(i,j)$ for all $i<j<9$, so $\bb_0(0,1)+\bb_0(1,2) 
  -\bb_0(0,2)\in \bar{W}_0(0,1)+(\bar{W}_0(0,1)-\bar{W}_0(0,1)) \subseteq  
 \bar{V}_0(0,1)$, 
\item[or] $\bar{W}_0(0,2)=\bar{W}_0(0,1)$ and then
  $\bb_0(0,1)-\bb_0(0,2)+\bb_0(1,2)\in (\bar{W}_0(0,1)-
  \bar{W}_0(0,1))+ \bar{W}_0(1,2) \subseteq \bar{V}_0(1,2)$, 
\item[or] $\bar{W}_0(0,2)=\bar{W}_0(1,2)$ and then $\bb_0(1,2)-
  \bb_0(0,2)+ \bb_0(0,1)\in (\bar{W}_0(0,2)-\bar{W}_0(0,2))+
  \bar{W}_0(0,1) \subseteq \bar{V}_0(0,1)$.  
\end{enumerate}
Therefore, if $\big|\{\bar{W}_0(0,1), \bar{W}_0(0,2), \bar{W}_0(1,2)\}\big|\leq
2$ then $\bb_0(0,1) -\bb_0(0,2) +\bb_0(1,2)\in \bigcup\cV_0$. If
elements of $\{\bar{W}_0(0,1), \bar{W}_0(0,2), 
\bar{W}_0(1,2)\}$ are all distinct, then they are respectively
included in disjoint sets $\bar{V}_0(0,1), \bar{V}_0(0,2),
\bar{V}_0(1,2)$. Hence we may conclude that in any case $\bb_0(0,1)
-\bb_0(0,2) +\bb_0(1,2)$ equals to a nontrivial $(2,\cV_0,3)$--
combination. 

Similarly, if  $\big|\{\bar{W}_1(0,1), \bar{W}_1(0,2),
\bar{W}_1(1,2)\}\big|\leq 2$, then 
\begin{enumerate}
\item[either] $\bar{W}_1(0,1) =\bar{W}_1(1,2)$ and then 
$-\big((\bb_1(0,1)-\bb_1(0,2))+\bb_1(1,2)\big)\in -\bar{V}_1(1,2)$, 
\item[or] $\bar{W}_1(0,1)=\bar{W}_1(0,2)$ and then 
$-\big((\bb_1(0,1)-\bb_1(0,2))+\bb_1(1,2)\big)\in -\bar{V}_1(1,2)$, 
\item[or] $\bar{W}_1(0,2)=\bar{W}_1(1,2)$ and then 
$-\big((\bb_1(1,2) -\bb_1(0,2)) +\bb_1(0,1))\big)\in -\bar{V}_1(0,1)$.
\end{enumerate}
Therefore easily in any case $\bb_1(0,2)-\bb_1(0,1)- \bb_1(1,2)$
equals to a nontrivial $(2,\cV_1,3)$--combination. 
\medskip

Now, in the current case we have $\cV_0\cap \cV_1=\emptyset$, so we
may conclude that $0=\big(\bb_0(0,1) -\bb_0(0,2)
+\bb_0(1,2)\big)+\big(\bb_1(0,2)-\bb_1(0,1)-\bb_1(1,2)\big)$ is equal
to a nontrivial $(2,\cV,8)$--combination, contradicting the
assumption that $\cV$ is an 8--good qif.  
\medskip

Thus Case 1 cannot happen and by Claim \ref{cl1} either
$\bar{W}_0(0,1)=\bar{W}_1(1,2)$, or $\bar{W}_1(0,1)=\bar{W}_0(1,2)$. 
\medskip

\noindent {\sc Case 2}:\quad $\bar{W}_0(0,1)=\bar{W}_1(1,2)$.\\
By the homogeneity, $\bar{W}_0(j,8)=\bar{W}_1(8,9)$ for each $j<8$.
By Claim \ref{cl3}(2), for every $j<8$,  $a_j-a_9=\bb_0(j,8)
-\bb_1(j,8) +\bb_0(8,9)-\bb_1(8,9)$, so 
\[(a_9+\bb_0(8,9))-a_j=\big(\bb_1(8,9) -\bb_0(j,8)\big)+\bb_1(j,8)\in 
\big(\bar{W}_0(j,8)-\bar{W}_0(j,8)\big)+\bar{W}_1(j,8)\]
Since $\cW$ is immersed in $\cV$, the set on the far right above is
included in $\bar{V}_1(j,8)$. Hence for $c=a_9+\bb_0(8,9)$ and $
A'=\{a_j:j<8\}$ we have $c- A'\subseteq \bigcup\cV$. Using Lemma
\ref{fromKupMin}(2) we may conclude that $c- A\subseteq \bigcup\cQ$.  
\medskip

\noindent {\sc Case 3}:\quad $\bar{W}_1(0,1)=\bar{W}_0(1,2)$. \\
By the homogeneity, $\bar{W}_1(j,8)=\bar{W}_0(8,9)$ for each $j<8$. As
before we use Claim \ref{cl3}(2) to get 
\[(\bb_1(8,9)-a_9)+ a_j=\big(\bb_0(8,9)-\bb_1(j,8)\big) +\bb_0(j,8)
\in \big(\bar{W}_0(8,9)-\bar{W}_0(8,9)\big)+\bar{W}_0(j,8)\]
Since $\cW$ is immersed in $\cV$, the set on the far right above is
included in $\bar{V}_0(j,8)$. Thus for $c=\bb_1(8,9)-a_9$ and $
A'=\{a_j:j<8\}$ we have $ A'+c\subseteq \bigcup\cV$. By Lemma
\ref{fromKupMin}(1) we get $ A+c\subseteq \bigcup \cQ$. 
\bigskip

Finally, to show that only one of (A) and (B) may take place, suppose
$ A+c\subseteq \bigcup\cQ$ and $d- A\subseteq \bigcup\cQ$ for some  
$c,d\in \bbH$. For $a\in A$ let $Q_a,Y_a\in\cQ$ be such that 
$a+c\in Q_a$ and $d-a\in Y_a$. 

Fix any $a\in A$ and choose $b\in A\setminus \big(\{a\}\cup
(Y_a-c)\cup (d-Q_a)\big)$ (it is possible as by the assumption
\ref{gettranslMin}(e), $|A\cap (Y_a-c)|<2$ and $|A\cap
(d-Q_a)|<2$). Now, 
\[(a+c)+(d-a) =c+d=(b+c)+(d-b),\]
so $0\in Q_a+Y_a-Q_b-Y_b$. By the choice of $b$ we have $Q_b\neq Y_a$,
$Q_a\neq Y_b$ and also (by \ref{gettranslMin}(e)) $Q_a\neq  Q_b$ and
$Y_a\neq Y_b$. Therefore some nontrivial $(2,\cQ,4)$--combination is
equal to 0, contradicting $\cQ$ is a good qif. 
\end{proof}

\section{Quasi independence and sums}
In a special case when $\cQ,\cV,\cW$ are all families consisting of
singletons (and $\rho$ is the discrete metric on $\bbH$), Theorem
\ref{gettranslMin} gives the following result of its own interest. 

\begin{corollary}
 Suppose that $(\bbH,+,0)$ is an Abelian group and  $\bB\subseteq
 \bbH$ is quasi independent. Assume also that
 \begin{enumerate}
 \item[(a)] $m\longrightarrow \big(10\big)^4_{2^{144}}$,
 \item[(b)] $ A\subseteq \bbH$, $| A|\geq m$ and $ A- A\subseteq
   \bB-\bB$. 
 \end{enumerate}
 Then exactly one of (A), (B) below holds true:
 \begin{enumerate}
 \item[(A)]   There is a unique $c\in \bbH$ such that $ A+c\subseteq \bB$.
 \item[(B)]   There is a unique $c\in \bbH$ such that $c- A\subseteq \bB$.    
 \end{enumerate}
\end{corollary}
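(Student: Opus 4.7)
The plan is to reduce the corollary to Theorem \ref{gettranslMin} applied with $\cW = \cV = \cQ = \{\{b\} : b \in \bB\}$ and $\rho$ the discrete metric on $\bbH$. First I would verify the hypotheses of that theorem. By Observation \ref{obs3.2}(1), since $\bB$ is quasi independent, the family of singletons $\{\{b\}: b\in\bB\}$ is an $8$-good qif, so condition (a) holds for all three of $\cW, \cV, \cQ$. The immersion conditions hold trivially via the identity bijection: the requirement $(a-a')+b \in \pi(W_1)$ is automatic because $a = a'$ whenever $a, a'$ lie in a common singleton. With $\rho$ discrete, ${\rm diam}_\rho(Q) = 0$ for each singleton $Q \in \cQ$, while distinct points of $\bbH$ have $\rho$-distance $1$, which gives (e). Hypothesis (c) is given; for (d) one uses $\bigcup\{W - W' : W, W' \in \cW\} = \bB - \bB$ together with hypothesis (b) of the corollary. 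Condition (b) of the theorem is the Erd\H{o}s--Rado assumption (a) of the corollary.

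Theorem \ref{gettranslMin} then delivers exactly one of: some $c \in \bbH$ with $A + c \subseteq \bigcup \cQ = \bB$, or some $c \in \bbH$ with $c - A \subseteq \bB$. This yields the dichotomy and the mutual exclusivity of cases (A) and (B) in the corollary, so only the uniqueness of the witness $c$ in each case remains.

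For uniqueness in (A), suppose $A + c \subseteq \bB$ and $A + c' \subseteq \bB$ with $c \neq c'$. Since $|A| \geq m \geq 10$, I would pick distinct $a_1, a_2 \in A$ with $a_1 - a_2 \neq \pm(c - c')$; this excludes at most finitely many pairs. Then the four elements $a_1+c,\ a_1+c',\ a_2+c',\ a_2+c$ of $\bB$ are pairwise distinct and their alternating sum $(a_1+c) - (a_1+c') + (a_2+c') - (a_2+c)$ equals $0$. Padding with four further elements of $\bB$ (possible since $|\bB| \geq 8$) carrying coefficient $0$ produces a nontrivial $(2,8)$-combination from $\bB$ equal to $0$, contradicting quasi-independence. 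The uniqueness in case (B) is symmetric, using $c - a_1,\ c' - a_1,\ c' - a_2,\ c - a_2$. The whole argument is essentially bookkeeping once one notices the singleton specialization, and I do not expect any serious obstacle: Theorem \ref{gettranslMin} does the real work, and uniqueness is a one-line appeal to quasi-independence.
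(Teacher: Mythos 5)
Your proposal is correct and takes exactly the route the paper intends: the paper explicitly remarks (just before the corollary) that it is the specialization of Theorem \ref{gettranslMin} to singleton families and the discrete metric, which is precisely your reduction, and your uniqueness argument mirrors the one the paper writes out for Theorem \ref{gettransl}. You have merely spelled out the verification of the hypotheses and the uniqueness step that the paper leaves implicit.
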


The above Corollary inspired our interest in its dual version when $A-A$
and $\bB-\bB$ are replaced by $A+A$ and $\bB+\bB$. This dual result (given
in Theorem \ref{gettransl} below) is not used in the proof of our
independence theorem, but we find it interesting. 

\begin{lemma}
\label{fromKup}
Suppose that $(\bbH,+,0)$ is an Abelian group and  $\bB\subseteq \bbH$ is quasi
independent. Assume that $ A'\subseteq A\subseteq \bbH$ and $c\in \bbH$ are such
that  
\begin{enumerate}
\item[(a)] $ A+ A\subseteq \bB+\bB$,  
\item[(b)] $ A'+c\subseteq \bB$ and $| A'|=4$.
\end{enumerate}
Then $ A-c\subseteq \bB$.
\end{lemma}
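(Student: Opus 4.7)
The plan is to fix an arbitrary $a \in A$ and show $a - c \in \bB$; combined over all $a \in A$ this gives $A - c \subseteq \bB$. Enumerate $A' = \{a_0, a_1, a_2, a_3\}$, set $b_i := a_i + c$ and $d := a - c$. Hypothesis (b) gives each $b_i \in \bB$, and hypothesis (a) lets me choose $p_i, q_i \in \bB$ with $p_i + q_i = a + a_i = d + b_i$ for every $i < 4$.

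I argue by contradiction, supposing $d \notin \bB$. Then $b_i \notin \{p_i, q_i\}$ for every $i$: otherwise $d$ would equal the complementary summand, putting it in $\bB$. For each $j \in \{1, 2, 3\}$, equating the identities at indices $0$ and $j$ via $p_0 + q_0 - b_0 = d = p_j + q_j - b_j$ yields
\[
(p_0 + q_0 + b_j) - (p_j + q_j + b_0) = 0,
\]
a $\bbZ$-combination of at most six elements of $\bB$ with three $+1$ and three $-1$ coefficients.

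The key step is to apply quasi-independence of $\bB$ to this vanishing sum. After collapsing coincident elements, every net coefficient lies in $\{-3, \ldots, 3\}$. If all lie in $\{-2, \ldots, 2\}$, then (padding with further distinct elements of $\bB$ at coefficient $0$, available since $|\bB| \geq 8$) quasi-independence forces the combination to be trivial, i.e., the multisets $\{p_0, q_0, b_j\}$ and $\{p_j, q_j, b_0\}$ coincide; but this is impossible because $b_j \neq b_0$ and $b_j \notin \{p_j, q_j\}$. So some net coefficient must be exactly $\pm 3$, and with only three $+1$'s and three $-1$'s in total, absolute value $3$ can only arise by all three positions of a single sign collapsing to one element. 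This yields either
\[
(\mathrm{I}_j)\ p_0 = q_0 = b_j \qquad\text{or}\qquad (\mathrm{II}_j)\ p_j = q_j = b_0
\]
(possibly both). I expect this multiset/coefficient bookkeeping to be the main technical obstacle.

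To finish, I pigeonhole over $j \in \{1, 2, 3\}$. Since $(\mathrm{I}_j)$ holding for two distinct $j, j'$ would give $b_j = p_0 = b_{j'}$, contradicting $b_j \neq b_{j'}$, $(\mathrm{I}_j)$ holds for at most one $j$. Hence $(\mathrm{II}_j)$ holds for at least two distinct indices $j_1 \neq j_2$; then the defining relations at those indices give $d = p_{j_k} + q_{j_k} - b_{j_k} = 2 b_0 - b_{j_k}$ for $k = 1, 2$, forcing $b_{j_1} = b_{j_2}$ and yielding the final contradiction.
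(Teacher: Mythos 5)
Your proof is correct and follows essentially the same route as the paper's: set up $a-c = p_i + q_i - b_i$, compare pairs of these identities to get a vanishing sum over $\bB$, invoke quasi-independence to force either a multiset coincidence or a coefficient of absolute value $3$, and then pigeonhole over the resulting alternatives. Your organization is slightly streamlined — arguing by contradiction from $d\notin\bB$ (so $b_i\notin\{p_i,q_i\}$, which lets the multiset case collapse immediately) and comparing only the three pairs $(0,j)$, which makes the final pigeonhole ($(\mathrm{I}_j)$ for at most one $j$, hence $(\mathrm{II}_j)$ for two, hence $b_{j_1}=b_{j_2}$) a bit shorter than the paper's, which runs over all six pairs.
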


\begin{proof}
Suppose that $ A'\subseteq A\subseteq \bbH$ satisfy the assumptions (a) and 
(b).   Assume $a\in A$ and let us argue that $a-c\in \bB$.

Let $\langle a_i:i<4\rangle$ list the elements of $ A'$. For $i<4$ let
$b_i=a_i+c\in\bB$ and note that all $b_i$'s are pairwise distinct. Since
$a_i+a\in\bB+\bB$ we  may also choose $b_i',b_i''\in
\bB$ such that $a_i+a=b_i'+b_i''$. Then, for each $i<4$, we have 
\[a-c=a-(b_i-a_i)=b_i'+b_i''-a_i-(b_i-a_i)= b_i'+b_i''-b_i.\]
Thus for $i<j<4$ we have 
\begin{enumerate}
\item[$(*)_1$] \quad $0=(b_i'+b_i''+b_j)-(b_j'+b_j''+b_i)$.
\end{enumerate}
If for some $i<j<4$ both sets $\{b_i',b_i'',b_j\}$ and $\{b_j',b_j'',b_i\}$
had at least 2 elements, then the right hand side of $(*)_1$ would give a
$(2,8)$--combination from $\bB$ with the value 0, so the combination would
have to be a trivial one. Therefore
\begin{enumerate}
\item[$(*)_2$] for each $i<j<4$,
  \begin{enumerate}
  \item[either (i)] $b_i'=b_i''=b_j$,
  \item[or (ii)] $b_j'=b_j''=b_i$,
  \item[or (iii)] $\{b_i',b_i'',b_j\}=\{b_j',b_j'',b_i\}$.
  \end{enumerate}
\end{enumerate}
Suppose that $i<j<4$ are such that $(*)_2$(iii) holds true. Since $b_i\neq
b_j$, we get $b_i\in\{b_i',b_i''\}$ and hence $a-c=b_i'+b_i''-b_i\in
 \{b_i',b_i''\} \subseteq\bB$, and we are done.

Assume towards contradiction that 
\begin{enumerate}
\item[$(*)_3$]  for each $i<j<4$, either $(*)_2$(i) or $(*)_2$(ii) holds true.
\end{enumerate}
Then for some $i_0<4$, $b_j'=b_j''$ whenever $j\neq i_0$. Necessarily,
\[\Big(j_0\neq j_1\ \wedge\ i_0\notin\{j_0,j_1\}\Big)\ \Rightarrow\
  b_{j_0}'\neq b_{j_1}'\] 
(as $a+a_{j_0} \neq a+a_{j_1}$). Since there are no repetitions among
$b_j$'s, we may now choose $j\neq i_0$ such that $b_j\neq b_{i_0}'$,
$b_j'\neq b_{i_0}$ getting immediate contradiction with our assumption
$(*)_3$. 
\end{proof}

\begin{lemma}
\label{4up}
Suppose that $(\bbH,+,0)$ is an Abelian group and  $\bB\subseteq \bbH$ is quasi 
independent. Assume that $ A'\subseteq A\subseteq \bbH$ are such that  
\begin{enumerate}
\item[(a)] $ A+ A\subseteq \bB+\bB$,  
\item[(b)] $| A'|\geq 4$, and $ A'+c\subseteq \bB$ for some $c\in \bbH$.
\end{enumerate}
Then $ A+c\subseteq \bB$ and the order of $c$ is $\leq 2$. 
\end{lemma}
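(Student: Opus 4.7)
The plan is to first apply Lemma \ref{fromKup} to obtain $A-c\subseteq\bB$, then exploit quasi-independence to force $2c=0$, and finally observe that $-c=c$ turns $A-c\subseteq\bB$ into the required $A+c\subseteq\bB$. For the opening step, take any four-element subset $A''\subseteq A'$; the hypotheses $A+A\subseteq \bB+\bB$ and $A''+c\subseteq\bB$ are exactly those of Lemma \ref{fromKup}, yielding $A-c\subseteq\bB$. In particular, for every $a\in A'$ both $a+c$ and $a-c$ lie in $\bB$.

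The main step is to show $2c=0$. Fix two distinct $a_0,a_1\in A'$ and set $b_i=a_i+c$, $b_i'=a_i-c$ (for $i=0,1$), so that all four points lie in $\bB$. The trivial identity $(a_0+c)+(a_1-c)=(a_0-c)+(a_1+c)$ rewrites as
\[b_0+b_1'-b_0'-b_1=0.\]
Were $b_0,b_1,b_0',b_1'$ pairwise distinct, padding with four zero coefficients on further elements of $\bB$ (which exist since $|\bB|\ge 8$) would exhibit a non-trivial $(2,8)$-combination summing to $0$, contradicting quasi-independence. Hence some coincidence must occur. The cases $b_0=b_1$ and $b_0'=b_1'$ would force $a_0=a_1$, which is impossible; the cases $b_0=b_0'$ and $b_1=b_1'$ immediately give $2c=0$. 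In the remaining ``cross'' cases $b_0=b_1'$ or $b_0'=b_1$ the displayed relation collapses to one of $2b_0-b_0'-b_1=0$, $b_0+b_1'-2b_0'=0$, or (when both cross equalities hold) $2b_0-2b_0'=0$; each such shorter relation must, by quasi-independence applied to its distinct $\bB$-elements, suffer yet another coincidence, and a short finite sub-case check shows that every branch terminates either in $a_0=a_1$ (impossible) or in $b_0=b_0'$, i.e.\ $2c=0$.

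Once $2c=0$ is in hand, $-c=c$, so $A+c=A-c\subseteq\bB$ by the first step, completing the proof. The main obstacle is really just the combinatorial sub-case analysis for the ``cross'' equalities; the key simplification is that each collapse produces a relation involving at most three distinct $\bB$-elements with non-zero coefficient profile, so quasi-independence closes every branch after at most one further reduction, ending either in the contradiction $a_0=a_1$ or in the target conclusion $2c=0$.
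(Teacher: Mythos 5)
Your proof is correct and follows the same backbone as the paper's --- one application of Lemma \ref{fromKup} to obtain $A-c\subseteq\bB$, then a nontrivial $(2,8)$-combination to exploit quasi-independence --- with two structural variations worth noting. The paper applies Lemma \ref{fromKup} a second time (to $-c$, using $A'-c\subseteq A-c\subseteq\bB$) to obtain $A+c\subseteq\bB$ before establishing anything about the order of $c$; you invert the logic, proving $2c=0$ first and then reading off $A+c=A-c\subseteq\bB$, which dispenses with the second application. For $2c=0$, the paper chooses four distinct $a_0,\ldots,a_3\in A$ and pigeonholes out an index $i$ with $\{a_0+c,a_0-c\}\cap\{a_i+c,a_i-c\}=\emptyset$, so that $(a_0+c)-(a_0-c)-(a_i+c)+(a_i-c)=0$ is a nontrivial $(2,8)$-combination in a single stroke; you keep only two points and handle the possible collisions among $\{a_0\pm c,a_1\pm c\}$ by cases. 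Both are valid; the paper's pigeonhole avoids your case split, while your version shows that two points of $A'$ already suffice once $A-c\subseteq\bB$ is known. A small precision note: in the cross cases (e.g.\ $b_0=b_1'$ with $b_0'\neq b_1$) you do not in fact need ``yet another coincidence'' --- assuming $2c\neq 0$, the survivors $b_0,b_0',b_1$ in $2b_0-b_0'-b_1=0$ are automatically pairwise distinct, so quasi-independence is contradicted outright and the case closes immediately.
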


\begin{proof}
Let $ A'+c\subseteq \bB$. It follows from Lemma \ref{fromKup} that
$ A-c\subseteq \bB$. Applying that lemma again for $ A', A,\bB$ and $-c$
we get $ A+c\subseteq \bB$.

Concerning the second part of the assertion, suppose towards contradiction
that $c+c\neq 0$. Let $a_0,a_1,a_2,a_3$ be distinct elements of $ A$. Then
for distinct $i,j<4$ we have 
\[a_i+c\neq a_i-c,\quad a_i+c\neq a_j+c,\quad \mbox{and}\quad a_i-c\neq
  a_j-c,\]
and consequently we may find $i<4$ such that $\{a_0+c,a_0-c\}\cap \{a_i+c, 
a_i-c\}=\emptyset$. Then, by the first paragraph of this proof, 
$a_0+c,a_0-c,a_i+c, a_i-c \in\bB$ are all distinct and $(a_0+c)-(a_0-c)-
(a_i+c)+(a_i-c)=0$, contradicting the quasi independence of $\bB$.
\end{proof}

\begin{theorem}
\label{gettransl}
 Suppose that $(\bbH,+,0)$ is an Abelian group and  $\bB\subseteq \bbH$ is quasi
 independent. Assume also that
 \begin{enumerate}
 \item[(a)] $m\longrightarrow \big(6\big)^4_{2^{144}}$,
 \item[(b)] $ A\subseteq \bbH$, $| A|\geq m$ and $ A+ A\subseteq
   \bB+\bB$. 
 \end{enumerate}
Then there is a unique $c\in \bbH$ of order $\leq 2$ such that $ A+c\subseteq 
\bB$. 
\end{theorem}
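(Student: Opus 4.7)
The strategy is to reduce Theorem \ref{gettransl} to Lemma \ref{4up}: once we have \emph{any} $c \in \bbH$ and any four-element subset $A' \subseteq A$ with $A' + c \subseteq \bB$, Lemma \ref{4up} automatically upgrades this to $A + c \subseteq \bB$ together with $2c = 0$. Hence the entire task is to manufacture one such pair $(A', c)$.

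To produce it, I would mirror the Erd\H{o}s--Rado step from the proof of Theorem \ref{gettranslMin}. Enumerate $\langle a_i : i < m\rangle$ as pairwise distinct elements of $A$ and, for each $i \neq j$, use assumption (b) to fix representations $a_i + a_j = \bb_0(i,j) + \bb_1(i,j)$ with $\bb_0(i,j), \bb_1(i,j) \in \bB$, normalized so that $\bb_\epsilon(j,i) = \bb_{1-\epsilon}(i,j)$. Enumerate the 144 equality formulas of the shape $\bb_\epsilon(i_x, i_y) = \bb_{\epsilon'}(i_{x'}, i_{y'})$ for $\epsilon, \epsilon' \in \{0,1\}$ and $0 \leq x < y < 4$, $0 \leq x' < y' < 4$, and color $[m]^4 \to 2^{144}$ according to which formulas hold on the increasing enumeration of a given quadruple. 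By (a), pick a 6-homogeneous set, which we may take to be $\{0,1,2,3,4,5\}$. The algebraic engine is the identity
\[ (a_0+a_1)+(a_2+a_3) = (a_0+a_2)+(a_1+a_3) = (a_0+a_3)+(a_1+a_2). \]
Substituting the $\bb_\epsilon$ decompositions and subtracting pairs, each difference is a signed sum of at most 8 elements of $\bB$ equal to $0$. After combining repeated summands, quasi independence of $\bB$ rules out any nontrivial $(2,8)$-combination; hence genuine coincidences among the $\bb_\epsilon$'s must occur, and homogeneity propagates every such coincidence uniformly across all quadruples in the homogeneous set. A finite case analysis on the 144 patterns (together with the freedom to swap $\bb_0$ and $\bb_1$ at individual pairs) then isolates four elements $b_0^+, b_1^+, b_2^+, b_3^+ \in \bB$ satisfying $\{b_i^+, b_j^+\} = \{\bb_0(i,j), \bb_1(i,j)\}$ for all $i < j < 4$. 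Setting $c := b_0^+ - a_0$, the equations $b_i^+ + b_j^+ = a_i + a_j$ force $(b_i^+ - a_i) + (b_j^+ - a_j) = 0$, so $b_i^+ - a_i = c$ for every $i < 4$ and $2c = 0$. Lemma \ref{4up} applied to $A' := \{a_0, a_1, a_2, a_3\}$ and $c$ completes the existence half.

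For uniqueness, if both $c, c'$ work, then for any two distinct $a, a' \in A$ the signed sum $(a+c) + (a'+c') - (a+c') - (a'+c) = 0$ is a $(2,4)$-combination from $\bB$; quasi independence forces a collapse among the four terms, and the only collapse compatible with $a \neq a'$ (and with letting the pair $a, a' \in A$ vary, using $|A| \geq 4$, so that the constraint $a - a' \in \{c-c', c'-c\}$ cannot hold simultaneously) is $c = c'$. The main obstacle is the case analysis of the second paragraph: organizing the 144 possible equality patterns and eliminating degenerate configurations so that the matching $\{b_i^+, b_j^+\} = \{\bb_0(i,j), \bb_1(i,j)\}$ always emerges. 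This parallels Claims \ref{cl3}--\ref{cl1} and Cases 1--3 of the proof of Theorem \ref{gettranslMin}, but the additive identity is more rigid, which is why 6 homogeneous elements suffice here in place of 10.
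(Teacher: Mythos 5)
Your overall strategy --- Erd\H{o}s--Rado coloring on quadruples, extracting a $6$-homogeneous set, forcing algebraic coincidences among the $\bb_\epsilon$'s via quasi independence, then handing off to Lemma~\ref{4up} --- is indeed the same scaffolding the paper uses, and your handling of Lemma~\ref{4up}, the derivation of $2c=0$, and the uniqueness argument at the end are all essentially correct. The problem is the middle, which you yourself flag as ``the main obstacle'' and then leave entirely unexecuted: the claim that the case analysis ``isolates four elements $b_0^+,\ldots,b_3^+\in\bB$ satisfying $\{b_i^+,b_j^+\}=\{\bb_0(i,j),\bb_1(i,j)\}$ for all $i<j<4$'' is asserted, not proved, and it is the bulk of the theorem. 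Worse, this particular intermediate goal is not what the Erd\H{o}s--Rado argument naturally yields. In the paper, after ruling out degenerate patterns (Claims~\ref{cl5}--\ref{cl7}, Cases~1--4), the surviving cases are $\bb_0(0,1)=\bb_1(1,2)$ or its mirror, and homogeneity then gives identities of the form $\bb_0(j,4)=\bb_1(4,5)$ for $j<4$, so that the element $c$ and the ``$b_j^+$'' are built from the fifth and sixth homogeneous indices, not from the internal pairs among $\{0,1,2,3\}$. That your proposed collapse is equivalent to the conclusion once $2c=0$ is known makes it an \emph{a posteriori} reformulation, not a usable stepping stone: deriving it independently would require essentially the same case-by-case work you skip.

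A concrete instance of what is missing: the paper's Case~4, where $\bb_0(0,1)=\bb_1(0,1)$ and hence $\bb_0(i,j)=\bb_1(i,j)$ for all pairs, does \emph{not} visibly contradict quasi independence on the surface (nothing forces a nontrivial $(2,8)$-combination at first sight). Eliminating it requires a dedicated torsion computation ($4\bb_0(0,1)=2a_1+2\bb_0(0,1)\Rightarrow a_0=a_1$) and a separate use of Claim~\ref{cl7} with all three of $\bb_0(0,1),\bb_0(1,2),\bb_0(2,3)$ shown distinct. Your remark that ``genuine coincidences among the $\bb_\epsilon$'s must occur, and homogeneity propagates every such coincidence'' is true but far from sufficient: the hard work is precisely in showing that the \emph{wrong} coincidences (Cases~1--4 of the paper) cannot occur, and your outline gives no evidence that the finite case check would go through. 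Also, minor: the paper takes $\bb_0,\bb_1$ symmetric in this sums setting, rather than the anti\-symmetric normalization you borrowed from the differences theorem; this doesn't affect anything essential since $\{\bb_0(i,j),\bb_1(i,j)\}$ is well defined either way, but it is worth being consistent.
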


\begin{proof}
Let $\langle a_i:i<m\rangle$ be a sequence of pairwise distinct elements of
$ A$. Since $ A+ A\subseteq \bB+\bB$, we may choose symmetric functions 
$\bb_0,\bb_1:m\times m\longrightarrow \bB$ such that 
\[a_i+a_j=\bb_0(i,j)+\bb_1(i,j)\qquad \mbox{ for all }\ i,j<m.\]

Let $\langle \varphi_\ell(i_0,i_1,i_2,i_3):\ell<144\rangle$ list all
formulas of the form 
\[\bb_j(i_x,i_y)=\bb_{j'}(i_{x'},i_{y'})\]
for $j,j'<2$ and $x<y<4$, $x'<y'<4$. 

Let $\mu:\big[ m\big]^{\textstyle 4}\longrightarrow {}^{144}2$ be a
coloring of quadruples from $m$ such that if $i_0<i_1<i_2<i_3<m$, then
\[\mu\big(\{i_0,i_1,i_2,i_3\}\big)\big(\ell\big)=1\quad \mbox{ if and only
    if }\quad \varphi_\ell(i_0,i_1,i_2,i_3)\mbox{ holds true.}\]
Since $m\longrightarrow \big(6\big)^4_{2^{144}}$, we may choose $u\in
[m]^{\textstyle 6}$ homogeneous for $\mu$. Without loss of generality,
$u=\{0,1,2,3,4,5\}$.  

\begin{claim}
  \label{cl5}
 If $\{\bb_0(i,j):i<j<6\}\cap \{\bb_1(i,j):i<j<6\}\neq \emptyset$, then
 either $\bb_0(0,1)=\bb_1(1,2)$, or $\bb_1(0,1)=\bb_0(1,2)$, or
 $\bb_0(0,1)=\bb_1(0,1)$. 
\end{claim}

\begin{proof}[Proof of the Claim]
Suppose $i_0<j_0<6$ and $i_1<j_1<6$ are such that $\bb_0(i_0,j_0) =
\bb_1(i_1,j_1)$. We shall consider all possible orders of $i_0,j_0,i_1,j_1$
and use the homogeneity to conclude one of the clauses in the assertion.
\medskip

\noindent (a)\quad If $i_0<j_0<i_1<j_1$, then (by the homogeneity)
$\bb_0(0,1)=\bb_1(2,3) =\bb_1(4,5) =\bb_0(2,3)$, so also
$\bb_0(0,1)=\bb_1(0,1)$. 
\medskip

\noindent (b)\quad If $i_0<j_0=i_1<j_1$ then also $\bb_0(0,1)=\bb_1(1,2)$.
\medskip

\noindent (c)\quad If $i_0<i_1<j_0<j_1$, then $\bb_0(0,3)=\bb_1(2,4)=
\bb_1(1,4)=\bb_0(0,2)$ and also $\bb_0(0,1)=\bb_1(1,2)$. 
\medskip

\noindent (d)\quad If $i_0<i_1<j_0=j_1$, then $\bb_0(0,3)=\bb_1(1,3)=
\bb_1(2,3)= \bb_0(1,3)$ and also $\bb_0(0,1)=\bb_1(0,1)$.
\medskip

\noindent (e)\quad If $i_0<i_1<j_1<j_0$, then $\bb_0(0,5)=\bb_1(3,4)=
\bb_1(1,2) =\bb_0(0,3)$ and also $\bb_0(0,1)=\bb_1(1,2)$.
\medskip

\noindent (f)\quad If $i_0=i_1<j_0<j_1$, then $\bb_0(0,1)=\bb_1(0,2) =
\bb_1(0,3)=\bb_0(0,2)$, so also $\bb_0(0,1)=\bb_1(0,1)$.
\medskip

\noindent (g)\quad If $i_0=i_1<j_0=j_1$ then $\bb_0(0,1)=\bb_1(0,1)$.
\medskip

\noindent (h)\quad If $i_0=i_1<j_1<j_0$, then $\bb_0(0,2)=\bb_1(0,1) =
\bb_0(0,3)= \bb_1(0,2)$, so also $\bb_0(0,1)=\bb_1(0,1)$.
\medskip

\noindent (i)\quad If $i_1<i_0<j_0<j_1$, then $\bb_1(0,1)=\bb_0(1,2)$
similarly to (e), just interchange $\bb_0$ and $\bb_1$.
\medskip

\noindent (j)\quad If $i_1<i_0<j_0=j_1$, then $\bb_0(0,1)=\bb_1(0,1)$
similarly to (d).
\medskip

\noindent (k)\quad If $i_1<i_0<j_1<j_0$, then $\bb_1(0,1)=\bb_0(1,2)$
similarly to (c).
\medskip

\noindent (l)\quad If $i_1<i_0=j_1<j_0$, then $\bb_1(0,1)=\bb_0(1,2)$.
\medskip

\noindent (m)\quad If $i_1<j_1<i_0<j_0$, then $\bb_0(0,1)=\bb_1(0,1)$
similarly to (a).
\end{proof}

\begin{claim}
  \label{cl6}
  If $\bb_0(0,3)=\bb_0(1,2)$, then $\bb_0(0,1)=\bb_0(1,2)=\bb_0(2,3)=
  \bb_0(0,3)$. \\
  Similarly if $\bb_0$  is replaced by $\bb_1$.
\end{claim}

\begin{proof}[Proof of the Claim]
  Straightforward by the homogeneity of $u$.
\end{proof}

\begin{claim}
  \label{cl7}
\[\bb_0(0,1)+\bb_1(0,1)-\bb_0(1,2)-\bb_1(1,2) +\bb_0(2,3)+\bb_1(2,3) =
  \bb_0(0,3)+ \bb_1(0,3).\]
\end{claim}

\begin{proof}[Proof of the Claim]
 Follows by the choice of $\bb_0(i,j), \bb_1(i,j)$ and 

 $(a_0+a_1)-(a_1+a_2)+(a_2+a_3)=a_0+a_3$.
\end{proof}

\medskip

Now, we will consider six cases, showing that the first four of them are not
possible. In the remaining two cases we will find $c\in \bbH$ such that
$\{a_i+c:i<4\}\subseteq \bB$. Then by Lemma \ref{4up} we will also have
$ A+c\subseteq \bB$.
\medskip

\noindent {\sc Case 1}:\quad $\{\bb_0(i,j):i<j<6\}\cap
\{\bb_1(i,j):i<j<6\}=\emptyset$ and  $\bb_1(0,3)\notin \{\bb_1(0,1),
\bb_1(1,2), \bb_1(2,3)\}$.\\
Then $\bb_1(0,3)\notin \{\bb_0(0,1),\bb_1(0,1),\bb_0(1,2), \bb_1(1,2),
\bb_0(2,3),\bb_1(2,3), \bb_0(0,3)\}$ and by Claim \ref{cl7}
\[\bb_1(0,3)=\bb_0(0,1)+\bb_1(0,1)-\bb_0(1,2)-\bb_1(1,2) +\bb_0(2,3)+\bb_1(2,3)
  -\bb_0(0,3),\]
contradicting quasi independence of $\bB$.
\medskip

\noindent {\sc Case 2}:\quad $\{\bb_0(i,j):i<j<6\}\cap
\{\bb_1(i,j):i<j<6\}=\emptyset$ and  $\bb_0(0,3)\notin \{\bb_0(0,1),
\bb_0(1,2), \bb_0(2,3)\}$.\\
By an argument similar to Case 1, one shows that this case is not possible
as well. 
\medskip

\noindent {\sc Case 3}:\quad $\{\bb_0(i,j):i<j<6\}\cap
\{\bb_1(i,j):i<j<6\}=\emptyset$ and  $\bb_0(0,3)\in \{\bb_0(0,1),
\bb_0(1,2), \bb_0(2,3)\}$ and $\bb_1(0,3)\in \{\bb_1(0,1),
\bb_1(1,2), \bb_1(2,3)\}$.
\smallskip

\noindent {\sc Subase 3A}:\quad $\bb_0(0,3)=\bb_0(1,2)$.\\
Then by Claim \ref{cl6}, $\bb_0(0,3)=\bb_0(0,1)=\bb_0(1,2) = \bb_0(2,3)$.\\
If $\bb_1(0,3)=\bb_1(0,1)$, then $a_0+a_3=a_0+a_1$ and $a_3=a_1$, a 
contradiction.\\  
If $\bb_1(0,3)=\bb_1(2,3)$, then $a_0+a_3=a_2+a_3$ and $a_0=a_2$, a 
contradiction.\\  
If $\bb_1(0,3)=\bb_1(1,2)$, then Claim \ref{cl6} implies
$\bb_1(0,3)=\bb_1(0,1)$ and we already know that this leads to a
contradiction.\\ 
Consequently Subcase 3A is not possible.
\smallskip

\noindent {\sc Subase 3B}:\quad $\bb_1(0,3)=\bb_1(1,2)$.\\
Similarly as in  Subcase 3A one argues that this is not possible.
\smallskip

\noindent {\sc Subase 3C}:\quad $\bb_0(0,3)=\bb_0(0,1)$ and 
$\bb_1(0,3)=\bb_1(0,1)$.\\ 
Then $a_0+a_1=a_0+a_3$ and $a_1=a_3$ giving a contradiction. 
\smallskip 

\noindent {\sc Subase 3D}:\quad $\bb_0(0,3)=\bb_0(2,3)$ and
$\bb_1(0,3)=\bb_1(2,3)$.\\ 
Like Subcase 3C, this is not possible.
\smallskip

\noindent {\sc Subase 3E}:\quad $\bb_0(0,3)=\bb_0(0,1)$ and
$\bb_1(0,3)=\bb_1(2,3)$.\\ 
If we had $\bb_1(0,1)=\bb_1(1,2)$, then also (by the homogeneity)
$\bb_1(1,2)=\bb_1(2,3)$ and we  get a contradiction like in Subcase 3C.\\ 
If we had $\bb_0(1,2)=\bb_0(2,3)$ then also $\bb_0(2,3)=\bb_0(0,1)$ and we
get a contradiction like in Subcase 3D.\\
Consequently, there must be no repetitions in $\{\bb_0(1,2),\bb_0(2,3),
\bb_1(0,1), \bb_1(1,2)\}$. By Claim \ref{cl7} and the assumption of the
current subcase we have 
\[\bb_1(0,1)+\bb_0(2,3)-\bb_0(1,2)-\bb_1(1,2)=0,\]
a contradiction with the quasi independence of $\bB$. 
\smallskip

\noindent {\sc Subase 3F}:\quad $\bb_0(0,3)=\bb_0(2,3)$ and $\bb_1(0,3)=
\bb_1(0,1)$.\\ 
Like Subcase 3E, this is not possible.
\medskip

The next three cases cover the possibility when $\{\bb_0(i,j):i<j<6\}\cap
\{\bb_1(i,j):i<j<6\}\neq \emptyset$. By Claim \ref{cl5}, this implies that 
either $\bb_0(0,1)=\bb_1(1,2)$, or $\bb_1(0,1)=\bb_0(1,2)$, or
$\bb_0(0,1)=\bb_1(0,1)$.  
\medskip

\noindent {\sc Case 4}:\quad $\bb_0(0,1)=\bb_1(0,1)$\\
Then for all $i<j<6$ we have $\bb_0(i,j)=\bb_1(i,j)$.

If for some $i_0<j_0\leq i_1<j_1$ we had $\bb_0(i_0,j_0)= \bb_0(i_1,j_1)$,
then by the homogeneity we would have had $\bb_0(0,1)=
\bb_0(i,j)=\bb_1(i,j)$ for all $i<j<5$ and 
\[4b_0(0,1)=(a_0+a_1)+(a_1+a_2)=2a_1+2b_0(0,1).\]
Hence $2b_0(0,1)+(a_0+a_1)=4b_0(0,1)=2a_1+2b_0(0,1)$ and $a_0=a_1$, a
contradiction.

Therefore, $\bb_0(i_0,j_0)\neq \bb_0(i_1,j_1)$ whenever $i_0<j_0\leq 
i_1<j_1\leq 3$. Now, by Claim \ref{cl7},
\[\begin{array}{l}
  2\bb_0(0,3)=  \bb_0(0,3)+ \bb_1(0,3)=\\
  \bb_0(0,1)+\bb_1(0,1)-\bb_0(1,2)-\bb_1(1,2) +\bb_0(2,3)+\bb_1(2,3) =\\  
    2\bb_0(0,1)-2\bb_0(1,2)+2\bb_0(2,3).
    \end{array}\]
If we had $\bb_0(0,3)=\bb_0(1,2)$, then by the homogeneity $\bb_0(1,2) =
\bb_0(0,5)=\bb_0(2,3)$, contradicting what we said above. Therefore,
$\bb_0(0,3) \neq \bb_0(1,2)$ and $\bb_0(0,1),\bb_0(1,2),\bb_0(2,3)$ are
pairwise distinct. Hence
\[2\bb_0(0,1)-2\bb_0(1,2)+2\bb_0(2,3)-2\bb_0(0,3)\]
is a nontrivial $(2,8)$--combination with value $0$, a contradiction with
the quasi independence of $\bB$. 

Consequently, Case 4 is also impossible. 
\medskip

\noindent {\sc Case 5}:\quad $\bb_0(0,1)=\bb_1(1,2)$.\\
By the homogeneity, for each $j<4$ we have then
$\bb_0(j,4)=\bb_1(4,5)$. Hence for every $j<4$ we have
\[a_j+a_4=\bb_0(j,4)+\bb_1(j,4)=\bb_1(4,5)+\bb_1(j,4),\]
and consequently
\[a_j+(a_4-\bb_1(4,5))=\bb_1(j,4)\in\bB.\]
Thus letting $c=a_4-\bb_1(4,5)$ we will have $\{a_i+c:i<4\}\subseteq
\bB$. By Lemma \ref{4up} we also have  $ A+c\subseteq \bB$. 
\medskip

\noindent {\sc Case 6}:\quad $\bb_1(0,1)=\bb_0(1,2)$.\\
Similarly to Case 5, for each $j<4$ we have $\bb_1(j,4)=\bb_0(4,5)$ and
\[a_j+a_4=\bb_0(j,4)+\bb_1(j,4)=\bb_0(j,4)+\bb_0(4,5).\]
Hence $a_j+(a_4-\bb_0(4,5))=\bb_0(j,4)\in\bB$ and the rest is clear.
\bigskip

Concerning the uniqueness of $c$,  suppose towards contradiction
that $c\neq d$ are such that $ A+c\subseteq \bB$ and $ A+d\subseteq
\bB$. Let $a_0,a_1,a_2,a_3$ be distinct elements of $ A$. Then for
distinct $i,j<4$ we have 
\[a_i+c\neq a_i+d,\quad a_i+c\neq a_j+c,\quad \mbox{and}\quad
  a_i+d\neq a_j+d,\] 
and we may find $i<4$ such that $\{a_0+c,a_0+d\}\cap
\{a_i+c, a_i+d\}=\emptyset$. Then the elements  $a_0+c,a_0+d,a_i+c,
a_i+d$ belong to $\bB$, they are all distinct and
$(a_0+c)-(a_0+d)-(a_i+c)+(a_i+d)=0$, contradicting the quasi
independence  of $\bB$.  

Finally, Lemma \ref{4up} gives that $c$ must be of order at most 2.
\end{proof}

\section{Forcing for Abelian groups with few elements of order two}
In this and the next section, we will keep the following
notation/assumptions concerning our group $\bbH$.

\begin{hypothesis}
\label{hypno}
  \begin{enumerate}
  \item $(\bbH,+,0)$ is an Abelian perfect Polish group with the topology
    generated by a complete metric $\rho^*$. 
  \item $\bD\subseteq \bbH$ is a countable dense subset and $\rho: \bbH\times
    \bbH\longrightarrow [0,\infty)$ is a {\em translation invariant\/}  metric
    compatible with the topology of $\bbH$. (The metric $\rho$ does not have to
    be complete; it exists by the Birkhoff--Kakutani theorem.) 
  \item The open ball in the metric $\rho$ with radius $2^{-n}$ and center
    at $0$ is denoted $\bB_n$ and we let $\cU=\big\{d+\bB_n: d\in\bD\
    \wedge\ n<\omega\big\}$. By the invariance of the metric $\rho$, the
    family $\cU$ is a  countable base of the topology of $\bbH$.
  \end{enumerate}
\end{hypothesis}

Note that if $P\subseteq B\subseteq \bbH$ then $x+y\in (B+x)\cap (B+y)$ for
each $x,y\in P$. Consequently, if $P\subseteq B$ is a perfect set, then it
witnesses that $B$ has a perfect set of pairwise non-disjoint
translations. But for $k\geq 2$ we may and will introduce a forcing notion
adding a Borel set $B\subseteq \bbH$ which has many pairwise
$k$--overlapping translations but no perfect set of such translations.

The technical details force us to break up the construction into two
cases. First, we will assume that the group $\bbH$ has only a few elements
of rank 2. So, in addition to the assumptions and notation specified in
\ref{hypno}, in this section we assume the following:
\begin{hypothesis}
\label{hypno2}
  \begin{enumerate}
\item The set of elements of $\bbH$ of order larger than 2 is dense
    in $\bbH$.
\item $1<k<\omega$.
\item $\vare$ is a countable ordinal and $\lambda$ is an uncountable
  cardinal such that ${\rm NPr}^\vare(\lambda)$ holds true. The model
  $\bbM(\vare,\lambda)$ and functions $\rksp,\bj$ and $\bk$ on
  $[\lambda]^{<\omega}\setminus \{\emptyset\}$ are as fixed in Definition
  \ref{hypo2}. 
  \end{enumerate}
\end{hypothesis}

We will define a forcing notion $\bbP$ adding $\lambda$ many (distinct)
elements $\langle \eta_\alpha:\alpha<\lambda\rangle$ of the group $\bbH$ as
well as a sequence $\langle F_m:m<\omega\rangle$ of closed subsets of
$\bbH$. The $\Sigma^0_2$ subset $S=\bigcup\limits_{m<\omega} F_m$ of $\bbH$
will have the property that (in the forcing extension) 
\begin{enumerate}
\item[$(\heartsuit)_1$] there is {\em no perfect\/} set $P\subseteq \bbH$
  satisfying
  \[\big(\forall x,y\in P\big) \big(\big|(x+S)\cap (y+S)\big|\geq k\big).\] 
\end{enumerate}
At the same we will make sure that
\begin{enumerate}
\item[$(\heartsuit)_2$]  $\big|(-\eta_\alpha+S)\cap
  (-\eta_\beta+S)\big|\geq k$ for all $\alpha,\beta<\lambda$.
\end{enumerate}
To ensure $(\heartsuit)_2$ holds, the forcing will also add witnesses for
it: group elements $\nu_{i,\alpha,\beta}=\nu_{i,\beta,\alpha}\in \bbH$ and
integers $h_{\alpha,\beta}<\omega$ such that $\eta_\alpha+
\nu_{i,\alpha,\beta}\in F_{h_{\alpha,\beta}}$ (for $i<k$,
$\alpha,\beta<\lambda$). 

A condition $p\in\bbP$ will give a ``finite information'' on objects
mentioned above. Thus for some finite $w^p\subseteq \lambda$, for all distinct
$\alpha,\beta\in w^p$, the condition $p$ provides a basic open neighborhood
$U^p_\alpha(n^p)$ of $\eta_\alpha$, basic open neighborhood
$W^p_{i,\alpha,\beta}$ of $\nu_{i,\alpha,\beta}$ and the values of
$h_{\alpha,\beta} = h^p(\alpha,\beta)$. An approximation to the closed set
$F_m\subseteq \bbH$ will be given by its open neighborhood 
\[F(p,m)= \bigcup\big\{U^p_\alpha(n^p)+W^p_{i,\alpha,\beta}: 
     (\alpha,\beta)\in (w^p)^{\langle 2\rangle} \  \wedge\ i<k \  \wedge\ 
     h^p(\alpha,\beta)=m\big\}.\]     
Clause $(\heartsuit)_1$ as well as the ccc of the forcing $\bbP$ will
result from the involvement of the rank $\rksp$ and additional technical
pieces of information carried by conditions $p\in \bbP$: basic open sets
$Q^p_{i,\alpha,\beta}, V^p_{i,\alpha,\beta}$ and integers $r^p_m$. 

\begin{definition}
  \label{fordef}
{\bf (A)}\quad Let $\bbP$ be the collection of all tuples  
\[p=\big(w^p,M^p,\bar{r}^p,n^p,\bar{\Upsilon}^p,\bar{V}^p,h^p\big)=   
  \big(w,M,\bar{r},n,\bar{\Upsilon},\bar{V},h\big)\]      
such that the following demands $(\boxtimes)_1$--$(\boxtimes)_8$ are
satisfied.  
\begin{enumerate}
\item[$(\boxtimes)_1$] $w\in [\lambda]^{<\omega}$, $|w|\geq 4$,
  $0<M<\omega$, $3\leq n<\omega$ and $\bar{r}=\langle r_m:m<M\rangle
  \subseteq \omega$ with $r_m\leq n-2$  for $m<M$.  
\item[$(\boxtimes)_2$] $\bar{\Upsilon}= \langle \bar{U}_\alpha:\alpha\in
  w\rangle$ where each $\bar{U}_\alpha=\langle U_\alpha(\ell):\ell\leq n\rangle$
  is a $\subseteq$--decreasing sequence of elements of the basis $\cU$.
\item[$(\boxtimes)_3$]  $\bar{V}=\langle Q_{i,\alpha,\beta},
    V_{i,\alpha,\beta}, W_{i,\alpha,\beta}: i<k,\ (\alpha,\beta)\in
    w^{\langle 2\rangle}\rangle\subseteq \cU$ and  $Q_{i,\alpha,\beta}=
    Q_{i,\beta,\alpha}\supseteq V_{i,\alpha,\beta}=
  V_{i,\beta,\alpha} \supseteq W_{i,\alpha,\beta}= W_{i,\beta,\alpha}$ for
  all $i<k$ and $(\alpha,\beta)\in w^{\langle 2\rangle}$. 
\item[$(\boxtimes)_4$] 
  \begin{enumerate}
  \item[(a)] The indexed family $\langle U_\alpha(n-2):\alpha\in
    w\rangle\conc \langle Q_{i,\alpha,\beta}: i<k,\ \alpha,\beta\in   w,\
    \alpha<\beta \rangle$ is an 8--good qif (so in particular the sets in
    this system are pairwise disjoint), and  
  \item[(b)] $\langle U_\alpha(n):\alpha\in  w\rangle\conc \langle
W_{i,\alpha,\beta}: i<k,\ \alpha,\beta\in   w,\ \alpha<\beta \rangle$ is
immersed in $\langle U_\alpha(n-1):\alpha\in  w\rangle\conc \langle
V_{i,\alpha,\beta}: i<k,\ \alpha,\beta\in   w,\  \alpha<\beta \rangle$ and
$\langle U_\alpha(n-1):\alpha\in  w\rangle\conc \langle  V_{i,\alpha,\beta}:
i<k,\ \alpha,\beta\in   w,\ \alpha<\beta \rangle$ is immersed in $\langle
U_\alpha(n-2):\alpha\in w\rangle\conc \langle Q_{i,\alpha,\beta}:  
  i<k,\ \alpha,\beta\in   w,\ \alpha<\beta \rangle$; see Definition
  \ref{quasidef}(4) (so all these families are 8--good qifs). 
  \end{enumerate}
\item[$(\boxtimes)_5$] 
  \begin{enumerate}
\item[(a)] If $\alpha,\beta\in w$, $\ell\leq n$ and $U_\alpha(\ell)\cap 
U_\beta(\ell)\neq \emptyset$, then $U_\alpha(\ell)=U_\beta(\ell)$, and 
\item[(b)] if $\alpha,\beta,\gamma\in w$, $\ell\leq n$, $U_\alpha(\ell)\neq
  U_\beta(\ell)$ and $a\in U_\alpha(\ell)$, $b\in U_\beta(\ell)$, then
  $\rho(a,b)>{\rm diam}_\rho\big(U_\gamma(\ell)\big)$ ($= {\rm
    diam}_\rho\big(-U_\gamma(\ell)\big)$).
\end{enumerate}
\item[$(\boxtimes)_6$] $h: w^{\langle 2\rangle}\stackrel{\rm
    onto}{\longrightarrow} M$ is such that $h(\alpha,\beta)=h(\beta,\alpha)$
  for $(\alpha,\beta)\in w^{\langle 2\rangle}$.  
\item[$(\boxtimes)_7$] Assume that $u,u'\subseteq w$, $\pi$ and $\ell\leq n$
  are such that  
  \begin{itemize}
\item $4\leq |u|=|u'|$ and $\pi:u\longrightarrow u'$ is a bijection, 
\item $r_{h(\alpha,\beta)}\leq \ell$ for all $(\alpha,\beta)\in u^{\langle
    2\rangle}$,  
\item $U_\alpha(\ell)\cap U_\beta(\ell)=\emptyset$ and
  $h(\alpha,\beta)=h(\pi(\alpha),\pi(\beta))$ for all distinct
  $\alpha,\beta\in u$,
\item for some $c\in \bbH$, 
  \begin{enumerate}
  \item[either] for all $\alpha\in u$, we have 
$\big(U_\alpha(\ell)+c\big)\cap U_{\pi(\alpha)}(\ell)\neq \emptyset$
\item[or] for all $\alpha\in u$, we have 
$\big(c-U_\alpha(\ell)\big)\cap U_{\pi(\alpha)}(\ell)\neq \emptyset$.
  \end{enumerate}
\end{itemize}
Then $\rksp(u)=\rksp(u')$, $\bj(u)=\bj(u')$, $\bk(u)=\bk(u')$ and for
$\alpha \in u$ 
\[|\alpha\cap u|=\bk(u)\quad\Leftrightarrow\quad |\pi(\alpha)\cap u'|=
  \bk(u).\]  
\item[$(\boxtimes)_8$] Assume that  
  \begin{itemize}
  \item $\emptyset\neq u\subseteq w$, $\rksp(u)=-1$, $\ell\leq n$ and
\item $\alpha\in u$ is such that $|\alpha\cap u|=\bk(u)$, and 
  \item  $r_{h(\beta,\beta')}\leq \ell$  and $U_\beta(\ell)\cap
    U_{\beta'}(\ell)=\emptyset$ for all $(\beta,\beta')\in u^{\langle
      2\rangle}$.   
  \end{itemize}
Then there is {\bf no}\/ $\alpha'\in w\setminus u$ such that $U_\alpha(\ell)
=U_{\alpha'}(\ell)$ and $h(\alpha,\beta)= h(\alpha', \beta)$ for all
$\beta\in u\setminus\{\alpha\}$.   
\end{enumerate}

\noindent {\bf (B)}\quad For $p\in\bbP$ and $m<M^p$ we define
\[F(p,m)= \bigcup\big\{U^p_\alpha(n^p)+W^p_{i,\alpha,\beta}: 
     (\alpha,\beta)\in (w^p)^{\langle 2\rangle} \  \wedge\ i<k \  \wedge\ 
     h^p(\alpha,\beta)=m\big\}.\]     

\noindent {\bf (C)}\quad For $p,q\in \bbP$ we declare that $p\leq q$\quad if
and only if  
\begin{itemize}
\item $w^p\subseteq w^q$, $M^p\leq M^q$, $\bar{r}^q\rest M^p=\bar{r}^p$,  
  $n^p\leq n^q$, $h^q\rest (w^p)^{\langle 2\rangle} = h^p$, and 
\item if $\alpha\in w^p$ and $\ell\leq n^p$ then $U^q_\alpha(\ell) =
  U^p_\alpha(\ell)$, and 
\item if $(\alpha,\beta)\in (w^p)^{\langle 2\rangle}$, $i<k$, then
$Q^q_{i,\alpha, \beta}\subseteq Q^p_{i,\alpha,\beta}$, $V^q_{i,\alpha,
\beta} \subseteq V^p_{i,\alpha,\beta}$, and $W^q_{i,\alpha, \beta}\subseteq
W^p_{i,\alpha,\beta}$, and  
\item if $m<M^p$, then $F(q,m)\subseteq F(p,m)$. 
\end{itemize}
\end{definition}

\begin{lemma}
\label{dense}
\begin{enumerate}
\item $(\bbP,\leq)$ is a partial order of size $\lambda$.
\item The following sets are dense in $\bbP$:
  \begin{enumerate}
  \item[(i)] $D^0_{\gamma,M,n}=\big\{p\in\bbP:\gamma\in u^p\ \wedge \ M^p>M
    \ \wedge\ n^p>n \big\}$ for $\gamma<\lambda$ and $M,n<\omega$.
\item[(ii)] $D^1_N=\big\{p\in\bbP:\diam(U^p_\alpha(n^p-2))<2^{-N}\ \wedge\ 
  \diam(Q^p_{i,\alpha,\beta})<2^{-N} \ \wedge$

\qquad  $\diam(U^p_\alpha(n^p-2)+Q^p_{i,\alpha,\beta})< 2^{-N} \mbox{ 
  for all }i<k,\ (\alpha, \beta)\in (w^p)^{\langle 2\rangle}\big\}$

for $N<\omega$.  
\item[(iii)] $D^2_N=\big\{p\in\bbP:$ for all $i,j<k$ and 
$(\alpha,\beta), (\gamma,\delta)\in (w^p)^{\langle 2\rangle}$ it
  holds that

\qquad\quad ${\rm diam}_\rho(U^p_\alpha(n^p-2))<2^{-N}$\ and\ ${\rm
  diam}_\rho (Q^p_{i,\alpha,\beta})<2^{-N}$\ and  

\qquad\quad  ${\rm diam}_\rho (U^p_\alpha(n^p-2)+Q^p_{i,\alpha,\beta})<
2^{-N}$~and  

\qquad\quad if $(i,\alpha^*,\alpha,\beta)\neq (j,\gamma^*,\gamma,\delta)$   
 then 

\qquad\quad $\big(U^p_{\alpha^*}(n^p)+W^p_{i,\alpha,\beta}\big)\cap 
\big(U^p_{\gamma^*}(n^p)+W^p_{i,\gamma,\delta}\big)= \emptyset\big\}$. 

for $N<\omega$.  
  \end{enumerate}
\item Assume $p\in\bbP$. Then there is $q\geq p$ such that $n^q\geq n^p+3$,
  $w^q=w^p$ and  
  \begin{itemize}
\item for all $\alpha\in w^p$, $\cl\big(U^q_\alpha(n^q-2)\big)\subseteq 
  U^p_\alpha(n^p)$, and
\item for all $i<k$ and $(\alpha,\beta)\in (w^p)^{\langle 2\rangle}$, 
\[\cl\big(U^q_\alpha(n^q-2)+Q^q_{i,\alpha,\beta}\big)\subseteq
  U^p_\alpha(n^p)+W^p_{i,\alpha,\beta}\quad\mbox{ and }\quad
  \cl\big(Q^q_{i,\alpha,\beta}\big) \subseteq W^p_{i,\alpha,\beta}.\]
  \end{itemize}
\end{enumerate}
\end{lemma}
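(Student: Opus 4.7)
Part (1) is a routine check: reflexivity and transitivity of $\leq$ follow clause-by-clause from Definition \ref{fordef}(C), and the cardinality bound $|\bbP|=\lambda$ holds since each condition is coded by a finite subset of $\lambda$ together with finitely many parameters from countable sets ($\omega$ and the base $\cU$). The bulk of the work is Part (3); items (2)(ii), (2)(iii) will follow from it by iteration (halving diameters using $\cU$), and item (2)(i) combines (3) with a fresh application of Proposition \ref{existence} to add new ordinals to $w^p$ and to assign $h$-values on the new pairs. The extra disjointness demanded in (2)(iii) is automatic once diameters are small enough, because any coincidence $a+b=a'+b'$ between centers of distinct index tuples would yield a nontrivial $(2,8)$-combination in the 8-good qif with value $0$, contradicting $(\boxtimes)_4$(a).

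For Part (3), the plan is to set $n^q:=n^p+3$, $w^q:=w^p$, $M^q:=M^p$, $\bar{r}^q:=\bar{r}^p$, $h^q:=h^p$, and $U^q_\alpha(\ell):=U^p_\alpha(\ell)$ for $\ell\leq n^p$, then construct the data at levels $n^p+1,n^p+2,n^p+3$ as follows. Group $w^p$ by the equivalence relation $U^p_\alpha(n^p)=U^p_\beta(n^p)$ and pick one representative per class; using Hypothesis \ref{hypno2}(1) and openness, for each representative $\alpha$ pick a point $a_\alpha\in U^p_\alpha(n^p)$ of order larger than $2$, and also pick $b_{i,\alpha,\beta}\in W^p_{i,\alpha,\beta}$. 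Apply Proposition \ref{existence} at these chosen points to obtain pairwise disjoint open refinements $U^*_\alpha\subseteq U^p_\alpha(n^p)$ and $Q^*_{i,\alpha,\beta}\subseteq W^p_{i,\alpha,\beta}$ forming an $8$-good qif, and set $U^*_\beta:=U^*_\alpha$ for every $\beta$ in the same class as $\alpha$. By continuity of addition and further shrinking if necessary, arrange $\cl(U^*_\alpha)\subseteq U^p_\alpha(n^p)$, $\cl(Q^*_{i,\alpha,\beta})\subseteq W^p_{i,\alpha,\beta}$, and $\cl(U^*_\alpha+Q^*_{i,\alpha,\beta})\subseteq U^p_\alpha(n^p)+W^p_{i,\alpha,\beta}$. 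Declare these the level-$(n^p+1)$ data, then apply Observation \ref{obs3.2}(3) twice to produce the immersion chain yielding the $V^q_{i,\alpha,\beta}$ at level $n^p+2$ and the $U^q_\alpha(n^q), W^q_{i,\alpha,\beta}$ at level $n^q=n^p+3$.

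Clauses $(\boxtimes)_1$--$(\boxtimes)_4$ and $(\boxtimes)_6$ are visible from this construction, while $(\boxtimes)_5$(b) at the new levels is secured by further shrinking diameters below the finitely many positive $\rho$-distances realized by disjoint sets in the old data. The main obstacle is preserving the rank-sensitive clauses $(\boxtimes)_7$ and $(\boxtimes)_8$ at the new levels $\ell\in\{n^p+1,n^p+2,n^p+3\}$, and the equal-representatives convention is exactly what makes this work: $U^q_\alpha(\ell)\cap U^q_\beta(\ell)=\emptyset$ at a new level is equivalent to $U^p_\alpha(n^p)\neq U^p_\beta(n^p)$, which by $(\boxtimes)_5$(a) for $p$ means $U^p_\alpha(n^p)\cap U^p_\beta(n^p)=\emptyset$. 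Moreover, since $U^q_\alpha(\ell)\subseteq U^p_\alpha(n^p)$, any translation witness $(U^q_\alpha(\ell)+c)\cap U^q_{\pi(\alpha)}(\ell)\neq\emptyset$ (or its reflected version) at a finer level lifts to the same witness at level $n^p$. Thus every configuration triggering the hypothesis of $(\boxtimes)_7$ or $(\boxtimes)_8$ for $q$ at a new level already triggers it for $p$ at level $n^p$, where by assumption the conclusions hold; since those conclusions are independent of the level, they transfer for free to $q$, completing Part (3).
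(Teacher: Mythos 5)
Your treatment of Part (3) is essentially the same as the paper's and is sound: the new levels are built by Proposition \ref{existence} and Observation \ref{obs3.2}(3), and the preservation of $(\boxtimes)_7$ and $(\boxtimes)_8$ is obtained by pulling the hypothesis at a new level $\ell>n^p$ back to level $n^p$ in $p$ (using $U^q_\alpha(\ell)\subseteq U^p_\alpha(n^p)$, and using $r^q_m=r^p_m\leq n^p-2$ so the threshold clause is automatic). One remark: the ``equivalence classes of indices with $U^p_\alpha(n^p)=U^p_\beta(n^p)$'' and the equal-representatives convention do nothing. By $(\boxtimes)_4$ and the immersion bijections, for any $p\in\bbP$ the sets $U^p_\alpha(n^p)$ are already pairwise disjoint (indeed pairwise distinct) for distinct $\alpha\in w^p$; this is implicitly used in Lemma \ref{abc}. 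So the classes are singletons and the convention is vacuous, and your claim that it ``is exactly what makes $(\boxtimes)_7$, $(\boxtimes)_8$ work'' is misplaced.

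The genuine gap is in (2)(i), which you dismiss as ``$(3)$ plus a fresh application of Proposition \ref{existence} to add new ordinals and assign $h$-values.'' That is where the real work lies, and your pull-back argument breaks down there. Once you enlarge $w^p$ to $w=w^p\cup\{\gamma\}$, the sets $u\subseteq w$ that contain $\gamma$ give configurations in $(\boxtimes)_7$ and $(\boxtimes)_8$ that have no counterpart in $p$; one cannot ``lift the hypothesis to level $n^p$ in $p$.'' Verifying these clauses after adding $\gamma$ is an essential argument that uses a specific choice of $h$: the paper assigns pairwise distinct fresh values $h(\alpha,\gamma)$ (each taken exactly once), and sets $r_m=n-2$ for the new $m\geq M^p$. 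This forces the following: if any $u\ni\gamma$ and $\ell\leq n$ trigger the hypothesis of $(\boxtimes)_7$, then the condition $h(\pi(\beta),\pi(\gamma))=h(\beta,\gamma)$ forces $\pi$ to be the identity on $u$ and $u=u'$, so the conclusion is trivial; and for $(\boxtimes)_8$, the unique-value property rules out any competing $\alpha'$. None of this is in your proposal; ``assign $h$-values on the new pairs'' without specifying the injective assignment and carrying out the case analysis leaves the core of the density proof unproved.
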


\begin{proof}
 (2)(i)\quad  Suppose $p\in\bbP$ and $\gamma\in \lambda\setminus w^p$. Let  
 $\alpha^*=\min(w^p)$ and let $w=w^p\cup \{\gamma\}$ and $n=n^p+3$. Using
 Proposition \ref{existence} we may choose $U_\alpha(n-2)\in \cU$ (for
 $\alpha\in w$) and $Q_{i,\alpha,\beta}\in \cU$ (for $i<k$,  $\alpha<\beta$,
 $\alpha,\beta\in w$) such that  
 \begin{itemize}
\item $U_\alpha(n-2)\subseteq U^p_\alpha(n^p)$ and
  $Q_{i,\alpha,\beta}\subseteq  W_{i,\alpha,\beta}^p$ when $\alpha,\beta\in
  w^p$,  
\item $U_\gamma(n-2)\subseteq U^p_{\alpha^*}(n^p)$, 
\item $\langle U_\alpha(n-2):\alpha\in w\rangle\conc \langle
  Q_{i,\alpha,\beta}:  i<k,\ \alpha<\beta,\ \alpha,\beta\in w\rangle$ is an
  8--good qif,  
\item ${\rm diam}_\rho\big(U_\delta(n-2)\big) = {\rm diam}_\rho\big(
  -U_\delta(n-2)\big)<\rho(a,b)$ for all $\delta\in w$, $(\alpha,\beta)
  \in w^{\langle 2\rangle}$, $a\in U_\alpha(n-2)$ and $b\in U_\beta(n-2)$.
 \end{itemize}
Then by Observation \ref{obs3.2}(3) we may choose $U_\alpha(n-1),
U_\alpha(n), V_{i,\alpha,\beta}, W_{i,\alpha,\beta}\in \cU$ (for
$\alpha<\beta$ from $w$ and $i<k$) such that  $U_\alpha(n)\subseteq
U_\alpha(n-1)\subseteq U_\alpha(n-2)$, $W_{i,\alpha,\beta}\subseteq  
V_{i,\alpha,\beta}\subseteq Q_{i,\alpha,\beta}$ and
\begin{itemize}
\item $\langle U_\alpha(n-1):\alpha\in  w\rangle\conc \langle 
  V_{i,\alpha,\beta}: i<k,\ \alpha,\beta\in   w,\ \alpha<\beta \rangle$ is  
  immersed in $\langle U_\alpha(n-2):\alpha\in  w\rangle\conc \langle 
  Q_{i,\alpha,\beta}: i<k,\ \alpha,\beta\in   w,\  \alpha<\beta 
  \rangle$, and 
\item $\langle U_\alpha(n):\alpha\in  w\rangle\conc \langle 
  W_{i,\alpha,\beta}: i<k,\ \alpha,\beta\in   w,\ \alpha<\beta \rangle$ is  
  immersed in $\langle U_\alpha(n-1):\alpha\in  w\rangle\conc \langle 
  V_{i,\alpha,\beta}: i<k,\ \alpha,\beta\in   w,\  \alpha<\beta 
  \rangle$.
\end{itemize}
Put $\bar{\Upsilon}=\langle \bar{U}_\alpha:\alpha\in w\rangle$, where 
 $\bar{U}_\alpha= \bar{U}^p_\alpha\conc \langle U_\alpha(n-2),
 U_\alpha(n-1), U_\alpha(n)\rangle$ if $\alpha\in w^p$ and
 $\bar{U}_\gamma=\bar{U}^p_{\alpha^*}\conc \langle U_\gamma(n-2),
 U_\gamma(n-1), U_\gamma(n)\rangle$. Let $Q_{i,\beta, \alpha} =
 Q_{i,\alpha,\beta}$, $V_{i,\beta, \alpha} =V_{i,\alpha,\beta}$ and
 $W_{i,\beta, \alpha} =W_{i,\alpha,\beta}$ (for $i<k$, $\alpha<\beta$ from
 $w$), and let $\bar{V}= \langle Q_{i,\alpha,\beta},V_{i,\alpha,\beta}, 
 W_{i,\alpha,\beta}: i<k,\ (\alpha,\beta)\in w^{\langle 2\rangle} 
 \rangle$. Let $M=M^p+|w^p|$ and let $h: w^{\langle 2\rangle}
 \longrightarrow M$ be such that   
\begin{itemize}
\item $h(\alpha,\beta)=h^p(\alpha,\beta)$ when $(\alpha,\beta)\in
  (w^p)^{\langle 2\rangle}$,
\item $h(\alpha,\gamma)=h(\gamma,\alpha)=M^p+j$ when $\alpha\in w^p$ and
  $j=|w^p\cap \alpha|$.   
\end{itemize}
We also define $\bar{r}:M\longrightarrow (n-1)$ so that $\bar{r}\rest
M^p=\bar{r}^p$ and $r_m=n-2$ for $m\in [M^p,M)$

Put $q=(w,M,r,n,\bar{\Upsilon},\bar{V},h)$. Let us argue that $q\in\bbP$. To
this end we have to verify conditions $(\boxtimes)_1$--$(\boxtimes)_8$ of
Definition \ref{fordef}. Of these the first six demands follow immediately
by our choices. To show $(\boxtimes)_7$, suppose $u,u'\subseteq w$ and
$\pi:u \longrightarrow u'$ and $\ell\leq n$ and $c\in \bbH$ satisfy the
assumptions there. If $\alpha\in w^p$  then $h(\gamma,\alpha)\geq M^p$ and
therefore $\gamma\in u$ if and only if $\gamma\in u'$. If $\gamma\notin u$,
then $u\cup u'\subseteq w^p$ and clause $(\boxtimes)_7$ for $p$ (applied to
$\min(n^p,\ell)$ instead of $\ell$) gives the needed conclusion. If
$\gamma\in u$, then $\gamma\in u'$ too and we look at $h(\beta,\gamma)$ for
$\beta \in u\cap w^p$. Each of these values is taken by $h$ exactly one time, so
$h(\pi(\beta),\pi(\gamma))=h(\beta,\gamma)$ for all $\beta\in w^p$ implies
that $\pi(\gamma)=\gamma$ and $\pi(\beta)=\beta$ for $\beta\in u\cap
w^p$. Hence $u=u'$ and $\pi$ is the identity, so the desired  conclusion
follows.   

Now suppose $\ell\leq n$, $\alpha\in u\subseteq w$ are as in the assumptions
of $(\boxtimes)_8$ (so by \ref{hypo2}$(\circledast)_b$ also $u\geq 2$). If
$\gamma\notin u$, then applying $(\boxtimes)_8$ for $p$ to $\alpha,u$ and
$\ell'=\min(\ell,n^p)$ we see that there is no $\alpha'\in w^p\setminus u$
with $U_\alpha(\ell)=U_{\alpha'}(\ell)$, and $h(\alpha,\beta) =
h(\alpha',\beta)$ for all $\beta\in u\setminus \{\alpha\}$. The  values of
$h(\beta,\gamma)$ (for $\beta\in u$) are above $M^p$, so they cannot be
equal to $h(\beta,\alpha)$ either. Consequently, the conclusion of
$(\boxtimes)_8$ holds in this case.  So assume now that $\gamma\in
u\setminus\{\alpha\}$. The value of $h(\gamma,\alpha)$ is taken exactly
once, so no $\alpha'\in w \setminus \{\gamma, \alpha\}$ satisfies
$h(\gamma,\alpha)=h(\gamma,\alpha')$ and the desired conclusion should be
clear now. Finally, assume $\gamma=\alpha$. As we said, $|u|\geq 2$ so we
may take $\beta\in u\setminus\{\gamma\}$ and look at
$h(\gamma,\beta)$. There is no $\alpha'\in w\setminus\{\gamma\}$ with 
$h(\alpha',\beta)=h(\gamma,\beta)$, so desired conclusion follows,
finishing the proof of $(\boxtimes)_8$.

Now one easily deduces (2)(i).
\medskip

\noindent (ii)\quad Assume $p\in \bbP$ and $N<\omega$. For
$(\alpha,\beta)\in  (w^p)^{\langle 2\rangle}$ and $i<k$ first choose
$U_\alpha(n^p+1), Q_{i,\alpha,\beta}\in \cU$ such that $U_\alpha(n^p+1)
\subseteq U^p_\alpha(n^p)$, $Q_{i,\alpha,\beta}= Q_{i,\beta,\alpha}\subseteq
W^p_{i,\alpha,\beta}$ and $\rho^*$--diameters of $U_\alpha(n^p+ 1),
Q_{i,\alpha,\beta}$ and $U_\alpha(n^p+1)+ Q_{i,\alpha,\beta}$ are all
smaller than $2^{-N}$. Note that $\langle U_\alpha(n^p+1):\alpha\in
w^p\rangle\conc\langle Q_{i,\alpha,\beta}: i<k,\ \alpha<\beta,\
\alpha,\beta\in w^p\rangle$ is an 8--good qif.  Next, use Proposition
\ref{existence} to choose $U_\alpha(n^p+2), U_\alpha(n^p+3),
V_{i,\alpha,\beta}, W_{i,\alpha,\beta}\in\cU$ such that
$U_\alpha(n^p+3)\subseteq U_\alpha(n^p+2)\subseteq  U_\alpha(n^p+1)$,
and $W_{i,\alpha,\beta}= W_{i,\beta,\alpha}\subseteq
V_{i,\alpha,\beta}=V_{i,\beta,\alpha}\subseteq Q_{i,\alpha,\beta}$
(for $(\alpha,\beta)\in (w^p)^{\langle 2\rangle}$ and $i<k$), and  
\begin{itemize}
\item $\langle U_\alpha(n^p+3):\alpha\in w^p\rangle\conc\langle 
  W_{i,\alpha,\beta}: i<k,\ \alpha<\beta,\ \alpha,\beta\in w^p\rangle$ is 
  immersed in $\langle U_\alpha(n^p+2):\alpha\in w^p\rangle\conc\langle 
  V_{i,\alpha,\beta}: i<k,\ \alpha<\beta,\ \alpha,\beta\in w^p\rangle$, and    
\item $\langle U_\alpha(n^p+2):\alpha\in w^p\rangle\conc\langle 
  V_{i,\alpha,\beta}: i<k,\ \alpha<\beta,\ \alpha,\beta\in w^p\rangle$ is 
  immersed in $\langle U_\alpha(n^p+1):\alpha\in w^p\rangle\conc\langle 
  Q_{i,\alpha,\beta}: i<k,\ \alpha<\beta,\ \alpha,\beta\in
  w^p\rangle$.
\end{itemize}
Now, for $\alpha\in w^p$ let $\bar{U}_\alpha=\bar{U}^p_\alpha\conc \langle 
U_\alpha(n^p+1), U_\alpha(n^p+2), U_\alpha(n^p+3)\rangle$ and then let
$\bar{\Upsilon}=\langle \bar{U}_\alpha:\alpha\in w^p\rangle$ and
$\bar{V}=\langle Q_{i,\alpha,\beta},V_{i,\alpha,\beta},W_{i,\alpha,\beta}:
i<k,\ (\alpha,\beta)\in  (w^p)^{\langle 2\rangle}\rangle$. These choices
clearly determine a condition $q=(w^p,M^p,\bar{r}^p,n^p+3, \bar{\Upsilon}, 
\bar{V},h^p) \in D^1_N$ stronger than $p$. 
\medskip

\noindent (iii)\quad Similarly to (ii), we just make $U_\alpha(n^p+1)$,
$U_\alpha(n^p+2)$, $U_\alpha(n^p+3)$,  $V_{i,\alpha,\beta}$,
$Q_{i,\alpha,\beta}$ and $W_{i,\alpha,\beta}$ suitably small.  
\medskip

\noindent (3)\quad Analogous.
\end{proof}

\begin{lemma}
  \label{abc}
Suppose that $p\in\bbP$ and $\alpha,\beta,\gamma,\delta\in w^p$ are such
that $\alpha\neq \beta$. If
\[\Big(U^p_\alpha(n^p-2)-U^p_\beta(n^p-2)\Big)\cap
  \Big(U^p_\gamma(n^p-2)-U^p_\delta(n^p-2)\Big)\neq \emptyset,\]
then $\alpha=\gamma$ and $\beta=\delta$.
\end{lemma}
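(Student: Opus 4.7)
The plan is to assume the intersection is non-empty: pick witnesses $a \in U^p_\alpha(n^p-2)$, $b \in U^p_\beta(n^p-2)$, $c \in U^p_\gamma(n^p-2)$, $d \in U^p_\delta(n^p-2)$ satisfying $a - b = c - d$, equivalently
\[a - b - c + d = 0.\]
I will then derive a contradiction whenever $(\alpha,\beta) \ne (\gamma,\delta)$ by turning this equality into a forbidden combination under the $8$-good qif property in $(\boxtimes)_4$(a).

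First, I would rule out $\gamma = \delta$ using the separation in $(\boxtimes)_5$(b). Translation invariance of $\rho$ gives $\rho(a,b) = \rho(a-b, 0) = \rho(c-d, 0) = \rho(c, d)$; if $c, d \in U^p_\gamma(n^p-2)$ then $\rho(c, d) \le {\rm diam}_\rho\bigl(U^p_\gamma(n^p-2)\bigr)$, while pairwise disjointness of the qif in $(\boxtimes)_4$(a) together with $\alpha \ne \beta$ triggers $(\boxtimes)_5$(b) to give $\rho(a, b) > {\rm diam}_\rho\bigl(U^p_\gamma(n^p-2)\bigr)$, a contradiction. Symmetrically, $\alpha = \beta$ is excluded by hypothesis.

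With $\gamma \ne \delta$ in hand, each index in $w^p$ appears at most twice among $\alpha, \beta, \gamma, \delta$, so the equality $a - b - c + d = 0$ collects into a $(2,\cV,8)$-combination from the qif $\cV$ of $(\boxtimes)_4$(a): for each $\zeta \in w^p$ I place at most two of $\{a, -b, -c, d\}$ into the slot $U^p_\zeta(n^p-2)$ according to which of $\alpha, \beta, \gamma, \delta$ equal $\zeta$, and pad with further qif sets (possible since $|w^p| + k\binom{|w^p|}{2} \ge 8$) carrying zero coefficients. The signed coefficient on $U^p_\zeta$ equals (the number of $\alpha,\delta$ equal to $\zeta$) minus (the number of $\beta, \gamma$ equal to $\zeta$). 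A direct check shows these signed coefficients all vanish precisely when $\alpha = \gamma$ and $\beta = \delta$: a signed coefficient of magnitude $2$ forces either $\alpha = \delta$ or $\beta = \gamma$, and then the overall balance forces the other pair to collapse as well, yielding $\alpha = \beta$; while with all multiplicities $\le 1$ the vanishing of the signed coefficients combined with $\alpha \ne \beta$ and $\gamma \ne \delta$ forces $\gamma = \alpha$ and $\delta = \beta$. Hence assuming $(\alpha,\beta) \ne (\gamma,\delta)$ produces a nontrivial $(2,\cV,8)$-combination equal to $0$, contradicting $(\boxtimes)_4$(a).

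The main obstacle is the multiplicity bookkeeping: the qif admits only two elements per set with coefficients in $\{-1,0,1\}$, so the configurations in which three of $\{a,b,c,d\}$ lie in a single $U^p_\zeta(n^p-2)$ cannot be handled by the qif alone. These configurations occur precisely when $\gamma = \delta \in \{\alpha,\beta\}$, and the preliminary step via $(\boxtimes)_5$(b) sweeps them all away uniformly, after which the qif argument proceeds without further obstruction.
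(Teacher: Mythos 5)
Your proof is correct and follows the same route as the paper: rule out $\gamma=\delta$ via translation invariance of $\rho$ together with $(\boxtimes)_5$(b), then observe that $a-b-c+d=0$ is a $(2,8)$-combination from the $8$-good qif of $(\boxtimes)_4$(a), so it must be trivial, and triviality of the signed coefficients forces $\{\alpha,\delta\}=\{\beta,\gamma\}$ as multisets, hence (using $\alpha\neq\beta$) $\alpha=\gamma$ and $\beta=\delta$. The only blemish is the sentence about a magnitude-$2$ coefficient ``yielding $\alpha=\beta$'' — what actually happens in that case is that the slot with sum $\pm 2$ directly witnesses a nontrivial combination equal to $0$, contradicting the qif property; no collapsing of pairs is involved — but this does not affect the correctness of the argument.
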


\begin{proof}
Let $n=n^p$. Suppose that $a\in U^p_\alpha(n-2)$, $b\in U^p_\beta(n-2)$,
$c\in U^p_\gamma(n-2)$ and $d\in U^p_\delta(n-2)$ are such that
$a-b=c-d$. Then $a+(c-a)=c$ and $b+(c-a)=d$, so as $\rho$ is invariant we
have $\rho(a,b)=\rho(c,d)$. Demand \ref{fordef}(A)$(\boxtimes)_5$(b)
implies that $\rho(a,b)>{\rm diam}_\rho(U^p_\gamma(n-2))$ and hence
$\gamma\neq \delta$. Now look at $a+d-b-c$: since $\alpha\neq\beta$ and
$\gamma\neq\delta$ it is a (2,4)--combination from an 8--good qif
$\langle U^p_\zeta(n-2): \zeta\in w\rangle$. Since the value of
the combination is 0, it has to be trivial. Hence immediately
$\alpha=\gamma$ and $\beta=\delta$.
\end{proof}

\begin{lemma}
  \label{Knaster}
  The forcing notion $\bbP$ has the Knaster property.
\end{lemma}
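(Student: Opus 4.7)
The plan is the standard $\Delta$-system plus uniformization strategy, combined with a direct amalgamation for any two conditions from the thinned subfamily. Let $\{p_\xi:\xi<\omega_1\}\subseteq\bbP$ be given, and suppose we seek an uncountable pairwise compatible subfamily. By the $\Delta$-system lemma, thin to an uncountable $Z\subseteq\omega_1$ on which $\{w^{p_\xi}:\xi\in Z\}$ forms a $\Delta$-system with a fixed root $w^*$ and with $|w^{p_\xi}|$ constant. Since $M^p,n^p,\bar r^p$ and the basic opens from the countable family $\cU$ all range over countable sets, and only countably many order-types of finite subsets of $\omega_1$ occur, a further pigeonhole thinning lets us assume that $M^{p_\xi},n^{p_\xi},\bar r^{p_\xi}$ are constant, the order-isomorphism $\pi_{\xi,\xi'}:w^{p_\xi}\to w^{p_{\xi'}}$ fixes $w^*$ pointwise, and $\pi_{\xi,\xi'}$ transports the $\bar U$-sequences, the function $h^{p_\xi}$, and the basic opens $Q,V,W$ of $p_\xi$ bijectively onto those of $p_{\xi'}$.

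Fix now $\xi\ne\xi'$ in $Z$ and write $u_\xi:=w^{p_\xi}\setminus w^*$, $u_{\xi'}:=w^{p_{\xi'}}\setminus w^*$. I will amalgamate $p_\xi$ and $p_{\xi'}$ into a common extension $r$ with $w^r:=w^{p_\xi}\cup w^{p_{\xi'}}$, inheriting $M,n,\bar r$ and each $\bar U^r_\alpha$ from whichever of $p_\xi,p_{\xi'}$ contains $\alpha$, and inheriting $h^r$ and $\bar V^r$ on pairs lying inside a single $w^{p_\xi}$. For the ``different-position'' cross pairs $(\alpha,\beta)$ with $\alpha\in u_\xi$, $\beta\in u_{\xi'}$ and $\pi_{\xi,\xi'}(\alpha)\ne\beta$, I will set $h^r(\alpha,\beta):=h^{p_\xi}(\alpha,\pi_{\xi',\xi}(\beta))$ and define $Q^r,V^r,W^r$ analogously; the uniformization guarantees the symmetric equality $h^{p_\xi}(\alpha,\pi_{\xi',\xi}(\beta))=h^{p_{\xi'}}(\pi_{\xi,\xi'}(\alpha),\beta)$, so this is coherent. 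The remaining ``same-position'' cross pairs $(\alpha,\pi_{\xi,\xi'}(\alpha))$ correspond to no pair of either $p_\xi$ or $p_{\xi'}$, so I will assign them fresh labels in $[M^p,M^r)$, extend $\bar r^r$ by $n-2$ on the new labels, and use Proposition~\ref{existence} together with Observation~\ref{obs3.2}(3) to choose fresh basic opens $Q^r,V^r,W^r$ so that the enlarged system still satisfies the qif and geometric clauses (this is permissible because the uniform data already gives $U^r_\alpha(\ell)=U^r_{\pi_{\xi,\xi'}(\alpha)}(\ell)$, which is allowed by $(\boxtimes)_5$(a)). By construction every summand in $F(r,m)$ with $m<M^p$ appears in both $F(p_\xi,m)$ and $F(p_{\xi'},m)$, so the required inclusions $F(r,m)\subseteq F(p_\xi,m)\cap F(p_{\xi'},m)$ hold; clauses $(\boxtimes)_1$--$(\boxtimes)_3,(\boxtimes)_6$ are immediate, and $(\boxtimes)_4,(\boxtimes)_5$ follow from inheritance together with the careful choice of fresh basic opens.

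The hard part will be the verification of the rank-theoretic clauses $(\boxtimes)_7$ and $(\boxtimes)_8$ for $r$, since these quantify over subsets $u\subseteq w^r$ that may straddle $u_\xi$ and $u_{\xi'}$. The strategy will be to reduce every crossing configuration to an intra-condition one. For $(\boxtimes)_7$, given $u,u'\subseteq w^r$, a bijection $\pi:u\to u'$ and $c\in\bbH$ satisfying the hypothesis, the constraint $h^r(\alpha,\beta)=h^r(\pi(\alpha),\pi(\beta))$ combined with the uniform labeling of cross pairs should force $\pi$ to act coherently with the order-isomorphism $\pi_{\xi,\xi'}$ on the new positions; composing appropriately with $\pi_{\xi,\xi'}$ then pushes the whole configuration inside a single $w^{p_\xi}$, whereupon $(\boxtimes)_7$ for $p_\xi$ delivers the desired equalities of $\rksp,\bj,\bk$ and the positional equivalence. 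For $(\boxtimes)_8$, any hypothetical $\alpha'\in w^r\setminus u$ matching $\alpha$ on all labels $h^r(\alpha,\beta)$ for $\beta\in u\setminus\{\alpha\}$ must, by the uniform labeling of cross pairs (and the freshness of same-position labels), lie in the same $w^{p_\xi}$ as $u\cap w^{p_\xi}$, contradicting $(\boxtimes)_8$ for $p_\xi$. Once both clauses are verified, $r\in\bbP$ extends both $p_\xi$ and $p_{\xi'}$, establishing the Knaster property.
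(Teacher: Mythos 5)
The proposal's key deviation from the paper lies in the treatment of $h^r$ on the cross pairs, and this is where it breaks. You propose to give fresh labels only to the \emph{same-position} cross pairs $(\alpha,\pi_{\xi,\xi'}(\alpha))$ and to inherit $h^r(\alpha,\beta):=h^{p_\xi}(\alpha,\pi_{\xi',\xi}(\beta))$ for the \emph{different-position} cross pairs. This makes $\alpha$ and its twin $\alpha':=\pi_{\xi,\xi'}(\alpha)$ into $h^r$-indistinguishable elements over $w^{p_\xi}$: for $\beta\in w_0$ you get $h^r(\alpha',\beta)=h^{p_{\xi'}}(\alpha',\beta)=h^{p_\xi}(\alpha,\beta)=h^r(\alpha,\beta)$ (by the $\Delta$-system uniformity), and for $\beta\in u_\xi\setminus\{\alpha\}$ the pair $(\alpha',\beta)$ is a different-position cross pair, so your rule gives $h^r(\alpha',\beta)=h^{p_\xi}(\pi_{\xi',\xi}(\alpha'),\beta)=h^{p_\xi}(\alpha,\beta)=h^r(\alpha,\beta)$. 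Thus $\alpha'$ matches $\alpha$ on all labels $h^r(\alpha,\cdot)$ over any $u\subseteq w^{p_\xi}$, and also $U^r_\alpha(\ell)=U^r_{\alpha'}(\ell)$ for all $\ell\le n^r$. Consequently, whenever the hypotheses of $(\boxtimes)_8$ are satisfied by some $u\subseteq w^{p_\xi}$ with $\alpha\in u\cap u_\xi$ (a genuine possibility, e.g. a pair $u=\{\alpha,\beta\}$ of rank $-1$), your amalgam $r$ has $\alpha'\in w^r\setminus u$ witnessing $(\maltese)^{\alpha'}$, so $r\notin\bbP$. Your stated claim --- that any $\alpha'$ matching $\alpha$ ``must lie in the same $w^{p_\xi}$ as $u\cap w^{p_\xi}$'' --- is therefore false; the twin $\alpha'$ is an explicit counterexample.

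The paper avoids exactly this problem by assigning fresh labels to \emph{all} cross pairs (a bijection from $(w^{p_\xi}\setminus w_0)\times(w^{p_{\xi'}}\setminus w_0)$ onto $[M^{p_\xi},M)$). Then $h^q(\alpha',\beta)\ge M^{p_\xi}>h^q(\alpha,\beta)$ for any cross pair $(\alpha',\beta)$, so $\alpha'$ can only satisfy $(\maltese)^{\alpha'}$ in the residual case $u\setminus\{\alpha\}\subseteq w_0$. To rule that out, the paper derives an extra consequence of the $\Delta$-system uniformization, its $(*)_3$, which ties $\bk$ to the non-root elements; note that this requires an additional uniformization item you omitted, namely $\rksp(u)=\rksp(\pi^*[u])$, $\bj(u)=\bj(\pi^*[u])$, $\bk(u)=\bk(\pi^*[u])$ for all $u\subseteq w^{p_\xi}$. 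That same item is also what legitimizes ``pushing the configuration into a single $w^{p_\xi}$'' in your $(\boxtimes)_7$ sketch. Finally, the paper takes $n^q=n^{p_\xi}+3$ and extends all $\bar U$-sequences with fresh opens; this increase in $n$ is what makes the straddling case of $(\boxtimes)_7$ work (every straddling $u$ contains a cross pair with $r^q$-value $n^q-2$, which forces $\ell\ge n^q-2$ and hence puts you at qif level). With $n^r$ unchanged, as you propose, that argument is unavailable, and you would need to redo the straddling case from scratch.
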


\begin{proof}
 Suppose $\langle p_\vare:\vare<\omega_1\rangle$ is a sequence of
 pairwise distinct conditions from $\bbP$. Applying standard
 $\Delta$--lemma based cleaning procedure we may find $w_0\subseteq
 \lambda$ and $A\in [\omega_1]^{\omega_1}$ such that for distinct
 $\xi,\zeta\in A$ the following demands $(*)_1+(*)_2$ are satisfied. 
 \begin{enumerate}
 \item[$(*)_1$]  $|w^{p_\xi}|=|w^{p_\zeta}|$, $w_0=w^{p_\xi}\cap
   w^{p_\zeta}$, $M^{p_\xi}=M^{p_\zeta}$, $n^{p_\xi}=n^{p_\zeta}$,
   $\bar{r}^{p_\xi}= \bar{r}^{p_\zeta}$.
 \item[$(*)_2$] If $\pi^*:w^{p_\zeta}\longrightarrow w^{p_\xi}$ is the
   order isomorphism, then
   \begin{itemize}
   \item $\pi^*\rest w_0$ is the identity,
\item $\bar{U}^{p_\zeta}_\alpha(\ell)=
 \bar{U}^{p_\xi}_{\pi^*(\alpha)}(\ell)$ whenever  $\alpha\in w^{p_\zeta}$,
 $\ell\leq n^{p_\zeta}$,  
\item if $(\alpha,\beta)\in \big(w^{p_\zeta}\big)^{\langle 2\rangle}$,
  $i<k$, then $h^{p_\zeta}(\alpha,\beta)= h^{p_\xi}( \pi^*(\alpha),\pi^* 
(\beta))$, and 
\[Q^{p_\zeta}_{i,\alpha,\beta} =  Q^{p_\xi}_{i,\pi^*(\alpha), 
\pi^*(\beta)},\quad V^{p_\zeta}_{i,\alpha,\beta} =  V^{p_\xi}_{i,\pi^*(\alpha),
\pi^*(\beta)}\ \mbox{ and }\ W^{p_\zeta}_{i,\alpha,\beta} =
W^{p_\xi}_{i,\pi^*(\alpha), \pi^*(\beta)},\] 
\item if $\emptyset\neq u\subseteq w^{p_\zeta}$, then $\rksp(u)=
  \rksp(\pi^*[u])$, $\bj(u)=\bj(\pi^*[u])$ and $\bk(u)=\bk(\pi^*[u])$. 
   \end{itemize}
 \end{enumerate}
 Note that then for all $\xi\in A$ we have
 \begin{enumerate}
 \item[$(*)_3$] if $u\subseteq w_0$, $\alpha\in w^{p_\xi}\setminus w_0$ and
   $\rksp\big( u\cup\{\alpha\}\big)=-1$, then $\bk\big(u\cup\{\alpha\}\big)
   \neq |u\cap\alpha|$. 
 \end{enumerate}
 [Why? Suppose towards contradiction that $\bk\big(u\cup\{\alpha\}\big) =
 |u\cap\alpha|$. For $\zeta\in A$ let $\alpha_\zeta\in
 w^{p_\zeta}$ be such that $|\alpha_\zeta\cap w^{p_\zeta}|=|\alpha\cap
 w^{p_\xi}|$. By $(*)_2$ we have
 \[j\stackrel{\rm def}{=}\bj\big(u\cup\{\alpha\}\big) =
\bj\big(u\cup\{\alpha_\zeta\}\big) 
\ \mbox{ and }\  \bk\big( u\cup\{\alpha_\zeta\}\big)=
\bk\big(u\cup\{\alpha\}\big)= |u\cap \alpha| = |u\cap \alpha_\zeta|
\stackrel{\rm def}{=}k.\] 
Therefore, letting $u\cup\{\alpha\}=\{\alpha_0,\ldots,\alpha_{\ell-1}\}$ be  
the increasing enumeration, we have $\alpha_k=\alpha$ and
\[\bbM\models R_{\ell,j}[\alpha_0,\ldots,\alpha_{k-1},\alpha_\zeta,
  \alpha_{k+1}, \ldots,\alpha_{\ell-1}]\qquad \mbox{ for all }\zeta\in A.\]  
However, this contradicts the choice of $\bj,\bk$ in Definition \ref{hypo2}
and the assumption $\rksp\big(u\cup\{\alpha\}\big)=-1$.] 
\medskip

 We will argue now that for $\xi,\zeta\in A$ the conditions $p_\xi,p_\zeta$
 are compatible. So let $\xi<\zeta$ be from $A$ and let $\pi^*:w^{p_\zeta}
 \longrightarrow w^{p_\xi}$ be the order isomorphism. Set $w=w^{p_\xi}\cup 
 w^{p_\zeta}$, $M=M^{p_\xi}+\big | w^{p_\xi}\setminus
 w^{p_\zeta}\big|^2$, $n=n^{p_\xi}+3$ and let $\bar{r}=\langle r_m:
 m<M\rangle$ be such that $r_m=r^{p_\xi}_m$ if  $m<M^{p_{\xi}}$, and
 $r_m=n-2$ if $M^{p_{\xi}}\leq m<M$.  

 Use Proposition \ref{existence} and Observation \ref{obs3.2}(iii) to choose 
$U_\alpha(n-2)$, $U_\alpha(n-1)$, $U_\alpha(n)$, $Q_{i,\alpha,\beta}$,
$V_{i,\alpha,\beta}$ and  $W_{i,\alpha,\beta}$ from $\cU$ for $i<k$ and
$(\alpha,\beta)\in w^{\langle 2\rangle}$ so that  
\begin{enumerate}
\item[$(*)_4$] 
\begin{enumerate}
\item[(a)] demands \ref{fordef}$(\boxtimes)_3$--$(\boxtimes)_5$ are  
  satisfied and
\item[(b)] if $(\alpha,\beta)\in \big(w^{p_\xi}\big)^{\langle 2\rangle}$, $i<k$, 
  then $U_\alpha(n-2)\subseteq U^{p_\xi}_\alpha(n^{p_\xi})$ and
  $Q_{i,\alpha,\beta}\subseteq  W_{i,\alpha,\beta}^{p_\xi}$, and
\item[(c)] if $(\alpha,\beta)\in \big(w^{p_\zeta}\big)^{\langle 2\rangle}$, $i<k$, 
  then $U_\alpha(n-2)\subseteq U^{p_\zeta}_\alpha(n^{p_\zeta})$ and
  $Q_{i,\alpha,\beta}\subseteq  W_{i,\alpha,\beta}^{p_\zeta}$.
\end{enumerate}
\end{enumerate}
Let $\bar{U}_\alpha=\bar{U}^{p_\xi}_\alpha\conc \langle U_\alpha(n-2),
U_\alpha(n-1), U_\alpha(n)\rangle$ if $\alpha\in w^{p_\xi}$ and
$\bar{U}_\alpha= \bar{U}^{p_\zeta}_\alpha\conc \langle  U_\alpha(n-2),
U_\alpha(n-1), U_\alpha(n)\rangle$ if $\alpha\in w^{p_\zeta}$, and let
$\bar{\Upsilon},\bar{V}$ be defined naturally. Choose $h:w^{\langle
  2\rangle} \longrightarrow M$ extending both $h^{p_\xi}$ and
$h^{p_\zeta}$ in such a manner that $h(\alpha,\beta)=h(\beta,\alpha)$ for
$(\alpha,\beta)\in w^{\langle 2\rangle}$ and the mapping    
\[\big (w^{p_\xi}\setminus w_0\big)\times \big(w^{p_\zeta}\setminus w_0\big)
  \ni (\alpha,\beta)\mapsto h(\alpha,\beta)\]
is a bijection onto $[M^{p_\xi},M)$. Finally we set
$q=(w,M,\bar{r},n,\bar{\Upsilon},\bar{V},h)$. 

Let us argue that $q\in\bbP$ (once we are done with that, it should be clear
that $q$ is stronger than both $p_\xi$ and $p_\zeta$). The only potentially
unclear demands to verify are $(\boxtimes)_7$ and $(\boxtimes)_8$ of
\ref{fordef}. 

First, to demonstrate $(\boxtimes)_7$, suppose that $u,u'\subseteq w$ and
$\pi:u\longrightarrow u'$ and $\ell\leq n$ and $c\in \bbH$ are as in the
assumptions there. Let us consider the following three cases.
\medskip

\noindent {\sc Case 1:}\quad $u\subseteq w^{p_\xi}$.\\
Then for each $(\alpha,\beta)\in u^{\langle 2\rangle}$ we have
$h(\alpha,\beta)<M^{p_\xi}$, so this also holds for all $(\gamma,\delta)\in
(u')^{\langle 2\rangle}$. Consequently, either $u'\subseteq w^{p_\xi}$ or
$u'\subseteq w^{p_\zeta}$.  
\medskip

If $u'\subseteq w^{p_\xi}$, then let $\ell'=\min(\ell,n^{p_\xi})$ and
consider $u,u',\pi,\ell'$. Using clause $(\boxtimes)_7$ for $p_\xi$ we
immediately obtain the desired conclusion.  \medskip

If $u'\subseteq w^{p_\zeta}$, then we let $\ell'=\min(\ell,n^{p_\xi})$ and 
we consider $u,\pi^*[u'],\ell'$ and $\pi^*\circ\pi$ (where, remember,
$\pi^*:w^{p_\zeta}\longrightarrow w^{p_\xi}$ is the order
isomorphism). By $(*)_1+(*)_2$, clause $(\boxtimes)_7$ for $p_\xi$
applies to them and we get     
\begin{itemize}
\item $\rksp(u)=\rksp\big(\pi^*[u']\big)$, $\bj(u)=\bj\big(
  \pi^*[u']\big)$, $\bk(u)=\bk\big( \pi^*[u']\big)$ and 
\item for $\alpha\in u$,\quad $|\alpha\cap u|=\bk(u)\ \Leftrightarrow \ 
\big |(\pi^*\circ\pi)(\alpha)\cap \pi^*[u']\big |=\bk(u)$.
\end{itemize}
Now $(*)_1$, $(*)_2$ immediately imply the desired conclusion.  
\medskip

\noindent {\sc Case 2:}\quad $u\subseteq w^{p_\zeta}$.\\
Same as the previous case, just interchanging $\xi$ and $\zeta$.
\medskip

\noindent {\sc Case 3:}\quad $u\setminus w^{p_\xi}\neq \emptyset
\neq u\setminus w^{p_\zeta}$.\\
Choose $\alpha\in u\setminus w^{p_\xi}$ and $\beta\in u\setminus
w^{p_\zeta}$. Then $h(\alpha,\beta)\geq M^{p_\xi}$ and therefore 
$n-2=r_{h(\alpha,\beta)}\leq \ell$.  

We will argue that $\pi$ is the identity on $u$ and $u=u'$ (so the needed
assertion is immediate). Suppose towards contradiction that we got a $\gamma\in u$ 
such that $\pi(\gamma)\neq \gamma$. Since $|u|\geq 4$ we may also pick
$\gamma'\in u$ such that $\{\gamma,\pi(\gamma)\}\cap
\{\gamma',\pi(\gamma')\}=\emptyset$. Now we consider two subcases determined
by the  property of $c\in \bbH$.   

Suppose $\big(U_\delta(\ell)+c\big)\cap U_{\pi(\delta)}(\ell)\neq \emptyset$
for all $\delta\in u$. Then for some $b\in U_\gamma(\ell)$, $b'\in
U_{\pi(\gamma)}(\ell)$, $b''\in U_{\gamma'}(\ell)$ and $b'''\in
U_{\pi(\gamma')}(\ell)$ we have $b'-b=c=b'''-b''$. However, this (and the 
choice of $\gamma$ and $\gamma'$) gives immediate contradiction with
$\langle U_\delta(\ell): \delta\in w\rangle$ being a good qif  (remember
$\ell\geq n-2$).    

Assume now that $\big(c-U_\delta(\ell)\big)\cap U_{\pi(\delta)}(\ell)\neq 
\emptyset$ for all $\delta\in u$. Then for some $b\in U_\gamma(\ell)$,
$b'\in U_{\pi(\gamma)}(\ell)$, $b''\in U_{\gamma'}(\ell)$ and $b'''\in
U_{\pi(\gamma')}(\ell)$ we have $b'+b=c=b'''+b''$, getting immediate
contradiction with $\langle U_\delta(\ell): \delta\in w\rangle$ being a good 
qif.
\medskip

Now, concerning  $(\boxtimes)_8$, suppose that $u\subseteq w$, $\ell\leq n$ 
and $\alpha\in u$ are such that 
\begin{itemize}
\item $|\alpha\cap u|=\bk(u)$ and $\rksp(u)=-1$ and 
\item $r_{h(\beta,\beta')}\leq \ell$ and $U_\beta(\ell)\cap U_{\beta'}(\ell)
  = \emptyset$ for all $(\beta,\beta')\in u^{\langle 2\rangle}$. 
\end{itemize}
We want to argue that there is no $\alpha'\in w$ such that 
\begin{enumerate}
\item[$(\maltese)^{\alpha'}$] $\alpha'\notin u$, \quad $h(\alpha,\beta)=
  h(\alpha', \beta)$ for all $\beta\in u\setminus\{\alpha\}$, and
  $U_\alpha(\ell)=U_{\alpha'}(\ell)$. 
\end{enumerate}
This is immediate if $\ell\geq n-2$, so let us assume $\ell\leq
n^{p_\zeta}$. Then we must also have $r_{h(\beta,\beta')}\leq n^{p_\zeta}$ for all
$(\beta,\beta')\in u^{\langle 2\rangle}$, so either $u\subseteq u^{p_\xi}$
or $u\subseteq u^{p_\zeta}$. By the symmetry, we may assume that $u\subseteq 
u^{p_\xi}$.  

If $u\subseteq w^{p_\xi}\cap w^{p_\zeta}$ then we may first use
$(\boxtimes)_8$ for $p_\xi$ to assert that there is no $\alpha'\in
w^{p_\xi}$ satisfying $(\maltese)^{\alpha'}$ and then in the same manner
argue that  no $\alpha'\in w^{p_\zeta}$ satisfies $(\maltese)^{\alpha'}$. 

If $u\subseteq w^{p_\xi}$ but $u\setminus w^{p_\zeta}\neq \emptyset$ and
$\alpha\in w^{p_\xi}\cap w^{p_\zeta}$, then $(\boxtimes)_8$ for $p_\xi$
implies there is no $\alpha'\in w^{p_\xi}$ satisfying
$(\maltese)^{\alpha'}$. Also if $\alpha'\in w^{p_\zeta}\setminus w^{p_\xi}$
then for $\beta\in  u\setminus w^{p_\zeta}$ we have $h(\alpha,\beta)<
M^{p_\xi}\leq h(\alpha',\beta)$, so $(\maltese)^{\alpha'}$ fails then
too. 

Thus we are left only with the possibility that $\alpha\in
u^{p_\xi}\setminus u^{p_\zeta}$. Like before, $(\boxtimes)_8$ for
$p_\xi$ implies there is no $\alpha'\in w^{p_\xi}$ satisfying
$(\maltese)^{\alpha'}$. So suppose now $\alpha'\in w^{p_\zeta}\setminus
w^{p_\xi}$. By $(*)_3$ we know that $(u\setminus
\{\alpha\})\setminus u^{p_\zeta}\neq \emptyset$, so let  $\beta\in
u\setminus u^{p_\zeta}$, $\beta\neq\alpha$. Then we have $h(\alpha,\beta)<
M^{p_\xi}\leq h(\alpha',\beta)$, so $(\maltese)^{\alpha'}$ fails. The
proof of $(\boxtimes)_8$ is complete now. 
\end{proof}

\begin{lemma}
\label{genset}
For each $(\alpha,\beta)\in \lambda^{\langle 2\rangle}$ and
  $i<k$,
\[  \begin{array}{ll}
\forces_\bbP& \mbox{`` the sets }\\
&\displaystyle \bigcap \big\{U^p_\alpha(n^p): p\in \name{G}_\bbP\
    \wedge\ \alpha\in w^p\big\}\quad\mbox{ and }\quad 
\bigcap \big\{W^p_{i,\alpha,\beta}: p\in \name{G}_\bbP\
    \wedge\ \alpha,\beta\in w^p\big\}\\
&\mbox{ have exactly one element each. ''}
  \end{array}\]
\end{lemma}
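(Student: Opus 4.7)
The plan is to argue in the generic extension $V[G]$, where $G$ is $\bbP$-generic over $V$; since the two intersections are handled by parallel arguments I describe only the first in detail. Fix $\alpha<\lambda$ and let $T_\alpha:=\bigcap\{U^p_\alpha(n^p):p\in G,\ \alpha\in w^p\}$. By density of $D^0_{\alpha,0,0}$ (Lemma \ref{dense}(2)(i)) the indexing family is nonempty, and it is downward directed under $\subseteq$: if $p,p'\in G$ and $r\in G$ extends both, then by Definition \ref{fordef}(C) we have $U^r_\alpha(n^r)\subseteq U^r_\alpha(n^p)=U^p_\alpha(n^p)$ and similarly for $p'$.

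For uniqueness I would simply invoke density of $D^1_N$: for every $N<\omega$ pick $q\in G\cap D^1_N$ with $\alpha\in w^q$; then $\diam(U^q_\alpha(n^q))\leq\diam(U^q_\alpha(n^q-2))<2^{-N}$, so $T_\alpha$ has $\rho^*$-diameter $0$ and hence at most one point.

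For existence I would use completeness of $\rho^*$. Inductively choose $p_0\leq p_1\leq\cdots$ in $G$ so that $\alpha\in w^{p_n}$, $\diam(U^{p_n}_\alpha(n^{p_n}-2))<2^{-n}$, and $\cl(U^{p_{n+1}}_\alpha(n^{p_{n+1}}-2))\subseteq U^{p_n}_\alpha(n^{p_n})$. At each step the relevant set of conditions is dense above $p_n$ (combining Lemma \ref{dense}(3) applied to $p_n$ with density of $D^1_{n+1}$), and since $p_n\in G$ such a set meets $G$. Cantor's intersection theorem then produces a unique $x^*\in\bigcap_n\cl(U^{p_n}_\alpha(n^{p_n}-2))$, and the closure nesting forces $x^*\in U^{p_n}_\alpha(n^{p_n})$ for every $n$.

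The main obstacle is the final step: verifying $x^*\in U^q_\alpha(n^q)$ for an \emph{arbitrary} $q\in G$ with $\alpha\in w^q$, not just the chosen chain. Because the $U^p_\alpha(n^p)$ are open, a naive Cauchy argument only places $x^*$ in their closures. I would bypass this with one more use of Lemma \ref{dense}(3): by genericity choose $q'\in G$ with $q'\geq q$ and $\cl(U^{q'}_\alpha(n^{q'}-2))\subseteq U^q_\alpha(n^q)$. For each $n$ let $r_n\in G$ extend both $q'$ and $p_n$ and pick any $y_n\in U^{r_n}_\alpha(n^{r_n})$; these $y_n$ lie in $U^{p_n}_\alpha(n^{p_n})$ (so $y_n\to x^*$ by the diameter bound) and simultaneously in $U^{q'}_\alpha(n^{q'}-2)$, giving $x^*\in\cl(U^{q'}_\alpha(n^{q'}-2))\subseteq U^q_\alpha(n^q)$. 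The second intersection, involving $W^p_{i,\alpha,\beta}$, is handled identically, using the pair $Q^p_{i,\alpha,\beta}\supseteq W^p_{i,\alpha,\beta}$ in place of $U^p_\alpha(n^p-2)\supseteq U^p_\alpha(n^p)$ and the corresponding clauses of $D^1_N$ and Lemma \ref{dense}(3) that bound diameters of $Q^p_{i,\alpha,\beta}$ and control $\cl(Q^q_{i,\alpha,\beta})\subseteq W^p_{i,\alpha,\beta}$.
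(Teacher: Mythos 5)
Your proof is correct and follows exactly the route the paper intends: the paper's proof is the one-line ``Follows from Lemma \ref{dense}(2)(ii), (3),'' and what you have written is the standard fill-in of that sketch, using $D^1_N$ for uniqueness via shrinking $\rho^*$-diameters, Lemma \ref{dense}(3) plus completeness of $\rho^*$ for existence via nested closures, and an approximating sequence to see that the limit actually lies inside (not merely in the closure of) $U^q_\alpha(n^q)$ for arbitrary $q\in \name{G}_\bbP$.
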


\begin{proof}
 Follows from Lemma \ref{dense}(2)(ii), (3). 
\end{proof}

\begin{definition}
\label{namesdef}
\begin{enumerate}
\item For $(\alpha,\beta)\in\lambda^{\langle 2\rangle}$ and $i<k$ let
  $\name{\eta}_\alpha$, $\name{\nu}_{i,\alpha,\beta}$ and
  $\name{h}_{\alpha,\beta}$ be $\bbP$--names such that 
\[  \begin{array}{ll}
\forces_\bbP&\mbox{`` }\displaystyle \{\name{\eta}_\alpha\}= \bigcap
              \big\{U^p_\alpha(n^p): p\in \name{G}_\bbP\ \wedge\
              \alpha\in w^p\big\},\\
&\displaystyle\quad \{\name{\nu}_{i,\alpha,\beta}\}=\bigcap \big\{W^p_{i,\alpha,\beta}:
  p\in \name{G}_\bbP\ \wedge\ \alpha,\beta\in w^p\big\}\\
&\displaystyle\quad\name{h}_{\alpha,\beta}=h^p(\alpha,\beta)\mbox{ for some
  (all) $p\in\name{G}_\bbP$ such that $\alpha,\beta\in w^p$. ''}
  \end{array}\]
\item For $m<\omega$ let $\name{\bF}_m$ be a $\bbP$--name such that 
\[\forces_\bbP \mbox{`` }\name{\bF}_m=\bigcap \big\{ F(p,m): p\in
  \name{G}_\bbP \  \wedge \ m<M^p\big\}.\mbox{ ''}\] 
  (Remember $F(p,m)$ was defined in Definition \ref{fordef}{\bf (B)}.)
\end{enumerate}
\end{definition}

\begin{lemma}
\label{baspropnames}
  \begin{enumerate}
\item For each $m<\omega$, $\forces_\bbP$`` $\name{\bF}_m$ is a
  closed subset of $\bbH$. '' 
\item For $i<k$ and $(\alpha,\beta)\in \lambda^{\langle 2\rangle}$ we
    have 
\[\forces_\bbP\mbox{`` }
  \name{\eta}_\alpha,\name{\nu}_{i,\alpha,\beta}\in \bbH,\quad
  \name{h}_{\alpha,\beta}<\omega,\quad \name{\nu}_{i,\alpha,\beta}=
  \name{\nu}_{i,\beta, \alpha}\ \mbox{ and }\  \name{\eta}_\alpha +
  \name{\nu}_{i,\alpha,\beta}\in
  \name{\bF}_{\name{h}_{\alpha,\beta}} .\mbox{ ''}\]
\item $\forces_\bbP$`` $\langle\name{\eta}_\alpha,
  \name{\nu}_{i,\alpha,\beta}: i<k,\ \alpha<\beta<\lambda\rangle$ is
  quasi independent (so they are also distinct) .''
\item $\forces_\bbP$`` $\Big|\big(-\name{\eta}_\alpha+
  \bigcup\limits_{m<\omega} \name{\bF}_m\big) \cap
  \big(-\name{\eta}_\beta+ \bigcup\limits_{m<\omega} \name{\bF}_m\big)
  \Big|\geq k$. ''
  \end{enumerate}
\end{lemma}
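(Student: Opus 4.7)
The plan is to address the four clauses in the order (2) $\to$ (1) $\to$ (3) $\to$ (4), since the later parts build on the earlier ones. For each clause I would pick a generic condition $p$ witnessing the relevant indices and then quote a structural demand of Definition~\ref{fordef}.

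Clause (2) is essentially bookkeeping. Lemma~\ref{genset} already shows that $\name{\eta}_\alpha$ and $\name{\nu}_{i,\alpha,\beta}$ denote elements of $\bbH$; density of $D^0_{\gamma,M,n}$ from Lemma~\ref{dense}(2)(i) ensures that eventually $\alpha,\beta\in w^p$ with $h^p(\alpha,\beta)<M^p$, and the coherence clause $h^q\rest (w^p)^{\langle 2\rangle}=h^p$ in Definition~\ref{fordef}(C) makes $\name{h}_{\alpha,\beta}$ a well-defined integer name. Symmetry $\name{\nu}_{i,\alpha,\beta}=\name{\nu}_{i,\beta,\alpha}$ is built into $(\boxtimes)_3$, and for the membership statement one checks that $\name{\eta}_\alpha+\name{\nu}_{i,\alpha,\beta}\in U^p_\alpha(n^p)+W^p_{i,\alpha,\beta}\subseteq F(p,h^p(\alpha,\beta))$ for every generic $p$ listing $\alpha,\beta$; intersecting over such $p$ gives the desired conclusion.

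The main obstacle is clause (1): each $F(p,m)$ is open, so a priori $\name{\bF}_m$ is only a $G_\delta$. The fix is Lemma~\ref{dense}(3), which produces, for every $p$, a stronger $q$ satisfying $\cl\bigl(U^q_\alpha(n^q-2)+Q^q_{i,\alpha,\beta}\bigr)\subseteq U^p_\alpha(n^p)+W^p_{i,\alpha,\beta}$. Combined with $U^q_\alpha(n^q)\subseteq U^q_\alpha(n^q-2)$ and $W^q_{i,\alpha,\beta}\subseteq Q^q_{i,\alpha,\beta}$ coming from Definition~\ref{fordef}(C), this yields $\cl(F(q,m))\subseteq F(p,m)$. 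Therefore $\name{\bF}_m$ is forced to coincide with the intersection of the closures $\cl(F(p,m))$ along a cofinal family of conditions, and is consequently closed.

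Clause (3) is where the $8$-good qif structure $(\boxtimes)_4$(a) pays off: given finitely many generic elements, fix a single generic $p$ listing all of their indices, so that each $\name{\eta}_\alpha$ sits in $U^p_\alpha(n^p-2)$ and each $\name{\nu}_{i,\alpha,\beta}$ in $Q^p_{i,\alpha,\beta}$. These are pairwise disjoint sets of an $8$-good qif, which immediately gives distinctness of the generic elements, and writing each coefficient $\iota\in\{-2,\ldots,2\}$ as $\iota=\iota'+\iota''$ with $\iota',\iota''\in\{-1,0,1\}$ (and taking $b_i=b_i'$) recasts any nontrivial $(2,8)$-combination as a nontrivial $(2,\cV,8)$-combination whose value is nonzero by the qif property. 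Finally, clause (4) is a short consequence of (2) and (3): letting $m=\name{h}_{\alpha,\beta}=\name{h}_{\beta,\alpha}$ (by $(\boxtimes)_6$) and using the symmetry $\name{\nu}_{i,\alpha,\beta}=\name{\nu}_{i,\beta,\alpha}$, all $k$ elements $\name{\nu}_{i,\alpha,\beta}$ (for $i<k$) lie in $\bigl(-\name{\eta}_\alpha+\name{\bF}_m\bigr)\cap\bigl(-\name{\eta}_\beta+\name{\bF}_m\bigr)$, and they are pairwise distinct by (3).
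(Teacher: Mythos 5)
Your proof is correct and matches the paper's intent: the paper itself gives only ``Should be clear (remember Lemma~\ref{dense}),'' and what you write is exactly the expected unpacking, using the dense sets $D^0$ and Lemma~\ref{dense}(3) for clauses (1)--(2), the $8$--good qif demand $(\boxtimes)_4$(a) together with the $\iota=\iota'+\iota''$ splitting for clause (3), and combining (2) and (3) for clause (4).
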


\begin{proof}
Should be clear (remember Lemma \ref{dense}).  
\end{proof}

\begin{lemma}
  \label{addin}
  Let $p=(w,M,\bar{r},n,\bar{\Upsilon},\bar{V},h)\in D^2_1\subseteq
  \bbP$ (cf. \ref{dense}(iii)) and 
  $a_\ell,b_\ell\in\bbH$ and $U_\ell,W_\ell\in\cU$ (for $\ell<4$) be such
  that the following conditions are satisfied.
\begin{enumerate}
\item[$(\circledast)_1$] $U_\ell\in \{U_\alpha(n):\alpha\in w\}$, $W_\ell\in
  \{W_{i, \alpha,\beta}:i<k,\ (\alpha,\beta)\in w^{\langle 2\rangle}\}$ (for
  $\ell<4$).
\item[$(\circledast)_2$]
  \begin{itemize}
  \item $(U_0+W_0)\cap (U_1+W_1)=\emptyset$,
  \item $(U_1+W_1)\cap (U_3+W_3)=\emptyset$,
  \item $(U_2+W_2)\cap (U_3+W_3)=\emptyset$,
  \item $(U_0+W_0)\cap (U_2+W_2)=\emptyset$.   
  \end{itemize}
\item[$(\circledast)_3$] $a_\ell\in U_\ell$ and $b_\ell\in W_\ell$ and
  $a_\ell+b_\ell \in \bigcup\limits_{m<M} F(p,m)$ for $\ell<4$.
\item[$(\circledast)_4$] $(a_0+b_0)-(a_1+b_1)=(a_2+b_2)-(a_3+b_3)$.
\end{enumerate}
Then for some $(\alpha,\beta)\in w^{\langle 2\rangle}$ and distinct $i,j<k$
we have
\begin{enumerate}
\item[either] $U_0=U_2=U_\alpha(n)$, $U_1=U_3=U_\beta(n)$, 
  $W_0=W_1=W_{i,\alpha,\beta}$, and $W_2=W_3=W_{j,\alpha,\beta}$,
\item[or] $U_0=U_1=U_\alpha(n)$, $U_2=U_3=U_\beta(n)$, 
  $W_0=W_2=W_{i,\alpha,\beta}$, and $W_1=W_3=W_{j,\alpha,\beta}$. 
\end{enumerate}
\end{lemma}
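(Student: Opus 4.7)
The plan is to rewrite $(\circledast)_4$ as the zero combination
\[a_0 - a_1 - a_2 + a_3 + b_0 - b_1 - b_2 + b_3 = 0\]
and analyze it via the 8--good qif from $(\boxtimes)_4$(a) together with a label--matching identity supplied by membership in $D^2_1$. First I would use the disjointness built into $D^2_1$: since $a_\ell + b_\ell \in F(p, m_\ell)$, there exist (unique) indices with $a_\ell + b_\ell \in U^p_{\alpha^*_\ell}(n) + W^p_{i_\ell, \alpha^*_\ell, \delta_\ell}$, and comparing this labelled sumset to $U_\ell + W_\ell \ni a_\ell + b_\ell$ via $D^2_1$ again forces the two labels to coincide. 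This yields $U_\ell = U^p_{\alpha^*_\ell}(n)$, $W_\ell = W^p_{i_\ell, \alpha^*_\ell, \delta_\ell}$, and in particular the label--matching constraint that the outer index of $U_\ell$ is one of the two indices of $W_\ell$.

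Next I would run a case analysis on $d_U := |\{U_\ell : \ell < 4\}|$ and $d_W := |\{W_\ell : \ell < 4\}|$. Because each $a_\ell$ lies in $U_{\alpha^*_\ell}(n-2)$ and each $b_\ell$ in a disjoint $Q_{i_\ell, \alpha_\ell, \beta_\ell}$ of the 8--good qif, whenever every occupied qif--slot carries at most two of our eight elements, Definition \ref{quasidef}(3) forces the aggregate per--slot coefficient to vanish. With coefficients $(+1, -1, -1, +1)$ on the $a$--side, the only qif--trivial identifications of the $U_\ell$'s are ``all four equal'' or the two pairings $(U_0 = U_1,\ U_2 = U_3)$ and $(U_0 = U_2,\ U_1 = U_3)$; the pairing $(U_0 = U_3,\ U_1 = U_2)$ yields aggregates $\pm 2$, while $d_U \in \{3, 4\}$ leaves at least one slot with aggregate $\pm 1$. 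The same trichotomy holds for the $W_\ell$'s.

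The residual ``over--occupied'' cases, where some qif--slot carries three or four of our elements (e.g.\ $d_U = 1$, or a triple+singleton $W$--configuration), escape the direct qif argument, but are eliminated by the label--matching: if three $\ell$'s share the same $W_\ell = W^p_{i, \alpha, \beta}$, then the three $\alpha^*_\ell$'s all lie in the two--element set $\{\alpha, \beta\}$, so by pigeonhole $U_{\ell_r} = U_{\ell_s}$ for some $r \neq s$, and then $(\circledast)_2$ (which combined with $D^2_1$ says precisely that for each $(\ell, \ell') \in \{(0,1), (0,2), (1,3), (2,3)\}$ one cannot have simultaneously $U_\ell = U_{\ell'}$ and $W_\ell = W_{\ell'}$) rules every such configuration out. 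Symmetrically for $U$--over--occupancy. After this, only $d_U = d_W = 2$ with matched pairings survives, and $(\circledast)_2$ forces $(U_0 = U_1,\ U_2 = U_3)$ to be matched with the $W$--pairing $(W_0 = W_2,\ W_1 = W_3)$, and $(U_0 = U_2,\ U_1 = U_3)$ with $(W_0 = W_1,\ W_2 = W_3)$.

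Finally I would read off the conclusion. In, say, the first surviving configuration, writing $U_0 = U_1 = U^p_\gamma(n)$ and $U_2 = U_3 = U^p_\zeta(n)$ with $\gamma \neq \zeta$, the label--matching forces $W_0 = W^p_{i, \gamma, \zeta}$ and $W_1 = W^p_{j, \gamma, \zeta}$ for some $i, j < k$ (the outer indices of $U_0$ and $U_2$ must each appear among the two indices of the shared set $W_0 = W_2$); then $(\circledast)_2$ applied to the pair $(0, 1)$ forces $W_0 \neq W_1$, hence $i \neq j$, matching the second alternative of the lemma with $(\alpha, \beta) = (\gamma, \zeta)$ appropriately ordered. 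The other surviving configuration yields the first alternative in the same way. The main obstacle will be the third step, where the over--occupied qif--slot cases require trading qif--arithmetic for the combinatorial label--matching identity coming from $D^2_1$.
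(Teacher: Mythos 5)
Your label-matching observation is correct: $D^2_1$ together with the fact that $a_\ell+b_\ell\in U_\ell+W_\ell$ and simultaneously in some matched block $U^p_\alpha(n)+W^p_{i,\alpha,\beta}$ inside $F(p,m)$ indeed forces $U_\ell$ to be indexed by one of the two $W_\ell$-indices. Your analysis of the at-most-two-per-slot configurations via the qif arithmetic, and the read-off of the two alternatives from the surviving crossed $2$-$2$ pairings, are also sound and essentially match the endgame of the paper's proof.

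The over-occupied cases are \emph{not} closed, though. Take $W_0=W_1=W_2=W^p_{i,\alpha,\beta}$: label-matching confines $U_0,U_1,U_2$ to $\{U^p_\alpha(n),U^p_\beta(n)\}$ and pigeonhole yields $U_\ell=U_{\ell'}$ for some pair, but that pair may be $\{1,2\}$ (or $\{0,3\}$ when the triple is different), and these two diagonals are exactly the pairs that $(\circledast)_2$ does not constrain --- it only covers the four cyclic pairs $\{0,1\},\{0,2\},\{1,3\},\{2,3\}$. So the configuration $W_0=W_1=W_2$, $U_1=U_2\neq U_0$ survives your argument. Worse, the ``symmetrically for $U$-over-occupancy'' clause fails altogether: if $U_0=U_1=U_2=U^p_\gamma(n)$ then label-matching only says each $W_\ell$ carries $\gamma$ as one of its two indices, while the other index is unconstrained, so there is no two-element target set and no pigeonhole at all. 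The paper kills both kinds of over-occupancy not by combinatorics but by shifting the excess into the immersed family: for three equal $W$'s, ${\rm LHS}_b=-\bigl((b_1-b_0)+b_2\bigr)+b_3\in -V_{i,\alpha,\beta}+V_3$ by $(\boxtimes)_4$(b), while for $U_0=U_1=U_2=U_3$ one rewrites ${\rm LHS}=\bigl((a_0-a_1)+b_0\bigr)+\bigl((a_3-a_2)+b_3\bigr)-b_1-b_2\in V_0+V_3-V_1-V_2$, where the $V$'s come from the still $8$-good qif $\langle U_\alpha(n-1),V_{i,\alpha,\beta}\rangle$. Without this immersion step (or an equivalent device) your proposal has a genuine hole precisely in the step you flagged as the main obstacle.
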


\begin{proof}
  For $\ell<4$ let $U_\ell^-$ and $V_\ell$ be such that
  \begin{itemize}
  \item if $U_\ell=U_\alpha(n)$ then $U_\ell^-=U_\alpha(n-1)$,
  \item if $W_\ell=W_{i,\alpha,\beta}$ then $V_\ell=V_{i,\alpha,\beta}$.    
  \end{itemize}
  Also, let
  \[{\rm LHS}_a=a_0-a_1-a_2+a_3,\quad {\rm LHS}_b=b_0-b_1-b_2+b_3,\quad 
    \mbox{ and }\quad {\rm LHS}={\rm LHS}_a+{\rm LHS}_b=0.\]
Put $\cU^*=\{U_0,U_1,U_2,U_3\}$, $\cW^*=\{W_0,W_1,W_2,W_3\}$,
$\cU^*_-=\{U_0^-,U_1^-,U_2^-,U_3^-\}$, and  $\cV^*=\{V_0,V_1,V_2,V_3\}$.
\medskip

\begin{enumerate}
\item[(a)] $|\cU^*|>1$.
\end{enumerate}
Why? If not, then $U_0=U_1=U_2=U_3$ and by the assumption $(\circledast)_2$
of the Lemma we have $\{W_0,W_3\}\cap \{W_1,W_2\}=\emptyset$. By
\ref{fordef}(A)$(\boxtimes)_4$, the latter also means that $\{V_0,V_3\}\cap
\{V_1,V_2\}=\emptyset$. Now,
\[{\rm LHS}=\big((a_0-a_1)+b_0\big)+\big((a_3-a_2)+b_3\big)-b_1-b_2,\]
and using \ref{fordef}(A)$(\boxtimes)_4$(b) we have $(a_0-a_1)+b_0\in V_0$,
$(a_3-a_2)+b_3\in V_3$, $b_1\in V_1$ and $b_2\in V_2$. Since
$\{V_0,V_3\}\cap \{V_1,V_2\}=\emptyset$ we see that ${\rm LHS}$ is a
nontrivial $(2,4)$--combination from $\cV^*$, so it cannot be 0,
contradicting assumption $(\circledast)_4$ of the Lemma. 
\medskip

\begin{enumerate}
\item[(b)] $|\cW^*|>1$.
\end{enumerate}
Why? Fully parallel to (a).
\medskip

\begin{enumerate}
\item[(c)] If for some $W$ we have $|\{\ell<4:W_\ell=W\}|=3$, then ${\rm LHS}_b$
  is a nontrivial $(2,4)$--combination from $\cV^*$.
\end{enumerate}
Why? Suppose $W_0=W_1=W_2\neq W_3$. Then, by
\ref{fordef}(A)$(\boxtimes)_4$(b), we have $(b_1-b_0)+b_2\in V_2$ and
$b_3\in V_3$. Hence ${\rm LHS}_b= -\big( (b_1-b_0)+b_2\big)+b_3\in -V_2+V_3$
and $V_2\neq V_3$. 

Suppose $W_0=W_1=W_3\neq W_2$. Then, by \ref{fordef}(A)$(\boxtimes)_4$(b), 
we have $(b_0-b_1)+b_3\in V_3$ and $b_2\in V_2$, so ${\rm LHS}_b=
(b_0-b_1)+b_3-b_2\in V_3-V_2$ and $V_2\neq V_3$.

The other cases are fully parallel. 
\medskip

\begin{enumerate}
\item[(d)] If for some $U$ we have $|\{\ell<4:U_\ell=U\}|=3$, then ${\rm LHS}_a$
  is a nontrivial $(2,4)$--combination from $\cU^*_-$.
\end{enumerate}
Why? Same argument as for (c), just using $U_\ell$ instead of $W_\ell$. 

\begin{enumerate}
\item[(e)] For every $W$ we have $|\{\ell<4:W_\ell=W\}|<3$.
\end{enumerate}
Why? We already know that $|\{\ell<4:W_\ell=W\}|<4$ (by (b)), so suppose
$\{\ell<4:W_\ell=W\}$ has exactly 3 elements. It follows from (c) that then
${\rm LHS}_b$ is a nontrivial $(2,4)$--combination from $\cV^*$. By (a) we
know that $|\cU^*|>1$. If for some $U$ we have $|\{\ell<4:U_\ell=U\}|=3$,
then we may use (d) to claim that ${\rm LHS}_a$ is a (nontrivial)
$(2,4)$--combination from $\cU^*_-$ and then ${\rm LHS}$ is a nontrivial
$(2,8)$--combination from $\cU^*_-\cup\cV^*$, contradicting
$(\circledast)_4$ (remember \ref{fordef}(A)$(\boxtimes)_4$).  

So suppose that for each $U$ we have $|\{\ell<4:U_\ell=U\}|\leq 2$. Then
${\rm LHS}_a$ is a (possibly trivial) $(2,4)$--combination from $\cU^*$ and
consequently ${\rm LHS}$ is a nontrivial $(2,8)$--combination from
$\cU^*\cup \cV^*$, so also from $\cU^*_-\cup\cV^*$, again contradicting
$(\circledast)_4$.   
\medskip

\begin{enumerate}
\item[(f)] For each $U$, $|\{\ell<4:U_\ell=U\}|<3$.
\end{enumerate}
Why? Same argument as for (e), just using (a) and (c) instead of (b) and
(d). 
\medskip

Since $p\in D^2_1$, it follows from our assumption $(\circledast)_3$ that
for each $\ell<4$, for some $\alpha=\alpha(\ell),\beta=\beta(\ell)$, and
$i=i(\ell)$ we have $U_\ell=U_\alpha(n)$ and $W_\ell=W_{i,\alpha,\beta}$. It
follows from (e)+(f) that ${\rm LHS}$ is a $(2,8)$--combination from
$\cU^*\cup\cW^*$. Necessarily it is a trivial combination (as ${\rm LHS}=0$
by $(\circledast)_4$).  Consequently ,
\begin{enumerate}
\item[$(\odot)_1$] either $U_0=U_1\neq U_2=U_3$, or $U_0=U_2\neq U_1=U_3$,
and 
\item[$(\odot)_2$] either $W_0=W_1\neq W_2=W_3$, or $W_0=W_2\neq W_1=W_3$. 
\end{enumerate}
Suppose $U_0=U_1\neq U_2=U_3$. Then by $(\circledast)_2$ we must have
$W_0\neq W_1$, $W_2\neq W_3$ and by $(\odot)_2$ we get $W_0=W_2$ and
$W_1=W_3$. Thus for some $(\alpha,\beta)\in w^{\langle 2\rangle}$ and
$i,j<k$, $i\neq j$, we have
\[U_0=U_1=U_\alpha(n),\quad U_2=U_3=U_\beta(n),\quad
  W_0=W_2=W_{i,\alpha,\beta}, \quad  W_1=W_3=W_{j,\alpha,\beta}.\]
Suppose now that $U_0=U_2$ and $U_1=U_3$. By $(\circledast)_2$ we must have
then $W_0\neq W_2$ and $W_1\neq W_3$. Therefore, by $(\odot)_2$, we may
conclude that $W_0=W_1$ and $W_2=W_3$. Consequently, for some
$(\alpha,\beta)\in w^{\langle 2\rangle}$ and distinct $i,j<k$ we have
\[U_0=U_2=U_\alpha(n),\quad U_1=U_3=U_\beta(n),\quad
  W_0=W_1=W_{i,\alpha,\beta}, \quad  W_2=W_3=W_{j,\alpha,\beta}.\]
\end{proof}

\begin{lemma}
  \label{getdiff}
  Let $p=(w,M,\bar{r},n,\bar{\Upsilon},\bar{V},h)\in D^2_1$ and 
  $\bX\subseteq \bbH$, $|\bX|\geq 5$. Suppose that $a_i(x,y),b_i(x,y),
  U_i(x,y)$ and $W_i(x,y)$ for $x,y\in \bX$, $x\neq y$ and $i<k$ satisfy the
  following demands (i)--(iv) (for all $x\neq y$, $i\neq i'$).
\begin{enumerate}
\item[(i)] $U_i(x,y)\in \{U_\alpha(n):\alpha\in w\}$, $W_i(x,y)\in
  \{W_{j, \alpha,\beta}:j<k,\ (\alpha,\beta)\in w^{\langle 2\rangle}\}$.
\item[(ii)]
  \begin{itemize}
  \item $\big(U_i(x,y)+W_i(x,y) \big)\cap \big (U_i(y,x)+W_i(y,x)
    \big)=\emptyset$, 
  \item $\big (U_i(x,y)+W_i(x,y) \big)\cap \big (U_{i'}(x,y)+W_{i'}(x,y)
    \big)=\emptyset$.    
  \end{itemize}
\item[(iii)] $a_i(x,y)\in U_i(x,y)$ and $b_i(x,y)\in W_i(x,y)$, and

  $a_i(x,y)+b_i(x,y)\in \bigcup\limits_{m<M} F(p,m)$.
\item[(iv)] $x-y=\big (a_i(x,y)+b_i(x,y) \big)-\big (a_i(y,x)+b_i(y,x)
  \big)$. 
\end{enumerate}
Then
\begin{enumerate}
\item $\bX-\bX\subseteq \bigcup\big\{U_\alpha(n-2)-U_\beta(n-2): \alpha,\beta \in 
  w\big\}$.
\item If $(x,y)\in \bX^{\langle 2\rangle}$ and $x-y\in
  U_\alpha(n-2)-U_\beta(n-2)$, $\alpha,\beta\in w$, then $\alpha\neq\beta$
  and for each $i<k$ we have $a_i(x,y)+b_i(x,y), a_i(y,x)+b_i(y,x)\in
  F(p,h(\alpha,\beta))$. 
\end{enumerate}
\end{lemma}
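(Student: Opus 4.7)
The plan is to apply Lemma~\ref{addin} for a fixed $(x,y) \in \bX^{\langle 2\rangle}$ and any distinct $i, i' < k$ (available since $k \geq 2$ by Hypothesis~\ref{hypno2}), using as the four inputs the tuples $(U_i(x,y), W_i(x,y), a_i(x,y), b_i(x,y))$, $(U_i(y,x), W_i(y,x), a_i(y,x), b_i(y,x))$, $(U_{i'}(x,y), W_{i'}(x,y), a_{i'}(x,y), b_{i'}(x,y))$, $(U_{i'}(y,x), W_{i'}(y,x), a_{i'}(y,x), b_{i'}(y,x))$. Hypothesis~(i) gives $(\circledast)_1$; (iii) gives $(\circledast)_3$; (ii) expands into the four disjointness clauses of $(\circledast)_2$ (two from each bullet, using $i \neq i'$); and (iv) applied to $i$ and to $i'$ yields $(\circledast)_4$. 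In the first outcome of Lemma~\ref{addin} one obtains $U_i(x,y) = U_{i'}(x,y) = U_\alpha(n)$, $U_i(y,x) = U_{i'}(y,x) = U_\beta(n)$ with $\alpha \neq \beta$ in $w$, and $W_i(x,y) = W_i(y,x) = W_{j,\alpha,\beta}$; since $b_i(x,y)$ and $b_i(y,x)$ lie in the same $W$--set, the immersion clause $(\boxtimes)_4$(b) of Definition~\ref{fordef} gives $(b_i(x,y) - b_i(y,x)) + a_i(x,y) \in U_\alpha(n-1)$, whence
\[x - y = \bigl(a_i(x,y) + b_i(x,y) - b_i(y,x)\bigr) - a_i(y,x) \in U_\alpha(n-1) - U_\beta(n) \subseteq U_\alpha(n-2) - U_\beta(n-2),\]
exactly as required by~(1), with $\alpha \neq \beta$.

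The hard part is ruling out the second outcome of Lemma~\ref{addin}, which would give $U_i(x,y) = U_i(y,x) = U_\alpha(n)$, $U_{i'}(x,y) = U_{i'}(y,x) = U_\beta(n)$, and $W_i(x,y) = W_{i'}(x,y) = W_{j_1,\alpha,\beta}$, $W_i(y,x) = W_{i'}(y,x) = W_{j_2,\alpha,\beta}$ with $j_1 \neq j_2$. Here $a_i(x,y), a_i(y,x) \in U_\alpha(n)$, so immersion instead places $x - y$ in $V_{j_1,\alpha,\beta} - W_{j_2,\alpha,\beta} \subseteq Q_{j_1,\alpha,\beta} - Q_{j_2,\alpha,\beta}$, which by the $8$--good qif condition $(\boxtimes)_4$(a) at level $n-2$ is disjoint from every $U_\gamma(n-2) - U_\delta(n-2)$ (any intersection would yield a nontrivial $(2,\cV,m)$--combination from that qif equal to $0$). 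To exclude this outcome I use $|\bX| \geq 5$: pick $z \in \bX \setminus \{x,y\}$, apply Lemma~\ref{addin} to the pairs $(x,z)$ and $(y,z)$ with the same $i, i'$, and substitute the three decompositions into the identity $(x - y) + (y - z) = (x - z)$. A case analysis of the four possible combinations of first/second outcomes for the pairs $(x,z)$ and $(y,z)$ shows that, combined with the assumed second-outcome expression for $x - y$, the resulting equation in $\bbH$ always rewrites as a nontrivial $(2, \cV, m)$--combination from the level $n-2$ qif equal to $0$, contradicting $(\boxtimes)_4$(a).

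For part~(2), the analysis above has already produced some $(\alpha_0, \beta_0) \in w^{\langle 2\rangle}$ with $\alpha_0 \neq \beta_0$ and $x - y \in U_{\alpha_0}(n-2) - U_{\beta_0}(n-2)$. If also $x - y \in U_\alpha(n-2) - U_\beta(n-2)$ for some $\alpha, \beta \in w$, then these two difference sets meet, so Lemma~\ref{abc} forces $\alpha = \alpha_0$ and $\beta = \beta_0$, and in particular $\alpha \neq \beta$. From the first outcome of Lemma~\ref{addin} we have $a_i(x,y) \in U_{\alpha_0}(n)$ and $b_i(x,y) \in W_{j, \alpha_0, \beta_0}$, so $a_i(x,y) + b_i(x,y) \in U_{\alpha_0}(n) + W_{j, \alpha_0, \beta_0} \subseteq F(p, h(\alpha_0, \beta_0)) = F(p, h(\alpha,\beta))$; the companion statement for $a_i(y,x) + b_i(y,x) \in U_{\beta_0}(n) + W_{j, \alpha_0, \beta_0}$ uses the symmetries $W_{j,\beta_0,\alpha_0} = W_{j,\alpha_0,\beta_0}$ and $h(\beta_0,\alpha_0) = h(\alpha_0,\beta_0)$ built into Definition~\ref{fordef}.
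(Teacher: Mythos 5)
Your reduction of part~(1) to Lemma~\ref{addin} is set up correctly, as is your handling of part~(2) via Lemma~\ref{abc} and the symmetry $W_{j,\beta,\alpha}=W_{j,\alpha,\beta}$, $h(\beta,\alpha)=h(\alpha,\beta)$. The gap is in your treatment of the ``second outcome'' (the paper's alternative~(B) in Lemma~\ref{addin}).

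You claim that one extra point $z$ and the identity $(x-y)+(y-z)=(x-z)$ always yield a nontrivial $(2,\cV,m)$--combination equal to $0$. That is not so. In the second outcome, $x-y\in V_{j,\alpha,\beta}-V_{j',\alpha,\beta}$ for some $j\neq j'$. The analogous thing can happen for both $(y,z)$ and $(x,z)$ with the \emph{same} pair $\{\alpha,\beta\}$ and consistent labels: explicitly, $x-y\in V_{j}-V_{j'}$, $y-z\in V_{j}-V_{j'}$, $x-z\in V_{j'}-V_{j}$ (abbreviating $V_{j,\alpha,\beta}$ by $V_j$). Substituting into the cocycle identity gives
\[v_1+v_2+v_3-v_1'-v_2'-v_3'=0,\qquad v_i\in V_j,\ v_i'\in V_{j'},\]
which has \emph{three} elements from each of the two sets $Q_{j,\alpha,\beta}$, $Q_{j',\alpha,\beta}$. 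A $(2,\cV,m)$--combination (Definition~\ref{quasidef}(3)) permits at most two elements per set with coefficients in $\{-1,0,1\}$, and the immersion property $(\boxtimes)_4$(b) does not reduce this sum to that form. So the $8$--good qif hypothesis does not touch this configuration, and your case analysis with three points is not exhaustive. This is precisely why the paper's proof of $(\heartsuit)$ in the case $k=2$ needs a \emph{fourth} point $t$: it first derives from $(\triangle)_3$ and $(\triangle)_4$ that the ``bad'' relation is transitive and the label $\Theta$ propagates, applies this to the triples $\{x,y,z\}$, $\{x,y,t\}$, and $\{x,z,t\}$, and then obtains a contradiction by comparing the forced values of $\Theta(z,x)$ and $\Theta(t,x)$ against $(\triangle)_2$. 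You also omit the split the paper makes between $k\ge 3$ (where outcome~(B) is ruled out directly by applying Lemma~\ref{addin} to two overlapping index pairs) and $k=2$ (where the four-point argument is genuinely needed). In short: the strategy is right, but the central case $k=2$ needs a substantially more careful argument than you indicate, and three points of $\bX$ do not suffice.
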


\begin{proof}
(1)\quad  Fix $x,y\in\bX$, $x\neq y$, for a moment.

Let $i\ne i'$, $i,i'<k$. We may apply Lemma \ref{addin} for $U_i(x,y)$,
$W_i(x,y)$, $U_i(y,x)$, $W_i(y,x)$, $a_i(x,y)$, $b_i(x,y)$, $a_i(y,x)$,
$b_i(y,x)$ here as $U_0,W_0,U_1,W_1,a_0,b_0,a_1,b_1$ there and for similar
objects with $i'$ in place of $i$ as $U_2,W_2,U_3,W_3,a_2,b_2,a_3,b_3$
there. This will produce distinct $\alpha=\alpha(x,y,i,i'), \beta=
\beta(x,y,i,i')\in w$ and distinct $j=j(x,y,i,i'),j'=j'(x,y,i,i')<k$ such
that 
\begin{enumerate}
\item[either (A)] $U_i(x,y)=U_{i'}(x,y)=U_\alpha(n)$,
  $U_i(y,x)=U_{i'}(y,x)=U_\beta(n)$,

  $W_i(x,y)=W_i(y,x)=W_{j,\alpha,\beta}$,
  $W_{i'}(x,y)=W_{i'}(y,x)=W_{j',\alpha,\beta}$,
\item[or (B)] $U_i(x,y)=U_i(y,x)=U_\alpha(n)$,
  $U_{i'}(x,y)=U_{i'}(y,x)=U_\beta(n)$,

  $W_i(x,y)=W_{i'}(x,y)=W_{j,\alpha,\beta}$,
  $W_i(y,x)=W_{i'}(y,x)=W_{j',\alpha,\beta}$.
\end{enumerate}
Note that if for some $i\neq i'$, $i,i'<k$, the possibility (A) above holds,
then it holds for all $i,i'<k$ and 
\[\begin{array}{r}
  x-y=\big(a_i(x,y)+b_i(x,y)\big)-\big(a_i(y,x)+b_i(y,x)\big)=\\
    \Big(a_i(x,y)+ \big(b_i(x,y)-b_i(y,x)\big)\Big)-a_i(y,x)
    \end{array}\]
and $a_i(x,y)+ \big(b_i(x,y)-b_i(y,x)\big)\in
U_\alpha(n)+\big(W_{j,\alpha,\beta} -W_{j,\alpha,\beta}\big)\subseteq
U_\alpha(n-1)\subseteq U_\alpha(n-2)$. Hence $x-y\in U_\alpha(n-2)
-U_\beta(n-2)$. 

Now unfix $x,y$. By what we have said, the first assertion of the Lemma will 
follow once we show that 
\begin{enumerate}
\item[$(\heartsuit)$] for all $x,y\in \bX$, $x\neq y$, there are $i\neq i'$
  such that possibility (A) above holds for them. 
\end{enumerate}
Here the argument breaks into two cases: $k\geq 3$ and $k=2$, with the former
being somewhat simpler.
\bigskip

{\sc Case}\quad $k\geq 3$.\\
Let $x,y\in\bX$, $x\neq y$. Suppose towards contradiction that in the
previous considerations both for $x,y,0,1$ and for $x,y,1,2$ the second
(i.e., (B)) possibility takes place. This gives us $\alpha,\beta,j,j'$ such
that $\alpha\neq\beta$, $j\neq j'$ and 
\begin{enumerate}
\item[$(*)_1$] $U_0(x,y)=U_0(y,x)=U_\alpha(n)$,
\item[$(*)_2$] $U_1(x,y)=U_1(y,x)=U_\beta(n)$,
\item[$(*)_3$] $W_0(x,y)=W_1(x,y)=W_{j,\alpha,\beta}$, 
\item[$(*)_4$] $W_0(y,x)=W_1(y,x)=W_{j',\alpha,\beta}$,
\end{enumerate}
and we also get $\gamma,\delta,\ell,\ell'$ such that  $\gamma\neq \delta$
and $\ell\neq \ell'$ and 
\begin{enumerate}
\item[$(*)_5$] $U_1(x,y)=U_1(y,x)=U_\gamma(n)$,
\item[$(*)_6$] $U_2(x,y)=U_2(y,x)=U_\delta(n)$,
\item[$(*)_7$] $W_1(x,y)=W_2(x,y)=W_{\ell,\gamma,\delta}$, 
\item[$(*)_8$] $W_1(y,x)=W_2(y,x)=W_{\ell',\gamma,\delta}$.  
\end{enumerate}
It follows from $(*)_2+(*)_5$ that $\gamma=\beta$ and from $(*)_3+(*)_7$ we
have $\ell=j$ and $\delta=\alpha$. Finally, $(*)_4+(*)_8$ imply
$\ell'=j'$. Consequently,
\[U_0(x,y)=U_2(x,y),\ U_0(y,x)=U_2(y,x),\ W_0(x,y)=W_2(x,y), \ 
  W_0(y,x)= W_2(y,x),\]
contradicting assumption (ii). 
\bigskip

{\sc Case}\quad $k=2$.\\
We will argue that $(\heartsuit)$ holds true in this case as well. First,
however, we have to establish some auxiliary facts.
\medskip

For each $x,y\in\bX$, $x\neq y$, we may choose $\alpha=\alpha(x,y)$, 
$\beta=\beta(x,y)$ and $j=j(x,y)$ such that $\alpha\neq\beta$ and 
\begin{enumerate}
\item[either $(A)^{\alpha,\beta,j}_{x,y}$] $U_0(x,y)=U_1(x,y)=U_\alpha(n)$,  
$U_0(y,x)=U_1(y,x)=U_\beta(n)$,

$W_0(x,y)=W_0(y,x)=W_{j,\alpha,\beta}$,
$W_1(x,y)=W_1(y,x)=W_{1-j,\alpha,\beta}$,
\item[or $(B)^{\alpha,\beta,j}_{x,y}$] $U_0(x,y)=U_0(y,x)=U_\alpha(n)$,   
$U_1(x,y)=U_1(y,x)=U_\beta(n)$,

$W_0(x,y)=W_1(x,y)=W_{j,\alpha,\beta}$,
$W_0(y,x)=W_1(y,x)=W_{1-j,\alpha,\beta}$, 
\end{enumerate}
Note that for each $(x,y)\in w^{\langle 2\rangle}$, either there are
$\alpha,\beta,j$ such that $(A)^{\alpha,\beta,j}_{x,y}$ holds true or there
are $\alpha,\beta,j$ such that $(B)^{\alpha,\beta,j}_{x,y}$ is true, but not
both. Also, remembering \ref{fordef}(A)$(\boxtimes)_4$(b), 
\begin{enumerate}
\item [$(\triangle)_1$]  if $(A)^{\alpha,\beta,j}_{x,y}$ holds, then $x-y\in 
U_\alpha(n-1) -U_\beta(n-1)$ and if $(B)^{\alpha,\beta,j}_{x,y}$ is
satisfied, then $x-y\in V_{j,\alpha,\beta} -V_{1-j,\alpha,\beta}$. 
\end{enumerate}
Define functions $\chi:\bX^{\langle 2\rangle}\longrightarrow 2$ and $\Theta:
\bX^{\langle 2\rangle}\longrightarrow [w]^2\times 2$ as follows. Assuming
$(x,y)\in \bX^{\langle 2\rangle}$,
\begin{itemize}
\item if for some $\alpha,\beta,j$ the demand 
$(A)^{\alpha,\beta,j}_{x,y}$ holds, then $\chi(x,y)=1$ and 
$\Theta(x,y)=(\{\alpha,\beta\},j)$, 
\item if for some $\alpha,\beta,j$ the demand $(B)^{\alpha,\beta,j}_{x,y}$
  is satisfied, then $\chi(x,y)=0$ and $\Theta(x,y)=(\{\alpha,\beta\},j)$. 
\end{itemize}
\medskip

Our goal is to show that the function $\chi$ never takes value 0 (as this 
will imply that the assertion $(\heartsuit)$ holds true). Note that 
\begin{enumerate}
\item [$(\triangle)_2$] if $\chi(x,y)=0$ and  $\Theta(x,y)=(\{\alpha,
  \beta\},j)$, then $\chi(y,x)=0$ and $\Theta(y,x)=(\{\alpha,\beta\},1-j)$,
  so also  $\Theta(x,y)\neq \Theta(y,x)$.  
\end{enumerate}
Also, 
\begin{enumerate}
\item [$(\triangle)_3$] if $x,y,z\in \bX$ are pairwise distinct and
  $\chi(x,y)=\chi(y,z)=1$, then $\chi(x,z)=1$. 
\end{enumerate}
Why? Assume $\chi(x,z)=0$. Then, by $(\triangle)_1$, for some $j,\xi,\zeta$
we have $x-z\in V_{j,\xi,\zeta} -V_{1-j,\xi,\zeta}$. However, $x-y\in
U_\alpha(n-1)-U_\beta(n-1)$ and $y-z\in U_\gamma(n-1)-U_\delta(n-1)$ (for
some $\alpha\neq \beta$ and $\gamma\neq \delta$), so
\[x-z\in U_\alpha(n-1)-U_\beta(n-1)+U_\gamma(n-1)-U_\delta(n-1).\]
Thus for some $a\in U_\alpha(n-1)$, $b\in U_\beta(n-1)$, $c\in
U_\gamma(n-1)$, $d\in U_\delta(n-1)$, $e\in V_{j,\xi,\zeta}$, and $f\in
V_{1-j,\xi,\zeta}$ we have $a-b+c-d+f-e=0$. The left hand side of this
equation represents a nontrivial (2,8)--combination from  $\langle
U_\zeta(n-1):\zeta\in w\rangle\conc \langle V_{0,\zeta,\zeta'},
V_{1,\zeta,\zeta'} :(\zeta,\zeta')\in w^{\langle   2\rangle}\rangle$
(remember $\alpha\neq \beta$, $\gamma\neq \delta$, $\xi\neq\zeta$), a
contradiction. 
\medskip

\begin{enumerate}
\item [$(\triangle)_4$] If $x,y,z\in \bX$ are pairwise distinct and
  $\chi(x,y)=\chi(y,z)=0$, then $\Theta(x,y)=\Theta(y,z)=\Theta(z,x)$ and
  $\chi(x,z)=0$. 
\end{enumerate}
Why? Let $\Theta(x,y)=(\{\alpha,\beta\},i)$, $\Theta(y,z)=(\{\gamma,
\delta\},j)$, and $\Theta(x,z)=(\{\xi,\zeta\},\ell)$. If
$\{\alpha,\beta\}\neq \{\gamma,\delta\}$, then 
\[x-z=(x-y)+(y-z)\in V_{i,\alpha,\beta}-V_{1-i,\alpha,\beta}+
  V_{j,\gamma,\delta} -V_{1-j,\gamma,\delta}\]
and $\{V_{0,\alpha,\beta},V_{1,\alpha,\beta}\}\cap \{ V_{0,\gamma,\delta},
V_{1,\gamma,\delta}\}=\emptyset$. Since either $x-z\in V_{\ell,\xi,\zeta}-
V_{1-\ell,\xi,\zeta}$ or $x-z\in \Big(U_\xi(n-1)-U_\zeta(n-1)\Big)\cup \Big(
U_\zeta(n-1)-U_\xi (n-1)\Big)$, we easily get that some nontrivial
(2,8)--combination from  $\langle U_\zeta(n-1):\zeta\in
w\rangle\conc \langle V_{0,\zeta,\zeta'},
V_{1,\zeta,\zeta'}: (\zeta,\zeta')\in w^{\langle
  2\rangle}\rangle$ equals 0, a contradiction. Consequently,
$\{\alpha,\beta\}=\{\gamma,\delta\}$, i.e.,
$\Theta(x,y)=(\{\alpha,\beta\},i)$ and  $\Theta(y,z)=(\{\alpha,\beta\}, j)$ 
for some $\alpha,\beta,i,j$.

If $i\neq j$ then $x-z\in V_{i,\alpha,\beta}-V_{1-i,\alpha,\beta}+
V_{1-i,\alpha,\beta}-V_{i,\alpha,\beta}$. But also either $x-z\in U_\xi(n-1)
-U_\zeta(n-1)$, or $x-z\in U_\zeta(n-1)-U_\xi(n-1)$, or $x-z\in
V_{\ell,\xi,\zeta}-V_{1-\ell,\xi,\zeta}$. In the first case we get 
\[\begin{array}{r}
0\in \Big(\big(V_{i,\alpha,\beta}-V_{i,\alpha,\beta}\big)+ U_\xi(n-1)\Big)
  - \Big(\big(V_{1-i,\alpha,\beta}-V_{1-i,\alpha,\beta}\big)+
  U_\zeta(n-1)\Big) \subseteq \quad \\
U_\xi(n-2)-U_\zeta(n-2),
\end{array}\] 
and symmetrically in the second case. In the last case we have
\[\begin{array}{r}
0\in \Big(\big(V_{i,\alpha,\beta}-V_{i,\alpha,\beta}\big)+
V_{\ell,\xi,\zeta}\Big) - \Big(\big(V_{1-i,\alpha,\beta}-
V_{1-i,\alpha,\beta} \big)+  V_{1-\ell,\xi,\zeta} \Big) \subseteq \quad\\  
Q_{\ell,\xi,\zeta}-Q_{1-\ell,\xi,\zeta}.
\end{array}\] 
In any case this gives a contradiction with
\ref{fordef}(A)$(\boxtimes)_4$. Consequently $i=j$ and
$\Theta(x,y)=\Theta(y,z)=(\{\alpha,\beta\},i)$. 

By considerations as above we see that necessarily $\chi(x,z)=0$ and
$\Theta(x,z)=(\{\alpha,\beta\},\ell)$. If $\ell=i$, then
\[x-z\in V_{i,\alpha,\beta}-V_{1-i,\alpha,\beta}\quad\mbox{ and }\quad x-z\in 
  V_{i,\alpha,\beta} - V_{1-i,\alpha,\beta}+V_{i,\alpha,\beta}
  -V_{1-i,\alpha,\beta}.\] 
Hence
\[0\in\Big(\big(V_{i,\alpha,\beta}-V_{i,\alpha,\beta}\big)+
  V_{i,\alpha,\beta} \Big) -\Big(\big(V_{1-i,\alpha,\beta} -
  V_{1-i,\alpha,\beta} \big)+ V_{1-i,\alpha,\beta}\Big) \subseteq
  Q_{i,\alpha,\beta} -Q_{1-i,\alpha,\beta},\]
a contradiction.

Consequently, $\ell=1-i$ and $\Theta(z,x)=(\{\alpha,\beta\},i)=\Theta(x,y)$
(and $\chi(x,z)=0$).
\bigskip\medskip

Now, suppose towards contradiction that $(\heartsuit)$ is not true and
$x,y\in\bX$ are such that  $x\neq y$ and $\chi(x,y)=0$. Let $z\in
\bX\setminus\{x,y\}$. We cannot have $\chi(x,z)=\chi(y,z)=1$ (as then
$(\triangle)_3$ would give a contradiction with $\chi(x,y)=0$). So one of
them is 0, and then $(\triangle)_4$ implies that the other is 0 as well and
\[\chi(x,y)=\chi(y,z)=\chi(x,z)=0\quad \mbox{ and }\quad \Theta(x,y)=
  \Theta(y,z)=\Theta(z,x).\]
Taking $t\in \bX\setminus \{x,y,z\}$ by similar considerations we obtain 
\[\chi(x,t)=\chi(y,t)=0\quad \mbox{ and }\quad \Theta(x,y)=
  \Theta(y,t)=\Theta(t,x).\]
Now consider $x,z,t$: since $\chi(x,z)=\chi(x,t)=0$ we may use
$(\triangle)_4$ to conclude that
\[\chi(z,t)=0\quad \mbox{ and }\quad \Theta(x,z)=\Theta(z,t)=\Theta(t,x).\]
But we have established already that $\Theta(t,x)=\Theta(x,y)=\Theta(z,x)$,
a contradiction (remember $(\triangle)_2$). The proof of Lemma
\ref{getdiff}(1) is complete now.
\medskip

\noindent (2)\quad Suppose $(x,y)\in \bX^{\langle 2\rangle}$.  In the
previous part we showed that for all $i<i'<k$ possibility (A) holds true.
More precisely, there are distinct $\alpha,\beta\in w$ such that for all
$i<k$ for some $j<k$ we have $a_i(x,y)\in U_\alpha(n)$ and $a_i(y,x)\in
U_\beta(n)$, and  $b_i(x,y), b_i(y,x)\in W_{j,\alpha,\beta}$. Then also
\begin{itemize}
\item $a_i(x,y)+b_i(x,y)\in U_\alpha(n)+W_{j,\alpha,\beta}\subseteq 
  F(p,h(\alpha,\beta))$, 
\item $a_i(y,x)+b_i(y,x)\in U_\beta(n)+W_{j,\alpha,\beta}\subseteq 
  F(p,h(\alpha,\beta))$. 
\end{itemize}
We also know that for these $\alpha,\beta$ we have $x-y\in U_\alpha(n-2) -
U_\beta(n-2)$. To complete the proof we note that, by Lemma \ref{abc}, for
any $\alpha',\beta'$ 
\begin{quotation}
$\big(U_\alpha(n-2)-U_\beta(n-2)\big)\cap 
\big(U_{\alpha'}(n-2)-U_{\beta'}(n-2)\big)\neq \emptyset$ implies 
$\alpha=\alpha'$ and $\beta=\beta'$.   
\end{quotation}
\end{proof}

\begin{lemma}
  \label{noperf}
\[\begin{array}{ll}
\forces_\bbP&\mbox{`` there is no perfect set $P\subseteq \bbH$ such that}\\
 &\quad  \Big(\forall x,y\in P\Big)\Big(\Big|\big(x+
   \bigcup\limits_{m<\omega} \name{\bF}_m\big) \cap  \big(y+
   \bigcup\limits_{m<\omega} \name{\bF}_m\big) \Big|\geq k\Big)\mbox{. ''}  
\end{array}\] 
\end{lemma}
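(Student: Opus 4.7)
The plan is to derive a contradiction by mimicking the rank-based strategy of \cite{Sh:522} and \cite{RoSh:1138}. Working inside a generic extension $V[G]$ with $p^* \in G$, the goal is to extract a finite subset $\bX = \{x_\iota : \iota < M^*\} \subseteq P$ of Ramsey-theoretic size, encode all pairwise overlaps through a single well-chosen condition $p \in G$, invoke Lemma \ref{getdiff} and Theorem \ref{gettranslMin} to embed $\bX$ injectively into the $U$-structure of $p$, and finally use the rank constraints $(\boxtimes)_7$--$(\boxtimes)_8$ from Definition \ref{fordef} to obtain a contradiction with the ceiling $1+\rksp(v)\leq \vare$ enforced by $\bbM(\vare,\lambda)$.

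Concretely, I would first fix $M^* < \omega$ large enough that $M^* \to (10)^4_{2^{144}}$ (so Theorem \ref{gettranslMin} applies) and $M^* \geq 5$ (for Lemma \ref{getdiff}), together with $N^*<\omega$ so large that $2^{-N^*+1}$ is smaller than a chosen separation threshold $\delta>0$. In $V[G]$, using perfectness of $P$, select pairwise distinct $x_0,\ldots,x_{M^*-1}\in P$ with $\rho(x_\iota,x_{\iota'})>\delta$ for $\iota\neq\iota'$. For each $(\iota,\iota')\in (M^*)^{\langle 2\rangle}$, pick pairwise distinct overlap witnesses $z_i^{\iota,\iota'}\in (x_\iota+S)\cap(x_{\iota'}+S)$ for $i<k$; each satisfies $z_i^{\iota,\iota'}-x_\iota\in \name{\bF}_{m_i^{\iota,\iota'}}$ for some $m_i^{\iota,\iota'}<\omega$. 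By Lemma \ref{dense}(2)(iii) and the filter property, fix $p\in G\cap D^2_{N^*}$ with $M^p>\max m_i^{\iota,\iota'}$ and $z_i^{\iota,\iota'}-x_\iota,\,z_i^{\iota,\iota'}-x_{\iota'}\in F(p,m_i^{\iota,\iota'})$ for all indices. This furnishes representations $z_i^{\iota,\iota'}-x_\iota=a+b$ with $a\in U^p_\alpha(n^p)$, $b\in W^p_{j,\alpha,\beta}$, which I would package as the data $a_i(x,y),b_i(x,y),U_i(x,y),W_i(x,y)$ required in Lemma \ref{getdiff}.

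Clauses (i), (iii), (iv) of Lemma \ref{getdiff} are immediate from the construction, while (ii) follows from $p\in D^2_1$: if two of the basic pieces coincided, then either $x_\iota$ and $x_{\iota'}$ would be $\rho$-close (contradicting the separation of $\bX$) or two distinct $z_i,z_{i'}$ would lie within one piece of diameter $<2^{-N^*}$ (contradicting their distinctness for small enough $i,i'$-pieces in $p$). Lemma \ref{getdiff} thus yields $\bX-\bX\subseteq \bigcup\{U^p_\alpha(n^p-2)-U^p_\beta(n^p-2):\alpha,\beta\in w^p\}$. Taking $\cW=\cV=\cQ=\{U^p_\alpha(n^p-2):\alpha\in w^p\}$ (an 8-good qif by $(\boxtimes)_4$(a), trivially immersed in itself) and checking clause (e) of Theorem \ref{gettranslMin} via the diameter-versus-separation estimate, we obtain $c\in \bbH$ with either $\bX+c\subseteq \bigcup\cQ$ or $c-\bX\subseteq \bigcup\cQ$. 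Either way, using $(\boxtimes)_5$ and Lemma \ref{abc}, the map $\iota\mapsto \alpha_\iota\in w^p$ defined so that $x_\iota+c\in U^p_{\alpha_\iota}(n^p-2)$ (or $c-x_\iota\in U^p_{\alpha_\iota}(n^p-2)$) is injective, and Lemma \ref{getdiff}(2) identifies $h^p(\alpha_\iota,\alpha_{\iota'})$ with $m_i^{\iota,\iota'}$ uniformly in $i$.

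The main obstacle is the final conversion of this injective embedding into a rank contradiction. Since $|w^p|$ can always be enlarged by the density $D^0_{\gamma,M,n}$, one cannot argue by sheer cardinality; the rank bound $1+\rksp(v)\leq \vare$ must be invoked. I expect the argument to mirror the strategy of \cite[Section 3]{RoSh:1138}: run the extraction above simultaneously with a tree-of-conditions / $\Delta$-system reduction (as in the proof of Lemma \ref{Knaster}) to produce a homogeneous configuration in $V$, and then apply $(\boxtimes)_7$ along a suitable translation $c$ to force the rank of a naturally arising $u\subseteq w^p$ to drop infinitely, or apply $(\boxtimes)_8$ to exhibit two distinct $\alpha,\alpha'\in w^p$ sharing the same $U$-coset and $h^p$-profile against $u\setminus\{\alpha\}$. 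Identifying the correct ``pivot'' $\alpha\in u$ with $|\alpha\cap u|=\bk(u)$ and translating the perfect-set hypothesis on $\name{P}$ into an Erd\H{o}s--Rado-fueled uniform family of rank drops is the delicate technical core.
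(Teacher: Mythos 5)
Your high-level skeleton is aligned with the paper's: pull a finite $\bX\subseteq P$ of Ramsey-theoretic size, reduce the overlap witnesses to the $U+W$ structure of a condition via Lemma \ref{getdiff}, invoke Theorem \ref{gettranslMin} to embed $\bX+c$ (or $c-\bX$) into the qif $\big\{U^p_\alpha(n^p-2)\big\}$, and then turn the resulting injection $\varphi:\bX\to w^p$ into a rank contradiction via $(\boxtimes)_7$--$(\boxtimes)_8$. You also correctly flag that cardinality alone cannot finish the job, since $w^p$ may be stretched. However, the step you declare to be the ``delicate technical core'' --- converting the perfect-set hypothesis into a sequence of rank drops --- is in fact where the entire proof lives, and your sketch of it points in a direction that does not work.

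You propose to ``run the extraction above simultaneously with a tree-of-conditions\slash $\Delta$-system reduction (as in the proof of Lemma \ref{Knaster}) to produce a homogeneous configuration in $V$.'' This is not what the argument requires. A $\Delta$-system of conditions in $V$ is the right tool for proving ccc, but it says nothing about a perfect set $P$ that lives only in $V[G]$. What the paper instead does is build, \emph{entirely inside $V[G]$}, a fusion tree $\langle d_\sigma:\sigma\in{}^{<\omega}2\rangle$ of basic neighborhoods refining $P$, using that the sets $Z^N_{\bar\ell,\bar m}$ are $\Sigma^1_1$ and hence have the Baire property. The crucial homogeneity obtained this way is that the witnessing triple $\big(N(x,y),\bar\ell(x,y),\bar m(x,y)\big)$ depends only on the first splitting level of $\sigma,\sigma'$, not on the specific branches (property $(\boxdot)_2^c$ together with $(\boxdot)_5$). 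Without this stability, you have no way to assert that a ``re-split'' point $x^*_\sigma$ near a pivot point $x^*_\varsigma$ has the same $h$-profile against the rest of $\bX_\iota$, and that is exactly what feeds $(\boxtimes)_7$.

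The pivoting step itself is also missing. You need a carefully tuned subsequence of levels $\iota(0)<\iota(1)<\cdots$ such that the metric separation at level $\iota(\ell+1)-1$ is small relative to the diameter of $U^{p_{\iota(\ell)}}_\alpha(n^{p_{\iota(\ell)}}-2)$ (the paper's $(\boxdot)_{12}$); then for $\varsigma\in{}^{\iota(\ell)}2$ with $|\varphi_{\iota(\ell)}(x^*_\varsigma)\cap w_{\iota(\ell)}|=\bk(w_{\iota(\ell)})$, the all-zeros extension $\varsigma^*$ and its last-bit flip $\sigma$ yield $x^*_{\varsigma^*}=x^*_\varsigma$ while $x^*_\sigma$ falls in the same coset $U^{p_{\iota(\ell+1)}}_{\varphi(\cdot)}(n^{p_{\iota(\ell)}}-2)$ and has identical $h$-values against every other point of $\bX_{\iota(\ell)}$. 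This is the configuration that both witnesses $R_{n,j}$ (via Definition \ref{hypo2}$(\circledast)_e$) and yet supplies a ``new'' $\alpha^*$, forcing $\rksp(w_{\iota(\ell+1)})<\rksp(w_{\iota(\ell)})$; and once the rank hits $-1$, the same pivot contradicts $(\boxtimes)_8$. None of this machinery appears in your proposal, and a single condition $p\in G$ with a single finite $\bX$ cannot support it: you need an infinite, coherently nested family of $\bX_\iota$'s and of conditions $p_\iota,q_\iota\in G$, and that coherence is exactly what the Baire-category fusion provides.
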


\begin{proof}
Suppose towards contradiction that  $G\subseteq \bbP$ is generic over
$\bV$ and in $\bV[G]$ the following assertion holds true:
\begin{quotation}
for some perfect set $P\subseteq \bbH$ we have
  \[\Big|\big(x+ \bigcup\limits_{m<\omega} \name{\bF}_m^G\big) \cap 
  \big(y+ \bigcup\limits_{m<\omega} \name{\bF}_m^G\big) \Big|\geq k\] 
for  all $x,y\in P$.  
\end{quotation}
Then for any distinct $x,y\in P$ there are $c_0,d_0,\ldots, c_{k-1},d_{k-1}
\in \bigcup\limits_{m<\omega} \name{\bF}_m^G$ such that $c_i\neq c_j$
whenever $i\neq j$ and   $x-y=c_i-d_i$ (for all $i<k$). 

For $\bar{\ell}=\langle \ell_i:i<k\rangle \subseteq \omega$, $\bar{m}
=\langle m_i:i<k\rangle\subseteq \omega$ and $N<\omega$ let    
\[\begin{array}{l}
Z^N_{\bar{\ell},\bar{m}}= \{(x,y)\in P^2:
\mbox{there are } c_i\in \name{\bF}_{\ell_i}^G, d_i\in 
\name{\bF}_{m_i}^G \mbox{ (for $i<k$) such that}\\
\qquad\quad x-y=c_i-d_i \ \mbox{ and } \ 2^{-N}<\min\big(\rho(c_i,c_j),  
    \rho(d_i,d_j)\big)\mbox{ for all distinct }i,j<k\}.     
\end{array}\] 
By our assumption on $P$ we know that 
\begin{enumerate}
\item[$(\boxdot)_0$] for all $x,y\in P$, $x\neq y$, there are $\bar{\ell},
  \bar{m}$ and $N$ such that $(x,y)\in Z_{\bar{\ell},\bar{m}}^N$.    
\end{enumerate}
The sets $Z_{\bar{\ell},\bar{m}}^N\subseteq P^2$ are $\Sigma^1_1$, so they
have the Baire property (in $P^2$). Therefore, for every open set
$U\subseteq \bbH\times\bbH$ with $U\cap P^2\neq\emptyset$ there is a basic
open set $(d_0+\bB_{n_0})\times (d_1+\bB_{n_1})\subseteq U$ such that
$\big[(d_0+\bB_{n_0})\times (d_1+\bB_{n_1})\big]\cap P^2\neq\emptyset$ and 
\begin{itemize}
\item either $Z_{\bar{\ell},\bar{m}}^N\cap \big[(d_0+\bB_{n_0})\times
  (d_1+\bB_{n_1})\big]$ is a meager subset of $P^2$,
\item or $\big[[(d_0+\bB_{n_0})\times (d_1+\bB_{n_1})]\cap P^2\big]
  \setminus  Z_{\bar{\ell},\bar{m}}^N$ is a meager subset of $P^2$. 
\end{itemize}

Now we may choose closed nowhere dense subsets $F_j$ of $P^2$ (for
$j<\omega$) such that for each $d_0,d_1\in\bD$ and $n_0,n_1<\omega$ and 
$N,\bar{\ell},\bar{m}$ as before we have
\begin{enumerate}
\item[$(\boxdot)^a_1$] if $Z_{\bar{\ell},\bar{m}}^N\cap \big[(d_0+\bB_{n_0})
  \times  (d_1+\bB_{n_1})\big]$ is meager in $P^2$, then 
  \[Z_{\bar{\ell},\bar{m}}^N\cap \big[(d_0+\bB_{n_0})\times
  (d_1+\bB_{n_1})\big] \subseteq \bigcup\limits_{j<\omega} F_j,\]
\item[$(\boxdot)^b_1$] if $\big[[(d_0+\bB_{n_0})\times (d_1+\bB_{n_1})]\cap
  P^2\big]  \setminus  Z_{\bar{\ell},\bar{m}}^N$ is meager in $P^2$, then 
  \[\big[[(d_0+\bB_{n_0})\times (d_1+\bB_{n_1})]\cap P^2\big] \setminus
  Z_{\bar{\ell},\bar{m}}^N \subseteq \bigcup\limits_{j<\omega} F_j.\]
\end{enumerate}
Let $\langle F_i:i<\omega\rangle$ be an enumeration of all sets
$E_j(d_0,d_1,n_0,n_1,N,\bar{\ell},\bar{m})$ (for all relevant
parameters). Then $\bigcup\limits_{j<\omega} F_j$ is a meager subset of
$P^2$. Let $B^*=P^2\setminus \bigcup\limits_{j<\omega} F_j$.
\medskip

We are going to choose now a sequence $0=n^*_0=n_0 <n^*_1<n_1< n^*_2<n_2<  
n^*_3<n_3<\ldots$ and a system $\langle  d_\sigma: \sigma \in {}^\iota 2,\
\iota<\omega\rangle\subseteq\bD$ such that the following demands
$(\boxdot)_2^a$--$(\boxdot)_2^e$ are satisfied.   
\begin{enumerate}
\item[$(\boxdot)_2^a$]  If $\iota<\omega$, $\sigma,\sigma' \in {}^\iota
  2$, $\sigma\neq \sigma'$, then $(d_\sigma+ \bB_{n_\iota}) \cap P\neq
  \emptyset$ and $\rho(d_\sigma,d_{\sigma'})>2^{2-n_\iota}$, and
  $\diam(d_\sigma+\bB_{n_\iota}) <2^{-\iota}$.
\item[$(\boxdot)_2^b$] If $\iota<\omega$, $\sigma\in {}^\iota 2$, then
  $\cl\big(d_{\sigma\conc\langle 0\rangle}+\bB_{n_{\iota+1}}\big)\cup
  \cl\big(d_{\sigma\conc\langle 1\rangle}+\bB_{n_{\iota+1}}\big) \subseteq
  (d_\sigma+ \bB_{n_\iota})$. 
\item[$(\boxdot)_2^c$]  If $\iota<\omega$ and  $\sigma,\sigma' \in {}^\iota
  2$, $\sigma\neq \sigma'$, and
  \[(x,y), (x',y')\in B^*\cap \big[(d_\sigma+\bB_{n_\iota})\times
    (d_{\sigma'}+ \bB_{n_\iota})\big],\]
then for all $\bar{\ell}\subseteq n_\iota^*$, $\bar{m}\subseteq n_\iota^*$
and  $N< n_\iota^*$ we  have 
\[(x,y)\in Z_{\bar{\ell},\bar{m}}^N\ \Leftrightarrow\ (x',y')\in
Z_{\bar{\ell},\bar{m}}^N.\]     
\item[$(\boxdot)_2^d$]  If $\iota<\omega$ and  $\sigma,\sigma' \in {}^\iota
  2$, $\sigma\neq \sigma'$, and  $(x,y)\in B^*\cap
  \big[(d_\sigma+\bB_{n_\iota}) \times (d_{\sigma'}+ \bB_{n_\iota})\big]$,
  then there are $\bar{\ell}\subseteq n_\iota^*$, $\bar{m}\subseteq
  n_\iota^*$ and  $N<n_\iota^*$ such that $(x,y)\in
  Z_{\bar{\ell},\bar{m}}^N$.
\item[$(\boxdot)_2^e$]  If $\iota<\omega$ and  $\sigma,\sigma' \in {}^\iota
  2$, $\sigma\neq \sigma'$, then  $\big[(d_\sigma+\bB_{n_\iota}) \times
  (d_{\sigma'}+ \bB_{n_\iota})\big] \cap \bigcup\limits_{j<\iota} F_j
  =\emptyset$. 
\end{enumerate}
The construction is by induction on $\iota<\omega$. We start with choosing
any $d_{\langle\rangle}\in\bD$ such that $(d_{\langle\rangle}+\bB_0)\cap
P\neq\emptyset$. We also set $n_0=n_0^*=0$. Let us describe in more detail
choices for $\iota=1$ as they have all the ingredients used later. So, first
find open sets $V^\dagger,V^{\dagger \dagger}$ such that $V^\dagger\cap 
P\neq\emptyset\neq V^{\dagger \dagger}\cap P$ and $\cl(V^\dagger)\cup
\cl(V^{\dagger \dagger})\subseteq (d_{\langle\rangle}+\bB_0)$,  $\cl(V^\dagger)\cap
\cl(V^{\dagger \dagger})=\emptyset$. Let $N_0,\bar{\ell}_0,\bar{m}_0$ be
such that the set $Z^{N_0}_{\bar{\ell}_0,\bar{m}_0}\cap \big [
V^\dagger\times V^{\dagger \dagger}\big]$ is not meager in $P^2$ and let
$n^*_1$ be such that $N_0<n^*_1$, $\bar{\ell}_0\subseteq n^*_1$ and
$\bar{m}_0\subseteq n^*_1$. Now we repeatedly use the Baire property of the
sets $Z^N_{\bar{\ell},\bar{m}}$ to find open sets $V'\subseteq V^\dagger$
and $V''\subseteq V^{\dagger \dagger}$ such that $V'\cap  P\neq\emptyset\neq
V''\cap P$ and 
\begin{enumerate}
\item[(A)] $\big[(V'\times V'')\cap P^2\big]\setminus
  Z^{N_0}_{\bar{\ell}_0,\bar{m}_0}$ is meager in $P^2$ (where
  $N_0,\bar{\ell}_0,\bar{m}_0$ are the ones fixed above), and 
\item[(B)] for every $\bar{\ell}\subseteq n^*_1$, $\bar{m}\subseteq n^*_1$
  and $N<n^*$, either $\big[(V'\times V'')\cap P^2\big] \setminus
  Z^N_{\bar{\ell},\bar{m}}$ is meager in $P^2$, or $(V'\times V'')\cap
  Z^N_{\bar{\ell},\bar{m}}$ is meager in $P^2$.
\end{enumerate}
Since $F_0$ is a nowhere dense subset of $P^2$, we may find open sets
$V^*\subseteq V'$ and $V^{**}\subseteq V''$ such that $V^*\cap 
P\neq\emptyset\neq V^{**}\cap P$ and $(V^*\times V^{**})\cap F_0=\emptyset$. 
Now, after fixing some $x\in V^*\cap P$ and $y\in V^{**}\cap P$ we choose
$n>n^*_1$ so large that $\rho(x,y)>2^{3-n}$ and 
\begin{itemize}
\item ${\rm diam}_{\rho^*}(x+\bB_n)<1/2$ and ${\rm
    diam}_{\rho^*}(y+\bB_n)<1/2$, and 
\item $x+\bB_n\subseteq V^*$ and $y+\bB_n\subseteq V^{**}$.
\end{itemize}
Then we set $n_1=n+2$ and choose $d_{\langle 0\rangle}\in (x+\bB_{n_1})\cap
\bD$ and  $d_{\langle 1\rangle}\in (y+\bB_{n_1})\cap \bD$. Note that  $x\in
d_{\langle 0\rangle} +\bB_{n_1}\subseteq x+\bB_n$ and $y\in
d_{\langle 1\rangle} +\bB_{n_1}\subseteq y+\bB_n$.

Assuming $n^*_\iota<n_\iota<\omega$ and $\langle d_\sigma:\sigma\in {}^\iota
2\rangle\subseteq \bD$ have been selected,  we first pick open sets $\langle
V^\dagger_\varsigma:\varsigma\in {}^{\iota+1}2\rangle$ such that for all
$\sigma\in {}^\iota 2$ we have  $V^\dagger_{\sigma\conc \langle 0\rangle}
\cap P\neq\emptyset\neq V^\dagger_{\sigma\conc\langle 1\rangle} \cap P$,
$\cl(V^\dagger_{\sigma\conc\langle 0\rangle})\cup
\cl(V^\dagger_{\sigma\conc\langle 1\rangle})\subseteq
(d_\sigma+\bB_{n_\iota})$,  $\cl(V^\dagger_{\sigma\conc\langle
  0\rangle})\cap \cl(V^\dagger_{\sigma\conc\langle 1\rangle})=\emptyset$.
Next, letting $\langle (\varsigma_j',\varsigma_j''):j<j^*\rangle$ be an
enumeration of $\big({}^{\iota+1} 2\big)^{\langle 2\rangle}$, we choose
inductively open sets
\[V^\dagger_\varsigma=V^0_\varsigma\supseteq V^1_\varsigma\supseteq \ldots
  V^{j^*}_\varsigma \]
and integers
\[n_\iota=N^0_\varsigma\leq N^1_\varsigma\leq \ldots\leq N^{j^*}_\varsigma\]
(for $\varsigma\in {}^{\iota+1} 2$), as well as $N_j,\bar{\ell}_j,\bar{m}_j$,
in such a manner that the following demands (a)--(d) are satisfied for all
$j<j^*$. 
\begin{enumerate}
\item[(a)] If $\varsigma\in {}^{\iota+1} 2\setminus \{\varsigma_j', 
  \varsigma_j''\}$, then $V^{j+1}_\varsigma=V^j_\varsigma$ and
  $N^{j+1}_\varsigma =N^j_\varsigma$.
\item[(b)] $N_j,\bar{\ell}_j,\bar{m}_j$ are such that the  the set
  $Z^{N_j}_{\bar{\ell}_j,\bar{m}_j}\cap \big[V^j_{\varsigma_j'}\times
  V^j_{\varsigma_j''} \big]$ is not meager in $P^2$ and $N^{j+1}_{\varsigma_j'}
  =N^{j+1}_{\varsigma_j''}$ is such that
  $N_j+N^j_{\varsigma_j'}+N^j_{\varsigma_j''}<N^{j+1}_{\varsigma_j'}$,
  $\bar{\ell}_j \subseteq N^{j+1}_{\varsigma_j'}$ and $\bar{m}_j\subseteq
  N^{j+1}_{\varsigma_j'}$. 
\item[(c)] Open sets $V^{j+1}_{\varsigma_j'}\subseteq  V^j_{\varsigma_j'}$
  and $V^{j+1}_{\varsigma_j''}\subseteq V^j_{\varsigma_j''}$  are such that  
 $V^{j+1}_{\varsigma_j'}\cap  P\neq\emptyset\neq V^{j+1}_{\varsigma_j''}\cap
 P$ and  
\begin{enumerate}
\item[(A)] $\big[(V^{j+1}_{\varsigma_j'}\times V^{j+1}_{\varsigma_j''})\cap
  P^2\big]\setminus Z^{N_j}_{\bar{\ell}_j,\bar{m}_j}$ is meager in $P^2$ (where
  $N_j,\bar{\ell}_j,\bar{m}_j$ are the ones fixed in (b) above).
\end{enumerate}
\item[(d)] $\big(V^{j+1}_{\varsigma_j'}\times
  V^{j+1}_{\varsigma_j''}\big)\cap \bigcup\limits_{i\leq \iota} F_i=\emptyset$.
\end{enumerate}
Then we set $n_{\iota+1}^*=\max\{N^{j^*}_\varsigma:\varsigma\in {}^{\iota+1}
2\}$ and we choose inductively open sets 
\[V^{j^*}_\varsigma\supseteq V^{j^*+1}_\varsigma\supseteq
  V^{j^*+2}_\varsigma\supseteq \ldots  V^{j^*+j^*}_\varsigma \]
(for $\varsigma\in {}^{\iota+1}2$) so that the following conditions (e)--(f)
are satisfed. 

\begin{enumerate}
\item[(e)] If $\varsigma\in {}^{\iota+1} 2\setminus \{\varsigma_j', 
  \varsigma_j''\}$, then $V^{j^*+j+1}_\varsigma=V^{j^*+j}_\varsigma$.
\item[(f)]  Open sets $V^{j^*+j+1}_{\varsigma_j'}\subseteq
  V^{j^*+j}_{\varsigma_j'}$  and $V^{j^*+j+1}_{\varsigma_j''}\subseteq
  V^{j^*+j}_{\varsigma_j''}$  are such that   
 $V^{j^*j+1}_{\varsigma_j'}\cap  P\neq\emptyset\neq
 V^{j^*+j+1}_{\varsigma_j''} \cap  P$ and  
\begin{enumerate}
\item[(B)] for every $\bar{\ell}\subseteq n^*_{\iota+1}$, $\bar{m}\subseteq
  n^*_{\iota+1}$  and $N<n^*_{\iota+1}$, either
  $\big[(V^{j^*+j+1}_{\varsigma_j'}\times V^{j^*+j+1}_{\varsigma_j''})\cap  P^2\big]
  \setminus  Z^N_{\bar{\ell},\bar{m}}$ is meager in $P^2$, or 
  $(V^{j^*+j+1}_{\varsigma_j'}\times V^{j^*+j+1}_{\varsigma_j''})\cap
  Z^N_{\bar{\ell},\bar{m}}$ is meager in $P^2$. 

\end{enumerate}
\end{enumerate}
Next, we fix $x_\varsigma\in V^{2j^*}_\varsigma\cap P$ for $\varsigma\in
{}^{\iota+1} 2$. Choose $n> n^*_{\iota+1}$ so large that
\begin{itemize}
\item $\rho(x_\varsigma,x_{\varsigma'}) >2^{3-n}$ for distinct $\varsigma,
  \varsigma' \in {}^{\iota+1} 2$,
\item ${\rm diam}_{\rho^*}(x_\varsigma+\bB_n)< 2^{-\iota-1}$ and
  $x_\varsigma+\bB_n \subseteq V^{j^*}_\varsigma$  for all $\varsigma
  \in {}^{\iota+1} 2$.
\end{itemize}
Then we set $n_{\iota+1}=n+2$ and choose $d_\varsigma \in (x_\varsigma+
\bB_{n_{\iota+1}}) \cap \bD$.

This completes the description of the inductive construction.
\bigskip

It follows from $(\boxdot)^a_2+(\boxdot)^b_2$ that for each $\eta\in \can$
the set $\bigcap\limits_{\ell<\omega} d_{\eta\rest \ell}+\bB_{n_\ell}$ is a
singleton included in $P$. By $(\boxdot)^e_2$ we know that for
$\eta\neq\eta'$ 
\[\bigcap_{\ell<\omega} (d_{\eta\rest\ell}+\bB_{n_\ell})\times
  \bigcap_{\ell<\omega} (d_{\eta'\rest\ell}+\bB_{n_\ell})\subseteq B^*.\]
For $\sigma\in {}^\iota 2$ and $\ell<\omega$
let $\sigma*_\ell 0= \sigma\conc \langle\underbrace{0,\ldots,0}_\ell\rangle$
and let $x_\sigma^*\in\bbH$ be such that 
\begin{enumerate}
\item[$(\boxdot)_3$] $\displaystyle \{x_\sigma^*\}=\bigcap_{\ell <\omega}
  \Big(d_{\sigma*_\ell 0}+\bB_{n_{\iota+\ell}}\Big)$; so $x_\sigma^*\in P$
  and if $\sigma\neq \sigma'$ are from ${}^\iota 2$ then
  $(x^*_\sigma,x^*_{\sigma'}) \in B^*$. 
\end{enumerate}
Let $\name{P}, \name{F}_j,\name{n}^*_\iota, \name{n}_\iota,\name{d}_\sigma,
\name{x}_\sigma^*$ be $\bbP$--names for the objects appearing in 
$(\boxdot)_2$--$(\boxdot)_3$. Still working in $\bV[G]$, we may choose a  
sequence $\langle p_\iota,q_\iota:\iota<\omega\rangle\subseteq G$ such that: 
\begin{enumerate}
\item[$(\boxdot)^a_4$]  $p_0\forces_\bbP$`` $\name{P}$ is a perfect subset
  of $\bbH$, $\name{F}_j$ are closed nowhere dense subsets of $P^2$, and
  $\name{n}^*_\iota, \name{n}_\iota,\name{d}_\sigma,  \name{x}_\sigma^*$
  have the properties stated in  $(\boxdot)_1^a$--$(\boxdot)_1^b$,
  $(\boxdot)_2^a$--$(\boxdot)_2^e$, $(\boxdot)_3$ '',  and   
\item[$(\boxdot)^b_4$] $p_\iota$ decides the values of $\name{n}_\iota^*,
  \name{n}_\iota$ and $\name{d}_\sigma$ for $\sigma\in {}^\iota 2$,
  $\iota>0$,   
\item[$(\boxdot)^c_4$] $p_\iota\leq q_\iota\leq p_{\iota+1}$ and $p_\iota,
  q_\iota\in D^2_{n_\iota}\cap D^0_{0, n_\iota, n_\iota}\cap G$ (see
  \ref{dense}(2)) and $n^{p_\iota}+10<n^{q_\iota}$ and $w^{p_\iota}=w^{q_\iota}$. 
\end{enumerate}
\medskip

The properties of conditions from $\bbP$ stated in \ref{fordef}(A) are
absolute, so they hold in $\bV[G]$ as well (with $\bB_\ell$ being
$\bB_\ell^G$ etc). Now, still working in $\bV[G]$, for $0<\iota<\omega$ let
$\bX_\iota=\{x^*_\sigma:\sigma\in {}^\iota 2\}$. Note that
$x^*_\sigma\neq x^*_{\sigma'}$ and $(x^*_\sigma,x^*_{\sigma'})\in B^*$ when
$\sigma,\sigma'\in {}^\iota 2$ are distinct, and $\bX_\iota \subseteq
\bX_{\iota'}$ when $\iota\leq \iota'<\omega$. It follows from
$(\boxdot)^d_2+(\boxdot)_3$ that for $x,y\in\bX_\iota$, $x\neq y$, we have
$(x,y)\in Z^N_{\bar{\ell},\bar{m}}$ for some $N=N(x,y)<n_\iota^*$, 
$\bar{\ell}=\bar{\ell}(x,y), \bar{m} =\bar{m}(x,y)\subseteq n_\iota^*$. By
clause $(\boxdot)^c_2$, these $N(x,y),\bar{\ell}(x,y),\bar{m}(x,y)$ may be
chosen in such a manner that
\begin{enumerate}
\item[$(\boxdot)_5$] if $\sigma,\sigma'\in {}^\iota 2$, $\iota^*<\iota$,
  $\sigma\rest \iota^*=\sigma'\rest \iota^*$ but $\sigma(\iota^*)\neq
  \sigma'(\iota^*)$, and $\varsigma=\sigma\rest (\iota^*+1)$,
  $\varsigma'=\sigma'\rest (\iota^*+1)$, then
  $\bar{\ell}(x_\sigma,x_{\sigma'})=\bar{\ell}(x_\varsigma,
  x_{\varsigma'})$,   $\bar{m}(x_\sigma,x_{\sigma'})
  =\bar{m}(x_\varsigma,x_{\varsigma'})$,  and $N(x_\sigma,x_{\sigma'}) 
=N(x_\varsigma,x_{\varsigma'})$. 
\end{enumerate}
Let $J<\omega$ be such that the arrow property $J\longrightarrow 
(10)^4_{2^{144}}$ holds true and fix a $\iota\geq J$ for a while.
\medskip

Fix $x,y\in\bX_\iota$, $x\neq y$, and let $N=N(x,y)<n_\iota^*$,
$\bar{\ell}=\bar{\ell}(x,y), \bar{m} =\bar{m}(x,y)\subseteq n_\iota^*$. Then 
there are $c_i\in \name{\bF}^G_{\ell_i}$ and $d_i\in \name{\bF}^G_{m_i}$
(for $i<k$) such that for $i\neq i'$ we have  
\[ x-y=c_i-d_i \ \mbox{ and } \ 2^{-n_\iota} <2^{-N}<\rho(c_i,c_{i'}),  \
  \mbox{ and } \   2^{-n_\iota} < 2^{-N}<\rho(d_i,d_{i'}).\]   
The reasons for the use of $q_\iota$ rather than $p_\iota$ in what
follows will become clear at the end. Since $n_\iota<M^{q_\iota}$ and
$\name{\bF}_m^G\subseteq F(q_\iota, m)$  for all $m<M^{q_\iota}$, we get
$c_i\in U^{q_\iota}_\alpha(n^{q_\iota})+ W^{q_\iota}_{j,\alpha,\beta}$ for
some $j<k$ and $(\alpha,\beta)\in (w^{q_\iota})^{\langle 2\rangle}= 
(w^{p_\iota})^{\langle 2\rangle}$ and similarly for $d_i$. Therefore, for
each $i<k$ we may pick      
\begin{itemize}
\item $U_i(x,y),U_i(y,x)\in\{U^{q_\iota}_\alpha(n^{q_\iota}):\alpha\in
  w^{q_\iota}\}$, and 
\item $W_i(x,y), W_i(y,x)\in\{W^{q_\iota}_{j,\alpha,\beta}:
  (\alpha,\beta) \in (w^{q_\iota})^{\langle 2\rangle}\}$, and 
\item $a_i(x,y)\in U_i(x,y)$, $a_i(y,x)\in U_i(y,x)$ and $b_i(x,y)\in
  W_i(x,y)$, $b_i(y,x)\in W_i(y,x)$    
\end{itemize}
such that 
\[x-y=\big(a_i(x,y)+b_i(x,y)\big)-\big(a_i(y,x)+b_i(y,x)\big)\]
and  for $i\neq i'$
\[  \begin{array}{l}
2^{-n_\iota}<\rho\Big(a_i(x,y)+b_i(x,y), a_{i'}(x,y)+b_{i'}(x,y)\Big)\\ 
2^{-n_\iota}<\rho\Big(a_i(y,x)+b_i(y,x), a_{i'}(y,x)+b_{i'}(y,x)\Big).
  \end{array}\]    
Since the metric $\rho$ is invariant (and by $(\boxdot)^a_2$), we also have
\[2^{-n_\iota}<\rho(x,y)=\rho\Big(a_i(x,y)+b_i(x,y),a_i(y,x)+b_i(y,x)
  \Big).\] 
Since $q_\iota\in D^2_{n_\iota}$ we know that for all relevant
$j,\alpha,\beta$,   
\[{\rm diam}_\rho\Big(U^{q_\iota}_\alpha(n^{q_\iota})+
  W^{q_\iota}_{j,\alpha,\beta} \Big) <2^{-n_\iota},\] 
and consequently each of the sets $U^{q_\iota}_\alpha(n^{q_\iota})+ 
W^{q_\iota}_{j,\alpha,\beta}$ contains at most one element from each of the
sets $\big\{a_i(x,y)+b_i(x,y), a_i(y,x)+b_i(y,x)\big\}$,
$\big\{a_i(x,y)+b_i(x,y), a_{i'}(x,y)+b_{i'}(x,y)\big\}$  and
$\big\{a_i(y,x)+b_i(y,x), a_{i'}(y,x)+b_{i'}(y,x)\big\}$. Since $q_\iota\in
D^2_{n_\iota}$, different sets of the form $U^{q_\iota}_\alpha(n^{q_\iota}) 
+W^{q_\iota}_{j,\alpha,\beta}$ are disjoint, and thus we see that the
assumptions (i)--(iv) of Lemma \ref{getdiff} are satisfied.  
\medskip

Unfixing $x,y$, we may use Lemma \ref{getdiff}(1) to conclude that 
\begin{enumerate}
\item[$(\boxdot)_6$] \quad $\bX_\iota-\bX_\iota\subseteq
  \bigcup\big\{U_\alpha^{q_\iota} (n^{q_\iota}-2)
  -U_\beta^{q_\iota}(n^{q_\iota}-2): \alpha,\beta \in w^{q_\iota}\big\}$
\end{enumerate}
and hence also 
\[\bX_\iota-\bX_\iota\subseteq \bigcup\big\{U_\alpha^{p_\iota}
  (n^{p_\iota}) -U_\beta^{p_\iota}(n^{p_\iota}): \alpha,\beta \in
  w^{p_\iota} \big\}.\]  
Moreover, by \ref{getdiff}(2), we also conclude that
\begin{enumerate}
\item[$(\boxdot)_7$] if $x,y\in\bX_\iota$ and  $0\neq x-y\in
  U_\alpha^{q_\iota}(n^{q_\iota}-2) -U_\beta^{q_\iota}(n^{q_\iota}-2)$, then
  $\alpha\neq \beta$ and $\bar{m}(x,y)(i)=\bar{\ell}(x,y)(i)=
  h^{q_\iota}(\alpha,\beta)=  h^{p_\iota}(\alpha,\beta)$ for all  $i<k$. 
\end{enumerate}
Since $\big\{U_\alpha^{p_\iota}(n^{p_\iota}): \alpha\in w^{p_\iota}\big\}$,
$\big\{U_\alpha^{p_\iota}(n^{p_\iota}-1): \alpha\in w^{p_\iota} \big\}$,
$\big\{U_\alpha^{p_\iota}(n^{p_\iota}-2): \alpha\in w^{p_\iota} \big\}$ and
$\bX_\iota$ satisfy the assumptions of Theorem \ref{gettranslMin}, we get
that exactly one of $(A)_\iota$, $(B)_\iota$ below holds true. 
 \begin{enumerate}
 \item[$(A)_\iota$]   There is a $c_\iota\in \bbH$ such that $\bX_\iota+
   c_\iota\subseteq \bigcup\big\{U_\alpha^{p_\iota}(n^{p_\iota}-2):
   \alpha\in w^{p_\iota} \big\}$.  
 \item[$(B)_\iota$]   There is a $c_\iota\in \bbH$ such that $c_\iota-
   \bX_\iota\subseteq \bigcup\big\{U_\alpha^{p_\iota}(n^{p_\iota}-2):
   \alpha\in w^{p_\iota} \big\}$.  
 \end{enumerate}

Unfixing $\iota<\omega$ we let 
\[ \begin{array}{l}
A=\{\iota<\omega: J\leq \iota\mbox{ and case $(A)_\iota$ holds true }\}\\
B=\{\iota<\omega: J\leq \iota\mbox{ and case $(B)_\iota$ holds true }\}.
\end{array}\] 
One of the sets $A,B$ is infinite and this leads us to two very similar
cases.
\medskip

\noindent{\sc Case:} The set $A$ is infinite.\\
For $\iota\in A$ let $\bX_\iota,c_\iota$ be as before. Let $w_\iota=
\{\alpha\in w^{p_\iota}: U_\alpha^{p_\iota}(n^{p_\iota}-2)\cap (\bX_\iota
+c_\iota) \neq \emptyset\}$. Since ${\rm diam}_\rho\big(U_\alpha^{p_\iota}
(n^{p_\iota}-2)\big)<2^{-n_\iota}< \rho(x,y)$ for $\alpha\in w_\iota$ and distinct
$x,y\in\bX_\iota$, we get $\big|U_\alpha^{p_\iota}(n^{p_\iota}-2) \cap
(\bX_\iota +c_\iota)\big|=1$ for $\alpha\in w_\iota$. Consequently, we have
a natural bijection $\varphi_\iota:\bX_\iota\longrightarrow w_\iota$ such
that $x+c_\iota\in U_{\varphi_\iota(x)}^{p_\iota}(n^{p_\iota}-2)$. 

For $\iota<\iota'$ from $A$ we have $\bX_\iota \subseteq \bX_{\iota'}$ and
the mapping $\pi_{\iota,\iota'}=\varphi_{\iota'}\circ \varphi_\iota^{-1}:
w_\iota\longrightarrow w_{\iota'}$ is an injection. Clearly, if $x\in
\bX_\iota$, $\alpha=\varphi_\iota(x)\in w_\iota$ then  
\begin{enumerate}
\item[$(\boxdot)_8$] \quad $x+c_{\iota'}\in \Big(U^{p_{\iota'}}_\alpha( 
  n^{p_\iota}-2) +(c_{\iota'}-c_\iota)\Big)\cap
  U^{p_{\iota'}}_{\pi_{\iota,\iota'}(\alpha)} (n^{p_\iota}-2)\neq\emptyset$.   
\end{enumerate}
Suppose now that $x,y\in \bX_\iota$, $x\neq y$. By $(\boxdot)_6$, there are
$\alpha,\beta\in w^{q_\iota}$ such that $x-y\in
U^{q_\iota}_\alpha(n^{q_\iota}-2) - U^{q_\iota}_\beta(n^{q_\iota}-2)$ (and,
by $(\boxdot)_7$, $\alpha\neq\beta$). Then also 
\[x-y\in \Big(U^{p_\iota}_\alpha(n^{p_\iota}-2) - U^{p_\iota}_\beta
(n^{p_\iota}-2) \Big) \cap\Big(U^{p_\iota}_{\varphi_\iota(x)}
(n^{p_\iota}-2) -U^{p_\iota}_{\varphi_\iota(y)} (n^{p_\iota}-2) \Big).\] 
By Lemma \ref{abc} we conclude that $\alpha=\varphi_\iota(x)$ and
$\beta=\varphi_\iota(y)$. Together with $(\boxdot)_7$ this gives us that 
\begin{enumerate}
\item[$(\boxdot)_9^\iota$] if $(x,y)\in (\bX_\iota)^{\langle 2\rangle}$,
  then $\bar{m}(x,y)(i)=\bar{\ell}(x,y)(i)= h^{p_\iota}(\varphi_\iota(x),
  \varphi_\iota(y))$ for all $i<k$. 
\end{enumerate}
Putting together $(\boxdot)_9^\iota$ and $(\boxdot)_9^{\iota'}$ we see that 
\begin{enumerate}
\item[$(\boxdot)_{10}$] if $\iota<\iota'$ are from $A$ and $(x,y)\in
  (\bX_\iota)^{\langle 2\rangle}$,  then 
\[h^{p_\iota}(\varphi_\iota(x), \varphi_\iota(y))=
  h^{p_{\iota'}}(\varphi_{\iota'}(x), \varphi_{\iota'}(y)).\] 
In other words, if $(\alpha,\beta)\in w_\iota$ then 
\[h^{p_\iota}(\alpha,\beta)=h^{p_{\iota'}}(\alpha,\beta)= 
  h^{p_{\iota'}}(\pi_{\iota,\iota'}(\alpha), \pi_{\iota,\iota'}(\beta)).\] 
\end{enumerate}
It follows from $(\boxdot)_8+(\boxdot)_{10}$ and
\ref{fordef}(A)$(\boxtimes)_7$ that for $\iota<\iota'$ from $A$ we have 
\begin{enumerate}
\item[$(\boxdot)_{11}$] $\rksp(w_\iota)= \rksp\big(\pi_{\iota,\iota'}
  [w_\iota]\big)$, $\bj(w_\iota)=\bj\big(\pi_{\iota,\iota'} [w_\iota]\big)$,
  $\bk(w_\iota)=\bk\big(\pi_{\iota,\iota'} [w_\iota]\big)$ and 
\[|\alpha\cap w_\iota|=\bk(w_\iota)\quad\Leftrightarrow\quad
  |\pi_{\iota,\iota'}(\alpha)\cap \pi_{\iota,\iota'} [w_\iota]|=
  \bk(w_\iota)\qquad \mbox{ for all }\alpha \in w_\iota.\]   
\end{enumerate}
(Note that $r^{p_{\iota'}}_m\leq n^{p_\iota}-2$ when $m= h^{p_{\iota'}}( 
  \alpha,\beta)$, $\alpha,\beta\in w_\iota\subseteq w^{p_\iota}$,
  $\alpha\neq\beta$.) 
\medskip

Choose a strictly increasing sequence $\langle\iota(\ell): \ell<\omega 
\rangle \subseteq A$ such that 
\begin{enumerate}
\item[$(\boxdot)_{12}$] for each $\ell<\omega$, 
\[ 2^{2-n_{\iota(\ell+1)-1}}<{\rm diam}_\rho\Big(U^{p_{\iota(\ell)}}_0
  (n^{p_{\iota(\ell)}}-2)\Big)={\rm diam}_\rho\Big(
  U^{p_{\iota(\ell+1)}}_0   (n^{p_{\iota(\ell)}}-2)\Big)\]  
(remember $n_\iota$'s were chosen in $(\boxdot)_2$ and $p_{\iota(\ell)}\in
D^0_{0, n_{\iota(\ell)}, n_{\iota(\ell)}}$ so also $0\in w^{p_{\iota(\ell)}}$). 
\end{enumerate}
\medskip

Fix $\ell<\omega$ for a moment and suppose $\varsigma\in
{}^{\iota(\ell)}2$ is such that
\[\big|\varphi_{\iota(\ell)}(x^*_\varsigma)\cap w_{\iota(\ell)}\big|= 
  \bk(w_{\iota(\ell))}).\]
Let $\varsigma^*\in {}^{\iota(\ell+1)}2$ be such that $\varsigma\vtl
\varsigma^*$, $\varsigma^*(n)=0$ for $n\in [\iota(\ell),
\iota(\ell+1))$, and let $\sigma\in {}^{\iota(\ell+1)}2$ be such
that $\varsigma^*\rest (\iota(\ell+1)-1)\vtl \sigma$ and
$\sigma(\iota(\ell+1)-1)=1$. Then $x^*_{\varsigma^*}=x^*_\varsigma$
and $\rho(x^*_{\varsigma^*},x^*_\sigma)<
2^{1-n_{\iota(\ell+1)-1}}$. By $(\boxdot)_{12}$ we have then  
\[\rho(x^*_\varsigma+ c_{\iota(\ell+1)}, x^*_\sigma+
  c_{\iota(\ell+1)}) =\rho(x^*_{\varsigma^*},x^*_\sigma)<{\rm 
    diam}_\rho\Big( U^{p_{\iota(\ell+1)}}_0   (n^{p_{\iota(\ell)}}-2)\Big).\]  
Consequently,
\begin{enumerate}
\item[$(\boxdot)_{13}$]  $\displaystyle
  U^{p_{\iota(\ell+1)}}_{\varphi_{\iota(\ell+1)}  (x^*_\varsigma)}
  (n^{p_{\iota(\ell)}}-2) = U^{p_{\iota(\ell+1)}}_{
    \varphi_{\iota(\ell+1)} (x^*_\sigma)} (n^{p_{\iota(\ell)}}-2) $ 
\end{enumerate}
(remember \ref{fordef}(A)$(\boxtimes)_5$(b)). It follows from
$(\boxdot)_2^{c,d}+(\boxdot)_5$ that for each $x\in X_{\iota(\ell)}\setminus 
\{x^*_\varsigma\}$ we have $\bar{\ell}(x,x^*_\varsigma)=
\bar{\ell}(x,x^*_\sigma)$ and $\bar{m}(x,x^*_\varsigma)=
\bar{m}(x,x^*_\sigma)$, so by $(\boxdot)_9^{\iota(\ell+1)}$ we also have
\begin{enumerate}
\item[$(\boxdot)_{14}$]  $\displaystyle
  h^{p_{\iota(\ell+1)}}\big(\varphi_{\iota(\ell+1)}(x), 
  \varphi_{\iota(\ell+1)}(x^*_\varsigma) \big) = h^{p_{\iota(\ell+1)}}
  \big(\varphi_{\iota(\ell+1)}(x), \varphi_{\iota(\ell+1)}(x^*_\sigma)
  \big) $. 
\end{enumerate}
Condition \ref{fordef}(A)$(\boxtimes)_7$ for $p_{\iota(\ell+1)}$
together with $(\boxdot)_{11}$ imply now that, letting
$w^{\iota(\ell),\sigma}=\big(\pi_{\iota(\ell),\iota(\ell+1)}
\big[w_{\iota(\ell)}\big] \setminus\{\varphi_{\iota(\ell+1)}(x^*_\varsigma)\})\cup 
\{\varphi_{\iota(\ell+1)}(x^*_\sigma)\}$, we have 
\begin{enumerate}
\item[$(\boxdot)_{15}$] $\rksp\big(w^{\iota(\ell),\sigma}\big)
  =\rksp\big(\pi_{\iota(\ell), \iota(\ell+1)} [w_{\iota(\ell)}]\big)= \rksp(w_{\iota(\ell)})$, 

$\bj\big(w^{\iota(\ell),\sigma}\big) =\bj\big(\pi_{\iota(\ell),
  \iota(\ell+1)} [w_{\iota(\ell)}]\big)= \bj(w_{\iota(\ell)})$, and

$\bk\big(w^{\iota(\ell),\sigma}\big) =\bk\big(\pi_{\iota(\ell),
  \iota(\ell+1)} [w_{\iota(\ell)}]\big)= \bk(w_{\iota(\ell)}) = \big|
\varphi_{\iota(\ell+1)}(x^*_\sigma)\cap w^{\iota(\ell),\sigma}\big|$. 
\end{enumerate}
(Remember, $r^{p_{\iota(\ell+1)}}_m\leq n^{p_{\iota(\ell)}}-2$ when $m=
h^{p_{\iota(\ell+1)}}(\alpha,\beta)$, $\alpha,\beta\in
\pi_{\iota(\ell),\iota(\ell+1)}\big[w_{\iota(\ell)}\big]$ are distinct.) 
Consequently, if $\rksp(w_{\iota(\ell)})\geq 0$ then
\[\rksp(w_{\iota(\ell+1)})\leq \rksp\Big(\pi_{\iota(\ell),\iota(\ell+1)}
  \big[w_{\iota(\ell)}\big] \cup
  \{\varphi_{\iota(\ell+1)}(x^*_\sigma)\}\Big) <
\rksp(w_{\iota(\ell)})\] 
(remember Definition \ref{hypo2}$(\circledast)_e$).
\medskip

Unfixing $\ell<\omega$, we see that for some $\ell^*$ we have
$\rksp(w_{\iota(\ell^*)})=-1$. However, applying to $\ell^*$ the
procedure described above we get $\sigma\in {}^{\iota(\ell^*+1)}2$
such that $\varphi_{\iota(\ell^*+1)}(x^*_\sigma)$ contradicts clause
\ref{fordef}(A)$(\boxtimes)_8$ for $p_{\iota(\ell^*+1)}$ (remember
$(\boxdot)_{13}+(\boxdot)_{15}$). 
\medskip

\noindent{\sc Case:} The set $B$ is infinite.\\
Almost identical to the previous case. Defining $\varphi_\iota$ we use the
condition $c_\iota-x\in U^{p_\iota}_{\varphi_\iota(x)}(n^{p_\iota}-2)$, but
then not much other changes is needed. Even in $(\boxdot)_8$ we have 
\[c_{\iota'}-x=(c_\iota-x)+(c_{\iota'}-c_\iota) \in \Big(U^{p_{\iota'}}_\alpha( 
  n^{p_\iota}-2) +(c_{\iota'}-c_\iota)\Big)\cap
  U^{p_{\iota'}}_{\pi_{\iota,\iota'}(\alpha)}
  (n^{p_\iota}-2)\neq\emptyset\]
(where $\alpha=\varphi_\iota(x)\in w_\iota$).    
\end{proof}

The following theorem is the consequence of results presented in this 
section. 

\begin{theorem}
\label{mainA}  
Assume that 
  \begin{enumerate}
  \item $(\bbH,+,0)$ is an Abelian perfect Polish group,
  \item the set of elements of $\bbH$ of order larger than 2 is dense
    in $\bbH$,
  \item $2\leq k<\omega$ and 
\item $\vare<\omega_1$ and $\lambda$ is an uncountable
  cardinal such that ${\rm NPr}^\vare(\lambda)$ holds true. 
  \end{enumerate}
Then there is a ccc forcing notion $\bbP$ of cardinality $\lambda$ such that 
\[\begin{array}{ll}
\forces_\bbP &\mbox{`` for some $\Sigma^0_2$ subset $B$ of $\bbH$ we
               have:}\\  
&\quad \mbox{ there is a set }X\subseteq \bbH\mbox{ of cardinality }
  \lambda\mbox{ such that }\\
&\qquad\big(\forall x,y\in X\big)\big(\big| (x+B)\cap
  (y+B)\big|\geq k\big)\\
&\quad \mbox{ but there is no perfect set }P\subseteq \bbH \mbox{ such that 
  } \\
&\qquad\big(\forall x,y\in P\big)\big(\big| (x+B)\cap (y+B)\big|\geq
  k\big)\mbox{ ''.}
  \end{array}\]
\end{theorem}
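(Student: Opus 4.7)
The plan is to package the lemmas established earlier in this section: essentially all the work has already been carried out, and the theorem asserts exactly what the forcing $\bbP$ of Definition \ref{fordef} was designed to force. First I would take $\bbP$ to be that forcing notion. Lemma \ref{dense}(1) gives $|\bbP|=\lambda$, and Lemma \ref{Knaster} gives that $\bbP$ has the Knaster property, so in particular is ccc and preserves all cardinals and cofinalities.

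Next I would produce the $\Sigma^0_2$ set: set $\name{B}=\bigcup_{m<\omega}\name{\bF}_m$, which by Lemma \ref{baspropnames}(1) is forced to be a countable union of closed subsets of $\bbH$, hence $\Sigma^0_2$. The witnessing family of $\lambda$ many $k$--overlapping translations is then $\name{X}=\{-\name{\eta}_\alpha:\alpha<\lambda\}$. By Lemma \ref{baspropnames}(3) the $\name{\eta}_\alpha$'s are forced to be pairwise distinct (even quasi independent), so $|\name{X}|=\lambda$, and Lemma \ref{baspropnames}(4) delivers $|(-\name{\eta}_\alpha+\name{B})\cap(-\name{\eta}_\beta+\name{B})|\geq k$ for all $\alpha\neq \beta$, which is precisely the $k$--overlap property for translations by members of $\name{X}$.

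Finally, the non-existence of a perfect family of $k$--overlapping translations of $\name{B}$ is exactly the content of Lemma \ref{noperf}. Assembling these three observations yields the theorem.

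Since no new argument is required at this stage, the ``main obstacle'' is really a retrospective comment on the preceding work: the genuinely hard parts are Lemma \ref{Knaster}, whose proof relied on a $\Delta$--system cleanup combined with the absolute invariance of the rank data $\rksp$, $\bj$, $\bk$ coded into the conditions, and Lemma \ref{noperf}, which built a binary tree of generic points inside a hypothetical perfect witness, invoked the algebraic dichotomy of Theorem \ref{gettranslMin} to place each finite level inside a translation of (or the negation of) a condition, and then forced $\rksp$ to strictly decrease along the tree until clause $(\boxtimes)_8$ of Definition \ref{fordef} was violated.
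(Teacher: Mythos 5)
Your proof is correct and matches the paper's intent exactly: Theorem \ref{mainA} is stated in the paper without a separate proof, precisely because it is the immediate assembly of Lemma \ref{dense}(1), Lemma \ref{Knaster}, Lemma \ref{baspropnames}(1),(3),(4), and Lemma \ref{noperf}, with $\name{B}=\bigcup_{m<\omega}\name{\bF}_m$ and $X=\{-\name{\eta}_\alpha:\alpha<\lambda\}$ as you indicate. No further comment is needed.
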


\section{Forcing for groups with all elements of order $\leq 2$} 
Let us consider the situation when the main (algebraic) assumption of the
previous section fails: the set of elements of $\bbH$ of order larger than 2 is
NOT dense in $\bbH$. Let $H_2=\{a\in \bbH:a+a=0\}$, so $H_2$ is a
closed subgroup of $\bbH$ and its complement $\bbH\setminus H_2$ is
not dense in $\bbH$. Consequently, the interior of $H_2$ is not
empty and thus also $H_2$ is an open subset of $\bbH$. If $\bbH$ is a
perfect Polish group, so is $H_2$. Each coset of $H_2$ is clopen and  
consequently $\bbH/H_2$ is countable. 

Suppose that $\bT\subseteq H_2$ is a Borel set with $\lambda$ many
$k$--overlapping translations but without a perfect set of such
translations. Then $\bT$ is also a Borel subset of $\bbH$ and it still has
$\lambda$ many $k$--overlapping translations. If $P\subseteq \bbH$ is a
perfect set, then (as $|\bbH/H_2|\leq \omega$) for some $a\in \bbH$ the
intersection $P\cap (H_2+a)$ is uncountable. Consider $Q=\big(P\cap
(H_2+a)\big)-a\subseteq H_2$ --- it is a closed uncountable subset of $H_2$
(so contains a perfect set) and by the assumptions on $\bT$ there are
$c,d\in Q$ such that $\big|(\bT+c)\cap (\bT+d)\big|<k$. Then
$c+a,d+a\in P$ and $\big|(\bT+(c+a))\cap (\bT+(d+a))\big|=
\big|\big((\bT+c)\cap (\bT+d)\big)+a\big|<k$. 

Consequently, to completely answer the problem of Borel sets with
non--disjoint translations it is enough to deal with the case of all
elements of $\bbH$ being of order $\leq 2$. The arguments in this case are
similar to those from Section 5, but they are simpler. However, there is one
substantial difference. If $\bbH$ is a Polish group with all elements of
order $\leq 2$ and $B\subseteq \bbH$ is an uncountable Borel set, then $B$
has a perfect set of pairwise $2$--overlapping translations. Namely, choosing a
perfect set $P\subseteq B$ we will have $x+y,0\in (B+x)\cap (B+y)$ for each
$x,y\in P$. Moreover, if $x+b_0=y+b_1$, then also $x+b_1=y+b_0$. Therefore,
if $x\neq y$ and $(B+x)\cap (B+y)$ is finite, then $|(B+x)\cap (B+y)|$ must
be even. For that reason the meaning of $k$ in our forcing here will be
slightly different: the translations of the new Borel set will have at least
$2k$ elements.

\begin{hypothesis}
\label{hypall2}
  In the rest of the section we assume the following:
  \begin{enumerate}
\item $(\bbH,+,0)$, $\bD$, $\rho,\rho^*$ and $\cU$ are as in Assumption
   \ref{hypno}.
\item All elements of $\bbH$ have orders at most 2. 
\item $1<k<\omega$.
\item $\vare$ is a countable ordinal and $\lambda$ is an uncountable
  cardinal such that ${\rm NPr}^\vare(\lambda)$ holds true. The model
  $\bbM(\vare,\lambda)$ and functions $\rksp,\bj$ and $\bk$ on
  $[\lambda]^{<\omega}\setminus \{\emptyset\}$ are as fixed in Definition
  \ref{hypo2}. 
  \end{enumerate}
\end{hypothesis}

In groups with all elements of order two we should use a weaker notion of
independence. 

\begin{definition}
  \label{quasi-in}
  Let $(\bbH,+,0)$ be an Abelian group
  \begin{enumerate}
\item  A set $\bB\subseteq \bbH$ is  {\em  quasi$^-$  independent in
    $\bbH$\/} if $|\bB|\geq 8$ and if for all distinct
  $b_0,b_1,b_2,\ldots,b_7\in\bB$ and any $e_0,e_1,e_2,\ldots, e_7\in
  \{0,1\}$ not all equal $0$, we have  
  \[e_0b_0+e_1b_1+e_2b_2+e_3b_3+e_4b_4+e_5b_5+e_6b_6+e_7b_7\neq 0.\]
\item A family $\{V_i:i\leq n\}$ of disjoint subsets of $\bbH$ is a qif$^-$
  if for each choice of $b_i\in V_i$, $i\leq n$, the set $\{b_i:i\leq n\}$
  is quasi$^-$ independent. 
\end{enumerate}
\end{definition} 

\begin{proposition}
  \label{exis2ord}
  Assume that
  \begin{enumerate}
\item[(i)] $(\bbH,+,0)$ is a perfect Abelian Polish group,
\item[(ii)] $U_0,\ldots,U_n$ are nonempty open subsets of $\bbH$, $n\geq 7$. 
\end{enumerate}
Then there are non-empty open sets $V_i\subseteq U_i$ (for $i\leq n$) such
that $\{V_i:i\leq n\}$ is a qif$^-$. 
\end{proposition}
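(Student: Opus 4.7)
The plan is to mimic the proof of Proposition \ref{existence} in the simplified setting where every element of $\bbH$ has order at most $2$. Under that hypothesis $\bbH$ is naturally an $\mathbb{F}_2$-vector space (since $-a=a$ for every $a\in\bbH$), so a $\{0,1\}$-combination $\sum_{i} e_i b_i$ vanishes exactly when the involved $b_i$'s are $\mathbb{F}_2$-linearly dependent. Hence, after fixing representatives $b_i\in V_i$, the set $\{b_i:i\leq n\}$ is quasi$^-$ independent in the sense of Definition \ref{quasi-in}(1) precisely when every $8$-element subset is $\mathbb{F}_2$-linearly independent; and this is automatic once the full set $\{b_0,\ldots,b_n\}$ is $\mathbb{F}_2$-linearly independent.

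First I would choose representatives $b_i^*\in U_i$ ($i\leq n$) by induction on $i$, so that $b_i^*$ lies outside the $\mathbb{F}_2$-span of $\{b_0^*,\ldots,b_{i-1}^*\}$. This is possible because $\bbH$ is a perfect Polish group, so every nonempty open $U_i$ is uncountable, whereas the span in question has at most $2^i$ elements. Consequently the whole tuple $(b_0^*,\ldots,b_n^*)$ is $\mathbb{F}_2$-linearly independent, and in particular all the $b_i^*$'s are pairwise distinct (and distinct from $0$).

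Next I would promote this to an open condition. Enumerate the finitely many non-trivial tuples $\bar e=(e_0,\ldots,e_n)\in\{0,1\}^{n+1}$. For each such $\bar e$ the value $\sum_{i\leq n} e_i b_i^*$ is a nonzero element of $\bbH$, and the map $(x_0,\ldots,x_n)\mapsto \sum_{i\leq n} e_i x_i$ is continuous. Hence there are open neighborhoods $V_i^{\bar e}\subseteq U_i$ of $b_i^*$ such that $\sum_{i\leq n} e_i x_i\neq 0$ whenever $x_i\in V_i^{\bar e}$. Intersecting over the finitely many $\bar e$ and shrinking further (using the Hausdorff property and the fact that the $b_i^*$'s are pairwise distinct), I obtain pairwise disjoint open sets $V_i\subseteq U_i$ with $b_i^*\in V_i$.

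It then follows that for any choice of $b_i\in V_i$ ($i\leq n$) the elements $b_0,\ldots,b_n$ are $n+1\geq 8$ pairwise distinct points such that no nontrivial $\{0,1\}$-combination equals $0$; restricting to any $8$ of them gives the defining clause of quasi$^-$ independence, so $\{V_i:i\leq n\}$ is a qif$^-$. There is no real obstacle in this argument---the only thing to note is precisely that the ``few elements of order two'' difficulty from Section 5 disappears here, because in characteristic $2$ the forbidden combinations reduce to $\mathbb{F}_2$-linear combinations, and these can be killed by a straightforward dimension-counting greedy choice rather than by the Baire/meager argument used around $(\otimes)_i$ and $(\oplus)_1$ in Proposition \ref{existence}.
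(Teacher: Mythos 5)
Your proposal is correct and takes essentially the same route the paper has in mind: the paper's proof is only the remark ``Similar to Proposition \ref{existence}'', and Proposition \ref{existence} is exactly the greedy-choice-of-representatives-followed-by-continuity-shrinking argument you carry out, with the $\mathbb{F}_2$-dimension count replacing the more involved cardinality/meagerness bookkeeping that Proposition \ref{existence} needs in the presence of nontrivial torsion. One small remark for precision: the $\mathbb{F}_2$-vector-space packaging relies on the standing Hypothesis \ref{hypall2}(2) (all orders $\leq 2$), which is not restated in the proposition but is in force throughout Section 6; your underlying greedy argument (avoid the finitely many points $-\sum_{j\in T} b_j^*$ in an uncountable open set) actually works in any perfect Abelian Polish group, so the $\mathbb{F}_2$ language is a convenience rather than a restriction.
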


\begin{proof}
Similar to Proposition \ref{existence}.
\end{proof}

The forcing notion used in the case of groups with all elements of order
$\leq 2$ is almost the same as the one introduced in Definition
\ref{fordef}. The only difference is that instead of 8--good qifs we use the
weaker concept of qifs$^-$. (There are no 8--good qifs in the current case.)
Since in the current case, $a-b=a+b$ for $a,b\in\bbH$, we still can repeat 
all needed ingredients of Section 5. To stress the importance of this
property we will consistently use the addition $+$ rather than subtraction
$-$. 

\begin{definition}
  \label{5.2cp}
{\bf (A)}\quad Let $\bbQ$ be the collection of all tuples  
\[p=\big(w^p,M^p,\bar{r}^p,n^p,\bar{\Upsilon}^p,\bar{V}^p,h^p\big)=   
  \big(w,M,\bar{r},n,\bar{\Upsilon},\bar{V},h\big)\]      
such that the following demands $(\otimes)_1$--$(\otimes)_8$ are
satisfied.  
\begin{enumerate}
\item[$(\otimes)_1$] $w\in [\lambda]^{<\omega}$, $|w|\geq 4$,
  $0<M<\omega$, $3\leq n<\omega$ and $\bar{r}=\langle r_m:m<M\rangle$ where  
  $r_m\leq n-2$  for $m<M$.  
\item[$(\otimes)_2$] $\bar{\Upsilon}= \langle \bar{U}_\alpha:\alpha\in
  w\rangle$ where each $\bar{U}_\alpha=\langle U_\alpha(\ell):\ell\leq n\rangle$
  is a $\subseteq$--decreasing sequence of elements of $\cU$.
\item[$(\otimes)_3$]  $\bar{V}=\langle Q_{i,\alpha,\beta},
    V_{i,\alpha,\beta}, W_{i,\alpha,\beta}: i<k,\ (\alpha,\beta)\in
    w^{\langle 2\rangle}\rangle\subseteq \cU$ and  $Q_{i,\alpha,\beta}=
    Q_{i,\beta,\alpha}\supseteq V_{i,\alpha,\beta}=
  V_{i,\beta,\alpha} \supseteq W_{i,\alpha,\beta}= W_{i,\beta,\alpha}$ for
  all $i<k$ and $(\alpha,\beta)\in w^{\langle 2\rangle}$. 
\item[$(\otimes)_4$] 
  \begin{enumerate}
  \item[(a)] The indexed family $\langle U_\alpha(n-2):\alpha\in
    w\rangle\conc \langle Q_{i,\alpha,\beta}: i<k,\ \alpha,\beta\in   w,\
    \alpha<\beta \rangle$ is a qif$^-$ (so in particular the sets in
    this system are pairwise disjoint), and  
  \item[(b)] $\langle U_\alpha(n):\alpha\in  w\rangle\conc \langle
W_{i,\alpha,\beta}: i<k,\ \alpha,\beta\in   w,\ \alpha<\beta \rangle$ is
immersed in $\langle U_\alpha(n-1):\alpha\in  w\rangle\conc \langle
V_{i,\alpha,\beta}: i<k,\ \alpha,\beta\in   w,\  \alpha<\beta \rangle$ and
$\langle U_\alpha(n-1):\alpha\in  w\rangle\conc \langle  V_{i,\alpha,\beta}:
i<k,\ \alpha,\beta\in   w,\ \alpha<\beta \rangle$ is immersed in $\langle
U_\alpha(n-2):\alpha\in w\rangle\conc \langle Q_{i,\alpha,\beta}:  
  i<k,\ \alpha,\beta\in   w,\ \alpha<\beta \rangle$.
  \end{enumerate}
\item[$(\otimes)_5$] 
  \begin{enumerate}
\item[(a)] If $\alpha,\beta\in w$, $\ell\leq n$ and $U_\alpha(\ell)\cap 
U_\beta(\ell)\neq \emptyset$, then $U_\alpha(\ell)=U_\beta(\ell)$, and 
\item[(b)] if $\alpha,\beta,\gamma\in w$, $\ell\leq n$, $U_\alpha(\ell)\neq
  U_\beta(\ell)$ and $a\in U_\alpha(\ell)$, $b\in U_\beta(\ell)$, then
  $\rho(a,b)>{\rm diam}_\rho\big(U_\gamma(\ell)\big)$. 
\end{enumerate}
\item[$(\otimes)_6$] $h: w^{\langle 2\rangle}\stackrel{\rm
    onto}{\longrightarrow} M$ is such that $h(\alpha,\beta)=h(\beta,\alpha)$
  for $(\alpha,\beta)\in w^{\langle 2\rangle}$.  
\item[$(\otimes)_7$] Assume that $u,u'\subseteq w$, $\pi$ and $\ell\leq n$
  are such that  
  \begin{itemize}
\item $4\leq |u|=|u'|$ and $\pi:u\longrightarrow u'$ is a bijection, 
\item $r_{h(\alpha,\beta)}\leq \ell$ for all $(\alpha,\beta)\in u^{\langle
    2\rangle}$,  
\item $U_\alpha(\ell)\cap U_\beta(\ell)=\emptyset$ and
  $h(\alpha,\beta)=h(\pi(\alpha),\pi(\beta))$ for all distinct
  $\alpha,\beta\in u$,
\item for some $c\in \bbH$,  for all $\alpha\in u$, we have 
$\big(U_\alpha(\ell)+c\big)\cap U_{\pi(\alpha)}(\ell)\neq \emptyset$.
\end{itemize}
Then $\rksp(u)=\rksp(u')$, $\bj(u)=\bj(u')$, $\bk(u)=\bk(u')$ and for
$\alpha \in u$ 
\[|\alpha\cap u|=\bk(u)\quad\Leftrightarrow\quad |\pi(\alpha)\cap u'|=
  \bk(u).\]  
\item[$(\otimes)_8$] Assume that  
  \begin{itemize}
  \item $\emptyset\neq u\subseteq w$, $\rksp(u)=-1$, $\ell\leq n$ and
\item $\alpha\in u$ is such that $|\alpha\cap u|=\bk(u)$, and 
  \item  $r_{h(\beta,\beta')}\leq \ell$  and $U_\beta(\ell)\cap
    U_{\beta'}(\ell)=\emptyset$ for all $(\beta,\beta')\in u^{\langle
      2\rangle}$.   
  \end{itemize}
Then there is {\bf no}\/ $\alpha'\in w\setminus u$ such that $U_\alpha(\ell)
=U_{\alpha'}(\ell)$ and $h(\alpha,\beta)= h(\alpha', \beta)$ for all
$\beta\in u\setminus\{\alpha\}$.   
\end{enumerate}

\noindent {\bf (B)}\quad For $p\in\bbQ$ and $m<M^p$ we define
\[F(p,m)= \bigcup\big\{U^p_\alpha(n^p)+W^p_{i,\alpha,\beta}: 
     (\alpha,\beta)\in (w^p)^{\langle 2\rangle} \  \wedge\ i<k \  \wedge\ 
     h^p(\alpha,\beta)=m\big\}.\]     

\noindent {\bf (C)}\quad For $p,q\in \bbQ$ we declare that $p\leq q$\quad if
and only if  
\begin{itemize}
\item $w^p\subseteq w^q$, $M^p\leq M^q$, $\bar{r}^q\rest M^p=\bar{r}^p$,  
  $n^p\leq n^q$, $h^q\rest (w^p)^{\langle 2\rangle} = h^p$, and 
\item if $\alpha\in w^p$ and $\ell\leq n^p$ then $U^q_\alpha(\ell) =
  U^p_\alpha(\ell)$, and 
\item if $(\alpha,\beta)\in (w^p)^{\langle 2\rangle}$, $i<k$, then
$Q^q_{i,\alpha, \beta}\subseteq Q^p_{i,\alpha,\beta}$, $V^q_{i,\alpha,
\beta} \subseteq V^p_{i,\alpha,\beta}$, and $W^q_{i,\alpha, \beta}\subseteq
W^p_{i,\alpha,\beta}$, and  
\item if $m<M^p$, then $F(q,m)\subseteq F(p,m)$. 
\end{itemize}
\end{definition}

\begin{lemma}
\label{5.3cp}
\begin{enumerate}
\item $(\bbQ,\leq)$ is a partial order of size $\lambda$.
\item The following sets are dense in $\bbQ$:
  \begin{enumerate}
  \item[(i)] $D^0_{\gamma,M,n}=\big\{p\in\bbQ:\gamma\in u^p\ \wedge \ M^p>M
    \ \wedge\ n^p>n \big\}$ for $\gamma<\lambda$ and $M,n<\omega$.
\item[(ii)] $D^1_N=\big\{p\in\bbQ:\diam(U^p_\alpha(n^p-2))<2^{-N}\ \wedge\ 
  \diam(Q^p_{i,\alpha,\beta})<2^{-N} \ \wedge$

\qquad  $\diam(U^p_\alpha(n^p-2)+Q^p_{i,\alpha,\beta})< 2^{-N} \mbox{ 
  for all }i<k,\ (\alpha, \beta)\in (w^p)^{\langle 2\rangle}\big\}$

for $N<\omega$.  
\item[(iii)] $D^2_N=\big\{p\in\bbQ:$ for all $i,j<k$ and 
$(\alpha,\beta), (\gamma,\delta)\in (w^p)^{\langle 2\rangle}$ it
  holds that

\qquad\quad ${\rm diam}_\rho(U^p_\alpha(n^p-2))<2^{-N}$\ and\ ${\rm
  diam}_\rho (Q^p_{i,\alpha,\beta})<2^{-N}$\ and  

\qquad\quad  ${\rm diam}_\rho (U^p_\alpha(n^p-2)+Q^p_{i,\alpha,\beta})<
2^{-N}$~and  

\qquad\quad if $(i,\alpha^*,\alpha,\beta)\neq (j,\gamma^*,\gamma,\delta)$   
 then 

\qquad\quad $\big(U^p_{\alpha^*}(n^p)+W^p_{i,\alpha,\beta}\big)\cap 
\big(U^p_{\gamma^*}(n^p)+W^p_{i,\gamma,\delta}\big)= \emptyset\big\}$. 

for $N<\omega$.
\item[(iv)] $D^3_N=\big\{p\in D^2_N:$ for some $\langle Q^*_{i,\alpha,\beta}:
  i<k, \alpha,\beta\in w^p, \alpha<\beta\rangle\subseteq \cU$

\qquad\quad the system 

\qquad\quad $\langle U^p_\alpha(n-3):\alpha\in
  w^p\rangle\conc \langle Q^*_{i,\alpha,\beta}:
  i<k, \alpha,\beta\in w^p, \alpha<\beta\rangle$ 

\qquad\quad is a qif$^-$ and

\qquad\quad   $\langle U^p_\alpha(n-2):\alpha\in w^p\rangle\conc \langle
Q_{i,\alpha,\beta}:  i<k, \alpha,\beta\in w^p, \alpha<\beta\rangle$ 

\qquad\quad  is immersed in it $\ \big\}$.  
  \end{enumerate}
\item Assume $p\in\bbQ$. Then there is $q\geq p$ such that $n^q\geq n^p+3$,
  $w^q=w^p$ and  
  \begin{itemize}
\item for all $\alpha\in w^p$, $\cl\big(U^q_\alpha(n^q-2)\big)\subseteq 
  U^p_\alpha(n^p)$, and
\item for all $i<k$ and $(\alpha,\beta)\in (w^p)^{\langle 2\rangle}$, 
\[\cl\big(U^q_\alpha(n^q-2)+Q^q_{i,\alpha,\beta}\big)\subseteq
  U^p_\alpha(n^p)+W^p_{i,\alpha,\beta}\quad\mbox{ and }\quad
  \cl\big(Q^q_{i,\alpha,\beta}\big) \subseteq W^p_{i,\alpha,\beta}.\]
  \end{itemize}
\end{enumerate}
\end{lemma}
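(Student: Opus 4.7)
My plan is to derive Lemma \ref{5.3cp} by porting the proof of Lemma \ref{dense} verbatim from Section 5, with three systematic substitutions: Proposition \ref{existence} gives way to Proposition \ref{exis2ord}; ``$8$--good qif'' becomes ``qif$^-$''; and the two alternatives in \ref{fordef}$(\boxtimes)_7$ collapse to the single additive alternative of \ref{5.2cp}$(\otimes)_7$, since $-x=x$ in $\bbH$ by Hypothesis \ref{hypall2}. Apart from the new density set $D^3_N$, no new idea is needed.

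Part (1) will be immediate: reflexivity, transitivity and antisymmetry of $\leq$ read off Definition \ref{5.2cp}(C), and each condition is coded by a finite subset of $\lambda$ together with finitely many parameters from the countable families $\cU$ and $\omega$, giving $|\bbQ|=\lambda$. For parts (2)(i)-(iii) and (3) I plan to repeat the constructions from Lemma \ref{dense} with the above substitutions; the arity hypothesis $n\geq 7$ of Proposition \ref{exis2ord} is always met since $|w|\geq 4$ and $k\geq 2$ together produce at least $4+2\binom{4}{2}=16$ sets in the system to which the proposition is applied. Observation \ref{obs3.2}(3) is reused unchanged to produce the two inner immersed refinements at levels $n-1$ and $n$. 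The verification of $(\otimes)_7$ will be simpler than that of its analogue $(\boxtimes)_7$, since only the additive alternative needs to be examined; the case split on whether the newly introduced ordinal $\gamma$ lies in the tested set $u$ goes through verbatim, using the fact that the new values of $h$ are each attained exactly once. The verification of $(\otimes)_8$ is literally identical to the corresponding step in \ref{dense}.

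The one genuinely new step is the density of $D^3_N$. I would handle it in two stages. First, apply part (iii) to pass from an arbitrary $p$ to some $p_1\geq p$ with $p_1\in D^2_N$. Then extend $p_1$ by three additional inner levels to produce $q\geq p_1$: keep $w^q=w^{p_1}$, $M^q=M^{p_1}$, $\bar r^q=\bar r^{p_1}$, $h^q=h^{p_1}$ and all data through level $n^{p_1}$ from $p_1$; set $n^q=n^{p_1}+3$; use Proposition \ref{exis2ord} to pick strict shrinkings $U_\alpha(n^{p_1}+1)\subsetneq U^{p_1}_\alpha(n^{p_1})$ and $Q^q_{i,\alpha,\beta}\subsetneq W^{p_1}_{i,\alpha,\beta}$ forming a qif$^-$ of $\rho^*$--diameter below $2^{-N}$; and then use Observation \ref{obs3.2}(3) twice to obtain the remaining inner levels and the accompanying $V^q,W^q$. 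The $D^3_N$--witness for $q$ will be $Q^*_{i,\alpha,\beta}=W^{p_1}_{i,\alpha,\beta}$. Indeed $U^q_\alpha(n^q-3)=U^{p_1}_\alpha(n^{p_1})\subseteq U^{p_1}_\alpha(n^{p_1}-2)$ and $W^{p_1}_{i,\alpha,\beta}\subseteq Q^{p_1}_{i,\alpha,\beta}$, so the $Q^*$--system is a subsystem of the qif$^-$ furnished by $(\otimes)_4$(a) for $p_1$ and is itself a qif$^-$ by heredity (the qif$^-$ property is preserved under passing to open subsets). The required immersion of the $(n^q-2)$--system into this $Q^*$--system will be arranged as one of the steps in the Observation \ref{obs3.2}(3) construction at the outermost new level.

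The main technical point, and the only place where something beyond Lemma \ref{dense} is needed, is this heredity observation used to exhibit the $Q^*$--witness; I do not foresee any further obstacle, since everything else is a routine transposition of the proof of \ref{dense} under the three substitutions described above.
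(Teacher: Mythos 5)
Your proposal tracks the paper's own proof, which is simply ``Same as for Lemma~\ref{dense} (just using Proposition~\ref{exis2ord}),'' and you correctly identify that the only genuinely new content is the density of $D^3_N$, which you handle by the right two-stage argument (pass into $D^2_N$, then add three inner levels inside the $(n^{p_1})$ and $W^{p_1}$ boxes with $Q^*_{i,\alpha,\beta}=W^{p_1}_{i,\alpha,\beta}$ as the witness). One small wiring issue: as written you invoke Proposition~\ref{exis2ord} to produce the $(n^q-2)$--level system and then rely on Observation~\ref{obs3.2}(3) only for levels $n^q-1$ and $n^q$, but \ref{exis2ord} yields a qif$^-$ shrinking and not the immersion into the $Q^*$--system that $D^3_N$ requires; the repair is trivial --- use Observation~\ref{obs3.2}(3) at the $(n^q-2)$ level as well (the qif$^-$ property there is then automatic by heredity, exactly as you observe), so \ref{exis2ord} is not needed in this step at all.
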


\begin{proof}
Same as for \ref{dense} (just using Proposition \ref{exis2ord}).
\end{proof}

\begin{lemma}
  \label{5.4cp}
Suppose that $p\in D^3_1$ and $\alpha,\beta,\gamma,\delta\in w^p$
are such that $\alpha\neq \beta$. If
\[\Big(U^p_\alpha(n^p-2)+U^p_\beta(n^p-2)\Big)\cap
  \Big(U^p_\gamma(n^p-2)+U^p_\delta(n^p-2)\Big)\neq \emptyset,\]
then $\{\alpha, \beta\}=\{\gamma,\delta\}$.
\end{lemma}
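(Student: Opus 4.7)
The plan is to adapt the proof of Lemma~\ref{abc} to the exponent-2 setting of Hypothesis~\ref{hypall2}, where $-x=x$ for every $x\in\bbH$, so that $a+b=c+d$ is equivalent to $a+b+c+d=0$. The role that 8-good qifs and their differences played in Lemma~\ref{abc} will here be played by qifs$^-$, sums, and the immersion at level $n^p-3$ supplied by the definition of $D^3_1$.

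I would set $n=n^p$ and pick witnesses $a\in U^p_\alpha(n-2)$, $b\in U^p_\beta(n-2)$, $c\in U^p_\gamma(n-2)$ and $d\in U^p_\delta(n-2)$ of the assumed intersection, so $a+b=c+d$. The first step is to rule out $\gamma=\delta$ via the same translation-invariance trick as in Lemma~\ref{abc}: rewriting $a=c+d+b$ gives $\rho(a,b)=\rho(c+d,0)=\rho(c,d)$. If one had $\gamma=\delta$ this would force $\rho(a,b)\leq \diam(U^p_\gamma(n-2))$, contradicting clause $(\otimes)_5$(b) applied to $\alpha\neq\beta$ (noting that qif$^-$ members are disjoint, so $U^p_\alpha(n-2)\neq U^p_\beta(n-2)$).

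The main step is to assume $\{\alpha,\beta\}\neq\{\gamma,\delta\}$ and derive a contradiction from $a+b+c+d=0$. Two cases arise. If the four indices $\alpha,\beta,\gamma,\delta$ are pairwise distinct, then $a,b,c,d$ lie in four disjoint members of the qif$^-$ at level $n-3$ (using $U^p_\zeta(n-2)\subseteq U^p_\zeta(n-3)$), so they are pairwise distinct; padding with arbitrary choices from the remaining qif$^-$ members and coefficients $0$ produces a nontrivial $\{0,1\}$-combination of at least eight elements equal to $0$, contradicting quasi$^-$ independence.

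Otherwise $|\{\alpha,\beta\}\cap\{\gamma,\delta\}|=1$; by the symmetry of the equation $a+b=c+d$ under $a\leftrightarrow b$ and $c\leftrightarrow d$, I may assume $\alpha=\gamma$ while $\delta\notin\{\alpha,\beta\}$. Then $a,c\in U^p_\alpha(n-2)$ and $b\in U^p_\beta(n-2)$. The immersion of $\langle U^p_\zeta(n-2):\zeta\in w^p\rangle \conc \langle Q_{i,\zeta,\zeta'}:\ldots\rangle$ in the qif$^-$ at level $n-3$ (Definition~\ref{quasidef}(4)) yields $a+b+c=(a-c)+b\in U^p_\beta(n-3)$; hence $d=a+b+c\in U^p_\beta(n-3)$. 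But also $d\in U^p_\delta(n-2)\subseteq U^p_\delta(n-3)$, and the qif$^-$ members at level $n-3$ are disjoint, forcing $\beta=\delta$, a contradiction. The hard part is precisely this last case: one must recognize that the immersion property is exactly what turns a single index coincidence into a containment in two disjoint qif$^-$ members; once this is noticed, the remaining symmetry bookkeeping and padding to eight summands are routine.
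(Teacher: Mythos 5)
Your proof is correct and follows exactly the route the paper gestures at: it adapts Lemma~\ref{abc} to the exponent-2 setting, first using translation invariance of $\rho$ and $(\otimes)_5$(b) to rule out $\gamma=\delta$, and then handling the remaining cases (indices all distinct, and exactly one shared) by means of the qif$^-$ at level $n^p-3$, with the immersion condition supplied by $D^3_1$ converting a shared index into containment in a single qif$^-$ member at level $n^p-3$. The last case is indeed the crux, and your use of the immersion property to show $d=a+b+c\in U^p_\beta(n^p-3)$ while also $d\in U^p_\delta(n^p-3)$ is precisely what the paper's terse ``Similar to \ref{abc}, remembering \ldots immersed in a qif$^-$'' intends.
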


\begin{proof}
  Similar to \ref{abc}, remembering $\langle U^p_\alpha(n-2):\alpha\in
  w^p\rangle$ is immersed in a qif$^-$ $\langle U^p_\alpha(n-3):\alpha\in 
  w^p\rangle$; see \ref{5.3cp}(2)(iv).
\end{proof}

\begin{lemma}
  \label{5.5cp}
  The forcing notion $\bbQ$ has the Knaster property.
\end{lemma}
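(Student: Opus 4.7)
The plan is to adapt the proof of Lemma \ref{Knaster} almost verbatim, replacing the role of 8--good qifs by qifs$^-$ and using Proposition \ref{exis2ord} in place of Proposition \ref{existence}. The crucial arithmetic observation is that in $\bbH$, since every element has order at most 2, we have $a-b=a+b$ and every integer coefficient from $\{-2,-1,0,1,2\}$ reduces mod 2 to $\{0,1\}$. Consequently, every nontrivial $(2,8)$--combination from a qif$^-$ (in the sense of Definition \ref{quasi-in}) is nonzero, and only the first alternative of \ref{fordef}$(\boxtimes)_7$ can occur --- which matches the fact that this is the only alternative listed in $(\otimes)_7$.

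First I would take an arbitrary sequence $\langle p_\vare:\vare<\omega_1\rangle$ of pairwise distinct conditions from $\bbQ$ and apply the standard $\Delta$--system plus cleaning procedure to extract an uncountable $A\subseteq\omega_1$ and a finite root $w_0\subseteq\lambda$ so that analogues of $(*)_1$--$(*)_2$ of the proof of \ref{Knaster} hold: all $p_\xi$ ($\xi\in A$) share the same shape ($|w^p|$, $n^p$, $M^p$, $\bar{r}^p$), and the order isomorphisms $\pi^*:w^{p_\zeta}\to w^{p_\xi}$ transport all the data $\bar U, Q, V, W, h$ as well as the values of $\rksp,\bj,\bk$ on subsets of the support. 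The derived property $(*)_3$ (if $u\subseteq w_0$ and $\alpha\in w^{p_\xi}\setminus w_0$ satisfy $\rksp(u\cup\{\alpha\})=-1$, then $\bk(u\cup\{\alpha\})\neq|u\cap\alpha|$) follows as in \ref{Knaster} from the choice of $\bj,\bk$ in \ref{hypo2}.

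Next, for each pair $\xi<\zeta$ from $A$ I would build a common extension $q\in\bbQ$ with $w^q=w^{p_\xi}\cup w^{p_\zeta}$, $n^q=n^{p_\xi}+3$, $M^q=M^{p_\xi}+|w^{p_\xi}\setminus w^{p_\zeta}|^2$, and $r^q_m=n^q-2$ for $M^{p_\xi}\le m<M^q$. Using Proposition \ref{exis2ord} and the straightforward qif$^-$ analogue of Observation \ref{obs3.2}(3), I would pick new level-$(n-2),(n-1),(n)$ open sets nested inside the ones inherited from $p_\xi$ and $p_\zeta$ so that $(\otimes)_3$--$(\otimes)_5$ hold. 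Then $h^q$ is defined to extend $h^{p_\xi}\cup h^{p_\zeta}$ via a bijection $(w^{p_\xi}\setminus w_0)\times(w^{p_\zeta}\setminus w_0)\to[M^{p_\xi},M^q)$. Verifying $(\otimes)_1$--$(\otimes)_6$ is immediate from these choices.

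The main work is verifying $(\otimes)_7$ and $(\otimes)_8$, and this is where the qif$^-$ property is used. For $(\otimes)_7$, given $u,u'\subseteq w^q$, a bijection $\pi:u\to u'$, $\ell\le n^q$ and $c\in\bbH$ as in the hypothesis, I would split into three cases exactly as in \ref{Knaster}. If $u\subseteq w^{p_\xi}$, then every value $h^q(\alpha,\beta)$ for $(\alpha,\beta)\in u^{\langle 2\rangle}$ is below $M^{p_\xi}$, which forces $u'$ to be contained in $w^{p_\xi}$ or in $w^{p_\zeta}$; either way the conclusion follows by applying $(\otimes)_7$ for $p_\xi$ (composing with $\pi^*$ when necessary). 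The case $u\subseteq w^{p_\zeta}$ is symmetric. For the mixed case, pick $\alpha\in u\setminus w^{p_\xi}$ and $\beta\in u\setminus w^{p_\zeta}$; then $h^q(\alpha,\beta)\ge M^{p_\xi}$ forces $\ell\ge n^q-2$. Suppose some $\gamma\in u$ has $\pi(\gamma)\neq\gamma$, and choose $\gamma'\in u$ with $\{\gamma,\pi(\gamma)\}\cap\{\gamma',\pi(\gamma')\}=\emptyset$ (possible since $|u|\ge 4$). Then for suitable $b\in U^q_\gamma(\ell)$, $b'\in U^q_{\pi(\gamma)}(\ell)$, $b''\in U^q_{\gamma'}(\ell)$, $b'''\in U^q_{\pi(\gamma')}(\ell)$ we get $b+b'=c=b''+b'''$, hence $b+b'+b''+b'''=0$, a nontrivial $(2,8)$--combination from the qif$^-$ $\langle U^q_\delta(\ell):\delta\in w^q\rangle$, a contradiction. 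Thus $\pi$ is the identity on $u$, $u=u'$, and the required conclusion is trivial. For $(\otimes)_8$, I would follow the case split at the end of the proof of \ref{Knaster} line by line; the appeal to $(*)_3$ again handles the subcase with $\alpha\in w^{p_\xi}\setminus w^{p_\zeta}$ and candidate $\alpha'\in w^{p_\zeta}\setminus w^{p_\xi}$. I do not expect a genuine obstacle: the only place where the proof of \ref{Knaster} really used the full $\{-1,0,1\}$ coefficients rather than $\{0,1\}$ was in distinguishing the two alternatives of $(\boxtimes)_7$, and in the exponent-2 setting these alternatives collapse, so the qif$^-$ property is exactly strong enough to run the same arguments.
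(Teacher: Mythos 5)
Your approach is essentially the same as the paper's, which simply says: ``Same as Lemma~\ref{Knaster}, but when defining a bound $q$ to $p_\xi,p_\zeta$ modify the demands to have $n^q=n^{p_\zeta}+4$ and $q\in D^3_1$.'' You have unpacked that sentence correctly, and your arithmetic observation (that the two alternatives of $(\boxtimes)_7$ collapse to one when $-a=a$, and that qif$^-$ at level $n-2$ is inherited by the finer levels since any non-trivial $\{0,1\}$-combination from nested sets is still a choice from the coarser qif$^-$) is exactly the right justification for why the Knaster argument transfers. The one cosmetic difference is worth flagging: the paper takes $n^q=n^{p_\zeta}+4$ and additionally arranges $q\in D^3_1$, which costs one extra level since $D^3_1$ requires a qif$^-$ at level $n^q-3$; you use $n^q=n^{p_\xi}+3$ and do not mention $D^3_1$. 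This is not a gap for the Knaster property itself --- nowhere in your verification of $(\otimes)_1$--$(\otimes)_8$ is Lemma~\ref{5.4cp} or any other consequence of $D^3_1$-membership invoked --- so your shorter version of the extension goes through, and the paper's extra requirement is simply providing a slightly stronger (and slightly more expensive) common extension than what is strictly needed here.
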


\begin{proof}
  Same as Lemma \ref{Knaster}, but when defining a bound $q$ to
  $p_\xi,p_\zeta$ modify the demands to have $n^q=n^{p_\zeta}+4$ and $q\in
  D^3_1$. 
\end{proof}

\begin{lemma}
\label{5.6cp}
For each $(\alpha,\beta)\in \lambda^{\langle 2\rangle}$ and
  $i<k$,
\[  \begin{array}{ll}
\forces_\bbQ& \mbox{`` the sets }\\
&\displaystyle \bigcap \big\{U^p_\alpha(n^p): p\in \name{G}_\bbQ\
    \wedge\ \alpha\in w^p\big\}\quad\mbox{ and }\quad 
\bigcap \big\{W^p_{i,\alpha,\beta}: p\in \name{G}_\bbQ\
    \wedge\ \alpha,\beta\in w^p\big\}\\
&\mbox{ have exactly one element each. ''}
  \end{array}\]
\end{lemma}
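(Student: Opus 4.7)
The statement is the direct analog for $\bbQ$ of Lemma \ref{genset}, so the plan is to follow the same line of argument, now invoking the density results of Lemma \ref{5.3cp} in place of those of Lemma \ref{dense}. Fix $(\alpha,\beta)\in\lambda^{\langle 2\rangle}$ and $i<k$, and let $G$ be $\bbQ$--generic over $\bV$. Put
\[X_\alpha = \bigcap\big\{U^p_\alpha(n^p): p\in G\ \wedge\ \alpha\in w^p\big\},\qquad
 Y = \bigcap\big\{W^p_{i,\alpha,\beta}: p\in G\ \wedge\ \alpha,\beta\in w^p\big\}.\]
I need to show $|X_\alpha|=|Y|=1$ in $\bV[G]$.

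First, for the sets to be non-empty, I would use the density of $D^0_{\alpha,M,n}\cap D^0_{\beta,M,n}$ (Lemma \ref{5.3cp}(2)(i)) and genericity of $G$ to produce some $p_0\in G$ with $\alpha,\beta\in w^{p_0}$; thereafter every $p\in G$ compatible with $p_0$ automatically has $p\leq p_0$ (for a suitable larger condition in $G$) and hence the intersections are indexed by a cofinal family in $G$. Next, by the density of $D^1_N$ (Lemma \ref{5.3cp}(2)(ii)) and genericity of $G$, for every $N<\omega$ there is $p_N\in G$ with $\alpha,\beta\in w^{p_N}$ and
\[{\rm diam}_{\rho^*}\!\big(U^{p_N}_\alpha(n^{p_N}-2)\big)<2^{-N},\qquad
  {\rm diam}_{\rho^*}\!\big(W^{p_N}_{i,\alpha,\beta}\big)<2^{-N},\]
where the second inequality uses $W^{p_N}_{i,\alpha,\beta}\subseteq Q^{p_N}_{i,\alpha,\beta}$. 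Consequently, any two elements in $X_\alpha$ or in $Y$ have $\rho^*$-distance $<2^{-N}$ for all $N$, so $|X_\alpha|\le 1$ and $|Y|\le 1$.

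For non-emptiness I would apply Lemma \ref{5.3cp}(3) repeatedly inside $G$ to obtain a sequence $p_0\le p_1\le p_2\le\cdots$ in $G$ with $\alpha,\beta\in w^{p_j}$, diameters in $\rho^*$ shrinking below $2^{-j}$, and
\[\cl\!\big(U^{p_{j+1}}_\alpha(n^{p_{j+1}}-2)\big)\subseteq U^{p_j}_\alpha(n^{p_j}),\qquad
  \cl\!\big(W^{p_{j+1}}_{i,\alpha,\beta}\big)\subseteq W^{p_j}_{i,\alpha,\beta}.\]
Because $\rho^*$ is a complete metric compatible with the topology of $\bbH$ (Hypothesis \ref{hypno}(1)), the two decreasing sequences of non-empty closed sets of $\rho^*$-diameter tending to $0$ have non-empty intersection, each a singleton. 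Since $U^{p_{j+1}}_\alpha(n^{p_{j+1}}-2)\subseteq U^{p_j}_\alpha(n^{p_j})$ for every $j$ (and similarly for the $W$'s), these singletons lie in $X_\alpha$ and $Y$ respectively, giving $|X_\alpha|=|Y|=1$.

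There is no real obstacle: the only points that must be verified are that ``$\alpha\in w^p$'' (resp.\ ``$\alpha,\beta\in w^p$'') can be arranged together with the smallness of diameters \emph{and} with the closure-containment of part (3) of Lemma \ref{5.3cp}, which follows by genericity because each of these requirements is ensured by a dense set provided by that lemma. The argument is identical in spirit to the proof of Lemma \ref{genset} for $\bbP$; the passage from $\bbP$ to $\bbQ$ affects only the quasi-independence concept used in Definitions \ref{5.2cp}$(\otimes)_3$--$(\otimes)_4$, which plays no role in the present computation.
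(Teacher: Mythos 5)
Your plan is correct and matches the paper's argument, which simply cites Lemma~\ref{5.3cp} in exactly the way the analogous Lemma~\ref{genset} is obtained from Lemma~\ref{dense}; you have just unfolded that reference into its standard nested‐closed‐sets/completeness form. The one small point worth making fully explicit is that the singleton $\{x\}$ produced by your chain must be shown to lie in \emph{every} $U^{p}_\alpha(n^{p})$ with $p\in G$, not merely in $\bigcap_j U^{p_j}_\alpha(n^{p_j})$: for an arbitrary such $p$, use density (via Lemma~\ref{5.3cp}(3)) to find $r\in G$, $r\geq p$, with $\cl\big(U^{r}_\alpha(n^{r}-2)\big)\subseteq U^{p}_\alpha(n^{p})$, and observe that $x\in\cl\big(U^{r}_\alpha(n^{r})\big)\subseteq\cl\big(U^{r}_\alpha(n^{r}-2)\big)$ because $G$ is directed and the diameters of the chain shrink to~$0$.
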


\begin{proof}
 Follows from Lemma \ref{5.3cp}. 
\end{proof}

\begin{definition}
\label{5.7cp}
\begin{enumerate}
\item For $(\alpha,\beta)\in\lambda^{\langle 2\rangle}$ and $i<k$ let
  $\name{\eta}_\alpha$, $\name{\nu}_{i,\alpha,\beta}$ and
  $\name{h}_{\alpha,\beta}$ be $\bbQ$--names such that 
\[  \begin{array}{ll}
\forces_\bbQ&\mbox{`` }\displaystyle \{\name{\eta}_\alpha\}= \bigcap
              \big\{U^p_\alpha(n^p): p\in \name{G}_\bbQ\ \wedge\
              \alpha\in w^p\big\},\\
&\displaystyle\quad \{\name{\nu}_{i,\alpha,\beta}\}=\bigcap \big\{W^p_{i,\alpha,\beta}:
  p\in \name{G}_\bbQ\ \wedge\ \alpha,\beta\in w^p\big\}\\
&\displaystyle\quad\name{h}_{\alpha,\beta}=h^p(\alpha,\beta)\mbox{ for some
  (all) $p\in\name{G}_\bbQ$ such that $\alpha,\beta\in w^p$. ''}
  \end{array}\]
\item For $m<\omega$ let $\name{\bF}_m$ be a $\bbQ$--name such that 
  \[\forces_\bbQ \mbox{`` }\name{\bF}_m=\bigcap \big\{ F(p,m): m<M^p \
    \wedge \ p\in \name{G}_\bbQ\big\}.\mbox{ ''}\]
\end{enumerate}
\end{definition}

\begin{lemma}
\label{5.8cp}
  \begin{enumerate}
\item For each $m<\omega$, $\forces_\bbQ$`` $\name{\bF}_m$ is a
  closed subset of $\bbH$. '' 
\item For $i<k$ and $(\alpha,\beta)\in \lambda^{\langle 2\rangle}$ we
    have 
\[\forces_\bbQ\mbox{`` }
  \name{\eta}_\alpha,\name{\nu}_{i,\alpha,\beta}\in \bbH,\quad
  \name{h}_{\alpha,\beta}<\omega,\quad \name{\nu}_{i,\alpha,\beta}=
  \name{\nu}_{i,\beta, \alpha}\ \mbox{ and }\  \name{\eta}_\alpha +
  \name{\nu}_{i,\alpha,\beta}\in
  \name{\bF}_{\name{h}_{\alpha,\beta}} .\mbox{ ''}\]
\item $\forces_\bbQ$`` $\langle\name{\eta}_\alpha,
  \name{\nu}_{i,\alpha,\beta}: i<k,\ \alpha<\beta<\lambda\rangle$ is
  quasi$^-$ independent.''
\item $\forces_\bbQ$`` $\name{\nu}_{0,\alpha,\beta},\ldots,
\name{\nu}_{k-1,\alpha,\beta}, (\eta_\alpha+\eta_\beta+
\name{\nu}_{0,\alpha,\beta}), \ldots,(\eta_\alpha+\eta_\beta+ 
\name{\nu}_{k-1,\alpha,\beta})$ are distinct elements of  
$\big(\name{\eta}_\alpha+ \bigcup\limits_{m<\omega} \name{\bF}_m\big)
\cap\big(\name{\eta}_\beta+ \bigcup\limits_{m<\omega} \name{\bF}_m\big)$. ''  
  \end{enumerate}
\end{lemma}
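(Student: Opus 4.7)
The plan is to derive each of the four clauses directly from the construction of $\bbQ$, exploiting two features: the nesting/closure properties of the open neighborhoods provided by Lemma \ref{5.3cp}, and the qif$^-$ property $(\otimes)_4$(a) built into each condition. Throughout I will use freely that in the present group $x+x=0$, so $-x=x$ and sums equal differences.

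For clause (1), I will note that $\name{\bF}_m$ is forced to be a decreasing intersection of the open sets $F(p,m)$ over conditions $p$ in the generic filter with $m<M^p$. Lemma \ref{5.3cp}(3) supplies, for each such $p$, a stronger $q$ for which $\cl\big(F(q,m)\big)\subseteq F(p,m)$. Since such $q$'s are cofinal in the filter, $\name{\bF}_m$ equals the intersection of the closures and is therefore closed. Clause (2) is immediate from the definitions: the symmetries $\name{\nu}_{i,\alpha,\beta}=\name{\nu}_{i,\beta,\alpha}$ and $\name{h}_{\alpha,\beta}=\name{h}_{\beta,\alpha}$ come from $(\otimes)_3$ and $(\otimes)_6$, while for any $p\in\name{G}_\bbQ$ containing $\alpha,\beta$ we have $\name{\eta}_\alpha+\name{\nu}_{i,\alpha,\beta}\in U^p_\alpha(n^p)+W^p_{i,\alpha,\beta}\subseteq F(p,h^p(\alpha,\beta))$.

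For clause (3), I would argue by contradiction. Suppose some condition forces a nontrivial $0/1$ relation among eight distinct objects drawn from $\{\name{\eta}_\alpha,\name{\nu}_{i,\alpha,\beta}\}$. Strengthen it, using \ref{5.3cp}(2)(i), to a condition $p$ whose $w^p$ contains all the relevant ordinals. Then each of the eight objects lies in its own member of the indexed family $\langle U^p_\alpha(n^p-2):\alpha\in w^p\rangle\conc \langle Q^p_{i,\alpha,\beta}\rangle$, which by $(\otimes)_4$(a) is a qif$^-$. Definition \ref{quasi-in}(2) forbids the putative relation, giving the contradiction. The only point requiring care is to see that distinct generic objects genuinely sit in distinct members of the family, but the $U^p_\alpha(n^p-2)$ are pairwise disjoint for $\alpha\in w^p$ (part of being a qif$^-$), and the $W^p_{i,\alpha,\beta}$ are contained in the disjoint $Q^p_{i,\alpha,\beta}$.

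For clause (4), using $-x=x$, the membership $\name{\eta}_\alpha+\name{\nu}_{i,\alpha,\beta}\in \name{\bF}_{\name{h}_{\alpha,\beta}}$ from (2) rewrites as $\name{\nu}_{i,\alpha,\beta}\in \name{\eta}_\alpha+\name{\bF}_{\name{h}_{\alpha,\beta}}$, and by the symmetry also $\name{\nu}_{i,\alpha,\beta}\in \name{\eta}_\beta+\name{\bF}_{\name{h}_{\alpha,\beta}}$; translating by $\name{\eta}_\alpha+\name{\eta}_\beta$ then places $\name{\eta}_\alpha+\name{\eta}_\beta+\name{\nu}_{i,\alpha,\beta}$ in the same intersection. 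Distinctness within each of the two $k$-tuples is immediate from (3), and any hypothetical collision $\name{\nu}_{i,\alpha,\beta}=\name{\eta}_\alpha+\name{\eta}_\beta+\name{\nu}_{j,\alpha,\beta}$ rearranges to a nontrivial $0/1$ relation (of length two or four) among the generic elements summing to $0$, contradicting (3). The step I expect to be most delicate is the closure argument in (1), where one has to chase the nesting of open boxes through the dense sets of Lemma \ref{5.3cp}; everything else reduces cleanly to $(\otimes)_4$(a) and the identity $-x=x$.
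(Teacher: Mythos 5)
Your proposal is correct and gives the natural filling-in of the argument that the paper dismisses as ``Should be clear.'' Let me just make a couple of points explicit that you glossed over.

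For clause (1), the precise density argument is: for a fixed $m$ and any $p\in\bbQ$ with $m<M^p$, the set $\{q\geq p: \cl\big(F(q,m)\big)\subseteq F(p,m)\}$ is dense above $p$ by Lemma~\ref{5.3cp}(3) (note that Lemma~\ref{5.3cp}(3) gives $\cl\big(U^q_\alpha(n^q-2)+Q^q_{i,\alpha,\beta}\big)\subseteq U^p_\alpha(n^p)+W^p_{i,\alpha,\beta}$, and $F(q,m)\subseteq \bigcup\{U^q_\alpha(n^q-2)+Q^q_{i,\alpha,\beta}:\cdots\}$, so indeed $\cl\big(F(q,m)\big)\subseteq F(p,m)$). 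By genericity, for every $p\in G$ with $m<M^p$ there is $q\in G$, $q\geq p$, with $\cl\big(F(q,m)\big)\subseteq F(p,m)$; hence $\name{\bF}_m^G=\bigcap_{p}F(p,m)=\bigcap_{p}\cl\big(F(p,m)\big)$, a closed set. For clause (3), the key point, which you correctly flagged, is that $\name{\eta}_\alpha\in U^p_\alpha(n^p-2)$ and $\name{\nu}_{i,\alpha,\beta}\in W^p_{i,\alpha,\beta}\subseteq Q^p_{i,\alpha,\beta}$, and these are all pairwise disjoint members of the qif$^-$ of $(\otimes)_4$(a); Definition~\ref{quasi-in}(2) then gives quasi$^-$ independence directly. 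For clause (4), your reduction of collisions to (3) needs one extra word: if $\name{\nu}_{i,\alpha,\beta}= \name{\eta}_\alpha+\name{\eta}_\beta+\name{\nu}_{j,\alpha,\beta}$ with $i=j$, the resulting relation $\name{\eta}_\alpha+\name{\eta}_\beta=0$ has length two and must be padded with six more of the generic elements with zero coefficients to match the format of Definition~\ref{quasi-in}(1); that is harmless, and the $i\neq j$ case similarly pads a length-four relation. Overall your argument is essentially what the authors have in mind.
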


\begin{proof}
Should be clear.  
\end{proof}

\begin{lemma}
  \label{5.9cp}
  Let $p=(w,M,\bar{r},n,\bar{\Upsilon},\bar{V},h)\in D^2_1\subseteq
  \bbQ$ (cf. \ref{5.3cp}(iii)) and 
  $a_\ell,b_\ell\in\bbH$ and $U_\ell,W_\ell\in\cU$ (for $\ell<4$) be such
  that the following conditions are satisfied.
\begin{enumerate}
\item[$(\circledast)_1$] $U_\ell\in \{U_\alpha(n):\alpha\in w\}$, $W_\ell\in
  \{W_{i, \alpha,\beta}:i<k,\ (\alpha,\beta)\in w^{\langle 2\rangle}\}$ (for
  $\ell<4$).
\item[$(\circledast)_2$] $(U_\ell+W_\ell)\cap
  (U_{\ell'}+W_{\ell'})=\emptyset$ for $\ell<\ell'<4$.
\item[$(\circledast)_3$] $a_\ell\in U_\ell$ and $b_\ell\in W_\ell$ and
  $a_\ell+b_\ell \in \bigcup\limits_{m<M} F(p,m)$ for $\ell<4$.
\item[$(\circledast)_4$] $(a_0+b_0)+(a_1+b_1)=(a_2+b_2)+(a_3+b_3)$.
\end{enumerate}
Then for some $(\alpha,\beta)\in w^{\langle 2\rangle}$ and distinct $i,j<k$
one of the following three conditions holds.
\begin{enumerate}
\item[(A)] $\big\{\{U_0+W_0,U_1+W_1\}, \{U_2+W_2, U_3+W_3\}\big\}=$\\
  $\big\{\{U_\alpha(n)+W_{i,\alpha,\beta},U_\beta(n)+W_{i,\alpha,\beta}\},
  \{U_\alpha(n)+W_{j,\alpha,\beta},U_\beta(n)+W_{j,\alpha,\beta}\}\big\}$. 
\item[(B)] $\big\{\{U_0+W_0,U_1+W_1\}, \{U_2+W_2, U_3+W_3\}\big\}=$\\
  $\big\{\{U_\alpha(n)+W_{i,\alpha,\beta},U_\alpha(n)+W_{j,\alpha,\beta}\},
  \{U_\beta(n)+W_{i,\alpha,\beta},U_\beta(n)+W_{j,\alpha,\beta}\}\big\}$. 
\item[(C)] $\big\{\{U_0+W_0,U_1+W_1\}, \{U_2+W_2, U_3+W_3\}\big\}=$\\
  $\big\{\{U_\alpha(n)+W_{i,\alpha,\beta},U_\beta(n)+W_{j,\alpha,\beta}\},
  \{U_\alpha(n)+W_{j,\alpha,\beta},U_\beta(n)+W_{i,\alpha,\beta}\}\big\}$. 
\end{enumerate}
\end{lemma}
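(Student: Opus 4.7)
Since every element of $\bbH$ has order at most $2$, the hypothesis $(\circledast)_4$ is equivalent to
\[\mathrm{LHS} := \sum_{\ell<4} a_\ell + \sum_{\ell<4} b_\ell = 0,\]
a sum of eight positive terms. The overall strategy parallels Lemma~\ref{addin}, with the $8$-good qif arguments replaced by reductions through the two-step immersion chain of $(\otimes)_4$(b), $\cW \hookrightarrow \cV \hookrightarrow \cQ$, followed by the qif$^-$ property of $\cQ$ from $(\otimes)_4$(a). The basic absorption step is: whenever $a_\ell, a_{\ell'}$ lie in a common set of $\cW$, the immersion identity $(a_\ell + a_{\ell'}) + b \in \pi(W) \in \cV$ for $b \in W \in \cW$ (here using $-x = x$) lets us absorb the triple $(a_\ell, a_{\ell'}, b)$ into a single representative of $\cV$; applied once more, two elements in a common $\cV$-set can be absorbed into $\cQ$ using any third $\cV$-representative.

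Write $\cU^* := \{U_\ell : \ell < 4\}$ and $\cW^* := \{W_\ell : \ell < 4\}$. The first goal is to show $|\cU^*| = |\cW^*| = 2$, with each set appearing exactly twice. If $|\cU^*| = 1$, then $(\circledast)_2$ forces $W_0, W_1, W_2, W_3$ to be pairwise distinct; setting $u_0 := (a_0+a_1)+b_0 \in V_0$ and $u_2 := (a_2+a_3)+b_2 \in V_2$ rewrites $\mathrm{LHS} = u_0 + b_1 + u_2 + b_3$, a four-term $\{0,1\}$-combination of representatives from four distinct $\cQ$-sets, contradicting qif$^-$. The case $|\cW^*| = 1$ is symmetric, and the cases in which some $W_\ell$ or $U_\ell$ appears three or more times are handled similarly, with $(\circledast)_2$ ensuring distinctness on the other coordinate. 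Each of the remaining mixed configurations $(|\cU^*|,|\cW^*|) \in \{(2,3),(2,4),(3,3),(3,4),(4,4)\}$ and their symmetric counterparts is then ruled out by choosing the first absorption appropriately and, when necessary, using the second immersion $\cV \hookrightarrow \cQ$ to eliminate a residual $\cV$-level repetition before qif$^-$ is invoked.

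Once $|\cU^*| = |\cW^*| = 2$, the four pairs $(U_\ell, W_\ell)$ are pairwise distinct by $(\circledast)_2$ and hence exhaust the four elements of $\cU^* \times \cW^*$; the partition $\{\{0,1\},\{2,3\}\}$ of indices coming from $(\circledast)_4$ then induces one of exactly three possible pairings of $\cU^* \times \cW^*$, which are precisely the conclusions (A), (B), and (C). The main obstacle will be the case analysis in the middle step: for each mixed configuration one must verify that after all available absorptions the surviving $\{0,1\}$-combination really sits on at least four pairwise distinct sets of $\cQ$, relying on the injectivity of each immersion (so distinct $\cW$-sets persist as distinct $\cV$- and $\cQ$-sets) and on the disjointness in $\cQ$ of the two subfamilies $\{U_\alpha(n-2)\}$ and $\{Q_{i,\alpha,\beta}\}$.
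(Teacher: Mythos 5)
Your overall plan matches the paper's proof: use $a+a=0$ to turn $(\circledast)_4$ into a vanishing sum of eight terms, absorb repeated $U$- or $W$-representatives through the immersion chain $\cW\hookrightarrow\cV\hookrightarrow\cQ$, invoke the qif$^-$ property of $\cQ$ to force each set in $\cU^*$ and $\cW^*$ to occur exactly twice, and then read off the three pairings. The absorption calculations you give for $|\cU^*|=1$ are correct, and the remaining configurations to exclude are the same ones the paper runs through.

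There is, however, a real gap in the final step. Once you know the four pairs $(U_\ell,W_\ell)$ exhaust the $2\times 2$ grid $\cU^*\times\cW^*$, you do get exactly three combinatorial patterns from the partition $\{\{0,1\},\{2,3\}\}$, but that is not yet the same as conclusions (A)--(C): those require that the two distinct $W$-sets be $W_{i,\alpha,\beta}$ and $W_{j,\alpha,\beta}$ for the \emph{same} $(\alpha,\beta)$ that indexes $\cU^*=\{U_\alpha(n),U_\beta(n)\}$. Nothing in the grid combinatorics alone forces this. You need the additional observation — the paper records this as $(\ast\ast)$ — that $(\circledast)_3$ together with $p\in D^2_1$ forces, for each $\ell$, the pair indexing $W_\ell$ to contain the index of $U_\ell$ (because $a_\ell+b_\ell$ lies in exactly one of the pairwise disjoint sets $U_{\alpha'}(n)+W_{i',\alpha',\beta'}$ making up $\bigcup_m F(p,m)$). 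Since in all three grid patterns each $W$-set is paired with both $U_\alpha(n)$ and $U_\beta(n)$, this pins both $W$-sets to the pair $\{\alpha,\beta\}$, giving the desired $W_{i,\alpha,\beta}$ and $W_{j,\alpha,\beta}$. Without this step your last sentence asserts the conclusion rather than deriving it.
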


\begin{proof}
The arguments here are very similar to those in Lemma \ref{addin}. Note that
the assumption $(\circledast)_2$ here is slightly stronger than there (to
compensate for weaker qifs). Also our conclusion here is arguably weaker,
but this is a necessity caused by the fact that $a-b=a+b$ in $\bbH$.

For $\ell<4$ let $U_\ell^-, U_\ell^{--}$ and $V_\ell,Q_\ell$ be such that 
  \begin{itemize}
  \item if $U_\ell=U_\alpha(n)$ then $U_\ell^-=U_\alpha(n-1)$, $U_\ell^{--}=
    U_\alpha(n-2)$, 
  \item if $W_\ell=W_{i,\alpha,\beta}$ then $V_\ell=V_{i,\alpha,\beta}$,
    $Q_\ell=Q_{i,\alpha,\beta}$.    
  \end{itemize}
Using steps as in \ref{addin}, one can show that
\begin{enumerate}
\item[(*)]  For every $W,U$ we have
  \[|\{\ell<4:W_\ell=W\}|<3\quad\mbox{ and }\quad |\{\ell<4:U_\ell=U\}|<3.\] 
\end{enumerate}

Now,  since $p\in D^2_1$, it follows from our assumption $(\circledast)_3$ that
\begin{enumerate}
\item[(**)] for each $\ell<4$, for some $\alpha=\alpha(\ell),\beta=\beta(\ell)$, and 
$i=i(\ell)$ we have $U_\ell=U_\alpha(n)$ and $W_\ell=W_{i,\alpha,\beta}$.
\end{enumerate}
By assumption $(\circledast)_4$ we know that
\[0\in U_0+U_1+U_2+U_3+W_0+W_1+W_2+W_3.\]

If all of $U_i$'s are distinct, then $0\in U_0^-+U_1^-+U_2^-+U_3^-+X$, where
$X=\{0\}$ or $X=W_i+W_j$ for some $i<j<4$ with $W_i\neq W_j$ or
$X=W_0+W_1+W_2+W_3$ with all $W_i$'s distinct (remember (*)). This
contradicts \ref{5.2cp}(A)$(\otimes)_4$. Similarly if all $W_i$'s are
distinct.

So suppose $|\{U_0,U_1,U_2,U_3\}|=3$. Then for some $\ell<\ell'<4$,
$U_\ell\neq U_{\ell'}$ and 
\[0\in U_\ell^-+U_{\ell'}^-+W_0+W_1+W_2+W_3\subseteq U_\ell^{--} +
U_{\ell'}^{--}+X,\] 
where $X=\{0\}$ or $X=W_i+W_j$ for some $i<j<4$ with $W_i\neq W_j$ or
$X=W_0+W_1+W_2+W_3$ with all $W_i$'s distinct (remember (*)). This
again contradicts \ref{5.2cp}(A)$(\otimes)_4$. Similarly if
$|\{W_0,W_1,W_2,W_3\}|=3$. 

Consequently, $|\{U_0,U_1,U_2,U_3\}|=2=|\{W_0,W_1,W_2,W_3\}|$. Moreover for
some distinct $\alpha,\beta\in w$ we have 
\[|\{\ell<4:U_\ell=U_\alpha(n)\}|=|\{\ell<4:U_\ell=U_\beta(n)\}|=2\]
and for some $(i,\gamma,\delta)\neq (j,\vare,\zeta)$ we have 
\[|\{\ell<4:W_\ell=W_{i,\gamma,\delta}\}|=|\{\ell<4:W_\ell=W_{j,\vare,\zeta}\}|=2.\]
Now we consider all possible configurations .
\medskip

\noindent {\sc Case 1}\quad $U_0=U_1$, $U_2=U_3$, say they are respectively 
$U_\alpha(n)$ and $U_\beta(n)$.\\
Necessarily $W_0\neq W_1$ and $W_2\neq W_3$.

If $W_0=W_2$, $W_1=W_3$ then recalling (**) above, we also get 
$\{\gamma,\delta\}= \{\vare,\zeta\}=\{\alpha,\beta\}$ and (possibly after 
interchanging $i$ and $j$) $W_0= W_{i,\alpha,\beta}$, 
$W_1=W_{j,\alpha,\beta}$. This gives conclusion (B).

If $W_0=W_3$, $W_1=W_2$ then again by (**) above, we get $\{\gamma,\delta\}=
\{\vare,\zeta\}=\{\alpha,\beta\}$ and (possibly after interchanging $i$ and
$j$) $W_0= W_{i,\alpha,\beta}$,   $W_1=W_{j,\alpha,\beta}$. This also gives
conclusion (B). 
\medskip

\noindent {\sc Case 2}\quad $U_0=U_2$, $U_1=U_3$, say they are respectively
$U_\alpha(n)$ and $U_\beta(n)$.\\
Necessarily $W_0\neq W_2$ and $W_1\neq W_3$.

If $W_0=W_1$, $W_2=W_3$ then recalling (**) above, we also get 
$\{\gamma,\delta\}= \{\vare,\zeta\}=\{\alpha,\beta\}$. After possibly 
interchanging $i$ and $j$,  $W_0= W_{i,\alpha,\beta}$,
$W_2=W_{j,\alpha,\beta}$ and we get conclusion (A). 

If $W_0=W_3$, $W_1=W_2$ then again by (**) , we have $\{\gamma,\delta\}=
\{\vare,\zeta\}=\{\alpha,\beta\}$. After possibly interchanging $i$ and $j$,
$W_0= W_{i,\alpha,\beta}$,   $W_1=W_{j,\alpha,\beta}$. This leads to
conclusion (C).  
\medskip

\noindent {\sc Case 3}\quad $U_0=U_3$, $U_1=U_2$, say they are respectively
$U_\alpha(n)$ and $U_\beta(n)$.\\
Necessarily $W_0\neq W_3$ and $W_1\neq W_2$.

If $W_0=W_1$, $W_2=W_3$ then like above we get $\{\gamma,\delta\}=
\{\vare,\zeta\}=\{\alpha,\beta\}$. After possibly  
interchanging $i$ and $j$,  $W_0= W_{i,\alpha,\beta}$,
$W_2=W_{j,\alpha,\beta}$ and we get conclusion (A). 

If $W_0=W_2$, $W_1=W_3$ then we also have $\{\gamma,\delta\}=
\{\vare,\zeta\}=\{\alpha,\beta\}$ and after possibly interchanging $i$ and
$j$, $W_0= W_{i,\alpha,\beta}$,   $W_1=W_{j,\alpha,\beta}$. This leads to
conclusion (C).  
\end{proof}

\begin{lemma}
  \label{5.10cp}
  Let $p=(w,M,\bar{r},n,\bar{\Upsilon},\bar{V},h)\in D^3_1$ and 
  $\bX\subseteq \bbH$, $|\bX|\geq 5$. Suppose that $a_i(x,y),b_i(x,y),
  U_i(x,y)$ and $W_i(x,y)$ for $x,y\in \bX$, $x\neq y$ and $i<k$ satisfy the
  following demands (i)--(iv) (for all $x\neq y$, $i\neq i'$).
\begin{enumerate}
\item[(i)] $U_i(x,y)\in \{U_\alpha(n):\alpha\in w\}$, $W_i(x,y)\in
  \{W_{j, \alpha,\beta}:j<k,\ (\alpha,\beta)\in w^{\langle 2\rangle}\}$.
\item[(ii)]
  \begin{itemize}
  \item $\big(U_i(x,y)+W_i(x,y) \big)\cap \big (U_i(y,x)+W_i(y,x)
    \big)=\emptyset$, 
  \item $\big (U_i(x,y)+W_i(x,y) \big)\cap \big (U_{i'}(x,y)+W_{i'}(x,y) 
    \big)=\emptyset$,
  \item $\big (U_i(x,y)+W_i(x,y) \big)\cap \big (U_{i'}(y,x)+W_{i'}(y,x) 
    \big)=\emptyset$.        
  \end{itemize}
\item[(iii)] $a_i(x,y)\in U_i(x,y)$ and $b_i(x,y)\in W_i(x,y)$, and

  $a_i(x,y)+b_i(x,y)\in \bigcup\limits_{m<M} F(p,m)$.
\item[(iv)] $x+y=\big (a_i(x,y)+b_i(x,y) \big)+\big (a_i(y,x)+b_i(y,x)
  \big)$. 
\end{enumerate}
Then
\begin{enumerate}
\item $\bX+\bX\subseteq \bigcup\big\{U_\alpha(n-2)+U_\beta(n-2): \alpha,\beta \in 
  w\big\}$.
\item If $(x,y)\in \bX^{\langle 2\rangle}$ and $x+y\in
  U_\alpha(n-2)+U_\beta(n-2)$, $\alpha,\beta\in w$, then $\alpha\neq\beta$
  and for each $i<k$ we have $a_i(x,y)+b_i(x,y), a_i(y,x)+b_i(y,x)\in
  F(p,h(\alpha,\beta))$. 
\end{enumerate}
\end{lemma}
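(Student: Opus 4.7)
My plan is to adapt the proof of Lemma \ref{getdiff} to the present sum setting, with Lemma \ref{5.9cp} playing the role of Lemma \ref{addin} and Lemma \ref{5.4cp} the role of Lemma \ref{abc}. The crucial algebraic feature I will exploit is that in the 2-group $\bbH$ we have $a-b=a+b$ for all $a,b$, so the immersion clause $(\otimes)_4$(b) yields inclusions involving sums where the corresponding arguments in Section 5 used differences. Fix $(x,y) \in \bX^{\langle 2\rangle}$ and distinct $i, i' < k$, and set $(U_0, W_0, a_0, b_0) = (U_i(x,y), W_i(x,y), a_i(x,y), b_i(x,y))$, $(U_1, W_1, a_1, b_1) = (U_i(y,x), W_i(y,x), a_i(y,x), b_i(y,x))$, with analogous tuples $(U_2,W_2,a_2,b_2), (U_3,W_3,a_3,b_3)$ for $i'$. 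The six disjointness requirements in $(\circledast)_2$ of Lemma \ref{5.9cp} are covered by the three bullets of clause (ii), while $(\circledast)_4$ follows from (iv). Hence one of the conclusions (A), (B), (C) of \ref{5.9cp} applies.

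In Case (A), after relabelling, $U_i(x,y) = U_\alpha(n)$, $U_i(y,x) = U_\beta(n)$ with $\alpha \ne \beta$, and $W_i(x,y) = W_i(y,x) = W_{\ell,\alpha,\beta}$ for some $\ell<k$. Applying the immersion in $(\otimes)_4$(b) to the two elements $b_i(x,y), b_i(y,x) \in W_{\ell,\alpha,\beta}$ and to $a_i(x,y) \in U_\alpha(n)$ gives $(b_i(x,y) - b_i(y,x)) + a_i(x,y) \in U_\alpha(n-1)$; since $b-b' = b+b'$ in $\bbH$, this reads $a_i(x,y) + b_i(x,y) + b_i(y,x) \in U_\alpha(n-1)$. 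Adding $a_i(y,x) \in U_\beta(n)$ yields $x+y \in U_\alpha(n-1) + U_\beta(n) \subseteq U_\alpha(n-2) + U_\beta(n-2)$, the desired conclusion of (1) for this pair. The main obstacle is to rule out the possibility that for every pair $(i, i')$ only Case (B) or Case (C) of \ref{5.9cp} occurs. For $k \geq 3$, I expect that picking three pairs $(0,1), (0,2), (1,2)$ and combining the structural information (Case (B) forces $U_i(x,y) = U_i(y,x)$ and distinct $W$'s, Case (C) forces both $U$'s and $W$'s to be distinct between $x$ and $y$) leads to a direct contradiction with clause (ii). For $k = 2$ only a single pair is available; here I would vary $(x,y)$ across $\bX^{\langle 2\rangle}$ and, in the spirit of the coloring functions $\chi$ and $\Theta$ used in the $k=2$ branch of Lemma \ref{getdiff}, exploit the qif$^-$ property at level $n-3$ supplied by $p \in D^3_1$ (together with $|\bX|\geq 5$) to force a global consistency that can hold only if Case (A) is realized for some pair, with transitivity-style arguments playing the same role as there.

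Having established (1), for conclusion (2) I fix $(x,y) \in \bX^{\langle 2\rangle}$ with $x + y \in U_\alpha(n-2) + U_\beta(n-2)$. By what was just proved, some pair of indices falls into Case (A), giving $x + y \in U_{\alpha'}(n-1) + U_{\beta'}(n-2)$ for distinct $\alpha', \beta' \in w$. Lemma \ref{5.4cp}, applicable because $p \in D^3_1$, then forces $\{\alpha, \beta\} = \{\alpha', \beta'\}$, and in particular $\alpha \neq \beta$. Moreover Case (A) places $a_i(x,y) + b_i(x,y) \in U_\alpha(n) + W_{\ell,\alpha,\beta} \subseteq F(p, h(\alpha, \beta))$ and symmetrically $a_i(y,x) + b_i(y,x) \in U_\beta(n) + W_{\ell,\alpha,\beta} \subseteq F(p, h(\alpha,\beta))$. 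Running this argument for every $i < k$ produces the same pair $\{\alpha,\beta\}$ (by the uniqueness just deduced) and yields the full claim of (2).
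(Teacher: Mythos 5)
Your outline follows the paper's strategy exactly: reduce each triple $(x,y;i,i')$ to Lemma~\ref{5.9cp} (noting that the three bullets of clause (ii), applied also with the roles of $x,y$ and of $i,i'$ interchanged, yield all six disjointnesses of $(\circledast)_2$), observe that Case~(A) gives $x+y\in U_\alpha(n-2)+U_\beta(n-2)$ via the immersion and $a-b=a+b$, and finish part~(2) with Lemma~\ref{5.4cp}. Your handling of $k\ge 3$ is essentially correct: the case label among $\{A,B,C\}$ for a fixed $x,y$ is determined by the single index $i$ (through whether $U_i(x,y)=U_i(y,x)$ and whether $W_i(x,y)=W_i(y,x)$), so if (B) or (C) occurs it occurs for every pair; since each of (B) and (C) forces the $U_i(x,y)$'s to be distinct values in a two-element set $\{U_\alpha(n),U_\beta(n)\}$, three indices give a contradiction.

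The genuine gap is the case $k=2$, which is where almost all of the work in the paper's proof resides. You correctly identify the right tools --- the coloring $\chi\in\{A,B,C\}$, the index $\theta\in[w]^2$, the qif$^-$ property at level $n-3$ furnished by $p\in D^3_1$, and $|\bX|\ge 5$ --- but ``force a global consistency that can hold only if Case (A) is realized'' is a statement of the goal, not a verification. The paper needs four separate claims (transitivity of the $A$-label along triangles; that two $A$-edges from a common vertex cannot share a $\theta$-value; exclusion of $B$-labels; exclusion of $C$-labels), and each of the last two is a delicate case split over $|\theta(x,z)\cap\theta(x,y)|$ and over the sub-possibilities for which entries $\xi',\zeta'\in\theta(y,z)$ appear in the $U$'s, checking in every branch that the qif$^-$ cancellation at the right level produces a nontrivial zero combination. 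None of that combinatorial analysis --- which is precisely where one must check that no hidden equality among the $U$'s and $W$'s swallows the would-be contradiction --- is carried out. One also needs the exclusion to hold for \emph{every} pair $(x,y)\in\bX^{\langle 2\rangle}$, not merely ``some pair'' as your phrasing suggests, and the selection of a witness point $z\in\bX\setminus\{x,y\}$ satisfying the constraint in the paper's Claim~\ref{odot6} is itself a small argument using $|\bX|\ge 5$ that should be spelled out. So the sketch is sound in its direction but leaves the heart of the proof missing.
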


\begin{proof}
(1)\quad  Fix $x,y\in\bX$, $x\neq y$, for a moment.

Let $i\ne i'$, $i,i'<k$. We may apply Lemma \ref{5.9cp} for $U_i(x,y)$,
$W_i(x,y)$, $U_i(y,x)$, $W_i(y,x)$, $a_i(x,y)$, $b_i(x,y)$, $a_i(y,x)$,
$b_i(y,x)$ here as $U_0,W_0,U_1,W_1,a_0,b_0,a_1,b_1$ there and for similar
objects with $i'$ in place of $i$ as $U_2,W_2,U_3,W_3,a_2,b_2,a_3,b_3$
there. This will produce distinct $\alpha=\alpha(x,y,i,i'), \beta=
\beta(x,y,i,i')\in w$ and distinct $j=j(x,y,i,i'),j'=j'(x,y,i,i')<k$ such
that 
\[\begin{array}{l}
\mbox{either }(A)^{\alpha,\beta,j,j'}_{x,y,i,i'}:\\
\big\{\{U_i(x,y)+W_i(x,y),U_i(y,x)+W_i(y,x)\}, \{U_{i'}(x,y)+W_{i'}(x,y),
 U_{i'}(y,x)+W_{i'}(y,x)\}\big\}=\\ 
\big\{\{U_\alpha(n)+W_{j,\alpha,\beta},U_\beta(n)+W_{j,\alpha,\beta}\},
  \{U_\alpha(n)+W_{j',\alpha,\beta},U_\beta(n)+W_{j',\alpha,\beta}\}\big\},\\ 
\mbox{or }(B)^{\alpha,\beta,j,j'}_{x,y,i,i'}:\\
\big\{\{U_i(x,y)+W_i(x,y),U_i(y,x)+W_i(y,x)\}, \{U_{i'}(x,y)+W_{i'}(x,y),
 U_{i'}(y,x)+W_{i'}(y,x)\}\big\}=\\ 
\big\{\{U_\alpha(n)+W_{j,\alpha,\beta},U_\alpha(n)+W_{j',\alpha,\beta}\},
  \{U_\beta(n)+W_{j,\alpha,\beta},U_\beta(n)+W_{j',\alpha,\beta}\}\big\},\\ 
\mbox{or }(C)^{\alpha,\beta,j,j'}_{x,y,i,i'}:\\
\big\{\{U_i(x,y)+W_i(x,y),U_i(y,x)+W_i(y,x)\}, \{U_{i'}(x,y)+W_{i'}(x,y),
 U_{i'}(y,x)+W_{i'}(y,x)\}\big\}=\\ 
\big\{\{U_\alpha(n)+W_{j,\alpha,\beta},U_\beta(n)+W_{j',\alpha,\beta}\},
  \{U_\alpha(n)+W_{j',\alpha,\beta},U_\beta(n)+W_{j,\alpha,\beta}\}\big\}. 
\end{array}\]
Plainly,
\begin{enumerate}
\item[$(\odot)^{x,y}_1$] if for some $i\neq i'$ and $\alpha,\beta,j,j'$ the
  clause  $(A)^{\alpha,\beta,j,j'}_{x,y,i,i'}$ holds true,\\
  then $x+y\in U_\alpha(n-2)+U_\beta(n-2)$. 
\end{enumerate}
It should be also clear that for each $x\neq y$ and distinct $i,i',i''$,
\begin{enumerate}
\item[$(\odot)_2$] if $(B)^{\alpha,\beta,j,j'}_{x,y,i,i'}$ holds true, then
  also $(B)^{\alpha,\beta,j,j'}_{x,y,i,i''}$ holds true,
\end{enumerate}
and 
\begin{enumerate}
\item[$(\odot)_3$] if $(C)^{\alpha,\beta,j,j'}_{x,y,i,i'}$ holds true, then
  also $(C)^{\alpha,\beta,j,j'}_{x,y,i,i''}$ holds true,
\end{enumerate}
Consequently, if $k\geq 3$ then by argument similar to \ref{getdiff} (case
$k\geq 3$) for any $x\neq y$ from $\bX$ neither of possibilities
$(B)^{\alpha,\beta,j,j'}_{x,y,i,i'}$ nor
$(C)^{\alpha,\beta,j,j'}_{x,y,i,i'}$ can hold. Therefore we may easily
finish the proof of Lemma \ref{5.10cp} (when $k\geq 3$).
\medskip

So assume $k=2$. For each $x\neq y$ from $\bX$ we fix $\alpha=\alpha(x,y)$
and $\beta=\beta(x,y)$ such that either $(A)^{\alpha,\beta,0,1}_{x,y,0,1}$
or $(B)^{\alpha,\beta,0,1}_{x,y,0,1}$ or
$(C)^{\alpha,\beta,0,1}_{x,y,0,1}$. Let $\chi(x,y)=\chi(y,x)\in \{A,B,C\}$
and $\theta(x,y)=\theta(y,x)\in [w]^{\textstyle 2}$ be such that
$\big(\chi(x,y)\big)^{\theta(x,y),0,1}_{x,y,0,1}$ holds true. 

\begin{claim}
\label{odot4} If $x,y,z\in\bX$ are distinct and $\chi(x,y)=\chi(y,z)=A$, 
  then $\chi(x,z)=A$. 
\end{claim}

\begin{proof}[Proof of the Claim]
Let $\chi(x,y)=A=\chi(y,z)$ and $\theta(x,y)=\{\alpha,\beta\}$,
$\theta(y,z)=\{\gamma,\delta\}$. Assume towards contradiction that
$\chi(x,z)\in\{B,C\}$ and let $\theta(x,z)=\{\xi,\zeta\}$. Then for some
$\xi',\zeta'\in \{\xi,\zeta\}$ we have 
\[x+z\in U_{\xi'}(n)+U_{\zeta'}(n) +W_{0,\xi,\zeta}+W_{1,\xi,\zeta}.\]

\noindent $\bullet$\quad If $|\{\alpha,\beta\}\cap\{\gamma,\delta\}| =1$,
say $\alpha=\gamma$, $\beta\neq\delta$, and $\{\xi,\zeta\}=\{\xi',\zeta'\}
=\{\beta,\delta\}$, then
\[x+z\in  U_\alpha(n)+U_\alpha(n)+U_\beta(n)+U_\delta(n)+
  W_{i,\alpha,\beta} +   W_{i,\alpha,\beta} +   W_{j,\alpha,\delta} +
  W_{j,\alpha,\delta}\]
but also $x+z\in U_\beta(n)+U_\delta(n) + W_{0,\beta,\delta} +
W_{1,\beta,\delta}$. Consequently,
\[\begin{array}{ll}
  0\in &\big((U_\alpha(n)+U_\alpha(n)) + U_\beta(n)\big)+\big(U_\beta(n) +
         (U_\delta(n)+U_\delta(n))\big) +\\
   &\big((W_{i,\alpha,\beta}+
         W_{i,\alpha,\beta}) + W_{0,\beta,\delta} \big)+
\big((W_{j,\alpha,\delta}+W_{j,\alpha,\delta}) +W_{1,\beta,\delta}\big)
     \subseteq\\
&\qquad U_\beta(n-1) +U_\beta(n-1) +V_{0,\beta,\delta} +V_{1,\beta,\delta}
                   \subseteq Q_{0,\beta,\delta} +Q_{1,\beta,\delta}.
\end{array}\]
This immediately contradicts \ref{5.2cp}(A)$(\otimes)_4$. 

\noindent $\bullet$\quad If $|\{\alpha,\beta\}\cap\{\gamma,\delta\}| =1$,
say $\alpha=\gamma$, $\beta\neq\delta$, and $\xi'\neq\zeta'$,
$|\{\xi',\zeta'\}\cap \{\beta,\delta\}|=1$, say $\xi'=\beta$, then by
similar considerations we arrive to
\[\begin{array}{ll}
  0\in &\big((U_\alpha(n)+U_\alpha(n)) + U_\delta(n)\big)+\big((U_\beta(n) +
        U_\beta(n))+U_{\zeta'}(n)\big) +\\
   &\big((W_{i,\alpha,\beta}+
         W_{i,\alpha,\beta}) + W_{0,\beta,\zeta'} \big)+
\big((W_{j,\alpha,\delta}+W_{j,\alpha,\delta}) +W_{1,\beta,\zeta'}\big) 
     \subseteq\\
&\qquad U_\delta(n-1) +U_{\zeta'}(n-1) +V_{0,\beta,\zeta'}
  +V_{1,\beta,\zeta'}.
  \end{array}\]
In our case necessarily $\delta\neq\zeta'$ so we easily get contradiction
with \ref{5.2cp}(A)$(\otimes)_4$.  

\noindent $\bullet$\quad If $|\{\alpha,\beta\}\cap\{\gamma,\delta\}| =1$,
say $\alpha=\gamma$, $\beta\neq\delta$, and $\xi'\neq\zeta'$ and 
$\{\xi',\zeta'\}\cap \{\beta,\delta\}=\emptyset$, then
\[\begin{array}{ll}
  0\in &\big((U_\alpha(n)+U_\alpha(n)) + U_\beta(n)\big)+U_\delta(n) +
        U_\xi(n)+U_\zeta(n) +\\
   &\big((W_{i,\alpha,\beta}+
         W_{i,\alpha,\beta}) + W_{0,\xi,\zeta} \big)+
\big((W_{j,\alpha,\delta}+W_{j,\alpha,\delta}) +W_{1,\xi,\zeta}\big) 
     \subseteq\\
&\qquad U_\beta(n-1) +U_\delta(n) +U_\xi(n)+U_\zeta(n)+ V_{0,\xi,\zeta}
  +V_{1,\xi,\zeta},
  \end{array}\]
and $\beta,\delta,\xi,\zeta$ are all pairwise distinct. This again
contradicts \ref{5.2cp}(A)$(\otimes)_4$.  

\noindent $\bullet$\quad If $|\{\alpha,\beta\}\cap\{\gamma,\delta\}| =1$,
say $\alpha=\gamma$, $\beta\neq\delta$, and $\xi'=\zeta'$, then
\[\begin{array}{ll}
  0\in &\big((U_\alpha(n)+U_\alpha(n)) + U_\beta(n)\big)+\big((U_{\xi'}(n) +
        U_{\xi'}(n))+U_\delta(n)\big) +\\
   &\big((W_{i,\alpha,\beta}+
         W_{i,\alpha,\beta}) + W_{0,\xi,\zeta} \big)+
\big((W_{j,\alpha,\delta}+W_{j,\alpha,\delta}) +W_{1,\xi,\zeta}\big) 
     \subseteq\\
&\qquad U_\beta(n-1) +U_\delta(n-1) +V_{0,\xi,\zeta}
  +V_{1,\xi,\zeta},
  \end{array}\]
and $\beta\neq\delta$. Again contradiction with \ref{5.2cp}(A)$(\otimes)_4$.   

\noindent $\bullet$\quad If $\{\alpha,\beta\}=\{\gamma,\delta\}$, then we
arrive to
\[\begin{array}{ll}
  0\in &\big((U_\alpha(n)+U_\alpha(n)) + U_{\xi'}(n)\big)+\big((U_\beta (n)
         + U_\beta(n))+U_{\zeta'}(n)\big) +\\
   &\big((W_{i,\alpha,\beta}+W_{i,\alpha,\beta}) + W_{0,\xi,\zeta} \big)+ 
\big((W_{j,\alpha,\beta}+W_{j,\alpha,\beta}) +W_{1,\xi,\zeta}\big) 
     \subseteq\\
&\qquad U_{\xi'}(n-1) +U_{\zeta'}(n-1) +V_{0,\xi,\zeta}+V_{1,\xi,\zeta}. 
  \end{array}\]
Considering cases $\xi'=\zeta'$  and $\xi'\neq \zeta'$ separately we easily
get a contradiction with \ref{5.2cp}(A)$(\otimes)_4$.   

\noindent $\bullet$\quad If $\{\alpha,\beta\}\cap \{\gamma,\delta\}=
\emptyset$, then 
\[\begin{array}{ll}
    0\in &\big((W_{i,\alpha,\beta}+W_{i,\alpha,\beta})+U_\alpha(n) \big)+ 
           \big((W_{j,\gamma,\delta}+W_{j,\gamma,\delta}) + U_\beta(n)\big)
           +\\
&U_\gamma (n)+  U_\delta(n)+ U_{\xi'}(n)+U_{\zeta'}(n)+ W_{0,\xi,\zeta}
           +W_{1,\xi,\zeta} 
     \subseteq\\
&U_\alpha(n{-}1)+U_\beta(n{-}1) + U_\gamma(n)+U_\delta(n)+ U_{\xi'}(n) 
                   +U_{\zeta'}(n) +W_{0,\xi,\zeta}+W_{1,\xi,\zeta}.  
  \end{array}\]
If $\xi'=\zeta'$ then this gives
\[0\in U_\alpha(n{-}1)+U_\beta(n{-}1) + U_\gamma(n)+U_\delta(n-1)+
  W_{0,\xi,\zeta}+W_{1,\xi,\zeta},\]
a contradiction. So $\xi'\neq \zeta'$ and we ask what is the intersection
$\{\xi',\zeta'\}\cap \{\alpha,\beta,\gamma,\delta\}$. In each possible case
we also get a contradiction.   
\end{proof}

\begin{claim}
  \label{odot5}
 If $\chi(x,y)=A$ and $z\in \bX\setminus \{x,y\}$, then
  either $\chi(x,z)\neq A$ or $\theta(x,z)\neq \theta (x,y)$. 
\end{claim}

\begin{proof}[Proof of the Claim]
Suppose $\chi(x,y)=\chi(x,z)=A$ and $\theta(x,y)= \theta(x,z)=
\{\alpha,\beta\}$. By \ref{odot4} we know that $\chi(y,z)=A$. Hence for some
$\xi\neq \zeta$ and $i<2$ we have
\[y+z\in U_\xi(n)+U_\zeta(n)+W_{i,\xi,\zeta}+W_{i,\xi,\zeta}.\]
Also,
\[y+z=y+x+x+z\in U_\alpha(n)+U_\beta(n)+W_{0,\alpha,\beta} +
  W_{0,\alpha,\beta} + U_\alpha(n)+U_\beta(n)+W_{0,\alpha,\beta} +
  W_{0,\alpha,\beta}.\] 
Hence $0\in U_\xi(n-1)+U_\zeta(n-1) +V_{i,\xi,\zeta}+V_{i,\xi,\zeta}$, and
we get contradiction as usual.  
\end{proof}

\begin{claim}
  \label{odot6}
   $\chi(x,y)\neq B$ for any distinct $x,y\in\bX$.
 \end{claim}

\begin{proof}[Proof of the Claim]
 Suppose $\chi(x,y)=B$, $\theta(x,y)=\{\alpha,\beta\}$.  By \ref{odot5}
we may choose $z\in\bX\setminus \{x,y\}$ such that
\begin{enumerate}
\item[$(\star)_z$] \quad $\big(\chi(x,z),\theta(x,z)\big)\neq
  \big(A,\{\alpha,\beta\}\big) \neq  \big(\chi(y,z), \theta(y,z)\big)$.
\end{enumerate}
[Why is it possible? First take $t\notin\{x,y\}$ and ask if it has the
property described in $(\star)_t$. If not, then $\theta(x,t)=\theta(y,t)
=\{\alpha,\beta\}$ and either $\chi(x,t)=A$ or $\chi(y,t)=A$. Say the former
holds true. Pick $u\in\bX\setminus\{x,y,t\}$ and ask if this element has the
property $(\star)_u$. By Claim \ref{odot5} we have
\[(\chi(x,u),\theta(x,u))\neq
  (\chi(x,t),\theta(x,t))=(A,\{\alpha,\beta\}),\]
so if $(\star)_u$ fails this can be only because $ (\chi(y,u),\theta(y,u))
=(A,\{\alpha,\beta\})$. Taking $z\in \bX\setminus\{x,y,t,u\}$ and applying
Claim \ref{odot5} twice (with $\{x,t\}$ and $\{y,u\}$) we immediately see
that $(\star)_z$ holds true.]

By Claim \ref{odot4} we know that either $\chi(x,z)\neq
A$ or $\chi(y,z)\neq A$; by the symmetry we may assume $\chi(x,z)\neq
A$. Now we consider the other possibilities for the value of $\chi(x,z)$.\\ 
(i)\quad \underline{If $\chi(x,z)=B$ and $\theta(x,z)=\{\alpha,\beta\}$},
then
\[x+y, x+z\in 
  U_\alpha(n)+U_\alpha(n)+W_{0,\alpha,\beta}+W_{1,\alpha,\beta}.\]
Hence $y+z\in U_\alpha(n-1)+U_\alpha(n-1)+ U_\alpha(n)+U_\alpha(n)$. Also, 
for some $\xi',\zeta'\in\theta(y,z)=\{\xi,\zeta\}$ and $i,j<2$ we have   
\[y+z \in U_{\xi'}(n)+U_{\zeta'}(n)+W_{i,\xi,\zeta}+W_{j,\xi,\zeta},\]
where either $\xi'\neq \zeta'$ or $i\neq j$. Thus
\[0\in   U_\alpha(n-1)+U_\alpha(n-1)+U_\alpha(n)+U_\alpha(n) +U_{\xi'}(n)
  +U_{\zeta'}(n) +W_{i,\xi,\zeta} +W_{j,\xi,\zeta}\stackrel{\rm def}{=} Y.\] 
If $\xi'=\zeta'$ then $i\neq j$ and
\[Y\subseteq U_\alpha(n-1)+U_\alpha(n-1)+U_\alpha(n-1)+U_\alpha(n)
  +W_{0,\xi,\zeta} +W_{1,\xi,\zeta} \subseteq
  Q_{0,\xi,\zeta}+Q_{1,\xi,\zeta},\]
and we get a contradiction with \ref{5.2cp}(A)$(\otimes)_4$.   If
$\xi'\neq\zeta'$ then
\[Y\subseteq U_{\xi'}(n-2)+U_{\zeta'}(n-1)+W_{i,\xi,\zeta}
  +W_{j,\xi,\zeta}\]
and regardless of $i$ being equal to $j$ or not, we may get a contradiction
too. \\
(ii)\quad \underline{If $\chi(x,z)=B$ and $\theta(x,z)=\{\gamma,\delta\}\neq
  \{\alpha, \beta\}$}, then  
\[\begin{array}{l}
x+z\in U_\gamma(n)+U_\gamma(n)+W_{0,\gamma,\delta}+W_{1,\gamma,\delta}\quad
    \mbox{  and}\\ 
    x+y\in U_\alpha(n)+U_\alpha(n)+W_{0,\alpha,\beta}+W_{1,\alpha,\beta}.
  \end{array}    \]
Hence $y+z\in V_{0,\gamma,\delta}+W_{1,\gamma,\delta}+V_{0,\alpha,\beta}+
W_{1,\alpha,\beta}$. Like before, for some $\xi',\zeta'\in\theta(y,z)
=\{\xi,\zeta\}$ and $i,j<2$ we have   
\[y+z \in U_{\xi'}(n)+U_{\zeta'}(n)+W_{i,\xi,\zeta}+W_{j,\xi,\zeta},\]
where either $\xi'\neq \zeta'$ or $i\neq j$. Since  $\{V_{0,\gamma,\delta},
V_{1,\gamma,\delta}\} \cap \{V_{0,\alpha,\beta},V_{1,\alpha,\beta}\}=
\emptyset$, like before we get a contradiction with
\ref{5.2cp}(A)$(\otimes)_4$.\\  
(iii)\quad \underline{If $\chi(x,z)=C$ and $\theta(x,z)=\{\alpha,\beta\}$}, 
then  
\[\begin{array}{l}
   x+y\in  U_\alpha(n)+U_\alpha(n)+W_{0,\alpha,\beta}+W_{1,\alpha,\beta}\\
   x+z\in  U_\alpha(n)+U_\beta(n)+W_{0,\alpha,\beta}+W_{1,\alpha,\beta}.
  \end{array}\]
Also, $y+z\in U_{\xi'}(n)+U_{\zeta'}(n)+W_{i,\xi,\zeta}+W_{j,\xi,\zeta}$,
where $\xi',\zeta'\in \theta(y,z)=\{\xi,\zeta\}$, $i,j<2$ and either
$\xi'\neq\zeta'$ or $i\neq j$. We consider 2 subcases now. \\
If $i=j$ then ($\xi'\neq\zeta'$ and) $\chi(y,z)=A$ so by the choice of $z$
at the beginning we know that $\theta(y,z)\neq\{\alpha,\beta\}$. So we
arrive to 
\[\begin{array}{ll}
 0\in &\big((U_\alpha(n)+  U_\alpha(n))+ U_\alpha(n)\big)+
\big((W_{i,\xi,\zeta}+W_{i,\xi,\zeta})+   U_\beta(n)\big)+\\
    &\big((W_{0,\alpha,\beta}+W_{0,\alpha,\beta})+U_\xi(n) \big)+ 
    \big((W_{1,\alpha,\beta}+W_{1,\alpha,\beta}) + U_\zeta(n)\big) 
           \subseteq\\
&U_\alpha(n-1)+U_\beta(n-1) + U_\xi(n-1) +U_\zeta(n-1)
  \end{array}\]
and since $\{\xi,\zeta\}\neq\{\alpha,\beta\}$ a contradiction follows. \\
If $i\neq j$ then we get 
\[\begin{array}{ll}
 0\in &\big((U_\alpha(n)+  U_\alpha(n))+ U_\alpha(n)\big)+
\big((W_{0,\alpha,\beta}+W_{0,\alpha,\beta})+   U_\beta(n)\big)+\\
    &\big((W_{1,\alpha,\beta}+W_{1,\alpha,\beta})+U_{\xi'}(n) \big)+ 
U_{\zeta'}(n) +W_{0,\xi,\zeta}+W_{1,\xi,\zeta}   \subseteq\\
&U_\alpha(n-1)+U_\beta(n-1) + U_{\xi'}(n-1) +U_{\zeta'}(n)+W_{0,\xi,\zeta} 
                                                               +W_{1,\xi,\zeta}, 
  \end{array}\]
and again a contradiction.\\
(iv)\quad \underline{If $\chi(x,z)=C$ and $\theta(x,z)=\{\gamma,\delta\}\neq
  \{\alpha, \beta\}$}, then  
\[\begin{array}{l}
x+z\in U_\gamma(n)+U_\delta(n)+W_{0,\gamma,\delta}+W_{1,\gamma,\delta}\quad
    \mbox{  and}\\ 
    x+y\in U_\alpha(n)+U_\alpha(n)+W_{0,\alpha,\beta}+W_{1,\alpha,\beta}, \quad
    \mbox{  and}\\
y+z \in U_{\xi'}(n)+U_{\zeta'}(n)+W_{i,\xi,\zeta}+W_{j,\xi,\zeta}, 
  \end{array}    \]
where $\xi',\zeta'\in \theta(y,z)=\{\xi,\zeta\}$, $i,j<2$ and either
$\xi'\neq\zeta'$ or $i\neq j$. Thus 
\[\begin{array}{ll}
    0\in &\big(U_\gamma(n)+(U_\alpha(n)+U_\alpha(n))\big)+ 
    U_\delta(n)+W_{0,\gamma,\delta}+W_{1,\gamma,\delta} +\\
    &W_{0,\alpha,\beta}+W_{1,\alpha,\beta}+
      U_{\xi'}(n)+U_{\zeta'}(n)+W_{i,\xi,\zeta}+W_{j,\xi,\zeta}\subseteq \\
         &U_\gamma(n-1)+  U_\delta(n)+U_{\xi'}(n)+U_{\zeta'}(n)+\\
   &W_{0,\gamma,\delta}+W_{1,\gamma,\delta} +
    W_{0,\alpha,\beta}+W_{1,\alpha,\beta}+
    W_{i,\xi,\zeta}+W_{j,\xi,\zeta}. 
  \end{array}    \]
Since $W_{0,\gamma,\delta},W_{1,\gamma,\delta},  W_{0,\alpha,\beta}$ and
$W_{1,\alpha,\beta}$ are all distinct we get a contradiction in the usual
manner.  
\end{proof}
\medskip

\begin{claim}
  \label{odot7}
 $\chi(x,y)\neq C$ for any distinct $x,y\in\bX$.
\end{claim}

\begin{proof}[Proof of the Claim]
Suppose towards contradiction $\chi(x,y)=C$ and let
$\theta(x,y)=\{\alpha,\beta\}$. Let $z\in \bX\setminus \{x,y\}$. By
\ref{odot4} we know that either $\chi(x,z)\neq A$ 
or $\chi(y,z)\neq A$; by the symmetry we may assume $\chi(x,z)\neq A$. By
\ref{odot6} we know that $\chi(x,z)\neq B$,  so $\chi(x,z)=C$\\
\underline{If $\theta(x,y)=\theta(x,z)=\{\alpha,\beta\}$}, then $y+z\in
U_\alpha(n-1)+U_\alpha(n) +U_\beta(n-1)+U_\beta(n)$. We know that
$\chi(y,z)\in \{A,C\}$ and in both cases $y+z\in U_\gamma(n)+U_\delta(n)+
W_{i,\gamma,\delta}+ W_{j,\gamma,\delta}$, where $\theta(y,z)=\{\gamma,
\delta\}$ and $i,j<2$. Now we may conclude
\[0\in U_\alpha(n-1)+U_\alpha(n)+U_\beta(n-1)+U_\beta(n) +U_\gamma(n)
  +U_\delta(n) +W_{i,\gamma,\delta}+W_{j,\gamma,\delta}\stackrel{\rm def}{=}
  S.\]
If $i\neq j$ then $S\subseteq U_\gamma(n-2)  +U_\delta(n-2)
+W_{0,\gamma,\delta} +W_{1,\gamma,\delta}$ and an immediate contradiction
with \ref{5.2cp}(A)$(\otimes)_4$ follows. If $i=j$ then $S\subseteq
U_\gamma(n-2)  +U_\delta(n-2)+W_{i,\gamma\delta}+W_{i,\gamma\delta}
\subseteq  U_\gamma(n-3)+U_\delta(n-3)$ and we get a contradiction with
$p\in  D^3_1$. \\
\underline{If $\theta(x,z)=\{\xi,\zeta\}\neq \theta(x,y)=\{\alpha,\beta\}$},
then $\{W_{0,\xi,\zeta},W_{1,\xi,\zeta}\} \cap \{W_{0,\alpha,\beta},W_{1,\alpha,\beta}\}=
\emptyset$ and 
\[y+z\in U_\alpha(n)+U_\beta(n) +W_{0,\alpha,\beta}+W_{1,\alpha,\beta}
  +U_\xi(n)+U_\zeta(n) +W_{0,\xi,\zeta}+W_{1,\xi,\zeta}.\] 
Now, by considerations as before, we get a contradiction with
\ref{5.2cp}(A)$(\otimes)_4$. 
\end{proof}

Therefore,
\begin{enumerate}
\item[$(\odot)$] $\chi(x,y)=A$ for all distinct $x,y\in\bX$.
\end{enumerate}
Hence, if $x\neq y$ are from $\bX$ and $\theta(x,y)=\{\alpha,\beta\}$, then
$x+y\in U_\alpha(n-2)+U_\beta(n-2)$.
\medskip

\noindent (2)\quad Like Lemma \ref{getdiff}, using Lemma \ref{5.4cp}. 
\end{proof}

\begin{lemma}
  \label{gettran}
 Let $p=(w,M,\bar{r},n,\bar{\Upsilon},\bar{V},h)\in D^3_1$ and $\bX\subseteq
 \bbH$, $|\bX|\geq 5$. Suppose that
 \begin{enumerate}
 \item[(a)] $\bX+\bX\subseteq \bigcup\{U_\alpha(n)+U_\beta(n):
   \alpha,\beta\in w\}$, and
 \item[(b)] ${\rm diam}_\rho\big(U_\alpha(n)\big) <\rho(x,y)$ for all
   $\alpha\in w$, $(x,y)\in \bX^{\langle 2\rangle}$.
 \end{enumerate}
 Then there is a $c\in\bbH$ such that
 \[\bX+c\subseteq \bigcup\{U_\alpha(n-1): \alpha\in w\}.\]
\end{lemma}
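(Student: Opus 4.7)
The plan is to produce a function $\tau:\bX\to w$ and an element $c\in\bbH$ such that $x+c\in U_{\tau(x)}(n-1)$ for every $x\in\bX$. For each distinct $x,y\in\bX$, assumption (a) provides $\alpha,\beta\in w$ and $u\in U_\alpha(n), v\in U_\beta(n)$ with $x+y=u+v$. Since $\rho$ is translation invariant and every element of $\bbH$ has order two, $\rho(x,y)=\rho(x+y,0)=\rho(u,v)\le {\rm diam}_\rho U_\alpha(n)$ whenever $\alpha=\beta$, contradicting (b). Hence $\alpha\ne\beta$, and by Lemma \ref{5.4cp} the unordered pair $\theta(x,y):=\{\alpha,\beta\}$ is uniquely determined by $(x,y)$.

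The crucial structural claim is that for pairwise distinct $x_0,y,z\in\bX$,
\[
\theta(y,z)\;=\;\theta(x_0,y)\,\triangle\,\theta(x_0,z).
\]
To establish it, write $x_0+y=u_0+v_0$, $x_0+z=u_1+v_1$, $y+z=u_2+v_2$ using the $\theta$-decompositions; the identity $(x_0+y)+(x_0+z)=y+z$ (valid because of order two) yields
\[
u_0+v_0+u_1+v_1+u_2+v_2\;=\;0,
\]
an equation among six elements, each in some $U_\alpha(n-2)$ indexed by the six types appearing with multiplicity in the three $\theta$-pairs. Since $p\in D^3_1$ (Lemma \ref{5.3cp}(iv)), the level-$(n-3)$ family is a qif$^-$ in which the level-$(n-2)$ system is immersed. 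Iteratively absorbing a pair of elements sharing a type $\alpha$ together with a third element of a different type $\beta$ via the immersedness identity $a+a'+b\in U_\beta(n-3)$ preserves the parity of each type's multiplicity in the surviving equation; after maximal absorption, only odd-multiplicity types remain, with exactly one representative each in the qif$^-$. If the number $r$ of odd-multiplicity types were positive (and $r$ is automatically even since the total multiplicity is six), the resulting $r\le 6$ distinct qif$^-$ picks summing to zero would be a nontrivial $\{0,1\}$-combination contradicting quasi$^-$ independence. Hence $r=0$, i.e., every type has even multiplicity. A short case analysis on multisets of size six with all multiplicities even, under the constraint that each $\theta$-pair has two distinct types, leaves only the ``triangle'' configuration $\{A,B\},\{A,C\},\{B,C\}$ on three distinct types, which is precisely the symmetric-difference identity.

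The triangle relation implies $\theta(x_0,y)\ne\theta(x_0,z)$ and $|\theta(x_0,y)\cap\theta(x_0,z)|=1$ for distinct $y,z\in\bX\setminus\{x_0\}$. With $|\bX\setminus\{x_0\}|\ge 4$, an elementary observation (four distinct 2-subsets that pairwise intersect in exactly one element must share a common element, since otherwise three of them form the triangle $\{a,b\},\{a,c\},\{b,c\}$, which admits no fourth distinct extension satisfying the intersection condition) produces $\alpha^*\in\bigcap_{y\in\bX\setminus\{x_0\}}\theta(x_0,y)$. Setting $\tau(x_0)=\alpha^*$ and $\tau(y)$ equal to the unique element of $\theta(x_0,y)\setminus\{\alpha^*\}$ for $y\ne x_0$, I pick any $u_0\in U_{\alpha^*}(n)$ and let $c=x_0+u_0$. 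Then $x_0+c=u_0\in U_{\alpha^*}(n-1)$, while for $y\in\bX\setminus\{x_0\}$, writing $x_0+y=u+v$ with $u\in U_{\alpha^*}(n)$, $v\in U_{\tau(y)}(n)$, the immersedness of $\cW$ at level $n$ in $\cV$ at level $n-1$ (Definition \ref{5.2cp}$(\otimes)_4$(b)) gives $y+c=u+u_0+v\in U_{\tau(y)}(n-1)$, as required. The main obstacle is the parity claim in the middle paragraph: carefully tracking pair-absorption through the four-level immersedness chain $n\to n-1\to n-2\to n-3$ so that any odd-multiplicity configuration collapses to a violation of quasi$^-$ independence.
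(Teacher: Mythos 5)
Your proof is correct and takes essentially the same route as the paper's: the paper's Claim \ref{cl8} establishes exactly the pairwise property $|\theta(x,y)\cap\theta(x,z)|=1$ that you use, and the construction of $c$ via a common element $\alpha^*$ of all $\theta(x_0,y)$ and the observation that $\ge 4$ distinct $2$-subsets with pairwise intersections of size one form a sunflower coincides with the paper's use of the fourth element $x'$. The only differences are cosmetic: you prove the stronger triangle (symmetric-difference) identity $\theta(y,z)=\theta(x_0,y)\triangle\theta(x_0,z)$ when only $|\theta(x_0,y)\cap\theta(x_0,z)|=1$ is needed, and you pick an arbitrary $u_0\in U_{\alpha^*}(n)$ rather than, as the paper does, the representative $a'$ from a chosen decomposition $x_0+y_0=a'+b_0$ (both choices work, since immersedness repairs the discrepancy); your flagged ``parity/absorption'' step is sound, and the paper itself elides the analogous case analysis in Claim \ref{cl8} with a ``(remember $(\otimes)_4$)''.
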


\begin{proof}
 By assumption (b), if $x,y\in\bX$ are distinct and $x+y\in
 U_\alpha(n)+U_\beta(n)$, $\alpha,\beta\in w$, then $\alpha\neq
 \beta$. Also, if $(x,y)\in \bX^{\langle 2\rangle}$ and $x+y\in
 \big(U_\alpha(n)+U_\beta(n)\big) \cap \big(U_\gamma(n)+U_\delta(n)\big)$,
 then $\{\alpha,\beta\}=\{\gamma,\delta\}$ (by
 \ref{5.2cp}(A)$(\otimes)_4$). Consequently, for each $(x,y)\in \bX^{\langle
   2\rangle}$ we may let $\theta(x,y)$ to be the unique $\{\alpha,\beta\}\in
 [w]^{\textstyle 2}$ such that $x+y\in U_\alpha(n)+U_\beta(n)$.

 \begin{claim}
   \label{cl8}
   \[|\theta(x,y)\cap \theta(x,z)|=1\]
   whenever $x,y,z\in\bX$ are distinct.
 \end{claim}

 \begin{proof}[Proof of the Claim]
Let $\theta(x,y)=\{\alpha,\beta\}$, $\theta(x,z)=\{\gamma,\delta\}$ and
$\theta(y,z)=\{\xi,\zeta\}$. Then
\[y+z\in \Big(U_\alpha(n)+U_\beta(n)+U_\gamma(n)+U_\delta(n)\Big)\cap
  \big(U_\xi(n)+ U_\zeta(n)\big).\] 
Hence $0\in U_\alpha(n)+U_\beta(n)+U_\gamma(n)+U_\delta(n) + U_\xi(n)+
U_\zeta(n)$. Since $\alpha\neq\beta$, $\gamma\neq \delta$ and $\xi\neq\zeta$
we conclude that $\{\alpha,\beta\}\cap\{\gamma,\delta\}\neq \emptyset$
(remember  \ref{5.2cp}(A)$(\otimes)_4$). If we had $\{\alpha,\beta\}=
\{\gamma,\delta\}$, then $0\in U_\xi(n-1)+U_\zeta(n-1)$, a contradiction as
well. Consequently $|\{\alpha,\beta\}\cap\{\gamma,\delta\}|=1$.
\end{proof}

Fix distinct $x_0,y_0,z_0\in \bX$. Let $\theta(x_0,y_0)=\{\alpha_0,
\beta_0\}$, $\theta(x_0,z_0)=\{\gamma_0,\alpha_0\}$ and let $a',a''\in
U_{\alpha_0}(n)$, $b_0\in U_{\beta_0}(n)$, $c_0\in U_{\gamma_0}(n)$ be such
that $x_0+y_0=a'+b_0$ and $x_0+z_0=a''+c_0$.

Let $c=a'+x_0$. We will show that $x+c\in \bigcup\{U_\alpha(n-1):
\alpha\in w\}$ for all $x\in\bX$. To this end, first note that 
\begin{itemize}
\item $x_0+c=x_0+a'+x_0=a'\in U_{\alpha_0}(n)$, 
\item $y_0+c=y_0+a'+x_0=a'+b_0+a'=b_0\in U_{\beta_0}(n)$, 
\item $z_0+c=z_0+a'+x_0=a''+c_0+a'\in U_{\gamma_0}(n)+
  (U_{\alpha_0}(n)+U_{\alpha_0}(n))\subseteq U_{\gamma_0}(n-1)$.   
\end{itemize}
Now suppose $x\in \bX\setminus \{x_0,y_0,z_0\}$. Let $\theta(x,x_0)
=\{\delta,\zeta\}$, $x+x_0=d+e$, $d\in U_\delta(n)$, $e\in  U_\zeta(n)$.
\begin{enumerate}
\item[(*)] $\alpha_0\in \{\delta,\zeta\}$.
\end{enumerate}
Why? By Claim \ref{cl8} we have $|\theta(x_0,x)\cap \theta(x_0,y_0)|=
|\theta(x_0,x)\cap \theta(x_0,z_0)|=1$. Hence if $\alpha_0\notin
\{\delta,\zeta\}$, then $\theta(x,x_0)=\{\beta_0,\gamma_0\}$. Take $x'\in
\bX\setminus \{x_0,y_0,z_0,x\}$ and note that (again by Claim \ref{cl8})
\[\big|\theta(x_0,x')\cap \{\alpha_0,\beta_0\}\big| =
  \big|\theta(x_0,x')\cap \{\alpha_0,\gamma_0\}\big| =
  \big|\theta(x_0,x')\cap \{\gamma_0,\beta_0\}\big|=1,\]
and this is clearly impossible.
\medskip

By symmetry we may assume $\alpha_0=\delta$. But now
\[x+c=x+x_0+a'=(d+a')+e\in U_\zeta(n-1),\]
so we are done.
\end{proof}

\begin{lemma}
  \label{5.11cp}
\[\begin{array}{ll}
\forces_\bbQ&\mbox{`` there is no perfect set $P\subseteq \bbH$ such that}\\
 &\quad  \Big(\forall x,y\in P\Big)\Big(\Big|\big(x+
   \bigcup\limits_{m<\omega} \name{\bF}_m\big) \cap  \big(y+
   \bigcup\limits_{m<\omega} \name{\bF}_m\big) \Big|\geq 2k\Big)\mbox{. ''}  
\end{array}\] 
\end{lemma}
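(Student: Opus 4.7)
The plan is to mirror Lemma \ref{noperf}, making the changes forced by the exponent-$2$ setting. Suppose for contradiction that some generic $G \subseteq \bbQ$ adds a perfect set $P \subseteq \bbH$ with $|(x+S) \cap (y+S)| \geq 2k$ for every $x,y \in P$, where $S = \bigcup_m \name{\bF}_m^G$. For distinct $x, y \in P$ pick $2k$ distinct overlap points $z_i = x + a_i = y + b_i$; since $\bbH$ has exponent at most $2$ the $a_i$ are automatically pairwise distinct, so are the $b_i$, and $x + y = a_i + b_i$. Selecting any $k$ indices, define sets $Z^N_{\bar\ell, \bar m} \subseteq P^2$ analogous to those in Lemma \ref{noperf}, but built from the equation $x + y = c_i + d_i$ (rather than $x - y = c_i - d_i$) and requiring $\rho$-separations $> 2^{-N}$ among the $c_i$ and among the $d_i$.

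These sets are $\Sigma^1_1$ in $P^2$ and hence have the Baire property, so the Baire-category fusion from Lemma \ref{noperf} goes through with only cosmetic changes: build basic opens $d_\sigma + \bB_{n_\iota}$ together with integers $n_\iota^* < n_\iota$ satisfying the analogues of $(\boxdot)_2^a$--$(\boxdot)_2^e$, and extract $x^*_\sigma \in P$ as in $(\boxdot)_3$. Next pick $p_\iota \leq q_\iota \leq p_{\iota+1}$ in $G$ that sit in $D^3_{n_\iota} \cap D^0_{0,n_\iota,n_\iota}$ and decide the relevant $\name{n}_\iota, \name{d}_\sigma, \name{x}^*_\sigma$. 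The use of $D^3_{n_\iota}$ (rather than $D^2_{n_\iota}$ as in the analogous step of \ref{noperf}) is essential because the later invocation of Lemmas \ref{5.4cp} and \ref{5.10cp} requires $p_\iota \in D^3_1$.

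Fix $\iota$ large enough to make the Erd\H{o}s--Rado hypothesis of \ref{5.10cp} hold, set $\bX_\iota = \{x^*_\sigma : \sigma \in {}^\iota 2\}$, and unpack each representation $x + y = (a_i(x,y) + b_i(x,y)) + (a_i(y,x) + b_i(y,x))$ with basic sets $U_i(\cdot,\cdot), W_i(\cdot,\cdot)$ coming from $q_\iota$. The membership $q_\iota \in D^2_{n_\iota}$ ensures disjointness hypotheses (i)--(ii) of \ref{5.10cp}. Apply \ref{5.10cp}(1) to conclude
\[
\bX_\iota + \bX_\iota \subseteq \bigcup\big\{U_\alpha^{p_\iota}(n^{p_\iota}-2) + U_\beta^{p_\iota}(n^{p_\iota}-2) : \alpha, \beta \in w^{p_\iota}\big\},
\]
and combine \ref{5.10cp}(2) with \ref{5.4cp} to see that for $(x,y) \in \bX_\iota^{\langle 2\rangle}$ the unique $\{\alpha, \beta\}$ with $x+y \in U_\alpha^{p_\iota}(n^{p_\iota}-2) + U_\beta^{p_\iota}(n^{p_\iota}-2)$ determines the common value $h^{p_\iota}(\alpha,\beta)$ of all $\bar\ell(x,y)(i), \bar m(x,y)(i)$.

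At this point Lemma \ref{gettran} replaces the two-case use of Theorem \ref{gettranslMin} from Lemma \ref{noperf}: since $c - A = c + A$ in $\bbH$, only one alternative can arise and we obtain a single $c_\iota \in \bbH$ with $\bX_\iota + c_\iota \subseteq \bigcup\{U^{p_\iota}_\alpha(n^{p_\iota}-1) : \alpha \in w^{p_\iota}\}$. Define $\varphi_\iota : \bX_\iota \to w_\iota \subseteq w^{p_\iota}$ by $x + c_\iota \in U^{p_\iota}_{\varphi_\iota(x)}(n^{p_\iota}-1)$ (a bijection by the diameter clause $(\otimes)_5$(b) and translation invariance of $\rho$), and set $\pi_{\iota,\iota'} = \varphi_{\iota'} \circ \varphi_\iota^{-1}$. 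Repeat verbatim the rank-descent $(\boxdot)_8$--$(\boxdot)_{15}$ of Lemma \ref{noperf}, with clauses $(\otimes)_7$ and $(\otimes)_8$ of Definition \ref{5.2cp} replacing the corresponding $(\boxtimes)_7, (\boxtimes)_8$: along a suitable subsequence $\iota(\ell)$ chosen by $(\boxdot)_{12}$, the rank $\rksp(w_{\iota(\ell)})$ drops strictly as long as it is nonnegative, so eventually equals $-1$; at that step the construction produces an $\alpha' \in w^{p_{\iota(\ell^*+1)}} \setminus \pi_{\iota(\ell^*),\iota(\ell^*+1)}[w_{\iota(\ell^*)}]$ violating clause $(\otimes)_8$, the desired contradiction.

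The main obstacle is not in the fusion or the rank-collapse endgame (both essentially copy Section 5) but in confirming that Lemma \ref{5.10cp} really delivers the inputs needed by Lemma \ref{gettran}. The delicate point is the $k = 2$ branch of \ref{5.10cp}, where the $(B)$ and $(C)$ alternatives of \ref{5.9cp} have to be combinatorially excluded; that work, however, has already been done inside \ref{5.10cp} itself, so once we have it in hand the present argument is a systematic translation of the Section 5 machinery.
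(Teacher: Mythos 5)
Your overall plan tracks the paper's proof exactly: the paper indeed treats Lemma \ref{5.11cp} as a systematic translation of Lemma \ref{noperf}, replacing $(\boxtimes)_7$/$(\boxtimes)_8$ by $(\otimes)_7$/$(\otimes)_8$, $D^2$ by $D^3$ where needed so that Lemmas \ref{5.4cp} and \ref{5.10cp} apply, Lemma \ref{getdiff} by Lemma \ref{5.10cp}, and Theorem \ref{gettranslMin} by Lemma \ref{gettran} (at which point the two alternatives collapse to one because $\bX_\iota+c=c-\bX_\iota$). Your identification of $D^3$ as the essential upgrade and of \ref{gettran} as the single-case substitute is correct.

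There is, however, one genuine gap at the very start. You write ``Selecting any $k$ indices, define sets $Z^N_{\bar\ell,\bar m}$ \dots requiring $\rho$-separations $>2^{-N}$ among the $c_i$ and among the $d_i$.'' Two things are off. First, you cannot select \emph{any} $k$ of the $2k$ indices: what is needed is that the pairs $\{c_i,d_i\}$, $i<k$, are \emph{pairwise disjoint}, and arranging this is precisely why the hypothesis is $\geq 2k$ rather than $\geq k$. (The map $c\mapsto(x+y)+c$ is a fixed-point-free involution on the $2k$ distinct overlap-differences, so it induces a partial matching on the index set; choose one vertex from each matched pair together with all unmatched vertices to get $k$ indices with disjoint pairs.) Second, the definition of $Z^N_{\bar\ell,\bar m}$ must include the cross-separation $\rho(c_i,d_j)>2^{-N}$ for $i\neq j$, not just separations among the $c_i$'s and among the $d_i$'s. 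This cross-separation is exactly what, together with $q_\iota\in D^2_{n_\iota}$, forces the third bullet of hypothesis (ii) of Lemma \ref{5.10cp} --- namely $\big(U_i(x,y)+W_i(x,y)\big)\cap\big(U_{i'}(y,x)+W_{i'}(y,x)\big)=\emptyset$ --- which has no analogue in Lemma \ref{getdiff} and is not a consequence of $D^2_{n_\iota}$ membership alone. Without it, the assertion that ``$q_\iota\in D^2_{n_\iota}$ ensures disjointness hypotheses (i)--(ii)'' is not justified, and the invocation of \ref{5.10cp} fails. Both omissions are easily repaired (and the paper states them explicitly), but as written the plan would not survive the step where \ref{5.10cp} is applied.
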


\begin{proof}
Suppose towards contradiction that  $G\subseteq \bbQ$ is generic over
$\bV$ and in $\bV[G]$ the following assertion holds true:
\begin{quotation}
for some perfect set $P\subseteq \bbH$ we have
  \[\Big|\big(x+ \bigcup\limits_{m<\omega} \name{\bF}_m^G\big) \cap 
  \big(y+ \bigcup\limits_{m<\omega} \name{\bF}_m^G\big) \Big|\geq 2k\] 
for  all $x,y\in P$.  
\end{quotation}
Then for any distinct $x,y\in P$ there are $c_0,d_0,\ldots, c_{k-1},d_{k-1}
\in \bigcup\limits_{m<\omega} \name{\bF}_m^G$ such that $x+y=c_i+d_i$ (for
all $i<k$) and $\{c_i,d_i\}\cap \{c_{i'},d_{i'}\}=\emptyset$ (for $i<i'<k$);
remember $x+y=c_i+d_i$ implies that $x+c_i,x+d_i$ are distinct elements of 
$\big(x+ \bigcup\limits_{m<\omega} \name{\bF}_m^G\big) \cap \big(y+
\bigcup\limits_{m<\omega} \name{\bF}_m^G\big)$. 
For $\bar{\ell}=\langle \ell_i:i<k\rangle \subseteq \omega$, $\bar{m}
=\langle m_i:i<k\rangle\subseteq \omega$ and $N<\omega$ let    
\[\begin{array}{r}
Z^N_{\bar{\ell},\bar{m}}= \big\{(x,y)\in P^2:
\mbox{there are } c_i\in \name{\bF}_{\ell_i}^G, d_i\in 
\name{\bF}_{m_i}^G \mbox{ (for $i<k$) such that }\ \\
x+y=c_i+d_i \ \mbox{ and } \ 2^{-N}<\min\big(\rho(c_i,c_j),  
    \rho(d_i,d_j),\rho(c_i,d_j)\big)\ \ \\
\mbox{ for all distinct }i,j<k\quad\big\}.     
\end{array}\] 
Now we continue as in \ref{noperf}, but instead of \ref{gettranslMin} we use
\ref{gettran}. In $(\boxdot)^c_4)$ as there we demand $p_\iota,q_\iota\in
D^3_{n_\iota}$. Also under current assumptions on $\bbH$,
$\bX_\iota+c_\iota=c_\iota-\bX_\iota$, so we have only one case. Otherwise
the same proof works.  
\end{proof}

The following theorem is a consequence of results presented in this
section. 

\begin{theorem}
\label{5.12cp}  
Assume that 
  \begin{enumerate}
\item $(\bbH,+,0)$ is an Abelian perfect Polish group,
\item all elements of $\bbH$ have order at most 2,
\item $2\leq k<\omega$ and 
\item $\vare<\omega_1$ and $\lambda$ is an uncountable
  cardinal such that ${\rm NPr}^\vare(\lambda)$ holds true. 
  \end{enumerate}
Then there is a ccc forcing notion $\bbQ$ of cardinality $\lambda$ such that  
\[\begin{array}{ll}
\forces_\bbQ &\mbox{`` for some $\Sigma^0_2$ subset $B$ of $\bbH$ we
               have:}\\  
&\quad \mbox{ there is a set }X\subseteq \bbH\mbox{ of cardinality }
  \lambda\mbox{ such that }\\
&\qquad\big(\forall x,y\in X\big)\big(\big| (x+B)\cap
  (y+B)\big|\geq 2k\big)\\
&\quad \mbox{ but there is no perfect set }P\subseteq \bbH \mbox{ such that 
  } \\
&\qquad\big(\forall x,y\in P\big)\big(\big| (x+B)\cap (y+B)\big|\geq
  2k\big)\mbox{ ''.}
  \end{array}\]
\end{theorem}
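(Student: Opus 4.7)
The plan is to show that Theorem \ref{5.12cp} is essentially a wrap-up statement: all the real work has been done in Lemmas \ref{5.3cp}, \ref{5.5cp}, \ref{5.8cp} and \ref{5.11cp}, and the proof amounts to assembling these into the advertised conclusion. First I would take $\bbQ$ to be the forcing notion of Definition \ref{5.2cp}. Its cardinality is $\lambda$ since each condition is finite data indexed by a finite subset of $\lambda$ together with finitely many basic open sets from the countable base $\cU$; this is \ref{5.3cp}(1). The ccc condition is immediate from \ref{5.5cp} (Knaster implies ccc).

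In the generic extension, define the Borel set
\[B = \bigcup_{m<\omega}\name{\bF}_m^{\name{G}_\bbQ},\]
which is $\Sigma^0_2$ because each $\name{\bF}_m$ is forced to be closed by \ref{5.8cp}(1). For the witnessing family let $X=\{\name{\eta}_\alpha:\alpha<\lambda\}$; the elements $\name{\eta}_\alpha$ are pairwise distinct by the quasi$^-$ independence assertion of \ref{5.8cp}(3), so $|X|=\lambda$ in the extension. To see that any two translations $\name{\eta}_\alpha + B$, $\name{\eta}_\beta + B$ (with $\alpha\neq\beta$) overlap in at least $2k$ elements, I invoke \ref{5.8cp}(4) which exhibits explicitly $2k$ distinct elements in the intersection, namely
\[\name{\nu}_{0,\alpha,\beta},\ldots,\name{\nu}_{k-1,\alpha,\beta},\ \name{\eta}_\alpha+\name{\eta}_\beta+\name{\nu}_{0,\alpha,\beta},\ldots,\name{\eta}_\alpha+\name{\eta}_\beta+\name{\nu}_{k-1,\alpha,\beta}.\]
(The key point, implicit in \ref{5.8cp}(4), is that $\name{\eta}_\alpha+\name{\nu}_{i,\alpha,\beta}\in \name{\bF}_{\name{h}_{\alpha,\beta}}\subseteq B$ so, using that every element has order $\leq 2$ so $-x=x$, each such $\name{\nu}_{i,\alpha,\beta}$ lies in $\name{\eta}_\alpha+B$ and symmetrically in $\name{\eta}_\beta+B$; then $\name{\eta}_\alpha+\name{\eta}_\beta+\name{\nu}_{i,\alpha,\beta}=\name{\eta}_\beta+(\name{\eta}_\alpha+\name{\nu}_{i,\alpha,\beta})$ lies in both translations too, and distinctness of the $2k$ listed elements is exactly the content of \ref{5.8cp}(3)(4).)

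Finally, the nonexistence of a perfect set of $2k$-overlapping translations of $B$ is precisely the statement of Lemma \ref{5.11cp}. Combining all these items gives, in $\bV^\bbQ$, exactly the assertion in quotes in the statement of the theorem, completing the proof.

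There is no essential new obstacle in this wrap-up; the technical heart of the argument lies in \ref{5.11cp}, where the combinatorics of qif$^-$s, the rank $\rksp$, and Lemmas \ref{5.9cp}, \ref{5.10cp}, \ref{gettran} together defeat the existence of a perfect set of $2k$-overlapping translations. The only small bookkeeping point one needs to double-check here is that \ref{5.8cp}(4) genuinely produces $2k$ distinct elements of the intersection (rather than $k$), which is exactly why the theorem is stated with $2k$ instead of $k$ and is forced by the order-2 structure of $\bbH$ (every non-disjoint pair of translations has an even-sized intersection, as noted in the introduction of this section).
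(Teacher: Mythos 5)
Your proposal is correct and matches the paper's own (unwritten) intent: the paper simply states the theorem as "a consequence of results presented in this section," and your assembly of \ref{5.3cp} (size), \ref{5.5cp} (Knaster, hence ccc), \ref{5.8cp}(1)(3)(4) ($\Sigma^0_2$ set, distinctness of the $\name{\eta}_\alpha$, and the $2k$ witnessing intersection points), and \ref{5.11cp} (no perfect square) is exactly the bookkeeping required. Your remark that the order-$2$ structure makes $-\name{\eta}_\alpha = \name{\eta}_\alpha$ and forces the overlap count to be even is also the correct explanation for why the bound is $2k$ rather than $k$.
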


\section{Conclusions and Questions}
Let us recall from the Introduction, that {\em the spectrum of  translation 
  $k$--non-disjointness of a set $A\subseteq \bbH$\/} is    
\[\stnd_k (A)=\stnd_k (A,\bbH) =\{(x,y)\in \bbH\times \bbH: 
  |(A+x)\cap (A+y)|\geq k\}.\] 
By the definition,  $X\times X\subseteq  \stnd_k (A)$ if and only if 
\[\big(\forall x,y\in X\big)\big(\big| (x+A)\cap (y+A)\big|\geq k\big).\]
In particular, there is a perfect square $P\times P$ included in 
$\stnd_k (A)$ if and only if $A$ has a perfect set $P$ of
$k$--overlapping  translations.  

\begin{conclusion}
\label{conclu}
  Assume that
\begin{enumerate}
\item[(a)] $\bbH=(\bbH,0,+)$ is a perfect Abelian Polish group,
\item[(b)] $1<\iota<\omega$ and
\begin{itemize}
\item $k=\iota$ if $\{c\in \bbH:c+c\neq 0\}$ is dense in $\bbH$, and 
\item $k=2\iota$ otherwise,
\end{itemize}
\item[(c)] $\lambda$ is an uncountable cardinal such that ${\rm
    NPr}^\vare(\lambda)$ holds true for some countable ordinal $\vare$,  and  
\item[(d)] $\lambda=\lambda^{\aleph_0} \leq \mu=\mu^{\aleph_0}$.   
\end{enumerate}
Then there is a ccc forcing notion $\bbP^*$ and a $\bbP^*$--name $\name{B}$
for a $\Sigma^0_2$ subset of $\bbH$ such that 
\begin{enumerate}
\item $\forces_{\bbP^*}$ `` $2^{\aleph_0}=\mu$ '',
\item $\forces_{\bbP^*}$`` there is a set $X\subseteq \bbH$ of cardinality 
  $\lambda$ such that $X\times X\subseteq \stnd_k(\name{B})$ '',   but
\item $\forces_{\bbP^*}$`` there is {\bf no} set $X\subseteq \bbH$ of cardinality 
  $\lambda^+$ such that $X\times X\subseteq \stnd_k(\name{B})$ '', and   
\item $\forces_{\bbP^*}$`` there is {\bf no} perfect set $P\subseteq \bbH$
  such that $P\times P\subseteq \stnd_k(\name{B})$ ''.
\end{enumerate}
\end{conclusion}

\begin{proof}
Let us consider the case when (in assumption (b) of the Corollary) the set
$\{c\in\bbH: c+c\neq 0\}$ is dense in $\bbH$. The other case is fully
parallel. So we assume
\begin{itemize}
\item $(\bbH,+,0)$, $\bD$, $\rho,\rho^*$ and $\cU$ are as in Assumption
  \ref{hypno} and Assumption \ref{hypno2},
\item $k,\vare,\lambda,\rksp,\bj,\bk$ and $\mu$ satisfy Assumption
  \ref{hypno2} and assumption (d) of the Corollary. 
\end{itemize}
Let $\bbP$ be the forcing notion discussed in Section 5 (cf Theorem
\ref{mainA}) and let $\bbC_\mu$ be the forcing notion adding $\mu$ Cohen
reals, where conditions are finite functions with domains included in $\mu$
and values $0,1$. 

Let $\bbP^*=\bbP\times \bbC_\mu$.

By standard arguments, $\bbP^*$ is a ccc forcing notion and
$\forces_{\bbP^*} 2^{\aleph_0}=\mu$. Let $\name{B}$ be a $\bbP$--name for
the $\Sigma^0_2$ subset of $\bbH$ added by $\bbP\lessdot \bbP^*$. 

\begin{claim}
  \label{cl9}
\begin{enumerate}
\item[(2)] $\forces_{\bbP^*}$ `` there is a set $X\subseteq \bbH$ of
  cardinality  $\lambda$ such that
  \[\big(\forall x,y\in X\big)\big(\big| (x+\name{B})\cap
    (y+\name{B})\big|\geq k\big)\mbox{ '',}\]
  but
\item[(4)] $\forces_{\bbP^*}$ `` there is {\bf no} perfect set $P\subseteq
  \bbH$ such that
  \[\big(\forall x,y\in P\big)\big(\big| (x+\name{B})\cap
    (y+\name{B})\big|\geq k\big)\mbox{ ''.}\]
\end{enumerate}
\end{claim}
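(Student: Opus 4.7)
The two items are handled by different routes.

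For (2), the plan is to reuse the generic witnesses from Section~5. Working in $\bV^\bbP\subseteq \bV^{\bbP^*}$, set $X=\{-\name{\eta}_\alpha:\alpha<\lambda\}$, where the $\bbP$-names $\name{\eta}_\alpha$ are read as $\bbP^*$-names via the projection $\bbP^*\to \bbP$. Lemma \ref{baspropnames}(3) forces the $\name{\eta}_\alpha$'s to be pairwise distinct, so $|X|=\lambda$ (preserved since $\bbP^*$ is ccc), and Lemma \ref{baspropnames}(4) forces
\[\Big|\big(-\name{\eta}_\alpha+\bigcup_{m<\omega}\name{\bF}_m\big)\cap \big(-\name{\eta}_\beta+\bigcup_{m<\omega}\name{\bF}_m\big)\Big|\geq k\]
for all $\alpha,\beta<\lambda$. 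Since $\name{B}=\bigcup_{m<\omega}\name{\bF}_m$ and the existence of $k$ distinct elements in an intersection of two Borel sets is absolute between $\bV^\bbP$ and $\bV^{\bbP^*}$, the property $X\times X\subseteq \stnd_k(\name{B})$ is forced by $\bbP^*$.

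For (4), the plan is to exploit the product decomposition $\bbP^*=\bbP\times \bbC_\mu\cong \bbC_\mu\times\bbP$ and view $\bV^{\bbP^*}$ as $\bV^{\bbC_\mu}[G_\bbP]$, where $G_\bbP$ is $\bbP$-generic over $\bV^{\bbC_\mu}$. The forcing notion $\bbP$ is defined entirely from data fixed in Hypotheses \ref{hypno} and \ref{hypno2}: the Polish group $\bbH$ (with $\rho$, $\rho^*$, $\cU$, $\bD$) and the model $\bbM(\vare,\lambda)$ with its derived functions $\rksp$, $\bj$, $\bk$. All these objects belong to $\bV$, and their relevant properties are preserved in $\bV^{\bbC_\mu}$: since $\bbC_\mu$ is ccc it preserves uncountability, so the inductive definition of $\rk(\cdot,\bbM)$ (which quantifies over elements of $\lambda$ and tests uncountability of certain slices) is absolute between $\bV$ and $\bV^{\bbC_\mu}$. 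Consequently $\bV^{\bbC_\mu}$ still satisfies Hypothesis \ref{hypno2} with the same $\bbM$, and $\bbP$ interpreted in $\bV^{\bbC_\mu}$ is the same partial order as in $\bV$; the $\bbP$-name $\name{B}$ still denotes a $\Sigma^0_2$ subset of $\bbH$ of the same form.

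With this setup, the entire Section~5 argument culminating in Lemma \ref{noperf} is meant to apply verbatim with $\bV^{\bbC_\mu}$ as the ground model: the Knaster property of $\bbP$ (Lemma \ref{Knaster}), Lemmas \ref{abc}, \ref{addin}, \ref{getdiff} and Theorem \ref{gettranslMin} are proved from the hypotheses alone (together with classical ZFC facts, notably the Baire property of $\Sigma^1_1$ sets), so they hold over $\bV^{\bbC_\mu}$ equally well. Thus in $\bV^{\bbC_\mu}[G_\bbP]=\bV^{\bbP^*}$ no perfect $P\subseteq \bbH$ with $P\times P\subseteq \stnd_k(\name{B})$ exists. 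The main obstacle is the careful bookkeeping of this absoluteness: one must verify that none of the inductive choices, rank computations, or Baire-category extractions in the proof of Lemma \ref{noperf} exploits the particular ground model $\bV$, and that the presence of new Cohen-generic elements of $\bbH$ in $\bV^{\bbC_\mu}$ does not allow for a perfect set of $k$-overlapping translations that evades the argument. Both points are resolved by the observation that the argument is internal to the $\bbP$-extension of $\bV^{\bbC_\mu}$ and treats $\bbH$ uniformly, so new and old elements are handled on the same footing.
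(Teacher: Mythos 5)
Your proposal is correct and takes essentially the same route as the paper: decompose $\bbP^*$ as $\bbC_\mu\times\bbP\cong\bbC_\mu*\bbP$, observe that the ingredients of Section~5 (the group data and the model $\bbM(\vare,\lambda)$ with its rank, preserved because $\bbC_\mu$ is ccc) are unchanged in $\bV^{\bbC_\mu}$ so that $\bbP$ is literally the same partial order there, and then invoke the Section~5 results over $\bV^{\bbC_\mu}$. The paper's own proof of this claim is just a terser statement of exactly that; your additional handling of (2) via the $\name{\eta}_\alpha$'s from Lemma~\ref{baspropnames} and upward absoluteness of the $\geq k$-intersection statement is what the paper implicitly subsumes under ``the results of Section~5.''
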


\begin{proof}[Proof of the Claim]
If $H\subseteq \bbC_\mu$ is generic over $\bV$, then in $\bV[H]$ we may look
at the definition of the forcing notion $\bbP$ as all the ingredients still
have the required properties. Identifying $\bB^{\bV}_n$ with
$\bB^{\bV[H]}_n$ we easily see that $\bbP^{\bV}=\bbP^{\bV[H]}$. Hence
$\bbP^*$ is equivalent to the iteration $\bbC_\mu *\bbP$ and consequently
the results of Section 5 give the desired conclusion. 
\end{proof}

\begin{claim}
  \label{cl10}
  \begin{enumerate}
\item[(3)] $\forces_{\bbP^*}$ `` there is {\bf no} set $X\subseteq \bbH$ of
  cardinality  $\lambda^+$ such that 
  \[\big(\forall x,y\in X\big)\big(\big| (x+\name{B})\cap 
    (y+\name{B})\big|\geq k\big)\mbox{ ''.}\]
  \end{enumerate}
\end{claim}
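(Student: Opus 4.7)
My plan is to assume such $\name{X}$ exists in the $\bbP^*$--extension, reduce the $\bbP^*$--names to a single ``template'' Cohen name via a $\Delta$--system together with a pigeonhole argument, and extract from this template a perfect set of pairwise $k$--overlapping translations of $\name{B}$; this will contradict the no--perfect--set conclusion already established as Claim \ref{cl9}(4) above.

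So suppose that some condition in $\bbP^*$ forces $|\name{X}| = \lambda^+$ and $\name X \times \name X \subseteq \stnd_k(\name{B})$, and fix $\bbP^*$--names $\name{x}_\xi$ ($\xi < \lambda^+$) for pairwise distinct members of $\name X$. Since $\bbP^* = \bbP \times \bbC_\mu$ is ccc and $\bbC_\mu$ has finite conditions, each $\name{x}_\xi$ has a countable Cohen support $A_\xi \in [\mu]^{\aleph_0}$; i.e., it is equivalent to a name in $\bbP \times (\bbC_\mu \rest A_\xi)$. The $\Delta$--system lemma produces $Y \in [\lambda^+]^{\lambda^+}$ and a countable root $A^* \subseteq \mu$ with $A_\xi \cap A_\eta = A^*$ for distinct $\xi, \eta \in Y$. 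Set $\bV' = \bV[G_\bbP \times G_{\bbC_\mu \rest A^*}]$; then $|\bV' \cap \bbH| \le \lambda$, because the intermediate forcing has cardinality $\lambda$ and $\lambda^{\aleph_0} = \lambda$. Over $\bV'$, each $\name{x}_\xi$ is equivalent to a name for the Cohen forcing $\bbC_\mu \rest (A_\xi \setminus A^*)$. Identifying each such forcing with a single fixed copy of the Cohen real forcing, the number of possible such names is at most $(2^{\aleph_0})^{\aleph_0} = \lambda$. Pigeonholing on $\lambda^+$, there is $Y' \subseteq Y$ of size $\lambda^+$ and a Baire--measurable function $f : \can \to \bbH$ in $\bV'$ such that, in the final extension, $x_\xi = f(c_\xi)$ for all $\xi \in Y'$, where $c_\xi$ is the Cohen real encoded by the generic restricted to $A_\xi \setminus A^*$.

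For distinct $\xi, \eta \in Y'$, the reals $c_\xi$ and $c_\eta$ are mutually Cohen--generic over $\bV'$ and $f(c_\xi) \neq f(c_\eta)$. Let $C_0 \subseteq \can$ be a comeager $G_\delta$ on which $f$ is continuous. If $f$ were constant on some nonempty relatively open subset of $C_0$, pigeonholing the $\lambda^+$ Cohen reals $c_\xi$ into that open set would yield distinct indices with equal $x$--values, a contradiction. Hence $f \rest C_0$ is nowhere locally constant, so there is a perfect $C \subseteq C_0$ on which $f$ is continuous and injective. The relation
\[
  R = \{(c,c') \in \can \times \can :
       |(f(c) + \name{B}) \cap (f(c') + \name{B})| \geq k\}
\]
is $\Sigma^1_1$ (with parameters from $\bV'$ together with the $\bbP$--generic), is satisfied by every pair of mutually generic Cohen reals in $C \times C$, and is therefore comeager in $C \times C$. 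A Kuratowski--Ulam argument combined with a standard perfect--set fusion then produces a perfect $C' \subseteq C$ with $C' \times C' \subseteq R$, so $P := f[C']$ is a perfect subset of $\bbH$ satisfying $P \times P \subseteq \stnd_k(\name{B})$, contradicting Claim \ref{cl9}(4). The main obstacle will be this last fusion step: converting the comeager, merely $\Sigma^1_1$, overlap relation $R$ into a pointwise condition on a perfect square, while simultaneously arranging that $f$ remains injective on $C'$.
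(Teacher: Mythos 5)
Your decomposition matches the paper's: split $\bbP^*=\bbP\times\bbC_\mu$, apply a $\Delta$--system to the countable Cohen supports, pass to the intermediate model $\bV'$ generated by $\bbP$ together with the Cohen root, and then pigeonhole the $\lambda^+$ names down to a single Baire/Borel template $f$ (the paper's $\tau$) evaluated at a single moving Cohen coordinate, so that the overlap condition and injectivity are forced for any mutually Cohen pair over $\bV'$. Up to this point you are doing exactly what the paper does.

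You then diverge at the final step. You want to read off a perfect square inside the analytic relation $R$ of ``good'' pairs by arguing that $R$ is comeager in $\bV'$ and invoking a Kuratowski--Ulam/fusion construction; what you are reaching for is exactly Mycielski's independence theorem for a single $\Sigma^1_1$ comeager binary relation, which indeed produces a perfect $C'$ with $C'\times C'\subseteq R$ off the diagonal (and the diagonal is innocuous here, since $\big|(f(c)+B)\cap(f(c)+B)\big|\ge k$ once $B$ has at least $k$ elements). So the worry you flag at the very end is not a real obstacle — keeping $f$ injective along the fusion is a trivial additional demand once you know $f$ is nowhere locally constant. The paper, however, takes a shorter route: instead of arguing about comeager sets in $\bV'$, it observes that a single further Cohen real over $\bV'$ adds an explicit perfect set of reals that are \emph{pairwise mutually Cohen} over $\bV'$; then the forcing theorem gives the overlap and injectivity pointwise on this perfect set with no category argument at all. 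Both routes are valid; the paper's is one lemma shorter.

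The genuine gap in your write-up is the sentence ending ``...contradicting Claim \ref{cl9}(4).'' You construct $P=f[C']$ and verify its properties inside the intermediate model $\bV'$ (or, in the paper's version, inside the slightly larger $\bV^*$). But Claim \ref{cl9}(4) asserts non-existence in the full extension $\bV^{\bbP^*}$, and the sentence ``there exists a perfect $P\subseteq\bbH$ with $P\times P\subseteq\stnd_k(B)$'' does not transfer up for free: a perfect set picks up new branches in a further extension, so the universal quantifier over $P$ is a real quantifier, and the assertion is $\Sigma^1_3$ in the relevant Borel codes. You need to invoke upward absoluteness of $\Sigma^1_3$ sentences from $\bV'$ to $\bV[G]$ (equivalently, that ``$C'\times C'\subseteq R$'' is $\Pi^1_2$ in the codes and hence Shoenfield-absolute) before the contradiction is actually obtained. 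The paper states this step explicitly; without it, your argument stops one move short of the contradiction.
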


\begin{proof}[Proof of the Claim]
Assume $\lambda<\mu$ (otherwise clear). Suppose towards contradiction that
$G=G_0\times G_1\subseteq \bbP\times \bbC_\mu$ is generic over $\bV$ and in
$\bV[G_0][G_1]$ there are distinct $x_\alpha\in \bbH$ (for
$\alpha<\lambda^+$) such that
\[\big|(x_\alpha+\name{B}^G)\cap (x_\beta+\name{B}^G)\big|\geq k\qquad
  \mbox{ for }\alpha,\beta<\lambda^+.\]
Then in $\bV[G_0]$ we may find a condition $q\in G_1$ and $\bbC_\mu$--names
$\name{x}_\alpha$,  $\alpha<\lambda^+$, for elements of the group $\bbH$
such that
\[q\forces_{\bbC_\mu}\mbox{`` } \name{x}_\alpha\neq \name{x}_\beta\ \mbox{
    and }\ \big|(\name{x}_\alpha+\name{B})\cap (\name{x}_\beta+\name{B})
  \big|\geq k\mbox{ ''}\]
for all $\alpha<\beta<\lambda^+$. Each of the names $\name{x}_\alpha$ is
actually a $\bbC_{A_\alpha}$--name for some countable set $A_\alpha\subseteq
\mu$. Since $\bV[G_0]\models 2^{\aleph_0}=\lambda$, we may choose a set
$I\in [\lambda^+]^{\textstyle \lambda^+}$ and a set $u\subseteq \mu$ such
that the following two demands are satisfied (in $\bV[G_0]$).
\begin{enumerate}
\item[$(\clubsuit)_1$] ${\rm otp} (A_\alpha)={\rm otp}(A_\beta)$ for
  $\alpha,\beta\in I$. 
\item[$(\clubsuit)_2$]  For each $\alpha<\beta$ from $I$, letting
  $\pi_{\alpha,\beta}: A_\alpha\longrightarrow A_\beta$ be the order
  isomorphism, we have
 \[u=A_\alpha\cap A_\beta,\quad \pi_{\alpha,\beta}\rest u = {\rm id}_u\quad
   \mbox{ and }\quad A_\alpha\setminus u \mbox{ is infinite.}\]
\end{enumerate}
Let $u^*=u\cup\dom(q)\subseteq \mu$. Dismissing finitely many elements of
$I$ we may assume that  $A_\alpha\setminus u= A_\alpha\setminus u^*$ for all
$\alpha\in I$.

Let $G_1^*=G_1\cap \bbC_{u^*}$ and let us work in $\bV[G_0][G_1^*]$ for a
moment. Each name $\name{x}_\alpha$ (for $\alpha\in I$) can be thought of as
a $\bbC_{A_\alpha\setminus u^*}$--name now. Let $\xi={\rm
  otp}(A_\alpha\setminus u^*)$ for some (equivalently, all) $\alpha\in I$. 
Since $\bV[G_0][G_1^*]\models 2^{\aleph_0}=\lambda$, we may find $I^*\in 
[I]^{\textstyle \lambda^+}$ and a Borel function $\tau:{}^\xi
2\longrightarrow \bbH$ such that
\begin{enumerate}
\item[$(\clubsuit)_3$] $\forces \name{x}_\alpha= \tau\big (\name{c}_\alpha
  \circ \pi^\alpha\big)$, where $\pi^\alpha:\xi\longrightarrow
  A_\alpha\setminus u^*$ is the order isomorphism and $\name{c}_\alpha$ is (a
  name for) the Cohen real added by $\bbC_{A_\alpha\setminus u^*}$. 
\end{enumerate}
Consequently, if $\alpha\neq\beta$ are from $I^*$, then 
\begin{enumerate}
\item[$(\clubsuit)_4$] $\forces_{\bbC_{A_\alpha\setminus u^*}\times
    \bbC_{A_\beta\setminus u^*}}$`` $\big|(\tau(\name{c}_\alpha\circ
  \pi^\alpha)+\name{B}^{G_0}) \cap (\tau(\name{c}_\beta\circ
  \pi^\beta)+\name{B}^{G_0})\big|\geq k$ and 

  $\tau\big (\name{c}_\alpha\circ \pi^\alpha\big) \neq \tau\big
(\name{c}_\beta\circ \pi^\beta\big)$ '' 
\end{enumerate}
Therefore,
\begin{enumerate}
\item[$(\clubsuit)_5$] if $d_0,d_1\in {}^\xi 2$ are (mutually) Cohen reals
  over $\bV[G_0][G^*_1]$, then 
\[\bV[G_0][G^*_1][d_0,d_1]\models
\big|(\tau(d_0)+\name{B}^{G_0}) \cap (\tau(d_1)+\name{B}^{G_0})\big|\geq
k\mbox{ and }\tau(d_0) \neq \tau(d_1).\]
\end{enumerate}
Take $\alpha\in I$ and note that in $\bV^*=\bV[G_0][G^*_1][G_1\cap
\bbC_{A_\alpha\setminus u^*}]$ there is a perfect set $P\subseteq {}^\xi 2$
of mutually Cohen reals over $\bV[G_0][G^*_1]$. By $(\clubsuit)_5$ we know
\[\bV^*\models \tau\rest P\mbox{ is one-to-one and }
\big|(\tau(x)+\name{B}^{G_0}) \cap (\tau(y)+\name{B}^{G_0})\big|\geq
k\mbox{ for }x,y\in P.\]
By upward absoluteness of $\Sigma^1_3$ sentences we may assert now that 
\[
  \begin{array}{ll}
  \bV[G_0\times G_1]\models     &
\mbox{ there is a perfect set $P^*\subseteq 
                                  \bbH$ such that }\\
 &\big(\forall x,y\in P^*\big)\big( \big|(x+\name{B}^{G_0}) \cap
   (y+\name{B}^{G_0})\big|\geq  k\big).
  \end{array}
\]
This, however, contradicts Claim \ref{cl9}. 
\end{proof}
\end{proof}

\begin{conclusion}
[See {\cite[Proposition 3.3(5)]{RoSh:1138}}]
  \label{biglam}
  Assume that
  \begin{enumerate}
\item $\bbH$ is a perfect Polish group and $B\subseteq \bbH$ is a Borel set,
\item a cardinal $\lambda$ is such that ${\rm Pr}^\vare(\lambda)$ holds true
  for  every $\vare<\omega_1$, and
\item $1<k<\omega$, and
\item there is a set $X\subseteq \bbH$ of   cardinality  $\lambda$ such
  that  $X\times X\subseteq \stnd_k(B)$.
  \end{enumerate}
Then there is a perfect set $P\subseteq \bbH$ such that $P\times P\subseteq
\stnd_k(B)$.
\end{conclusion}

\begin{proof}
Under our assumptions on $\lambda$, if an analytic set $B\subseteq
\can\times\can$ includes a $\lambda$--square, it includes a perfect square
(see \cite[Claim 1.12(1)]{Sh:522}).

The space $\bbH$ is Borel isomorphic with $\can$; let
$f:\bbH\longrightarrow\can$ be a Borel isomorphism and let
$f_2:\bbH\times \bbH\longrightarrow \can\times\can:(x,y)\mapsto
(f(x),f(y))$. Then the set $f_2[\stnd_k(B)]$ is analytic and
$f[X]\times f[X]\subseteq f_2[\stnd_k(B)]$. Consequently there is a perfect
set $P^*\subseteq \can$ such that $P^*\times P^*\subseteq
f_2[\stnd_k(B)]$. We may choose a perfect set $P\subseteq f^{-1}[P^*]
\subseteq \bbH$ -- it will also satisfy $P\times P\subseteq \stnd_k(B)$. 
\end{proof}

Now, in Claim \ref{cl10} we used the upward absoluteness to show
$\forces_{\bbP^*}$`` (3) ''. If the group $\bbH$ is compact and
$B\subseteq \bbH$ is $\Sigma^0_2$, then the set $\stnd_k(B)$ is $\Sigma^0_1$
and hence the assertion in (4) of \ref{conclu} is $\Pi^1_2$, so
also absolute. However, in the case of general $\bbH$ the corresponding assertion
appears to be $\Pi^1_3$ so not so obviously absolute. Its absoluteness could
be establish if we can introduce corresponding rank. (This would be helpful
for natural consequences under MA.)

\begin{problem}
Develop the rank and the results parallel to ${\rm ndrk}_\iota$ and cute
$\mathcal{YZR}$--systems presented in \cite{RoSh:1170} for the case of
general perfect Abelian Polish groups.    
\end{problem}

The forcing notions presented in this article for various Abelian Polish
groups look similar, but the particular group structures may have different
impacts.

\begin{problem}
Is it consistent that for some  perfect Abelian Polish groups $\bbH_0$,
$\bbH_1$ and $2<k<\omega$ and an uncountable cardinal $\lambda$ we have: 
\begin{enumerate}
\item for some Borel set $B_0\subseteq \bbH_0$,
  \begin{enumerate}
\item there is a set $X\subseteq \bbH_0$ of   cardinality  $\lambda$ such
  that  $X\times X\subseteq \stnd_k(B_0,\bbH_0)$ (i.e.,
  $\stnd_k(B_0,\bbH_0)$ includes a $\lambda$--square) , but 
\item there is {\bf no} perfect set $P\subseteq \bbH_0$ such that $P\times
  P\subseteq \stnd_k(B_0,\bbH_0)$ (i.e., $\stnd_k(B_0,\bbH_0)$ does not
  include any perfect square)
\end{enumerate}
and
\item for every Borel set $B\subseteq \bbH_1$, if $\stnd_k(B,\bbH_1)$
  includes a $\lambda$--square, then it includes a perfect square ?
\end{enumerate}
\end{problem}

Considering differences caused by various choices of parameters, it is
natural to ask about the impact of $k$.

\begin{problem}
Is it consistent that for some  perfect Abelian Polish group $\bbH$ and
$2<k<\ell<\omega$ and an uncountable cardinal $\lambda$ the following two
statements are true. 
\begin{enumerate}
\item For some Borel set $B_0\subseteq \bbH$,
\begin{enumerate}
\item there is a set $X\subseteq \bbH$ of   cardinality  $\lambda$ such
  that  $X\times X\subseteq \stnd_\ell(B_0,\bbH)$, but 
\item there is {\bf no} perfect set $P\subseteq \bbH_0$ such that $P\times
  P\subseteq \stnd_\ell(B_0,\bbH_0)$.
\end{enumerate}
\item For every Borel set $B\subseteq \bbH$, if $\stnd_k(B,\bbH)$
  includes a $\lambda$--square, then it includes a perfect square.
\end{enumerate}  
\end{problem}

Of course, the next steps could be to investigate $\stnd_\omega$ and
$\stnd_{\omega_1}$:

\begin{problem}
Let $\bbH$ be a perfect Abelian Polish group. Is it consistent that for some 
Borel set $B\subseteq \bbH$:
\begin{itemize}
\item there is an uncountable set $X\subseteq \bbH$ such that $(B+x)\cap
  (B+y)$ is uncountable for every $x,y\in X$, but 
\item for every perfect set $P\subseteq \bbH$ there are $x,y\in P$ with
  $(B+x)\cap (B+y)$ countable?
\end{itemize}
Similarly if ``uncountable / countable'' are replaced with ``infinite /
finite'', respectively. 
\end{problem}

Let us also remind two other questions related to our results. The first one
calls for a ``dual'' results.

\begin{problem}
Is it consistent to have a Borel set $B\subseteq \bbH$ such that 
\begin{itemize}
\item $B$ has uncountably many pairwise disjoint translations, but
\item there is no perfect of pairwise disjoint translations of $B$ ? 
\end{itemize}
\end{problem}

Assumptions of Conclusion \ref{conclu} and Conclusion \ref{biglam}
bring the question what is the value of the first cardinal
$\lambda=\lambda_{\omega_1}$ such that ${\rm Pr}^\vare(\lambda)$ holds true
every $\vare<\omega_1$. 

\begin{problem}
Is $\lambda_{\omega_1}=\aleph_{\omega_1}$ ?  Does ${\rm
  Pr}^\vare(\aleph_{\omega_1})$ hold true for all $\vare<\omega_1$?
\end{problem}

\bigskip \bigskip \bigskip


\end{document}